\title[2-complex with matrix spaces]
{A 2-complex containing  Sobolev spaces of matrix fields}
\author{Jay Gopalakrishnan}
\address{Portland State University, PO Box 751, Portland OR 97201, USA }
\email{gjay@pdx.edu}
\author{Kaibo Hu}
\address{School of Mathematics, the University of Edinburgh, James Clerk Maxwell Building, Peter Guthrie Tait Rd, Edinburgh EH9 3FD, UK}
\email{kaibo.hu@ed.ac.uk}
\author{Joachim Sch\"oberl}
\address{Institute of Analysis and Scientific Computing, TU Wien, Wiedner Hauptstr. 8-10, 1040 Wien, Austria}
\email{joachim.schoeberl@tuwien.ac.at}
\newcommand{\bb}[1]{\mathbb{{#1}}}
\newcommand{\cl}[1]{\mathcal{{#1}}}
\newcommand{\Ho}{{\mathring{H}}}
\DeclareMathOperator{\T}{{\scriptstyle{\top}}}
\newcommand{\clos}[2]{\overline{#1}^{#2}}
\newcommand{\Hcc}{H_{\mathrm{cc}}}
\newcommand{\Hcd}{H_{\mathrm{cd}}}
\newcommand{\HcdT}{H_{\mathrm{cd}\scalebox{0.6}{\ensuremath{\top}}}}
\newcommand{\Hdd}{H_{\mathrm{dd}}}
\newcommand{\Hocc}{\Hot_{\mathrm{cc}}}
\newcommand{\Hocd}{\Hot_{\mathrm{cd}}}
\newcommand{\HocdT}{\Hot_{\mathrm{cd}\scalebox{0.6}{\ensuremath{\top}}}}
\newcommand{\Hodd}{\Hot_{\mathrm{dd}}}
\newcommand{\toHcc}{\,\ot{\!\mathcal{H}}_{\mathrm{cc}}}
\newcommand{\toHcd}{\,\ot{\!\mathcal{H}}_{\mathrm{cd}}}
\newcommand{\toHdd}{\,\ot{\!\mathcal{H}}_{\mathrm{dd}}}
\newcommand{\tHcc}{\mathcal{H}_{\mathrm{cc}}}
\newcommand{\tHcd}{\mathcal{H}_{\mathrm{cd}}}
\newcommand{\tHdd}{\mathcal{H}_{\mathrm{dd}}}
\newcommand{\hHcc}{\hat{H}_{\mathrm{cc}}}
\newcommand{\hHcd}{\hat{H}_{\mathrm{cd}}}
\newcommand{\hHdd}{\hat{H}_{\mathrm{dd}}}
\newcommand{\RT}{\mathcal{R}\mathcal{T}}
\newcommand{\ND}{\mathcal{N}\mathcal{D}}
\newcommand{\pol}{\mathcal{P}}
\newcommand{\LR}{L_{2, \mathbb{R}}}
\newcommand{\LL}{\mathcal{L}}
\newcommand{\Tg}{{T}_{\mathrm{g}}}
\newcommand{\Tc}{{T}_{\mathrm{c}}}
\newcommand{\Td}{{T}_{\mathrm{d}}}
\newcommand{\Rg}{{R}_{\mathrm{g}}}
\newcommand{\Rc}{{R}_{\mathrm{c}}}
\newcommand{\Rd}{{R}_{\mathrm{d}}}
\newcommand{\Socc}[1]{\mathop{\mathring{S}^{({#1})}_{\mathrm{cc}}}}
\newcommand{\Socct}[1]{\mathop{\mathring{\mathcal{S}}^{({#1})}_{\mathrm{cc}}}}
\newcommand{\Sodd}[1]{\mathop{\mathring{S}^{({#1})}_{\mathrm{dd}}}}
\newcommand{\Soddt}[1]{\mathop{\mathring{\mathcal{S}}^{({#1})}_{\mathrm{dd}}}}
\newcommand{\Socd}[1]{\mathop{\mathring{S}^{({#1})}_{\mathrm{cd}}}}
\newcommand{\Socdt}[1]{\mathop{\mathring{\mathcal{S}}^{({#1})}_{\mathrm{cd}}}}
\newcommand{\Sc}[1]{\mathop{{S}^{({#1})}_{\mathrm{c}}}}
\newcommand{\Soc}[1]{\mathop{\mathring{S}^{({#1})}_{\mathrm{c}}}}
\newcommand{\Scc}[1]{\mathop{S^{({#1})}_{\mathrm{cc}}}}
\newcommand{\Scct}[1]{\mathop{\mathcal{S}^{({#1})}_{\mathrm{cc}}}}
\newcommand{\Sdd}[1]{\mathop{S^{({#1})}_{\mathrm{dd}}}}
\newcommand{\Sddt}[1]{\mathop{\mathcal{S}^{({#1})}_{\mathrm{dd}}}}
\newcommand{\Scd}[1]{\mathop{S^{({#1})}_{\mathrm{cd}}}}
\newcommand{\Scdt}[1]{\mathop{\mathcal{S}^{({#1})}_{\mathrm{cd}}}}
\newcommand{\Ddd}{\mathop{D_{\mathrm{dd}}}}
\newcommand{\Rgg}{\mathop{R_{\mathrm{gg}}}}
\newcommand{\Rggt}{\mathop{{\tilde{R}_{\mathrm{gg}}}}}
\newcommand{\Rcc}{\mathop{R_{\mathrm{cc}}}}
\newcommand{\Rcct}{\mathop{{\tilde{R}_{\mathrm{\mathrm{cc}}}}}}
\newcommand{\Dgg}{\mathop{D_{\mathrm{gg}}}}
\newcommand{\Dgc}{\mathop{D_{\mathrm{\mathrm{gc}}}}}
\newcommand{\Dgct}{\mathop{\tilde{D}_{\mathrm{gc}}}}
\newcommand{\Dgch}{\mathop{\hat{D}_{\mathrm{gc}}}}
\newcommand{\Dcc}{\mathop{D_{\mathrm{cc}}}}
\newcommand{\Rgd}{\mathop{R_{\mathrm{gd}}}}
\newcommand{\Rgc}{\mathop{R_{\mathrm{gc}}}}
\newcommand{\RgcT}{\mathop{R_{\mathrm{gc}\scalebox{0.6}{\ensuremath{\top}}}}}
\newcommand{\Rgct}{\mathop{\tilde{R}_{\mathrm{gc}}}}
\newcommand{\Rgch}{\mathop{\hat{R}_{\mathrm{gc}}}}
\newcommand{\Ugch}{\mathop{\hat{U}_{\mathrm{gc}}}}
\newcommand{\Rcd}{\mathop{R_{\mathrm{cd}}}}
\newcommand{\Dcd}{\mathop{D_{\mathrm{cd}}}}
\newcommand{\Dgd}{\mathop{D_{\mathrm{gd}}}}
\definecolor{red}{rgb}{0.8,0,0}
\definecolor{darkorange}{rgb}{1,0.4,0}
\definecolor{lightorange}{rgb}{1,0.6, 0}
\definecolor{yellow}{rgb}{1,0.8, 0}
\newtheorem{theorem}{Theorem}[section]
\newtheorem{lemma}[theorem]{Lemma}
\newtheorem{corollary}[theorem]{Corollary}
\theoremstyle{remark}
\newtheorem{definition}[theorem]{Definition}
\newtheorem{remark}[theorem]{Remark}
\newtheorem{example}{Example}
\newcommand{\om}{\varOmega}
\newcommand*\ot[1]{\mathpalette\othelper{#1}}
\newcommand*\othelper[2]{%
        \hbox{\dimen@\accentfontxheight#1%
                \accentfontxheight#11.9\dimen@
                $\m@th#1\widetilde{\mathring{#2}}$%
                \accentfontxheight#1\dimen@
        }%
}
\newcommand*\accentfontxheight[1]{% changes the x-height of the accent font
        \fontdimen5\ifx#1\displaystyle
                \textfont
        \else\ifx#1\textstyle
                \textfont
        \else\ifx#1\scriptstyle
                \scriptfont
        \else
                \scriptscriptfont
        \fi\fi\fi3
}
\newcommand{\Hot}{\ot{H}}
\newcommand{\Ht}{{\Hot^{-1}}}
\newcommand{\HtRT}{{\Hot^{-1}_{\RT}}}
\newcommand{\HtND}{{\Hot^{-1}_{\ND}}}
\newcommand{\HtPl}{{\Hot^{-1}_{\pol_1}}}
\newcommand{\Hts}[1]{\Hot^{{#1}}}
\newcommand{\Hs}[1]{\Hts{{#1}}}
\newcommand{\Hm}{{{H}}^{-1}}
\newcommand{\veps}{\varepsilon}
\newcommand\tr{\operatorname{tr}}
\def\d{\partial}
\newcommand\inc{\operatorname{inc}}
\newcommand\skw{\operatorname{skw}}
\newcommand\vskw{\operatorname{vskw}}
\newcommand\mskw{\operatorname{mskw}}
\newcommand\sym{\operatorname{sym}}
\newcommand\dev{\operatorname{dev}}
\newcommand\grad{\operatorname{grad}}
\newcommand\dfo{\operatorname{def}}
\newcommand\deff{\operatorname{def}}
\newcommand\dfm{\operatorname{def}}
\renewcommand\div{\operatorname{div}}
\newcommand\curl{\operatorname{curl}}
\newcommand\hess{\operatorname{hess}}
\newcommand\ran{\operatorname{range}}
\newcommand\supp{\operatorname{supp}}
\newcommand\R{\mathbb{R}}
\newcommand\V{{\mathbb{V}}}
\newcommand{\vphi}{\varphi}
\newcommand{\D}{\mathcal{D}}
\DeclareSymbolFontAlphabet{\mathbbm}{bbold}
\newcommand\id{{\mathbbm{i}}}
\newsavebox\youngAB
\newsavebox\youngAC
\newsavebox\youngAD
\newsavebox\youngBB
\newsavebox\youngBC
\newsavebox\youngBD
\newsavebox\youngCB
\newsavebox\youngCC
\newsavebox\youngCD
\newsavebox\youngDB
\newsavebox\youngDC
\newsavebox\youngDD
\newcommand{\kh}[1]{{[\color{blue}KH:~#1}]}
\begin{document}

\maketitle

\begin{abstract}  % No symbols!
  Using a generalization of complexes, called 2-complexes, this paper
  defines and analyzes new Sobolev spaces of matrix fields and their
  interrelationships within a commuting diagram. These spaces have
  very weak second-order derivatives. An example is the space of
  matrix fields of square-integrable components whose row-wise
  divergence followed by yet another divergence operation yield a
  function in a standard negative-order Sobolev space. Similar spaces
  where the double divergence is replaced by a curl composed with
  divergence, or a double curl operator (the incompatibility
  operator), are also studied. Stable decompositions of such spaces in
  terms of more regular component functions (which are continuous in
  natural norms) are established.  Appropriately ordering
  such Sobolev spaces with and
  without boundary conditions (in a weak sense), we discover 
  duality relationships between them.
  Motivation to study such Sobolev spaces, from a finite
  element perspective and implications for weak well-posed variational
  formulations are pointed out.
	\\
	\vspace*{0.25cm}
	\\
	{\bf{Keywords:}} Sobolev spaces, regular decomposition, 2-complex, Hilbert complexes. \\
	
	\noindent
	\textbf{{MSC2020:}} 58J10, 58A12, 35J58, 35B65.
        % 58J10  Differential complexes
        % 58A12  de Rham theory in global analysis
        % 35B65  Smoothness and regularity of solutions to PDEs
        % 35J58  Boundary value problems for higher-order elliptic systems
\end{abstract}

\section{Introduction}

Substantial improvements in numerical techniques for solving partial
differential equations (PDEs) to address current scientific challenges
have come from connections to and preservation of the differential and
algebraic structures inherent in the PDEs. Ample examples are offered
by the history of finite element techniques.  The earliest finite
elements~\cite{Coura43}, Lagrange elements, consisted of {\em
  scalar-valued} functions. Developments in {\em vector-valued} finite
elements followed, starting with elements~\cite{RaviaThoma77} of
continuous normal ($n$) components. These ``$n$-continuous'' elements
were supplemented with ``$t$-continuous'' {vector-valued}
N{\'{e}}d{\'{e}}lec elements with continuous tangential ($t$)
components~\cite{Nedel80}. Further families of vector-valued elements
were unearthed continuing this line of work. Although these elements
were developed separately, today we understand them together as
fitting into a cochain subcomplex of a de Rham complex of Sobolev
spaces, thanks to intensive research into finite element exterior
calculus
(FEEC)~\cite{arnold2018finite,Arnold.D;Falk.R;Winther.R.2006a,Hiptm99}.
It is now clear how to generalize from scalar and vector fields to
tensor fields, as long as the tensors have the algebraic structure of
$k$-forms in the de Rham complex, i.e., higher order {\em alternating
  tensor-valued} finite elements in any dimension naturally fit into
FEEC.

This paper, while building on these developments, is motivated by
other types of tensors. Problems in continuum mechanics, differential
geometry and general relativity call for a study of tensors with other
types of symmetries. Indeed, even restricting to second-order tensors,
the need for study is evident from the increasing current interest in
matrix-valued finite element functions.  The earliest of these are the
``$nn$-continuous'' symmetric matrix fields (i.e., symmetric
matrix-valued functions $\sigma$ with continuous $(\sigma n)\cdot n$)
of the HHJ (Hellan-Herrmann-Johnson) elements~\cite{Comod89}, now
enjoying a revival~\cite{ArnolWalke20,
  Sinwel:09,PechsteinSchoeberl:11,PechsSchob18} in the TDNNS method
and elsewhere.  A seemingly disjoint (but potentially connected)
recent development is the ``$nt$-continuous'' trace-free matrix finite
element developed~\cite{GopalLederSchob20, GopalLederSchob20a,
  GopalKogleLeder23} for viscous fluid stresses in the context of the
MCS (Mass-Conserving Stress-yielding) method.  To add to this picture,
Regge
elements~\cite{christiansen2011linearization,li2018regge,GopalNeuntSchob23,gopalakrishnan2023analysis}
with ``$tt$-continuous'' symmetric matrix-valued elements are finding
more and more uses.  How does one connect these disparate developments
of $nn$, $nt$, and $tt$-continuous matrix finite elements?  The prior
synthesis (mentioned in the previous paragraph) involved spaces of the
de~Rham complex, all connected by fundamental first-order differential
operators (grad, curl, and divergence, in three dimensions).  In
contrast, what seems to be natural for the matrix finite elements are
other {\em second-order} differential operators.

The goal of this work is to take a step toward understanding what
{\em Sobolev spaces} and their arrangements might reveal a unified structure
where such second-order differential operators and matrix fields arise
naturally.  Although motivated by finite elements, this work does not
contain finite elements. The scope is limited to a study of
infinite-dimensional Sobolev spaces of matrix fields, their
interrelationships, and connections to standard Sobolev spaces. We
focus on spaces of scalar, vector, and matrix valued functions on
{\em three-dimensional} (3D) domains $\om$. Study of higher
order tensor fields on higher dimensional domains is certainly
interesting, but requires more algebraic machinery (such as group
representations and Young tableaux) to work with tensor symmetries.

In 3D however, the relevant symmetries can be captured by the familiar
symmetrization and deviatoric operations,
\begin{equation}
  \label{eq:sym-dev}
  \sym\tau = \frac 1 2 (\tau + \tau^{\T}), \qquad
  \dev\tau = \tau - \frac 1 3 \tr(\tau) \id, \qquad \tau \in \bb M,   
\end{equation}
where $\bb M = \bb R^{3 \times 3}$ denotes the vector space of
$3 \times 3$ real ($\bb R$) matrices, $\tr(\tau)$ denotes the trace of a matrix
$\tau \in \bb M$, and $\id$ denotes the $3\times 3$ identity matrix.
Here and throughout, $\tau^{\T}$, also written as $\T \tau$, denotes
the (pointwise) transpose of a matrix field~$\tau$.  The operations
in~\eqref{eq:sym-dev} generate subspaces of symmetric matrices and
trace-free matrices which we denote by
\[
  \bb S = \sym \bb M, \qquad \bb T = \dev \bb M. 
\]
Let $\bb V = \bb R^3$. We are interested in structures
connecting Sobolev spaces of functions with values in $\bb R$,
$\bb V$, $\bb S$ and $\bb T$ of the following form:
\begin{equation}
\label{eq:RVST}
  \begin{tikzcd}
    \R \arrow{r}     \arrow{d}
    &
    \V \arrow{r}     \arrow{d}
    &
    \V \arrow{r}     \arrow{d}
    &
    \R               \arrow{d}
    \\
    \V \arrow{r}     \arrow{d}
    &                         
    \bb S \arrow{r}  \arrow{d}
    &                         
    \bb T \arrow{r}  \arrow{d}
    &                         
    \V               \arrow{d}
    \\ 
    \V \arrow{r}     \arrow{d}
    &                         
    \bb T \arrow{r}  \arrow{d}
    &                         
    \bb S \arrow{r}  \arrow{d}
    &                         
    \V              \arrow{d}
    \\
    \R \arrow{r}    
    &
    \V \arrow{r}    
    &
    \V \arrow{r}    
    &
    \R              
  \end{tikzcd}
\end{equation}
Such diagrams where $\bb R$, $\bb V$, $\bb S$ and $\bb T$ are replaced
by appropriate Sobolev spaces of functions taking values in them, are
studied here. The first such diagram is introduced below
in~\eqref{eq:1}, which contain first-order derivative operators as
well as key algebraic operations $\T$, $\sym,$ and $\dev$. Certain
combinations of these operations result in basic second-order
derivative operators marked in diagram~\eqref{eq:1-with-2nd-order}.

The tensors along the four edges of~\eqref{eq:RVST} follow the
$\R$-$\V$-$\V$-$\R$ pattern of the well-known 3D de Rham complex
\begin{equation}
  \label{eq:deRham-smooth}
  \begin{tikzcd}[ampersand replacement=\&, column sep=1.2cm]
    C^\infty \ar[r, "{\grad}"]
    \&
    C^\infty \otimes \bb V  \ar[r, "{\curl}"]
    \&
    C^\infty \otimes \bb V  \ar[r, "{\div}"]
    \&
    C^\infty 
  \end{tikzcd}
\end{equation}
of infinitely smooth $(C^\infty)$ scalar and vector fields on $\om$.
Recall that a ``complex'' is a sequence of linear spaces $X_i$ and
linear maps $A_i:X_i \to X_{i+1}$, traditionally expressed by 
\begin{equation}
  \label{eq:seq}  
  \begin{tikzcd}[ampersand replacement=\&]
    \cdots\quad
    X_{k-2}
    \ar[r, "{A_{k-2}}"]
    \&
    X_{k-1} \ar[r, "A_{k-1}"]
    \&
    X_k \ar[r, "A_k"]
    \&
    X_{k+1} \ar[r, "{A_{k+1}}"]
    \&
    X_{k+2}
    \quad\cdots, 
  \end{tikzcd}
\end{equation}
satisfying $A_{i+1} \circ A_i = 0$ for all $i$.
In~\cite{olver1982differential}, ``$\ell$-complexes'' arose, which are
sequences~\eqref{eq:seq} with the property
$A_{i+\ell}\circ \cdots \circ A_{i+1}\circ A_{i} =0$ for all $i$ and
some fixed integer $\ell$ (so, e.g., a 1-complex is a complex in the
usual sense). As we shall see, diagrams of the form~\eqref{eq:RVST}
that we study here have a 2-complex structure (which explains the
title of this paper).  Definition~\ref{def:2complex} below formalizes
the 2-complex notion in the context of such diagrams.

Other examples of complexes, beyond the de Rham
complex~\eqref{eq:deRham-smooth}, include the well-known elasticity
complex~\cite{ArnolHu21,arnold2002mixed,Arnold2006a,PaulyZuleh23,Krone60},
also named after
Calabi or Kr\"oner,
\begin{equation}\label{elasticity}
\begin{tikzcd}[column sep=large]
  C^{\infty}\otimes \bb V
  \arrow{r}{\sym\grad}
  & C^{\infty}\otimes \bb S \arrow{r}{\curl \T \curl}
  & C^{\infty}\otimes \bb S \arrow{r}{\div}
  & C^{\infty} \otimes  \bb V,
\end{tikzcd}
\end{equation}
the hessian complex~\cite{ArnolHu21,hu2025distributional} 
\begin{equation}\label{hess-complex}
  \begin{tikzcd}[column sep=large]
    C^{\infty}  \arrow{r}{\grad\grad}
    & C^{\infty}\otimes \bb S \arrow{r}{\curl}
    & C^{\infty}\otimes \bb T     \arrow{r}{\div}
    & C^{\infty}\otimes \bb V,
  \end{tikzcd}
\end{equation}
and the $\div\div$ complex~\cite{ArnolHu21,PaulyZuleh20}
\begin{equation}\label{divdiv-complex}
  \begin{tikzcd}[column sep=large]
    C^{\infty} \otimes \bb V  \arrow{r}{\dev\grad}
    & C^{\infty}\otimes  \bb T    \arrow{r}{\sym\curl}
    & C^{\infty} \otimes \bb S     \arrow{r}{\div\div}
    & C^{\infty}.
  \end{tikzcd}
\end{equation}
These complexes can be systematically derived from the de Rham
complex~\eqref{eq:deRham-smooth} using the Bernstein-Gelfand-Gelfand
(BGG) construction, originally developed in algebraic and geometric
contexts~\cite{vcap2001bernstein,bernstein1975differential} and more
recently adapted to certain Sobolev spaces~\cite{ArnolHu21,vcap2023bgg}.
We shall see that analogous complexes,
with other ``$H^{-1}$~based'' Sobolev spaces,
defined shortly in
\eqref{eq:Hcc-cd-dd-norms}--\eqref{eq:H-cc-dd-cd-defn}, also arise
naturally from the 2-complexes and the diagrams of the
type~\eqref{eq:RVST} that we study here.

On the theme of {\em weakly regular $H^{-1}$~based Sobolev spaces,}
which is pervasive in this paper,
some motivating examples shed more light.  Let 
\begin{equation}
  \label{eq:L2-inner}
  (u, v) := \int_\om uv\; dx
\end{equation}
for scalar fields $u, v$, and in addition, for vector or matrix fields
$u$ and $v$, we continue to use the same notation $(u, v)$ to denote
the Lebesgue integral (when it exists) over $\om$ of the dot product
$u \cdot v$, or the Frobenius product $u: v$, respectively, of $u$ and
$v$.  All function spaces on $\om$ are defined precisely in
Subsection~\ref{ssec:preliminaries-spaces}, but for expediency, we use
the standard space $L_2$ and the space $H^{-1}$ with weaker topology
in the quick discussion of two examples below, both showing the role
of weak regularity, and each illuminating the role of one of two
algebraic operations $\sym$ and $\dev$.

\begin{example}
  The stress tensor $\sigma$ in linear elasticity is a matrix field
  which must satisfy $\sigma = \sym(\sigma)$ due to conservation of
  angular momentum.  Well-posed formulations for the
  Hellinger-Reissner principle in linear elasticity seek a symmetric
  matrix field (the stress tensor) $\sigma : \om \to \bb S$ in some
  Sobolev space $\Sigma$ and a vector field $u: \om \to \bb V$
  (displacement) in some Sobolev space $V$ satisfying
  \begin{equation}
    \label{eq:10}
    \begin{aligned}
      (A \sigma,  \tau)  +  ( u,  \div \tau)
      & = 0 &&\text{ for all } \tau \in \Sigma,
      \\
      (\div \sigma, v)
      & = (f,   v) &&\text{ for all } v \in V,
    \end{aligned}
  \end{equation}
  where $A$, $f$, and $\div$ denotes the compliance tensor, the load
  vector field, and row-wise divergence of a matrix field,
  respectively.  A ``regular choice'' is $V = L_2 \otimes \bb V$ and
  \begin{equation}
    \label{eq:8}
    \Sigma = \{ \tau \in L_2 \otimes \bb S: \div \tau \in L_2\otimes \bb V\}.
  \end{equation}
  Construction of finite elements for this $\Sigma$ is difficult and
  had remained an open problem for decades, as noted
  in~\cite{Arnol02}.  (If the symmetry condition on $\sigma$ were
  absent, then three copies of the $n$-continuous finite
  elements would have been sufficient.)  
  An alternative choice of ``weak regularity'' is
  \begin{equation}
    \label{eq:23}
    \Sigma = \{ \tau \in L_2 \otimes \bb S: \div \tau \in V^*\}
  \end{equation}
  where $V^*$ is weaker than $L_2$ integrals of
  the form $(\div \tau, v)$ in~\eqref{eq:10} are relaxed to a duality pairing
  $(\div\tau)(v)$ in $V$. The TDNNS
  method~\cite{PechsteinSchoeberl:11,PechsSchob18} with $nn$-continuous stresses
  can be seen as a
  discretization of such a formulation with $V = \Ho(\curl)$, a space
  defined shortly in \eqref{eq:22}. Theorem~\ref{thm:duality} shows
  that the condition $\div \tau \in V^*$ in~\eqref{eq:23} is equivalent to
  $\div \div \tau \in H^{-1}$. This motivates us to study
  Sobolev spaces with this weak regularity condition, namely the
  spaces~$\Hdd$ and $\tHdd$ defined in \eqref{eq:Hdd-defn} and
  \eqref{eq:Hdd-smoother-defn}, respectively.
\end{example}

\begin{example}
  Viscous stresses in Stokes flow with fluid velocity
  $u: \om \to \bb V$ can be extracted from the symmetric part of
  $\sigma = 2 \nu\grad u$, where $\nu$ is the kinematic viscosity. The
  incompressibility constraint $\div u=0$, a well known source of
  challenges in numerical simulation~\cite{JohnLinkeMerdo17}, now
  emerges as an algebraic constraint: $\sigma = \dev \sigma$. The
  definition of $\sigma$ and flow equations suggest that we should
  find $\sigma$ in a space $\Sigma$ of trace-free matrix fields, $u$
  in some space $V$ of vector fields,
  % where the $\div u =0$ can be
  % easily imposed, 
  and the pressure $p$ in some space $Q$ of scalar
  fields such that
  \begin{align*}
    (\nu^{-1}\sigma,  \tau)
      + (u, \div \tau)
    & = 0 && \text{for all } \tau \in \Sigma,
    \\
    (\div\sigma, v)  +  (\div v, p )
    & = - (f, v ) && \text{for all } v \in V,
    \\
     (\div u, q )  & = 0 && \text{for all } p \in Q.
  \end{align*}
  for some given source field~$f$.  The MCS
  method~\cite{GopalLederSchob20,
    GopalLederSchob20a,GopalKogleLeder23} sets $V = \Ho(\div)$ (a
  space defined shortly in \eqref{eq:22}) and $Q = \div V$. Then,
  instead of a ``regular choice''
  $\Sigma = \{  \tau \in L_2 \otimes \bb T: \div \sigma \in L_2
  \otimes \bb V\}$ that would make the integrals like
  $(\div\sigma, v)$ well defined, the MCS formulation proposes a
  choice of ``weak regularity,'' namely
  \begin{equation}
    \label{eq:24}
    \Sigma = \{ \tau \in L_2 \otimes \bb T:  \div \sigma \in V^*\}
  \end{equation}
  for which simple $nt$-continuous finite elements work, after relaxing
  $(\div\sigma, v)$ to a duality pairing $(\div\sigma)(v)$ in~$V$.
  Theorem~\ref{thm:duality} shows
  that the condition $\div \tau \in V^*$ in~\eqref{eq:24} is equivalent to
  $\curl \div \tau \in H^{-1}$. This motivates us to study
  Sobolev spaces with this weak regularity condition, namely the
  spaces~$\Hcd$ and $\tHcd$ defined in \eqref{eq:Hcd-defn} and
  \eqref{eq:Hcd-smoother-defn}, respectively.
\end{example}

% \bigskip\hrule\bigskip

% \input{old_intro.tex}

\subsection{Preliminaries and spaces}  \label{ssec:preliminaries-spaces}

Let $\om$ be a bounded open connected subset of the Euclidean space $\bb R^3$
with Lipschitz boundary. Let $L_2$ denote the space of
square-integrable $\bb R$-valued functions on~$\om$, or equivalently
the space of square-integrable $\bb R$-valued functions on~$\bb R^3$
supported on $\bar\om$.
Let $\cl D(\om)$ denote the
Schwartz space of smooth test functions on $\om$ that are compactly
supported in $\om$.  The dual of any topological space $X$ is denoted
by $X^*$.  The space of distributions on $\om$ is denoted by
$\cl D (\om)^*$.  The space of vector fields on $\om$ with square-integrable
components is denoted by $L_2 \otimes \bb V$ and the notation is
similarly extended to $\bb T$ and $\bb S$-valued fields on~$\om$ as
well as to other spaces, e.g., $\cl D(\om)^* \otimes \bb S$ denotes the space of 
symmetric matrix-valued fields whose components are distributions on~$\om$.

We use the standard Sobolev spaces $H^s(\bb R^3)$
and $H^s(\om)$ for any $s \in \bb R$. (see e.g.,~\cite{AdamsFourn03,McLea00}). We omit the domain $\om$ from
the notation when no confusion can arise and simply write $H^s$ for
$H^s(\om)$.  For scalar functions $u: \om \to \bb R$ and vector
functions $v, q : \om \to \bb V$, let
\begin{equation}
\label{eq:std-norms}
\begin{gathered}
  \| u \|_{H^1}^2
  =  \|u \|_{L_2}^2 + \| \grad u \|_{L_2}^2, \\
  \| v \|_{H(\curl)}^2
  =  \| v \|_{L_2}^2 + \| \curl v \|_{L_2}^2, \qquad   
  \|  q \|_{H(\div)}^2
  =  \| q \|_{L_2}^2 + \| \div v \|_{L_2}^2.
\end{gathered}
\end{equation}
These are norms of well-known Hilbert spaces of functions on $\om$,
namely
\begin{subequations}
  \label{eq:std-Sobolev-spaces}
  \begin{align}
    H(\grad) & \equiv H^1   = \{ u \in L_2: \grad u \in L_2 \otimes \bb V\},
    \\
    H(\curl) & = \{ v \in L_2 \otimes \bb V: \curl v \in L_2 \otimes \bb V\},
    \\
    H(\div) & = \{ q \in L_2 \otimes \bb V: \div q \in L_2\}.
  \end{align}
\end{subequations}
Using the standard norms in~\eqref{eq:std-norms}, the closures
\begin{equation}
  \label{eq:22}
  \Ho(\grad) = \clos{\cl D(\om)}{\| \cdot \|_{H^1}},
  \quad
  \Ho(\curl) = \clos{\cl D(\om) \otimes \bb V}{\| \cdot \|_{H(\curl)}},
  \quad
  \Ho(\div) = \clos{\cl D(\om) \otimes \bb V}{\| \cdot \|_{H(\div)}},  
\end{equation}
give well-known zero-trace subspaces of the spaces
in~\eqref{eq:std-Sobolev-spaces}.

Further spaces are defined using similar closures, but using the set of 
$\cl D(\om)$-functions extended by zero to all $\bb R^3$ and
closing  the set  using the $H^s(\bb R^3)$~norm.
Set 
\begin{equation}
  \label{eq:Hminus-closure}
  \Hts{s} = \clos{\cl D(\om)}{\| \cdot \|_{H^s(\bb R^3)}}, \qquad s \in \bb R,
\end{equation}
and $ \| u \|_{\Hts s} := \| u \|_{H^s(\bb R^3)}.$ This space is often
just denoted by $\widetilde{H}^s(\om)$, and in our setting, is also the
same as another often-occurring space in the literature,
$H^s_{\overline{\om}}(\bb R^3) =  \{ u \in H^{s}(\bb R^3): \; \supp u \subset \bar \om\}$ (see e.g.~\cite[Theorem~3.29]{McLea00}), i.e.,
\begin{equation}
  \label{eq:Hts}
  \Hts{s} = \{ u \in H^{s}(\bb R^3): \; \supp u \subset \bar \om\}.
\end{equation}
Since the $i$th
partial derivative $\d_i$ satisfies
$\|\d_i \varphi \|_{\Hs{s}} \le \| \varphi \|_{\Hs{s+1}}$ for all
$\varphi \in \cl D(\om)$, and since $\cl D (\om)$ is dense in $\Hs{s}$
by definition~\eqref{eq:Hminus-closure}, we conclude that for any real $s$, 
\begin{equation}
  \label{eq:di-cty}
  \d_i : \Hs{s+1}\to \Hs{s} \quad \text{is continuous}.
\end{equation}
It is well known~\cite[Theorem~3.30]{McLea00} that $\Hts{s}$ is also 
identifiable with a standard dual space
\begin{equation}
  \label{eq:Hs-dual-char}
  \Hts{s} = (H^{-s})^*
\end{equation}
for any $s \in \bb R$. 
The case $s=-1$ is of particular interest here.  The space $\Ht$, not
to be confused with $H^{-1} = \Ho(\grad)^*$, satisfies,
per~\eqref{eq:Hs-dual-char},
\begin{align}
  \label{eq:7}
  \Ht &  = H(\grad)^*,
\end{align}
and furthermore, even if $\Ht$ is not embedded in a space of distributions on
$\om$, it can be characterized using tempered distributions on
$\bb R^3$ supported on the closure of $\om$, due to~\eqref{eq:Hts}.
Therefore the norm of any $u $ in $\Ht$ can be computed either using
the $H^{-1}(\bb R^3)$-norm of the extension of $u$ by zero to all
$\bb R^3$, or by duality using~\eqref{eq:7}.  Finally, we note that it is also well-known \cite[Theorem~3.33]{McLea00} when $s>0$, $\Hts{s}$ is contained in
\[
  \Ho^s:= \clos{\cl D(\om) }{\| \cdot \|_{H^s(\om)}}
\]
and, moreover,  $\Hts{s}$ and $\Ho^s$ are equal if $s>0$ and
$s-\frac 1 2 $ is not an integer, so e.g.,  $\Ho^1  = \Hts{1}$.

For a general  $s \in \bb R$, we define the norms 
\begin{align*}
  \| v \|_{\Hs s (\curl)}^2
  & = \| v\|_{\Hs s}^2 + \| \curl v \|_{\Hs s}^2,
  &
    \| q \|_{\Hs s (\div)}^2
  & = \| q \|_{\Hs s}^2 + \| \div q \|_{\Hs s}^2.
\end{align*}
and set
\begin{align}
  \label{eq:13}
  \Hs s (\curl) = \clos{ \cl D(\om) \otimes \bb V}{ \| \cdot \|_{\Hs s(\curl)}},
  \qquad
  \Hs s (\div) = \clos{ \cl D(\om) \otimes \bb V}{ \| \cdot \|_{\Hs s (\div)}}.
\end{align}
These spaces with $s=-1$ feature in a central diagram introduced shortly.

Next we introduce  key spaces of matrix-valued
fields, which are also needed for the diagram.
Note that when the standard differential operators $\div$ and
$\curl$ are applied to matrix-valued fields, we do so row-wise.  The
next definitions involve second-order differential operators on
matrix-valued functions $g: \om \to \bb S$, $\tau: \om \to \bb T$, and
$\sigma: \om \to \bb S$,  such as  the incompatibility operator 
\begin{equation}
  \label{eq:inc-def}
  \inc g := \curl \T \curl g.
\end{equation}
Let
\begin{subequations}
  \label{eq:Hcc-cd-dd-norms}
\begin{align}
  \| g \|_{\Hocc}^2
  & =
    \| g\|_{{\Ht}}^2 + \| \curl g \|_{{\Ht}}^2 + \| \inc g \|_{{\Ht}}^2 
  \\
  \| \tau \|_{\Hocd}^2
  & =  \|\tau \|_{{\Ht}}^2 +
    \| \div \tau \|_{{\Ht} }^2 + \| \sym \curl \T \tau \|_{{\Ht}}^2 +
    \| \curl \div \tau \|_{{\Ht}}^2
  \\
  \| \sigma \|_{\Hodd}^2
  & = \|  \sigma\|_{{\Ht}}^2 + \| \div \sigma \|_{{\Ht}}^2 +
    \| \div \div \sigma \|_{{\Ht}}^2,
\end{align}
\end{subequations}
and $\| \tau \|_{\HocdT} = \| \T \tau \|_{\Hocd}$. Let
\begin{align}
  \label{eq:H-cc-dd-cd-defn}
  \Hocc = \clos{\cl D(\om) \otimes \bb S}{ \| \cdot \|_{\Hocc}}, \quad
  \Hocd = \clos{\cl D(\om) \otimes \bb T}{ \| \cdot \|_{\Hocd}}, \quad
  \Hodd = \clos{\cl D(\om) \otimes \bb S}{ \| \cdot \|_{\Hodd}}.
\end{align}
The space $ \HocdT = \{ \tau^{\T}: \; \tau \in \Hocd\} $ will also be
needed. Clearly, in view of~\eqref{eq:Hminus-closure}, the spaces
$\Hocc, \Hocd$ and $\Hodd$ are subspaces of $\Ht \otimes \bb S$,
$\Ht \otimes \bb T$, and $\Ht \otimes \bb S$, respectively.

Certain subspaces of $\Hs s$, $\Hs{s}(\curl)$ and $\Hs{s}(\div)$, which we now define, occur often. Let
$\pol_1$ denote the space of linear polynomials.  Using the coordinate
vector $x$ in $\bb R^3$, define
\[
  \RT = \{a + b x: a \in \bb V, b \in \bb R \}, \qquad 
  \ND = \{a + d \times x: a, d \in \bb V\}.
\]
% For scalar fields $u, v$, let 
% \begin{equation}
%   \label{eq:L2-inner}
%   (u, v) = \int_\om uv
% \end{equation}
% and for vector or matrix fields
% $u$ and $v$, we continue to use the same notation  $(u, v)$
% to denote the Lebesgue integral (when it exists) over $\om$ of
% the dot product $u \cdot v$, or the Frobenius product $u: v$, respectively, of $u$ and $v$. 
Let
\begin{align*}
  \LR
  & = \{ u \in L_2: \; (u, 1) = 0 \},\quad
    \Hs{s}_{\bb R} = \{ u \in \Hs s: \; u(1) =0\},
  \\
  \Hs{s}_{\RT}(\curl)
  & = \{ v \in \Hs{s}(\curl): v(r) = 0 \text{ for all } r \in \RT\},
  \\
  \Hs{s}_{\ND}(\div)
  & = \{ q \in \Hs{s}(\div): q(r) = 0 \text{ for all } r \in \ND\},
  \\
  \Hs{s}_{\pol_1}
  & = \{ w \in \Hs{s}: \; w(p) =0 \text{ for all } p \in \pol_1\}.
\end{align*}
Here and throughout, the action of a distribution $w$ on a function
$p$ in $\cl D(\bb R^3)$ is denoted by $w(p)$. In the above subspaces
of distributions, note that only the value of $p|_{\om}$ on $\om$ is needed 
to evaluate the action $w(p)$ since $w$ is supported on $\bar\om$.
Note also that $\Hs{s}_{\RT}(\curl)$ and  $\Hs{s}_{\ND}(\div)$
are closed subspaces of $\Hs{s}(\curl)$ and $\Hs{s}(\div)$.

\subsection{A diagram connecting the Sobolev spaces}

Using the above-defined notation, we can now precisely
introduce one of the objects of study in this paper. It is
the following diagram
connecting the above-defined Sobolev spaces of scalar-, vector-, and
matrix-valued distributions on $\bb R^3$:
\begin{equation}
  \label{eq:1}    
  \begin{tikzcd}
    [
    row sep=huge, 
    % column sep=large,  
    ampersand replacement=\&
    ]
    \Ho(\grad)
    \arrow{r}{\grad}
    \arrow[d, "\grad"]
    \&[1em]  
    \Ho(\curl)
    \arrow[r, "\curl"]
    \arrow[d, "\deff"]
    \&
    \Ho(\div) 
    \arrow{r}{\div}
    \arrow[d, "\frac 1 2 \T\dev\grad"]
    \&
    \LR
    \arrow[d, "\frac 1 3 \grad"]
    \\  
    \Ho(\curl)
    \arrow{r}{\deff}
    \arrow[d, "\curl"]
    \&
    \Hocc
    \arrow{r}{\curl}
    \arrow[d, "\T\curl"]
    \&
    \Hocd
    \arrow{r}{\div}
    \arrow[d, "\sym\curl\T"]
    \& \HtRT(\curl)
    \arrow[d, "\frac 1 2 \curl"]      
    \\
    \Ho(\div)
    \arrow[r, "\frac 1 2 \dev\grad"]
    \arrow[d, "\div"]
    \& 
    \HocdT
    \arrow{r}{\sym\curl}
    \arrow[d, "\div\T"]
    \&
    \Hodd
    \arrow{r}{\div}
    \arrow[d, "\div"]
    \&
    \HtND(\div)
    \arrow[d, "\div"]
    \\
    \LR
    \arrow{r}{\frac 1 3 \grad}
    \&
    \HtRT(\curl)
    \arrow{r}{\frac 1 2 \curl}
    \&
    \HtND(\div)
    \arrow{r}{\div}
    \&
    \HtPl
  \end{tikzcd}  
\end{equation}
Here $\dfm u = \sym \grad u$ for vector fields $u$ denotes the deformation operator, where $\grad u$ is
the matrix field whose $(i,j)$th component is $\d u_i/\d x_j$.
Note that information in the diagram~\eqref{eq:1} is repeated
across the diagonal, i.e., the diagram is symmetric about the
diagonal.
The properties collected in the next section show that each of the
indicated operators is linear and continuous in the  norms of
the indicated domain and codomain, and that each component cell in the
diagram commutes. A different  but similar diagram starting with analogous {\em spaces without boundary conditions} $H(\grad), H(\curl),$ and $H(\div)$,  is found later in Section~\ref{sec:duality}.

In the commutative diagram~\eqref{eq:1}, the ``objects'' (or
``vertices'') are the spaces. The ``morphisms'' (or ``arrows'') are
the indicated first-order differential operators. Compositions of
morphisms are referred to as ``paths''. Clearly, paths in~\eqref{eq:1}
always go right or down from an object. The following definition of a 
``2-complex'' is motivated by~\cite{olver1982differential}.

\begin{definition}
  \label{def:2complex}
  A path is a {\bf{complex}} if the composition of two successive
  morphisms in it vanish. We say that a path is a {\bf{2-complex}} if
  the composition of three successive morphisms in it vanish.
\end{definition}

We show (in the next section, in Theorem~\ref{thm:2-complex}) that all
paths in the diagram~\eqref{eq:1} are 2-complexes.  The analogous
diagram for spaces without boundary conditions also shares the same
property, as we shall see in Section~\ref{sec:duality}.

Before concluding this introduction, a few remarks on comparison with the BGG approach are in order. The BGG construction of~\cite{ArnolHu21,vcap2023bgg} produces
analogues of~\eqref{elasticity},~\eqref{hess-complex}
and~\eqref{divdiv-complex} with Sobolev spaces
$H^{q}\otimes \mathbb{W}$ or
$H(D, \mathbb{W}):=\{\sigma\in L_{2}\otimes \mathbb{W}: D\sigma \in
L_{2}\otimes \tilde{\mathbb{W}}\},$ for appropriate
$\bb W, \tilde{\bb{W}} \in \{ \bb S, \bb T\}$ and operators $D$ from
the above complexes.  The hessian, elasticity, and div-div complexes
were also studied individually in other
works~\cite{PaulyZulehner:23,geymonat2005some,PaulyZuleh20}.  It
should not be surprising that some individual results in this paper
may be alternately derived using the prior approaches, e.g., the
commutativity identities \eqref{eq:earlier-identities} are extensively
used in BGG works, and the regular decomposition for two of the
``slightly more regular spaces'' 
in Section~\ref{sec:slightly-more-regular}, 
$\toHcc$ and $\toHdd$, can be approached using the
technique of~\cite[Theorem 3]{ArnolHu21} with minor changes.
However, such individual results do not fully address the objectives of this
paper. For instance, the spaces defined in~\eqref{eq:Hcc-cd-dd-norms}
do not emerge from~\cite{ArnolHu21,vcap2023bgg} as canonical spaces
with a unified definition; rather, they exhibit a cohesive pattern
only through the perspective of the 2-complexes
in~\eqref{eq:1}. Consequently, the analytical results for these spaces,
such as regular decompositions, differ significantly from those
in~\cite{ArnolHu21,vcap2023bgg}. Moreover, the 2-complex
in~\eqref{eq:1} unifies several key spaces, including the Hessian,
elasticity, and $\div\div$ complexes, potentially inspiring novel
constructions across diverse applications. This unification can be 
reminiscent of the BGG diagram~\cite{ArnolHu21,vcap2023bgg}. However,
a critical distinction is that BGG diagrams involve full matrix spaces
requiring subsequent symmetry reduction, whereas~\eqref{eq:1} directly
incorporates spaces of tensors with the symmetrizations.

\subsection{Outline}

The  next section (Section~\ref{sec:cont-commut}) begins by  gathering a
number of identities from which the commutativity properties in the diagram~\eqref{eq:1} become
evident. We prove the 2-complex property of~\eqref{eq:1}, show how the
elasticity complex, the hessian complex and the div-div complex
emerges from the diagram. In Section~\ref{sec:regul-decomp}, we prove
that the newly introduced $H^{-1}$~based Sobolev spaces of weak regularity admit
decompositions with smoother component functions that vary
continuously with the decomposed function
(Theorems~\ref{thm:reg-dec-Hcc}, \ref{thm:reg-dec-Hdd}, and
\ref{thm:reg-dec-Hcd}).  We construct right inverses (in
Theorem~\ref{thm:rt-inv-cts}) of the operators in~\eqref{eq:1} as well
as of second-order differential operators that emerge from the
diagram, from which it follows that the ranges of the differential
operators considered are closed. This can be used to prove exactness
of derived complexes.  Slightly smoother versions of the matrix-valued
Sobolev spaces are then considered in
Section~\ref{sec:slightly-more-regular} and shorter regular
decompositions for them are proved. Finally, in
Section~\ref{sec:duality}, we mention extensions to the case of
analogous spaces without boundary conditions. The main result of that
section is Theorem~\ref{thm:duality} which shows how the diagrams of spaces with
and without boundary conditions are in correspondence through duality.

\section{Continuity, commutativity, and 2-complex properties}
\label{sec:cont-commut}

In this section, we show that the diagram~\eqref{eq:1} is a commuting
diagram and has the 2-complex property.

In addition to $\sym, \dev, \tr$ and $\skw \tau = \tau - \sym \tau$,
we use the algebraic operation $S: \bb M \to \bb M$ defined by
$
  S \tau = \tau^{\T} - \tr(\tau) \id, 
$  
whose inverse can be easily computed to be
\[
  S^{-1} \tau = \tau^{\T}  - \frac 1 2 \tr (\tau) \id.
\]
We often use the summation convention and the alternating symbol
$\veps^{ijk}\equiv \veps_{ijk}$ whose value equals $+1, -1,$ or 0
according to whether $ijk$ is a even, odd or not a permutation of 1,
2, 3.  Using Cartesian unit vectors $e_i\equiv e^i$ and the summation
convention, we write a vector $v$ as $v = v_i e^i$.  Using $\veps$,
one can express an isomorphism between skew-symmetric matrices in
$\bb K = \skw \bb M$ and their axial vectors in $\bb V$, given by
$\mskw: \bb V \to \bb K$,
$\mskw(v^i e_i) = -\veps^{ijk} v_k e_i \otimes e_j$.  Let
$\vskw : \bb M \to \bb V$ be defined by
$\vskw = \mskw^{-1} \circ \skw.$ For distributional  fields $w$,
vector fields $v$ and matrix fields $\tau$ on three-dimensional
domains, it is easy to see that the following identities hold:
\begin{subequations}
  \label{eq:earlier-identities}
  \begin{gather}
    \label{eq:div-mskw}
  \div \mskw v = -\curl v,
  \\
  \label{eq:mskw-grad}
  \mskw \grad w  = -\curl (w \id),
  \\
  \label{eq:mskw-curl}
  \mskw \curl v  = 2 \skw \grad v,
  \\
  \label{eq:skw-curl}
  2 \skw \curl \tau = \mskw \div S \tau,
  \\
  \label{eq:S-grad}
  S \grad v = -\curl \mskw v,
  \\
  \label{eq:tr-curl}
  \tr \curl \tau = -2  \div \vskw \tau,
  % \\
  % \label{eq:T-curl-sym-grad}
  % \T \curl \sym \grad v = \frac 1 2 \grad \curl v.
\end{gather}
\end{subequations}

% Let
% \begin{align*}
%   H^{-1}(\curl)
%   & = \{ v \in H^{-1} \otimes \bb V: \;    \curl v \in H^{-1} \otimes \bb V\}
%   \\
%   H^{-1}(\div)
%   & = \{ q \in H^{-1} \otimes \bb V: \; \div q \in H^{-1}\}.
% \end{align*}

% \begin{lemma}
%   \label{lem:dual-space-char}
%   The equalities 
%   \[
%     \Ho(\div)^* = H^{-1}(\curl),
%     \qquad
%     \Ho(\curl)^* = H^{-1}(\div) 
%   \]
%   hold algebraically and topologically.
% \end{lemma}
% \begin{proof}
%   This can be proved as in~\cite[Theorem~2.2]{GopalLederSchob20a}.
% \end{proof}

We start with two simple lemmas. Lemma~\ref{lem:commute-identities}
contains identities involving second-order partial differential
operators and Lemma~\ref{lem:dense-mean0} gives
density of the following smooth spaces with moment conditions:
\begin{equation}
  \label{eq:11}  
\begin{aligned}
  \cl D_{\bb R}
  & = \{ \varphi \in \cl D(\om): (\varphi, 1) = 0\},
  \\
  \cl D_{\RT}
  & = \{ \varphi \in \cl D(\om) \otimes \bb V  : (\varphi, r) = 0
    \text{ for all } r \in\RT \},
  \\
  \cl D_{\ND}
  & = \{ \varphi \in \cl D(\om) \otimes \bb V  : (\varphi, r) = 0
    \text{ for all } r \in\ND \},
  \\
  \cl D_{\pol_1}
  & = \{ \varphi \in \cl D(\om)  : (\varphi, p) = 0
    \text{ for all } p \in\pol_1 \}.
\end{aligned}
\end{equation}
For any two norms $\|\cdot \|_1$ and $\| \cdot \|_2$,
we write
\[
  \| a\|_1 \lesssim \| b \|_2
\]
to indicate that there is some constant $C>0$ independent of $a$ and
$b$ such that the inequality  $\| a\|_1 \le C \| b \|_2$ holds.

\begin{lemma}\label{lem:commute-identities}
  The identities
  \begin{gather}
    \label{eq:2}
    \div \T \curl \tau = \curl \div \T \tau, 
    \\
    \label{eq:3}
    \curl \T \grad u = \T \grad \curl u = \T \dev \grad \curl u,
    \\
    \label{eq:4}
    \div \sym \curl \T \tau = \frac 1 2 \curl \div  \tau, 
    \\
    \label{eq:5}
    \curl \dfm u = \frac 1 2 \T \grad \curl u = \frac 1 2 \T \dev \grad \curl u,
    \\
    \label{eq:6}
    \frac 1 2 \div \T \dev \grad u = \frac 1 3 \grad \div u
  \end{gather}
  hold for any vector-valued distribution $u$ and matrix-valued
  distribution $\tau$.
\end{lemma}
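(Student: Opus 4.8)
The plan is to prove each of the five identities~\eqref{eq:2}--\eqref{eq:6} by reducing everything to the basic algebraic identities~\eqref{eq:earlier-identities} and the elementary facts that partial derivatives commute (so $\curl\grad = 0$, $\div\curl = 0$ componentwise/row-wise) and that transpose commutes with componentwise differentiation in the sense that $\T$ of a row-wise operator equals the corresponding column-wise operator. Since all the objects are distributional, it suffices to verify each identity on smooth fields and then invoke density; in fact each identity is purely a statement about constant-coefficient differential operators, so one may even verify them on polynomials or by a direct index computation with the summation convention and the symbol $\veps^{ijk}$. I would organize the proof as five short paragraphs, one per identity, in the order~\eqref{eq:2}, \eqref{eq:4}, \eqref{eq:6}, \eqref{eq:3}, \eqref{eq:5}, since the last two are essentially consequences of the first three together with $\curl\grad=0$.

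For~\eqref{eq:2}, I would write $S\tau = \T\tau - \tr(\tau)\id$, so that $\T\tau = S\tau + \tr(\tau)\id$, and compute $\div\T\curl\tau$. Using $\div$ applied row-wise and the fact that $\div$ and $\curl$ act on different (free) indices, together with~\eqref{eq:mskw-grad} and~\eqref{eq:skw-curl}, one reduces both sides to the same expression; alternatively a direct index chase using $\veps^{ijk}$ identifies $\div\T\curl\tau$ and $\curl\div\T\tau$ as $\partial_i\partial_m \T\tau$-type contractions that are manifestly symmetric under the relevant index exchange. For~\eqref{eq:4}, I would start from $2\skw\curl\T\tau = \mskw\div S\,\T\tau$ from~\eqref{eq:skw-curl} applied to $\T\tau$, note $S\,\T\tau = \tau - \tr(\tau)\id$, take $\div$ of both sides, use~\eqref{eq:div-mskw} to turn $\div\mskw$ into $-\curl$, and rearrange; combined with $\div\curl(\tr(\tau)\id) = \curl\grad\tr(\tau) = 0$ (via~\eqref{eq:mskw-grad} or directly) and~\eqref{eq:2}, this yields $\div\sym\curl\T\tau = \tfrac12\curl\div\tau$ after collecting the symmetric and skew parts via $\sym = \id - \skw$. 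For~\eqref{eq:6}, since $\dev\grad u = \grad u - \tfrac13(\div u)\id$, we get $\T\dev\grad u = \T\grad u - \tfrac13(\div u)\id$, and then $\div\T\dev\grad u = \div\T\grad u - \tfrac13\grad\div u = \grad\div u - \tfrac13\grad\div u = \tfrac23\grad\div u$, using $\div\T\grad u = \grad\div u$ (a two-line index computation, $\partial_j\partial_i u_j = \partial_i\partial_j u_j$); multiplying by $\tfrac12$ gives~\eqref{eq:6}.

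For~\eqref{eq:3}, the chain $\curl\T\grad u = \T\grad\curl u$ follows from $S\grad v = -\curl\mskw v$~\eqref{eq:S-grad} and $S\curl v$-type manipulations, or more directly: $\curl$ acting row-wise on $\T\grad u$ (whose $(i,j)$ entry is $\partial_i u_j$) differentiates the first index, while $\T\grad\curl u$ has $(i,j)$ entry $\partial_i(\curl u)_j = \partial_i \veps_{jkl}\partial_k u_l$, and matching these is an $\veps$-identity; the equality with $\T\dev\grad\curl u$ then follows because $\tr\grad\curl u = \div\curl u = 0$, so $\dev\grad\curl u = \grad\curl u$. Finally~\eqref{eq:5} is immediate from~\eqref{eq:3}: $\curl\dfm u = \curl\sym\grad u = \tfrac12(\curl\grad u + \curl\T\grad u) = \tfrac12\curl\T\grad u$ since $\curl\grad u = 0$ row-wise, and then apply~\eqref{eq:3}. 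The main obstacle I anticipate is purely bookkeeping: getting the transposes, the row-wise versus column-wise conventions for $\div$ and $\curl$ on matrix fields, and the factors $\tfrac12,\tfrac13$ consistent throughout; there is no conceptual difficulty, so I would be careful to fix conventions once at the start (e.g., $(\curl\tau)_{ij} = \veps_{jkl}\partial_k\tau_{il}$, $(\div\tau)_i = \partial_j\tau_{ij}$) and then let the identities~\eqref{eq:earlier-identities} do the work.
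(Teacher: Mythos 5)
Your proposal is correct and follows essentially the same route as the paper: verify each identity by elementary index computations with $\veps^{ijk}$ and the algebraic identities~\eqref{eq:earlier-identities}, obtaining \eqref{eq:4} and \eqref{eq:5} from \eqref{eq:2} and \eqref{eq:3} via the cancellations $\div\circ\curl=0$ and $\curl\circ\grad=0$. The only (cosmetic) difference is in \eqref{eq:4}, where you pass through \eqref{eq:skw-curl} and \eqref{eq:div-mskw} with $\sym=\id-\skw$, while the paper simply writes $\sym\curl\T\tau=\tfrac12(\curl\T\tau+\T\curl\T\tau)$, kills the first term with $\div\circ\curl=0$, and applies \eqref{eq:2} to $\T\tau$.
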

\begin{proof}
  To prove~\eqref{eq:2}, % we use the alternating symbol
  % $\veps^{ijk}\equiv \veps_{ijk}$ whose value equals $+1, -1,$ or 0
  % according to whether $ijk$ is a even, odd or not a permutation of 1,
  % 2, 3.  
  we express row-wise curl using $\veps^{ijk}$, the
  summation convention, and standard Cartesian unit vectors~$e_i$, 
  \begin{align*}
    \div \T \curl \tau
    & = e_i\d_j [\T \curl \tau]^{ij}
      = e_i\d_j [\curl \tau]^{ji} = e_i\d_j \veps^{ikl} \d_k \tau_{jl}
      = e_i\veps^{ikl} \d_k  \d_j[\T \tau]_{lj}
    \\
    & = e_i\veps^{ikl} \d_k  [\div \T \tau]_{l} = \curl \div \T \tau.
  \end{align*}
  The first equality in~\eqref{eq:3} is proved similarly. For the
  second equality in~\eqref{eq:3}, it suffices to note that the
  $(i, j)$th component of the matrix field $\grad \curl u$ equals
  $\d_i \veps^{jkl} \d_k u_l$, so its trace, obtained with $i=j$ in
  this expression, vanishes.

  Identity~\eqref{eq:4} follows using $\div \circ \curl =0$
  and~\eqref{eq:2}: 
  \begin{align*}
    \div \sym \curl \T \tau
    & =  \div \frac 1 2 \big( \curl \T \tau + \T \curl \T \tau \big)
    \\
    & = \frac 1 2 \div  \T \curl \T \tau 
    = \frac 1 2 \curl \div  \T \T \tau.      
  \end{align*}
  Equation~\eqref{eq:5} follows from~\eqref{eq:3} and
  $\curl \circ \grad =0$ in an analogous fashion.
  The proof of~\eqref{eq:6} using analogous techniques is also elementary.
\end{proof}

\begin{lemma}
  \label{lem:dense-mean0}
  The spaces in~\eqref{eq:11}, namely
  $\cl D_{\bb R}$,
  $ \cl D_{\RT}$,
  $\cl D_{\ND}$, and 
  $\cl D_{\pol_1}$, are  dense in 
  $\Hs{s}_{\bb R}$
  $\Hs{s}_{\RT}(\curl)$,
  $\Hs{s}_{\ND}(\div)$, and 
  $\Hs{s}_{\pol_1}$, respectively, for any $s \in \bb R$.
  % $\LR$,
  % $\HtRT(\curl)$,
  % $\HtND(\div)$, and 
  % $  \HtPl$, respectively.
\end{lemma}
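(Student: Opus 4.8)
We need to show density of spaces like $\cl D_{\bb R}$ in $\Hs{s}_{\bb R}$, etc. Let me think about how to prove this.

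The setup: $\Hs{s} = \clos{\cl D(\om)}{\|\cdot\|_{H^s(\R^3)}}$, so $\cl D(\om)$ is dense in $\Hs{s}$ by definition. The subspace $\Hs{s}_{\bb R} = \{u \in \Hs{s}: u(1) = 0\}$ is obtained by imposing a single linear functional constraint. Similarly $\Hs{s}_{\RT}(\curl)$ imposes finitely many linear constraints (one for each $r$ in a basis of $\RT$, a 4-dimensional space), etc.

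The key idea: $\cl D_{\bb R}$ is the intersection of the dense set $\cl D(\om)$ with the closed subspace $\Hs{s}_{\bb R}$. To show this intersection is dense in $\Hs{s}_{\bb R}$, a general principle: if $D$ is dense in a Banach space $X$, $L: X \to \R^n$ is a continuous surjective linear map, and $L(D)$ is "still surjective enough" — actually the cleanest approach is:

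Given $u \in \Hs{s}_{\bb R}$, approximate by $\varphi_k \in \cl D(\om)$ with $\varphi_k \to u$ in $\Hs{s}$. Then $L(\varphi_k) \to L(u) = 0$. We want to correct $\varphi_k$ to $\tilde\varphi_k \in \cl D_{\bb R}$ (i.e., $L(\tilde\varphi_k) = 0$) with $\tilde\varphi_k \to u$ still. This correction is possible if we can find a "correction subspace": fixed functions $\psi_1, \dots, \psi_n \in \cl D(\om)$ (for the vector case, $\cl D(\om)\otimes\bb V$) such that the matrix $[L_i(\psi_j)]$ is invertible. Then set $\tilde\varphi_k = \varphi_k - \sum_j c_j^{(k)} \psi_j$ with coefficients $c^{(k)}$ solving $\sum_j L_i(\psi_j) c_j^{(k)} = L_i(\varphi_k)$; since $L(\varphi_k) \to 0$, we get $c^{(k)} \to 0$, hence $\tilde\varphi_k \to u$, and $\tilde\varphi_k \in \cl D_{\bb R}$ by construction.

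So the whole proof reduces to: (1) the functionals defining each subspace are continuous on the relevant $\Hs{s}$-type space — here $u(p)$ for $p$ a polynomial makes sense because elements of $\Hs{s}$ are (tempered, compactly supported) distributions on $\R^3$, so testing against a smooth function is well-defined and continuous in the $H^s(\R^3)$ norm (since such $p$, restricted near $\bar\om$, can be replaced by a $\cl D(\R^3)$ function); (2) existence of the finite-dimensional "correction space" of smooth compactly supported functions on which the functionals act invertibly — this is just linear independence, e.g. for $\cl D_{\bb R}$ pick any $\psi \in \cl D(\om)$ with $(\psi, 1) \neq 0$; for $\cl D_{\RT}$, the four functionals $\varphi \mapsto (\varphi, r_i)$ on $\cl D(\om)\otimes\bb V$ are linearly independent (polynomials are determined by their integrals against enough test functions / a moment argument), so a dual basis of smooth functions exists; similarly for $\ND$ and $\pol_1$. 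For the $H(\curl)$ and $H(\div)$ cases we additionally note the correction functions $\psi_j \in \cl D(\om)\otimes\bb V$ automatically lie in those graph spaces and the subtraction is continuous in the graph norm too, since $\psi_j$ is fixed and $c^{(k)} \to 0$.

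I would structure the write-up as one general lemma-internal argument (the "finite-codimension correction" principle) applied four times, with the only case-specific content being the verification that the defining functionals are continuous and linearly independent over smooth compactly supported functions. The main obstacle — really the only subtle point — is confirming that evaluation of an element of $\Hs{s}$ against a polynomial $p$ (or against $r \in \RT$, $\ND$) is a bounded linear functional in the $H^s(\R^3)$ norm. This follows because every $u \in \Hs{s}$ is supported in $\bar\om$ (compact), so $u(p) = u(\chi p)$ for any fixed cutoff $\chi \in \cl D(\R^3)$ equal to $1$ near $\bar\om$, and $v \mapsto v(\chi p)$ extends continuously from $\cl D(\om)$ to $H^s(\R^3)$ by the duality pairing $H^s(\R^3) \times H^{-s}(\R^3)$ with $\chi p \in \cl D(\R^3) \subset H^{-s}(\R^3)$; this is exactly the kind of reasoning already invoked around \eqref{eq:Hts} and in the remark following the definition of the moment subspaces. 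Everything else is routine linear algebra.
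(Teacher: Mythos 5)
Your proposal is correct and follows essentially the same route as the paper: approximate $u$ by test functions, then subtract a finite linear combination from a fixed smooth correction space to restore the moment conditions, noting that the moment functionals are continuous on $\Hs{s}$ (the paper bounds $(\varphi_n-v)(\rho_i)$ by $\|\varphi_n-v\|_{\Hs s}\|\rho_i\|_{H^{-s}}$, implicitly using the compact support/cutoff point you make explicit) and that the correction does not disturb the graph norm. The only difference is cosmetic: the paper builds your ``dual basis'' concretely as $b\,\rho_i$ with $b\in\cl D(\om)$, $b\ge 0$, and the basis $\rho_i$ normalized so that $(b\rho_i,\rho_j)=\delta_{ij}$, whereas you invoke its existence abstractly via linear independence.
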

\begin{proof}
  The proofs of all the four stated density results are similar. We
  only detail the second.  Fix a nontrivial scalar function
  $b(x) \in \cl D(\om)$ satisfying $b(x) \ge 0$. Let $\rho_i$ be a
  basis of the four-dimensional space $\RT,$ normalized so that
  \begin{equation}
    \label{eq:12}
    (b \rho_i, \rho_j) = \delta_{ij}
  \end{equation}
  where $\delta_{ij}$ denotes the Kronecker delta symbol.
  Let $v \in \Hs{s}_{\RT}(\curl)$. 
  In view of   \eqref{eq:13}, we can find a
  sequence $\varphi_n \in \cl D(\om) \otimes \bb V$ such that
  \begin{equation}
    \label{eq:14}
    \lim_{n \to \infty} \| \varphi_n - v \|_{\Hs{s}(\curl)} =0.
  \end{equation}
  Let 
  \begin{equation}
    \label{eq:15}
    \psi_n (x) = \varphi_n(x) - \sum_{i=1}^4 (\varphi_n, \rho_i) \; b(x) \,\rho_i(x).
  \end{equation}
  Then $\psi_n$ is in $\cl D(\om)\otimes \bb V$ and
  $(\psi_n, \rho_j) =0$ due to~\eqref{eq:12}, i.e.,
  $\psi_n \in \cl D_{\RT}$.
  Moreover, $\psi_n$ converges to $v$ in $\Hs{s}(\curl)$ as we now show:
  indeed, since $v(\rho_i) =0$,
  \[
    (\varphi_n, \rho_i)
    =
    (\varphi_n -v)(\rho_i) \le \| \varphi_n - v\|_{\Hs{s}} \| \rho_i\|_{H^{-s}} 
  \]
  by \eqref{eq:Hs-dual-char}. Hence
  \eqref{eq:14} implies that
  \begin{equation}
    \label{eq:phin-rhoi}
    \lim_{n \to \infty} (\varphi_n, \rho_i) = 0
  \end{equation}
  for each~$\rho_i$. Now it is evident from~\eqref{eq:15} that $\psi_n$
  converges to $v$ in $\Hs{s}$~norm since $\varphi_n$ does.
  Moreover,
  \begin{align*}
    \curl (\psi_n - v)
    & = \curl(\varphi_n - v) -
       \sum_{i=1}^4 (\varphi_n, \rho_i) \; \curl( b\,\rho_i),
  \end{align*}
  where, on the right hand side, the first term converges to zero in
  $\Hts{s}$ by~\eqref{eq:14}, and the second term converges to zero
  by~\eqref{eq:phin-rhoi}. Thus $\psi_n$ and $\curl \psi_n$ converges
  in $\Hts s$ to $v$ and $\curl v$, respectively.  Hence $\cl D_{\RT}$
  is dense in $\Hs{s}_{\RT}(\curl)$.
\end{proof}

% Moreover, since $\curl r = 0$ for any $r \in \RT$, 
% \[
%   \curl (\psi_n - v)  = \curl(\varphi_n - v),
% \]
% so~\eqref{eq:14} implies that $\curl \psi_n \to \curl v$ in
% $\Hs{s}$.  \kh{but $\curl \psi_n\neq \curl \varphi_n$ since derivatives of $b$ are involved?} 

\begin{theorem}
  \label{thm:diagram-commute-cty}
  The diagram \eqref{eq:1} commutes and every differential operator in
  it maps continuously (with respect to the norms of the indicated
  domains and codomains).
\end{theorem}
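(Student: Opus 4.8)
The plan is to verify the two claims -- continuity of every arrow and commutativity of every cell -- separately, reducing everything to the algebraic identities in Lemma~\ref{lem:commute-identities} and the continuity fact \eqref{eq:di-cty}.

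\emph{Continuity.} Each arrow in \eqref{eq:1} is a first-order operator (possibly preceded or followed by the bounded algebraic maps $\T$, $\sym$, $\dev$, $\deff$, $\mskw$, etc.), and the norms on the matrix spaces in \eqref{eq:Hcc-cd-dd-norms} are precisely engineered as graph norms so that the relevant composite operators appearing as arrows land in the target. The key observation is that, by the very definitions \eqref{eq:H-cc-dd-cd-defn} and \eqref{eq:13}, it suffices to check boundedness on the dense subsets $\cl D(\om)\otimes\bb W$; on these smooth fields one uses \eqref{eq:di-cty} to see that each $\d_i:\Ht\to\Ht$ (equivalently $\Hts 0(\bb R^3)\supset L_2$ mapping into $\Ht$) is bounded, and then one simply reads off, term by term, that every component of the target graph norm of the image equals (a linear combination of) a term already controlled by the source graph norm. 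For the arrows landing in $\HtRT(\curl)$, $\HtND(\div)$, and $\HtPl$, one must additionally note that the moment conditions are preserved: e.g.\ $\div$ applied to $\cl D(\om)\otimes\bb V$-fields produces fields annihilating $\pol_1$ after an integration by parts (since $\curl$ of a test function pairs to zero against $\RT$, etc.), so the image indeed lies in the indicated subspace; Lemma~\ref{lem:dense-mean0} guarantees these subspaces are the closures of the smooth moment-constrained fields, so the bounded extension is legitimate. I would organize this as a short enumeration of the (essentially three or four distinct) patterns of arrows and handle each pattern once.

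\emph{Commutativity.} Here the plan is to again restrict to smooth fields $\cl D(\om)\otimes\bb W$ (dense in every vertex), where all operators are classical and composition is unambiguous, and then invoke the identities. The diagonal cells, and the cells that are transposes of each other across the diagonal, need only be checked once by the stated symmetry of \eqref{eq:1}. The remaining cells fall into two groups: (i) cells whose commutation is one of the ``classical'' first-order facts $\curl\grad=0$, $\div\curl=0$, $\div\mskw=-\curl$ \eqref{eq:div-mskw}, $\mskw\grad w=-\curl(w\id)$ \eqref{eq:mskw-grad}, $\mskw\curl=2\skw\grad$ \eqref{eq:mskw-curl}, $S\grad=-\curl\mskw$ \eqref{eq:S-grad}, $\tr\curl\tau=-2\div\vskw\tau$ \eqref{eq:tr-curl}, possibly together with bookkeeping of the scalar factors $\tfrac12,\tfrac13$ and the $\sym,\dev,\T$ operators (for instance $\deff=\sym\grad$, and the top-left cell is literally $\grad\grad$, which commutes trivially); and (ii) cells that mix two derivatives, namely exactly the five identities \eqref{eq:2}--\eqref{eq:6}, which have been proved already. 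So the proof is mostly a tabulation: I would go cell by cell along the first row and first column (the rest following by diagonal symmetry and by \eqref{eq:2}--\eqref{eq:6}), in each case writing the two composite paths and citing the identity that equates them, taking care that the normalization constants are consistent with the labels in \eqref{eq:1}.

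\emph{Main obstacle.} The genuinely delicate point is not any single identity but the bookkeeping: making sure the algebraic operators ($\sym$, $\dev$, $\T$, $S$, $\mskw$, $\vskw$) and the rational constants are threaded through consistently so that the diagram really closes with the stated labels, and -- on the continuity side -- confirming that images of smooth fields land in the \emph{constrained} subspaces $\HtRT(\curl)$, $\HtND(\div)$, $\HtPl$ rather than merely in $\Ht(\curl)$, $\Ht(\div)$, $\Ht$. The first is handled by a careful pass through the low-dimensional linear algebra of $\bb M=\bb R^{3\times3}$; the second by the integration-by-parts argument sketched above together with Lemma~\ref{lem:dense-mean0}. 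Once these two points are nailed down, the theorem follows by density.
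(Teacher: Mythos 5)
Your proposal follows essentially the same route as the paper's proof: verify everything on the dense smooth (moment-constrained) fields, bound each term of the target graph norm using \eqref{eq:di-cty} together with the identities of Lemma~\ref{lem:commute-identities} (several terms vanish, e.g.\ $\inc\circ\deff=0$), check membership in the constrained codomains $\HtRT(\curl)$, $\HtND(\div)$, $\HtPl$ by integration by parts, conclude by density via Lemma~\ref{lem:dense-mean0}, and reduce commutativity to \eqref{eq:2}--\eqref{eq:6} plus the diagonal symmetry. The plan is sound and matches the paper's case-by-case argument; in writing it up just cite the specific identity behind each ``read off'' term (the bounds do not follow from derivative counting alone) and note that $\d_i$ is bounded from $\Hts{s+1}$ to $\Hts{s}$ (i.e.\ $L_2\to\Ht$), not from $\Ht$ to $\Ht$.
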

\begin{proof}
  {\em Commutativity.}
  The commutativity of the diagram cells in (row, column)-positions
  $(2, 3), (1, 2),$ and $(1, 3)$ follows respectively from
  identities~\eqref{eq:4},~\eqref{eq:5}, and~\eqref{eq:6} of
  Lemma~\ref{lem:commute-identities}. The commutativity at positions
  across the diagonal also follow from these. At the remaining
  positions, it is obvious.

  Next, let us prove the stated continuity properties.
  The continuity of the operators in the first row
  and column is standard. For the remaining operators, we use
  \eqref{eq:di-cty} and the following steps.
  We begin the maps in the second row
  of~\eqref{eq:1}.

  {\em Continuity of $\dfm : \Ho(\curl) \to \Hocc$.}  For any
  $u \in \cl D (\om) \otimes \bb V \subset \Ho(\curl)$, note that
  $g = \dfm u \in \cl D(\om ) \otimes \bb S \subset \Hocc$ satisfies
  \[
    \curl g
     = \frac 1 2 \T \dev \grad \curl u = \frac 1 2 \T  \grad \curl u
   \]
   due to~\eqref{eq:5} and~\eqref{eq:3}. This implies that
   $\inc g := \curl \T \curl g =0.$
   Hence, using \eqref{eq:di-cty}, 
   \begin{align*}
     \| \dfm u \|_{\Hocc}^2
     & = 
       \| g\|_{\Ht}^2 + \| \curl g \|_{\Ht}^2 + \| \inc g \|_{\Ht}^2
     \\
     & = \frac 1 2 \|\grad u \|_{\Ht}^2 +  \frac 1 4 \| \grad \curl u \|_{\Ht}^2
      \lesssim  \| u \|_{H(\curl)}^2,
   \end{align*}
   which proves the continuity of the deformation operator by density.

   {\em Continuity of  $\curl : \Hocc \to \Hocd$.} It suffices to
   observe that $\tau = \curl g$, for any
   $g \in \cl D(\om) \otimes \bb S \subset \Hocc$, satisfies
   $\tau = \dev \tau \in \cl D(\om) \otimes \bb T \subset \Hocd$ and
   \begin{align*}
     \div \tau & = 0,
     & \curl\div \tau
     & = 0,
     & \sym\curl \T \tau & = \inc g.
   \end{align*}
   This shows that $\| \tau \|_{\Hocd} \lesssim \|g \|_{\Hocc}$ and the
   continuity follows by density.

   {\em Continuity of $\div : \Hocd \to \HtRT(\curl)$.} Let
   $\tau \in \cl \D(\om) \otimes \bb T \subset \Hocd$.  Then, for any
   $r = a + b x \in \RT$, $a\in \bb V, b \in \bb R$, we have
   \begin{align*}
     (\div \tau, r)
     & = -(\tau,   \grad r) = -(\tau, b \id) = 0 
   \end{align*}
   since $\tau : \id$ vanishes for $\tau(x) \in \bb T$.  Next, by the
   definition of $\Hocd$~norm,
   \begin{align*}
     \| \div \tau \|_{{\Ht}}^2 + \| \curl \div \tau \|_{{\Ht}}^2
     \le \| \tau \|_{\Hocd}^2.
   \end{align*}
   Since the left hand side equals $\| \div \tau \|_{\Ht(\curl)}^2$,
   the continuity follows by density.

   {\em Continuity of $\dev \grad : \Ho(\div) \to \HocdT$.}
   Let
   $\tau = \dev \grad q$ for some
   $q \in \cl D(\om) \otimes \bb V \subset \Ho(\div).$
   Apply~\eqref{eq:6} to get 
   \begin{align*}
     \div \T \tau & = \frac 2 3 \grad \div q,
   \end{align*}
   which implies $\curl\div \T \tau = 0$. Also, since
   \[
     \sym\curl \tau = \sym\curl \frac 1 3 (\div q) \id,
   \]
   all terms in the norm $\| \tau \|_{\HocdT}$ can be bounded by the
   ${\Ht}$-norms of the first order of derivatives of $\div q$ and
   $q$, so using \eqref{eq:di-cty},
   $\| \tau \|_{\HocdT} \lesssim \| q\|_{H(\div)}$ and the continuity
   follows by density. 
   
   {\em Continuity of $\sym\curl \T : \Hocd \to \Hodd$.} This is a
   bounded operator since $\sigma = \sym \curl \T \tau$ for any
   $\tau \in \cl D(\om) \otimes \bb T \subset \Hocd$ satisfies, due
   to~\eqref{eq:4},
   \begin{align*}
     \div \sigma &  = \frac 1 2 \curl \div \tau,
   \end{align*}
   which in turn implies $\div \div \sigma = 0.$ Thus
   $\| \sigma\|_{\Hodd } \lesssim \| \tau \|_{\Hocd}$.

   {\em Continuity of  $\div: \Hodd \to \HtND(\div)$.} First note that
   for
   any $\sigma \in \cl D(\om) \otimes \bb S \subset \Hodd$
   and $r = a + b \times x \in \ND$, $a, b \in \bb V$, 
   \begin{align*}
     (\div \sigma, r)
     & = -(\sigma, \grad r)
       = 0
   \end{align*}
   because
   $\sigma: \grad r = \sigma_{ij} \d_j [b \times x]_i = \veps^{ijp}
   \sigma_{ij} b_p$ vanishes due to the symmetry
   $\sigma_{ij} = \sigma_{ji}$. Thus
   $\div \sigma$ is in $\HtND(\div)$.
   Moreover, by the definition of the
   $\Hodd$-norm
   \[
     \| \div \sigma \|_{\Ht}^2 + \| \div \div \sigma \|_{\Ht}^2
     \le \| \sigma \|_{\Hodd}^2.
   \]
   The left hand side exactly equals
   $\| \div \sigma \|_{\Ht(\div)}^2$ so the continuity follows by density.

   {\em Continuity of $\grad : \LR \to \HtRT(\curl)$.}  Let
   $u \in \cl D(\om) \cap \LR = \cl D_{\bb R}$.  By
   Lemma~\ref{lem:dense-mean0} $\cl D_{\bb R}$ is dense in $\LR$.
   For any $r = a + b x \in \RT$, $a\in \bb V, b\in \bb R$, integrating
   by parts using the compact support of $u$, 
   \begin{align*}
     (\grad u, r)
     = -(u, \div r)
     = -3 (u, b) = 0 
   \end{align*}
   since $u$ has zero mean value on $\om$. Hence $\grad u$ is in
   $\HtRT(\curl)$, so by Lemma~\ref{lem:dense-mean0},
   $\grad \LR \subseteq  \HtRT(\curl)$.
   The needed boundedness estimate is immediate
   from \eqref{eq:di-cty}.

   {\em Continuity of $\curl: \HtRT(\curl) \to \HtND(\div)$.}
   By Lemma~\ref{lem:dense-mean0}, $\cl D_{\RT}$ is dense in
   $\HtRT(\curl)$. Let $v \in \cl D_{\RT}$. Then, given any 
   $r = a + b \times x \in \ND$ with some $a, b \in \bb V$,
   \[
     (\curl v, r) = (v, \curl r) = 2 (v, b) = 0
   \]
   since $v$ is orthogonal to $\RT$. Hence $\curl v$ is in
   $\HtND(\div)$. Combined with the obvious norm bound, the
   continuity follows from density.

   {\em Continuity of $\div:  \HtND(\div) \to \HtPl$.}
   Let $q \in \cl D_{\ND}$. Then, for any $p \in \pol_1$,
   \[
     (\div q, p) = -( q, \grad p) = 0 
   \]
   since $\grad p$ is in $\ND$ and $q$ is orthogonal to $\ND$.
   Therefore $\div q$ is in $\HtPl$.  In view of the obvious
   norm bound, the density of $\cl D_{\ND}$  in
   $\HtND(\div)$ (given by Lemma~\ref{lem:dense-mean0}) finishes
   the proof.
\end{proof}

\begin{theorem}
  \label{thm:2-complex}
  All paths in the diagram~\eqref{eq:1} are 2-complexes.
\end{theorem}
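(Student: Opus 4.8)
The plan is to leverage the commutativity of~\eqref{eq:1}, already established in Theorem~\ref{thm:diagram-commute-cty}, in order to reduce the assertion to a short finite check. By Definition~\ref{def:2complex}, a path is a 2-complex precisely when the composition of any three consecutive arrows in it vanishes, so it suffices to show that for every triple of consecutive arrows $A\colon X_0 \to X_1$, $B\colon X_1 \to X_2$, $C\colon X_2 \to X_3$ occurring in~\eqref{eq:1} one has $C B A = 0$. Because every cell of~\eqref{eq:1} commutes, the composite $CBA$ depends only on the endpoints $X_0$ and $X_3$, not on the particular rightward/downward route joining them; hence it is enough to exhibit, for each relevant pair of endpoints, one route along which \emph{two} consecutive arrows already compose to zero. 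Using the symmetry of~\eqref{eq:1} about its diagonal, we may moreover restrict to triples making at least as many rightward as downward steps, which leaves two shapes to treat: (i) three rightward arrows, and (ii) two rightward and one downward arrow.

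For shape~(i) the three arrows lie in a single row. I would inspect each row in turn. Along rows~$1$ and~$4$ the arrows are $\grad$, $\curl$, $\div$ up to positive scalar factors, so $\curl\grad = 0$ and $\div\curl = 0$ there. In row~$2$ one already has $\div\circ\curl = 0$ as a map $\Hocc \to \HtRT(\curl)$: $\curl g$ is row-wise divergence-free for smooth $g$, and this persists on all of $\Hocc$ by density, as was observed in the proof of Theorem~\ref{thm:diagram-commute-cty}. In row~$3$, for smooth $q$ identity~\eqref{eq:mskw-grad} gives $\curl\dev\grad q = -\tfrac13\,\curl\!\big((\div q)\id\big) = \tfrac13\,\mskw\grad\div q$, which is skew-symmetric, so $\sym\curl\circ\tfrac12\dev\grad = 0$ on $\Ho(\div)$ (again extending from smooth fields by density). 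Thus in every row some consecutive pair among $A,B,C$ already vanishes, and $CBA = 0$.

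For shape~(ii) the source $X_0$ lies in one of the first three rows and first two columns. Using commutativity I would re-route the triple as ``one step down, then two steps right,'' so that it lands in the next row; its two rightward arrows then form a consecutive pair covered by the analysis of shape~(i). Such a pair vanishes except when it equals $\curl\circ\dfm$ (row~$2$ starting from column~$1$) or $\div\circ\sym\curl$ (row~$3$ starting from column~$2$); these two exceptional sources are $X_0 = \Ho(\grad)$ and $X_0 = \Hocc$. For those two I would instead re-route as ``two steps right, then one step down,'' so that the two rightward arrows become $\curl\grad = 0$ and $\div\curl = 0$ respectively. Hence $CBA = 0$ in all remaining cases as well, completing the proof.

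The only substantive ingredient beyond Theorem~\ref{thm:diagram-commute-cty} and the classical identities $\curl\grad = 0$, $\div\curl = 0$ is the skew-symmetry of $\curl\!\big((\div q)\id\big)$ furnished by~\eqref{eq:mskw-grad}, which is what makes $\sym\curl\dev\grad$ vanish; the rest is bookkeeping. The point that most needs care --- and where an oversight could slip in --- is verifying that the reduction via commutativity and diagonal symmetry genuinely exhausts every triple of consecutive arrows, including ``mixed'' routes such as right--down--right. This holds precisely because commutativity identifies all routes sharing the same pair of endpoints, so checking one convenient route per endpoint pair suffices.
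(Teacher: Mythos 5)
Your argument is correct, and it is organized differently enough from the paper's to be worth contrasting. The paper also reduces via commutativity, but in a different way: it classifies the six ``mixed'' shapes of three consecutive arrows (right--down--right, down--right--right, right--right--down, and their reflections), uses commutativity to reduce these to two representative shapes, and then verifies those base shapes by direct (``tedious but elementary'') computation of the triple compositions using the identities \eqref{eq:earlier-identities} and Lemma~\ref{lem:commute-identities} — e.g.\ it computes $\curl\circ\dfo\circ\grad=0$ explicitly. You instead push commutativity all the way to path-independence of composites between fixed endpoints, which lets you choose, for each endpoint class, a route on which an \emph{adjacent pair} already vanishes; the only algebraic inputs are then $\curl\grad=0$, $\div\curl=0$ (row-wise, extended by density using the continuity from Theorem~\ref{thm:diagram-commute-cty}), and $\sym\curl\,\dev\grad=0$ via \eqref{eq:mskw-grad} — the same computation the paper performs later in Corollary~\ref{cor:3complexes}. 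What your organization buys is a shorter base check (essentially 1-complex facts instead of a list of genuine triple compositions) and, incidentally, explicit coverage of the pure-row and pure-column triples (three rights in a single row, e.g.\ $\div\circ\curl\circ\dfo$ on row~2), which the paper's displayed shapes do not show even though they occur in paths such as an entire row of~\eqref{eq:1}. Your enumeration of the two exceptional sources $\Ho(\grad)$ and $\Hocc$, and the alternative re-routing for them, is accurate. The one step you should make slightly more explicit is the appeal to diagonal symmetry: reflection across the diagonal replaces operators by their transposed counterparts (e.g.\ $\curl$ versus $\T\curl$, $\Hocd$ versus $\HocdT$), so the mirrored composite is the original one conjugated by pointwise transposition, which preserves vanishing; this is the same level of detail at which the paper itself invokes the symmetry, so it is an expository remark rather than a gap.
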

\begin{proof}
  Consider paths of the following form:
  \[
    \begin{tikzpicture}[>=stealth, nodes={outer sep=1pt}]
      \clip (0.5, 0.5) rectangle (3.5, 2.5);
      \draw[dotted] (0,0) grid (4, 4);

      \node (U) at (1, 3) {};
      \node (V) at (1, 1) {};
      \node (W) at (2, 3) {};
      \node (X) at (1, 2) {};
      \node (Y) at (2, 2) {};
      \node (Z) at (2, 1) {};
      \node (O) at (3, 1) {};
      \node (Q) at (3, 2) {};
      
      % \node (U) at (1, 3) {$U$};
      % \node (V) at (1, 1) {$V$};
      % \node (W) at (2, 3) {$W$};
      % \node (X) at (1, 2) {$X$};
      % \node (Y) at (2, 2) {$Y$};
      % \node (Z) at (2, 1) {$Z$};
      % \node (O) at (3, 1) {$O$};
      % \node (Q) at (3, 2) {$Q$}; 

      \fill (U) circle (2pt);
      % \fill (V) circle (2pt);
      \fill (W) circle (2pt);
      \fill (X) circle (2pt);
      \fill (Y) circle (2pt);
      \fill (Z) circle (2pt);
      \fill (O) circle (2pt);

      \draw[->] (X) -- (Y) node[midway, above] {$A$};
      \draw[->] (Y) -- (Z) node[midway, left] {$B$};
      \draw[->] (Z) -- (O) node[midway, above] {$C$};  
    \end{tikzpicture}
    \qquad\qquad
    \begin{tikzpicture}[>=stealth, nodes={outer sep=1pt}]
      \clip (0.5, 0.5) rectangle (3.5, 2.5);
      \draw[dotted] (0,0) grid (4, 4);
      
      \node (U) at (1, 3) {};
      \node (V) at (1, 1) {};
      \node (W) at (2, 3) {};
      \node (X) at (1, 2) {};
      \node (Y) at (2, 2) {};
      \node (Z) at (2, 1) {};
      \node (O) at (3, 1) {};

      \fill (U) circle (2pt);
      \fill (V) circle (2pt);
      \fill (W) circle (2pt);
      \fill (X) circle (2pt);
      % \fill (Y) circle (2pt);
      \fill (Z) circle (2pt);
      \fill (O) circle (2pt);

      \draw[->] (X) -- (V); 
      \draw[->] (V) -- (Z);
      \draw[->] (Z) -- (O);
    \end{tikzpicture}
    \qquad\qquad
    \begin{tikzpicture}[>=stealth, nodes={outer sep=1pt}]
      \clip (0.5, 0.5) rectangle (3.5, 2.5);
      \draw[dotted] (0,0) grid (4, 4);
      
      \node (U) at (1, 3) {};
      \node (V) at (1, 1) {};
      \node (W) at (2, 3) {};
      \node (X) at (1, 2) {};
      \node (Y) at (2, 2) {};
      \node (Z) at (2, 1) {};
      \node (O) at (3, 1) {};
      \node (Q) at (3, 2) {};
      
      % \node (U) at (1, 3) {$U$};
      % \node (V) at (1, 1) {$V$};
      % \node (W) at (2, 3) {$W$};
      % \node (X) at (1, 2) {$X$};
      % \node (Y) at (2, 2) {$Y$};
      % \node (Z) at (2, 1) {$Z$};
      % \node (O) at (3, 1) {$O$};
      % \node (Q) at (3, 2) {$Q$}; 

      % \fill (U) circle (2pt);
      % \fill (V) circle (2pt);
      % \fill (W) circle (2pt);
      \fill (X) circle (2pt);
      \fill (Y) circle (2pt);
      % \fill (Z) circle (2pt);
      \fill (O) circle (2pt);
      \fill (Q) circle (2pt);

      \draw[->] (X) -- (Y); 
      \draw[->] (Y) -- (Q);
      \draw[->] (Q) -- (O);
    \end{tikzpicture}
  \]
  If the first path above is a 2-complex, then by the commutativity
  properties of Theorem~\ref{thm:diagram-commute-cty}, the second and
  third are also 2-complexes. Proving that the first path is a
  2-complex, i.e., showing that $C \circ B \circ A =0$ for all such
  $A, B, C$ in~\eqref{eq:1}, is (tedious but) elementary using the identities of
  \eqref{eq:earlier-identities} and Lemma~\ref{lem:commute-identities}.
  For example,  with
  $A = \grad: \Ho(\grad) \to \Ho(\curl), B = \dfo: \Ho(\curl) \to \Hocc,$ and $
  C = \curl: \Hocc \to \Hocd$,
  we have
  $C \circ B \circ A = \curl \circ \frac 1 2 (\grad + \T \grad ) \circ
  \grad = \frac 1 2 \curl \circ \T \grad \circ \grad = \frac 1 2 \curl
  \circ \T \grad \circ \grad = \frac 1 2 \T \grad \circ \curl \circ
  \grad =0 $, where we have used the identity~\eqref{eq:3} of
  Lemma~\ref{lem:commute-identities}.
  Similarly, it is elementary to see that $B \circ H \circ G =0$ 
  in paths taken from~\eqref{eq:1} of the form in the first diagram below,
  \[
    \begin{tikzpicture}[>=stealth, nodes={outer sep=1pt}]
      \clip (0.5, 0.5) rectangle (2.5, 3.5);
      \draw[dotted] (0,0) grid (4, 4);
      \node (U) at (1, 3) {};
      \node (V) at (1, 1) {};
      \node (W) at (2, 3) {};
      \node (X) at (1, 2) {};
      \node (Y) at (2, 2) {};
      \node (Z) at (2, 1) {};
      \node (O) at (3, 1) {}; 
      \fill (U) circle (2pt);
      \fill (W) circle (2pt);
      \fill (Y) circle (2pt);
      \fill (Z) circle (2pt);
      \fill (O) circle (2pt);
      \draw[->] (U) -- (W) node[midway, above] {$G$};
      \draw[->] (W) -- (Y) node[midway, left] {$H$};
      \draw[->] (Y) -- (Z) node[midway, left] {$B$};  
    \end{tikzpicture}
    \qquad\qquad
    \begin{tikzpicture}[>=stealth, nodes={outer sep=1pt}]
      \clip (0.5, 0.5) rectangle (2.5, 3.5);
      \draw[dotted] (0,0) grid (4, 4);
      \node (U) at (1, 3) {};
      \node (V) at (1, 1) {};
      \node (W) at (2, 3) {};
      \node (X) at (1, 2) {};
      \node (Y) at (2, 2) {};
      \node (Z) at (2, 1) {};
      \node (O) at (3, 1) {};

      % \node (U) at (1, 3) {$U$};
      % \node (V) at (1, 1) {$V$};
      % \node (W) at (2, 3) {$W$};
      % \node (X) at (1, 2) {$X$};
      % \node (Y) at (2, 2) {$Y$};
      % \node (Z) at (2, 1) {$Z$};
      % \node (O) at (3, 1) {$O$}; 

      \fill (U) circle (2pt);
      % \fill (V) circle (2pt);
      % \fill (W) circle (2pt);
      \fill (X) circle (2pt);
      \fill (Y) circle (2pt);
      \fill (Z) circle (2pt);
      \fill (O) circle (2pt);

      \draw[->] (U) -- (X);
      \draw[->] (X) -- (Y);
      \draw[->] (Y) -- (Z);
    \end{tikzpicture}
    \qquad\qquad
    \begin{tikzpicture}[>=stealth, nodes={outer sep=1pt}]
      \clip (0.5, 0.5) rectangle (2.5, 3.5);
      \draw[dotted] (0,0) grid (4, 4);
      \node (U) at (1, 3) {};
      \node (V) at (1, 1) {};
      \node (W) at (2, 3) {};
      \node (X) at (1, 2) {};
      \node (Y) at (2, 2) {};
      \node (Z) at (2, 1) {};
      \node (O) at (3, 1) {};

      % \node (U) at (1, 3) {$U$};
      % \node (V) at (1, 1) {$V$};
      % \node (W) at (2, 3) {$W$};
      % \node (X) at (1, 2) {$X$};
      % \node (Y) at (2, 2) {$Y$};
      % \node (Z) at (2, 1) {$Z$};
      % \node (O) at (3, 1) {$O$}; 

      \fill (U) circle (2pt);
      \fill (V) circle (2pt);
      % \fill (W) circle (2pt);
      \fill (X) circle (2pt);
      % \fill (Y) circle (2pt);
      \fill (Z) circle (2pt);
      \fill (O) circle (2pt);

      \draw[->] (U) -- (X);
      \draw[->] (X) -- (V);
      \draw[->] (V) -- (Z);
    \end{tikzpicture}
  \]
  so by commutativity, all paths of the three types shown above are
  also 2-complexes. These types of paths exhaust all possibilities.
\end{proof}

Next, we consider the fundamental {\em  second-order differential
  operators} inherent in~\eqref{eq:1}: the incompatibility operator
defined in~\eqref{eq:inc-def}, the hessian operator
$ \hess:= \dfo \circ \grad, $ $\grad \circ \div$, $\curl\circ \div$
and $\div\circ \div$.  These appear along the diagonals of the
following diagram:
\begin{equation}
  \label{eq:1-with-2nd-order}
  \begin{tikzcd} 
    [
    row sep=hypercomplexsize,
    ampersand replacement=\&
    ]
    \Ho(\grad)
    \arrow{r}{\grad}
    \arrow[d, "\grad"{sloped}]
    \arrow[dr, "\hess"{sloped}]
    \&[1em]  
    \Ho(\curl)
    \arrow[r, "\curl"]
    \arrow[d, "\deff"{sloped}]
    \arrow[dr, "\curl\dfo"{sloped}]
    \&
    \Ho(\div) 
    \arrow{r}{\div}
    \arrow[d, "\frac 1 2 \T\dev\grad"{sloped}]
    \arrow[dr, "\frac 1 3 \grad \div"{sloped}]            
    \&
    \LR
    \arrow[d, "\frac 1 3 \grad"{sloped}]
    \\  
    \Ho(\curl)
    \arrow{r}{\deff}
    \arrow[d, "\curl"{sloped}]
    \&
    \Hocc
    \arrow{r}{\curl}
    \arrow[d, "\T\curl"{sloped}]
    \arrow[dr, "\inc"{sloped}]              
    \&
    \Hocd
    \arrow{r}{\div}
    \arrow[d, "\sym\curl\T"{sloped}]
    \arrow[dr, "\frac 1 2 \curl \div"{sloped}]            
    \& \HtRT(\curl)
    \arrow[d, "\frac 1 2 \curl"]      
    \\
    \Ho(\div)
    \arrow[r, "\frac 1 2 \dev\grad"]
    \arrow[d, "\div"{sloped}]
    \& 
    \HocdT
    \arrow{r}{\sym\curl}
    \arrow[d, "\div\T"{sloped}]
    \&
    \Hodd
    \arrow{r}{\div}
    \arrow[d, "\div"{sloped}]
    \arrow[dr, "\div \div"{sloped}]
    \&
    \HtND(\div)
    \arrow[d, "\div"{sloped}]
    \\
    \LR
    \arrow{r}{\frac 1 3 \grad}
    \&
    \HtRT(\curl)
    \arrow{r}{\frac 1 2 \curl}
    \&
    \HtND(\div)
    \arrow{r}{\div}
    \&
    \HtPl
  \end{tikzcd}  
\end{equation}
The second-order operators below the diagonal are not shown as they
are mirrored by those shown above the diagonal. Note that each
indicated second-order operator is a composition of adjacent
first-order operators at the top and right, or by commutativity, the
adjacent left and bottom  operators. We can now read off less regular versions of
\eqref{elasticity}, \eqref{hess-complex}, and \eqref{divdiv-complex} from the diagram~\eqref{eq:1-with-2nd-order}, as stated next.

\begin{corollary}
  \label{cor:3complexes}
  The following paths in~\eqref{eq:1-with-2nd-order}
  are complexes:
  \begin{enumerate}
  \item The hessian complex:
    \begin{equation}
      \label{eq:hessian-complex}
    \begin{tikzcd} 
      [
      column sep=huge,
      % row sep=hypercomplexsize,
      ampersand replacement=\&
      ]
      \Ho(\grad)
      % \arrow{r}{\grad}
      % \arrow[d, "\grad"{sloped}]
      \arrow[dr, "\hess"{sloped}]
      \&[1em]  
      % \Ho(\curl)
      % \arrow[r, "\curl"]
      % \arrow[d, "\deff"{sloped}]
      % \arrow[dr, "\curl\dfo"{sloped}]
      \&
      % \Ho(\div) 
      % \arrow{r}{\div}
      % \arrow[d, "\frac 1 2 \T\dev\grad"{sloped}]
      % \arrow[dr, "\frac 1 3 \grad \div"{sloped}]            
      \&
      % \LR
      % \arrow[d, "\frac 1 3 \grad"{sloped}]
      \\  
      % \Ho(\curl)
      % \arrow{r}{\deff}
      % \arrow[d, "\curl"{sloped}]
      \&
      \Hocc
      \arrow{r}{\curl}
      % \arrow[d, "\T\curl"{sloped}]
      % \arrow[dr, "\inc"{sloped}]              
      \&
      \Hocd
      \arrow{r}{\div}
      % \arrow[d, "\sym\curl\T"{sloped}]
      % \arrow[dr, "\frac 1 2 \curl \div"{sloped}]            
      \& \HtRT(\curl)
      %\arrow[d, "\frac 1 2 \curl"]      
    \end{tikzcd}
    \end{equation}

\item The elasticity complex:
  \begin{equation}
    \label{eq:elasticity-complex}
    \begin{tikzcd} 
      [
      % row sep=hypercomplexsize,
      column sep=huge,
      ampersand replacement=\&
      ]
      % % 
      % \Ho(\grad)
      % \arrow{r}{\grad}
      % \arrow[d, "\grad"{sloped}]
      % \arrow[dr, "\hess"{sloped}]
      % \&[1em]  
      % \Ho(\curl)
      % \arrow[r, "\curl"]
      % \arrow[d, "\deff"{sloped}]
      % \arrow[dr, "\curl\dfo"{sloped}]
      % \&
      % \Ho(\div) 
      % \arrow{r}{\div}
      % \arrow[d, "\frac 1 2 \T\dev\grad"{sloped}]
      % \arrow[dr, "\frac 1 3 \grad \div"{sloped}]            
      % \&
      % \LR
      % \arrow[d, "\frac 1 3 \grad"{sloped}]
      % \\  
      \Ho(\curl)
      \arrow{r}{\deff}
      % \arrow[d, "\curl"{sloped}]
      \&
      \Hocc
      % \arrow{r}{\curl}
      % \arrow[d, "\T\curl"{sloped}]
      \arrow[dr, "\inc"{sloped}]              
      \&
      % \Hocd
      % \arrow{r}{\div}
      % \arrow[d, "\sym\curl\T"{sloped}]
      % \arrow[dr, "\frac 1 2 \curl \div"{sloped}]            
      \&
      % \HtRT(\curl)
      % \arrow[d, "\frac 1 2 \curl"]      
      \\
      % \Ho(\div)
      % \arrow[r, "\frac 1 2 \dev\grad"]
      % % \arrow[d, "\div"{sloped}]
      \& 
      % \HocdT
      % \arrow{r}{\sym\curl}
      % \arrow[d, "\div\T"{sloped}]
      \&
      \Hodd
      \arrow{r}{\div}
      % \arrow[d, "\div"{sloped}]
      % \arrow[dr, "\div \div"{sloped}]
      \&
      \HtND(\div)
      % \arrow[d, "\div"{sloped}]
      % \\
      % \LR
      % \arrow{r}{\frac 1 3 \grad}
      % \&
      % \HtRT(\curl)
      % \arrow{r}{\frac 1 2 \curl}
      % \&
      % \HtND(\div)
      % \arrow{r}{\div}
      % \&
      % \HtPl
    \end{tikzcd}  
  \end{equation}

\item The $\div\div$ complex:
  \begin{equation}
    \label{eq:div-div-complex}
    \begin{tikzcd} 
      [
      % row sep=hypercomplexsize,
      column sep=huge,
      ampersand replacement=\&      
      ]
      % % 
      % \Ho(\grad)
      % \arrow{r}{\grad}
      % \arrow[d, "\grad"{sloped}]
      % \arrow[dr, "\hess"{sloped}]
      % \&[1em]  
      % \Ho(\curl)
      % \arrow[r, "\curl"]
      % \arrow[d, "\deff"{sloped}]
      % \arrow[dr, "\curl\dfo"{sloped}]
      % \&
      % \Ho(\div) 
      % \arrow{r}{\div}
      % \arrow[d, "\frac 1 2 \T\dev\grad"{sloped}]
      % \arrow[dr, "\frac 1 3 \grad \div"{sloped}]            
      % \&
      % \LR
      % \arrow[d, "\frac 1 3 \grad"{sloped}]
      % \\  
      % \Ho(\curl)
      % \arrow{r}{\deff}
      % \arrow[d, "\curl"{sloped}]
      % \&
      % \Hocc 
      % \arrow{r}{\curl}
      % \arrow[d, "\T\curl"{sloped}]
      % \arrow[dr, "\inc"{sloped}]              
      % \&
      % \Hocd
      % \arrow{r}{\div}
      % \arrow[d, "\sym\curl\T"{sloped}]
      % \arrow[dr, "\frac 1 2 \curl \div"{sloped}]            
      % \& \HtRT(\curl)
      % \arrow[d, "\frac 1 2 \curl"]      
      % \\
      \Ho(\div)
      \arrow[r, "\frac 1 2 \dev\grad"]
      % \arrow[d, "\div"{sloped}]
      \& 
      \HocdT
      \arrow{r}{\sym\curl}
      % \arrow[d, "\div\T"{sloped}]
      \&
      \Hodd
      % \arrow{r}{\div}
      % \arrow[d, "\div"{sloped}]
      \arrow[dr, "\div \div"{sloped}]
      \&
      % \HtND(\div)
      % \arrow[d, "\div"{sloped}]
      \\
      % \LR
      % \arrow{r}{\frac 1 3 \grad}
      \&
      % \HtRT(\curl)
      % \arrow{r}{\frac 1 2 \curl}
      \&
      % \HtND(\div)
      % \arrow{r}{\div}
      \&
      \HtPl
    \end{tikzcd}
  \end{equation}
  \end{enumerate}
\end{corollary}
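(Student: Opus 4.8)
The plan is to read each of the three complexes directly off the commuting diagram~\eqref{eq:1-with-2nd-order} and, for every consecutive pair of arrows, to check that the composition vanishes, using two ingredients already in hand. The first is that every three-arrow path in~\eqref{eq:1} is a 2-complex (Theorem~\ref{thm:2-complex}), combined with the fact that, by commutativity (Theorem~\ref{thm:diagram-commute-cty}), each second-order operator in~\eqref{eq:1-with-2nd-order} is a composition of two first-order arrows of~\eqref{eq:1} in two equivalent ways (``top-then-right'' and ``left-then-bottom'' around its cell); so most of the required vanishings follow simply by writing the relevant composition as a three-arrow path in~\eqref{eq:1}. The second ingredient, needed for the two compositions not covered this way, is a one-line computation on $\cl D(\om)$-fields that extends to the full spaces by density and the continuity asserted in Theorem~\ref{thm:diagram-commute-cty}.

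For the hessian complex I would use $\hess=\deff\circ\grad$, so that $\curl\circ\hess=\curl\circ\deff\circ\grad$ is the composition along the path $\Ho(\grad)\to\Ho(\curl)\to\Hocc\to\Hocd$ of~\eqref{eq:1} and hence vanishes by Theorem~\ref{thm:2-complex}; the other composition, $\div\circ\curl=0$ as a map $\Hocc\to\Hocd\to\HtRT(\curl)$, follows from the classical row-wise identity $\div\curl=0$ on $\cl D(\om)\otimes\bb S$ together with density and the continuity of $\curl$ and $\div$. For the elasticity complex I would use that on $\Hocc$ one has $\inc=\sym\curl\T\circ\curl=\sym\curl\circ\T\curl$ (the two agreeing by commutativity of the relevant cell of~\eqref{eq:1}, as already observed in the continuity proof for $\curl:\Hocc\to\Hocd$, where $\sym\curl\T(\curl g)=\inc g$): then $\inc\circ\deff=\sym\curl\T\circ\curl\circ\deff$ is the composition along $\Ho(\curl)\to\Hocc\to\Hocd\to\Hodd$, and $\div\circ\inc=\div\circ\sym\curl\circ\T\curl$ is the composition along $\Hocc\to\HocdT\to\Hodd\to\HtND(\div)$, both three-arrow paths and hence both zero by Theorem~\ref{thm:2-complex}. (Both vanishings are also implicit in the earlier continuity proofs, via $\inc(\deff u)=0$ and via $\div\sym\curl\T\tau=\tfrac12\curl\div\tau$ together with $\div\curl=0$.)

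For the $\div\div$ complex, $\div\div=\div\circ\div$, so $\div\div\circ\sym\curl=\div\circ\div\circ\sym\curl$ is the composition along the path $\HocdT\to\Hodd\to\HtND(\div)\to\HtPl$ of~\eqref{eq:1} and vanishes by Theorem~\ref{thm:2-complex}. The remaining composition $\sym\curl\circ\tfrac12\dev\grad$ I would handle directly: for $q\in\cl D(\om)\otimes\bb V$,
\[
  \curl(\dev\grad q)=\curl\grad q-\tfrac13\curl((\div q)\id)=-\tfrac13\curl((\div q)\id)
\]
since $\curl\grad q=0$ row-wise, and by~\eqref{eq:mskw-grad} the matrix $\curl(w\id)=-\mskw\grad w$ is skew-symmetric for any scalar $w$, so its symmetric part vanishes and $\sym\curl\dev\grad q=0$. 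Density of $\cl D(\om)\otimes\bb V$ in $\Ho(\div)$ and the continuity of $\tfrac12\dev\grad:\Ho(\div)\to\HocdT$ and $\sym\curl:\HocdT\to\Hodd$ then give $\sym\curl\circ\tfrac12\dev\grad=0$ on all of $\Ho(\div)$. (This is essentially the identity $\sym\curl\tau=\sym\curl\tfrac13(\div q)\id$ already appearing in the continuity proof for $\dev\grad:\Ho(\div)\to\HocdT$.)

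The step with any real content is this last one. Unlike every other composition above, the vanishing of $\sym\curl\circ\dev\grad$ is not forced by the 2-complex structure of~\eqref{eq:1}: extending these two arrows to a three-arrow path of~\eqref{eq:1} only yields $\sym\curl\circ\dev\grad\circ\curl=0$ (vanishing merely on the range of $\curl:\Ho(\curl)\to\Ho(\div)$, not on all of $\Ho(\div)$) or $\div\circ\sym\curl\circ\dev\grad=0$ (weaker than what is wanted). So it must be checked by hand --- though even then the computation is a one-liner, already latent in Theorem~\ref{thm:diagram-commute-cty}. Everything else is bookkeeping: for each second-order composition, identifying a three-arrow directed path in~\eqref{eq:1} that realizes it, and using commutativity to choose whichever of the two factorizations is convenient.
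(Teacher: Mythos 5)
Your proposal is correct and follows essentially the same route as the paper: each second-order arrow is factored through the adjacent first-order arrows so that most compositions become three-arrow paths of~\eqref{eq:1} killed by Theorem~\ref{thm:2-complex}, while the leftovers ($\div\circ\curl$ for the hessian complex and $\sym\curl\circ\dev\grad$ for the $\div\div$ complex, the latter via~\eqref{eq:mskw-grad}) are elementary direct computations — exactly the combination the paper invokes, with your $\sym\curl\circ\dev\grad$ calculation matching the paper's verbatim. The only quibble is the closing remark that this is the sole composition not forced by the 2-complex structure: $\div\circ\curl$ on $\Hocc$ is likewise a two-arrow composition needing the classical identity, which you did in fact supply, so nothing is missing.
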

\begin{proof}
  These statements follow from the 2-complex properties of
  Theorem~\ref{thm:2-complex} and elementary manipulations with
  first-order differential operators. For instance, to prove the last,
  $ \sym\curl \circ \dev \grad w = -\frac 1 3 \sym\curl (\tr(\grad
  w)\id) = -\frac 1 3 \sym \mskw \grad(\tr(\grad w)) = 0,$ where we
  have used~\eqref{eq:mskw-grad}; and of course,
  $\div \div \circ \sym \curl$ must vanish due to the 2-complex
  property of Theorem~\ref{thm:2-complex}.
\end{proof}

\section{Regular decompositions, density, and continuous right inverses}
\label{sec:regul-decomp}

Right inverses of exterior derivatives that are continuous in
appropriate Sobolev norms were given in~\cite{CostaMcInt10}, inspired
by the classical work of~\cite{Bogov79}. In this section, we leverage
their results to show regular decompositions of some Sobolev spaces of
matrix fields, prove density of smooth functions in them, and
construct right inverses of the differential operators
in~\eqref{eq:1-with-2nd-order}.

The right inverses of the derivative operators
in~\eqref{eq:1-with-2nd-order} that act on or produce matrix fields
are the subscripted $D$ and $R$~operators labeling the diagonally
upward arrows and rightward arrows in the following diagram:
\begin{equation}
  \label{eq:reverse-arrows}
  \begin{tikzcd} 
    [
    row sep=hyper2,
    column sep=large,
    ampersand replacement=\&
    ]
    \Ho(\grad)
    \arrow{r}{\grad}
    \arrow[d, "\grad"{sloped}]
    \arrow[dr, "\hess"{sloped}]
    \&[1em]  
    \Ho(\curl)
    \arrow[r, "\curl"]
    \arrow[d, "\deff"{sloped}]
    \arrow[dr, "\curl\dfo"{sloped}]
    \arrow[l, shift left=2, "{\Tg}"{sloped}, swap]
    \&
    \Ho(\div) 
    \arrow{r}{\div}
    \arrow[d, "\;\;\frac 1 2\!\! \T\dev\grad\!"{sloped}]
    \arrow[dr, "\frac 1 3 \grad \div"{sloped}]
    \arrow[l, shift left=2, "{\Tc}"{sloped}, swap]
    \&
    \LR
    \arrow[d, "\frac 1 3 \grad"{sloped}]
    \arrow[l, shift left=2, "{\Td}"{sloped}, swap]
    \\  
    \Ho(\curl)
    \arrow{r}{\deff}
    \arrow[d, "\curl"{sloped}]
    \arrow[dr, "\T \curl \dfo"{sloped}]
    \&
    \Hocc
    \arrow{r}{\curl}
    \arrow[d, "\T\curl"{sloped}]
    \arrow[dr, "\inc"{sloped}]
    \arrow[ul, shift left=2, "\Dgg"{sloped}, swap]
    \arrow[l, shift left=2, "{\Rgg}"{sloped}, swap]
    \&
    \Hocd
    \arrow{r}{\div}
    \arrow[d, "\sym\curl\T"{sloped}]
    \arrow[dr, "\quad\frac 1 2 \curl \div"{sloped}]
    \arrow[ul, shift left=2, "\Dgc"{sloped}, swap]
    \arrow[l, shift left=2, "{\Rgc}"{sloped}, swap]
    \&
    \HtRT(\curl)
    \arrow[d, "\frac 1 2 \curl"{sloped}]
    \arrow[ul, shift left=2, "{\Dgd}"{sloped}, swap, shorten <=1ex]
    \arrow[l, shift left=2, "{\Rgd}"{sloped}, swap]
    \\
    \Ho(\div)
    \arrow[r, "\frac 1 2 \dev\grad"]
    \arrow[d, "\div"{sloped}]
    \arrow[dr, "\frac 1 3 \grad \div"{sloped}]
    \& 
    \HocdT
    \arrow{r}{\sym\curl}
    \arrow[d, "\div\T"{sloped}]
    \arrow[dr, "\frac 1 2 \curl \div \T"{sloped}]
    \arrow[ul, shift left=2, "\Dgc \T"{sloped}, swap]
    \arrow[l, shift left=2, "{\RgcT}"{sloped}, swap]
    \&
    \Hodd
    \arrow{r}{\div}
    \arrow[d, "\div"{sloped}]
    \arrow[dr, "\div \div"{sloped}]
    \arrow[ul, shift left=2, "{\Dcc}"{sloped}, swap]
    \arrow[l, shift left=2, "{\Rcc}"{sloped}, swap]
    \&
    \HtND(\div)
    \arrow[d, "\div"{sloped}]
    \arrow[ul, shift left=2, "{\Dcd}"{sloped}, swap, shorten <=1ex]
    \arrow[l, shift left=2, "{\Rcd}"{sloped}, swap]
    \\
    \LR
    \arrow{r}{\frac 1 3 \grad}
    \&
    \HtRT(\curl)
    \arrow{r}{\frac 1 2 \curl}
    \arrow[l, shift left=2, "{\Rg}"{sloped}, swap]
    \arrow[ul, shift left=2, "{\Dgd}"{sloped}, swap, shorten <=1ex]
    \&
    \HtND(\div)
    \arrow{r}{\div}
    \arrow[l, shift left=2, "{\Rc}"{sloped}, swap]
    \arrow[ul, shift left=2, "\T{\Dcd}"{sloped}, swap, shorten <=1ex]
    \&
    \HtPl
    \arrow[ul, shift left=2, "{\Ddd}"{sloped}, swap]
    \arrow[l, shift left=2, "{\Rd}"{sloped}, swap]
  \end{tikzcd}  
\end{equation}
The downward arrows in~\eqref{eq:reverse-arrows} can also be provided
with corresponding upward right inverses. They are not marked to
reduce clutter and because the same information is contained in the
horizontal arrows back and forth.  Below, we detail the construction
of each new right inverse operator. Note that some of the $D$ and $R$
operators map from and into {\em subspaces} of the respective domains
and codomains indicated in~\eqref{eq:reverse-arrows}. The codomain
subspaces consist of (more) {\em regular} functions. The domain
subspaces are {\em kernels} of one of the (multiple) operators acting
on the space. These subspaces are given precisely in the case by case results
below (and summarized in Theorem~\ref{thm:rt-inv-cts}). Throughout, we denote the null space of a linear operator $A: X \to Y$
by
\[
\ker(A:X).
\]
Note, e.g., $\ker(\curl: \Ho(\curl))$ is different
from $\ker(\curl: \Hocc) = \{g \in \Hocc: \curl g =0\}$.

Regular decompositions for standard Sobolev spaces based on the
exterior derivative can be inferred from the results
of~\cite{CostaMcInt10}. In this section, we also show how to combine
their results with our previous results to produce regular
decompositions for the following new spaces of matrix fields:
\begin{align*}
  \hHcc & = \{ g \in \Ht \otimes \bb S: \curl g \in \Ht \otimes \bb V,
          \inc g \in \Ht \otimes \bb S \},
  \\
  \hHcd & = \{ \tau \in \Ht \otimes \bb T: \div \tau \in \Ht \otimes \bb V,
          \sym\curl \tau \in \Ht \otimes \bb S,
          \curl\div\tau \in \Ht \otimes \bb V \},
  \\
  \hHdd & = \{ \sigma \in \Ht \otimes \bb S: \div \sigma \in \Ht \otimes \bb V,
          \div \div\sigma \in \Ht \}.
\end{align*}
They are normed, respectively, by
$\| \cdot \|_{\Hocc}, \| \cdot \|_{\Hocd}, \| \cdot \|_{\Hodd}$, the
norms defined in~\eqref{eq:Hcc-cd-dd-norms}.  Obviously, the spaces 
defined in~\eqref{eq:H-cc-dd-cd-defn} are subspaces of these spaces,
i.e.,
\begin{equation}
  \label{eq:Hcc-dd-cd-inclusions}
  \Hocc \subseteq \hHcc, \quad 
  \Hocd \subseteq \hHcd, \quad
  \Hodd \subseteq \hHdd.   
\end{equation}
The theorems in this section (Theorems~\ref{thm:reg-dec-Hcc}, \ref{thm:reg-dec-Hdd}, and~\ref{thm:reg-dec-Hcd}) improve these inclusions to equalities,
thus also proving the density of their respective subspaces of
compactly supported smooth functions.

Our results are under  the additional
assumptions on~$\om$ that it is simply connected and
that its boundary is connected. Then the topology of $\om$ is trivial. 
Covering $\om$ by subdomains starlike with respect to a ball and using
regularized Bogovski{\u{i}} operators in each subdomain,
\cite[Theorem~4.9]{CostaMcInt10} proves that there exist continuous
linear operators
\begin{subequations}
 \label{eq:std-rt-inverses} 
\begin{align}
  \label{eq:17}
  {\Tg}: \Hts{s} \otimes \bb V \to \Hts{s+1}, \qquad 
  {\Tc}: \Hts{s} \otimes \bb V  \to \Hts{s+1} \otimes \bb V,\qquad 
  {\Td}: \Hts{s}  \to \Hts{s+1}, 
\end{align}
satisfying
\begin{align}
  \label{eq:CM-grad-T1}
  \grad ({\Tg} v)
  & = v    \qquad \text{ for all } v \in \Hts{s} \otimes \bb V
    \text{ with } \curl v  =0,
  \\
  \label{eq:CM-curl-T2}
  \curl( {\Tc} q)
  & = q   \qquad \text{ for all } q \in \Hts{s} \otimes \bb V
    \text{ with } \div q =0,
  \\
  \label{eq:CM-div-T3}
  \div ({\Td} u )
  & = u  \qquad \text{ for all } u \in \Hts{s} \text{ with }
    u(1) = 0, 
  % (u, 1) = 0,    
  % \int_\om u = 0,  
\end{align}
\end{subequations}
for any real number $s$, where $\Hts{s}$ is the subspace of
distributions on $\bb R^3$ defined in~\eqref{eq:Hts}. The last condition
in~\eqref{eq:CM-div-T3}
is a zero mean condition on $u$ given through a functional action that makes sense even for negative $s$. For $s\ge 0$, it can be expressed using the $L_2$ inner product as $(u, 1) = 0$.

It is standard to use \eqref{eq:std-rt-inverses} to produce regular
decompositions of $\Ho(\curl)$ and $\Ho(\div)$. Indeed, any 
$u \in \Ho(\curl)$ can be decomposed into 
\begin{equation}
  \label{eq:Hocurl-reg-dec}
  u = \Soc 0 u + \grad \Soc 1 u, \quad
  \text{ with } 
  \Soc 0 u = \Tc \curl u, \; \Soc 1 u =   \Tg(u - \Tc \curl u), 
\end{equation}
as can be immediately verified using~\eqref{eq:CM-curl-T2}
and~\eqref{eq:CM-grad-T1}. By the continuity properties of $\Tg$ and $\Tc$,
the operators $\Soc 0 : \Ho(\curl) \to \Ho^1 \otimes \bb V$ and $\Soc 1 : \Ho(\curl) \to \Ho^1$  are continuous. Since $\Soc 0 u$ and $\Soc 1 u$ have
$\Ho^1$-regularity (higher than what may be expected of~$u$), this is
referred to as a ``regular decomposition'' of $\Ho(\curl)$.  The process of arriving
at this decomposition can be viewed as first generating a zero curl
function $u - \Tc \curl u$ and then moving {\em left} of $\Ho(\curl)$
in the diagram~\eqref{eq:reverse-arrows} to create a potential in
$\Ho^1$ using the operator $\Tg$. For the matrix-valued function
spaces in the middle of~\eqref{eq:reverse-arrows}, the process is
similar, but we have more options to move, such as {\em up, left, or
  diagonally,} and our regular decompositions that follow have
multiple potentials.

\begin{comment}
\kh{ 
\begin{remark}
Poincar\'e-type operators provide another explicit construction of the right inverse. For example, for de~Rham complexes defined on strong Lipschitz domains, regularized Poincar\'e or Bogolvski{\u\i} operators $T^{k}$ satisfy 
$$
I=d^{k-1}T^{k}+T^{k+1}d^{k}+L^{k}, 
$$
where $L^{k}$ is a smoothing operator. Therefore, for $u\in H\Lambda^{k}:=\{u\in L^{2}\Lambda^{k}: du\in L^{2}\Lambda^{k+1}\}$, 
$$
u=d(Tu)+Tdu+Lu
$$
provides a regular decomposition, since $Tu\in H^{1}$ and $Tdu+Lu\in H^{1}$. This provides alternative argument in the above proof when right inverse in the de~Rham complex is needed. 
\end{remark}
}
\end{comment}

\subsection{Regular decomposition of $\Hocc$}

We start with a result that can also be found in~\cite[Theorem 2]{ArnolHu21} as a special case of existence of regular potential. Here we provide an explicit construction (see also \cite{vcap2023bounded}). 
% The Poincar\'e operators with an explicit form~\cite{vcap2023bounded} for the elasticity complex provide a more explicit construction. However, for completeness, we include an explicit proof below. 
\begin{lemma}
  \label{lem:Dcc}
  There is a linear map
  $\Dcc: \ker(\div: \Hts s \otimes \bb S) \to \Hts{s+2} \otimes \bb S$ such that 
  \[
    \sigma  =\inc {\Dcc} \sigma, \qquad
    \| {\Dcc} \sigma \|_{\Hts{s+2}} \lesssim \| \sigma \|_{\Hts s}.
  \]
  for any $s \in \bb R$.
\end{lemma}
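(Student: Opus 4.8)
The plan is to construct $\Dcc$ from the known right inverses $\Tg,\Tc,\Td$ in~\eqref{eq:std-rt-inverses} by ``walking'' backwards through the diagram~\eqref{eq:reverse-arrows}: given $\sigma$ with $\div\sigma=0$ we first produce a vector potential for it via the div-chain, then a matrix potential for that via the curl-chain. Concretely, since $\sigma\in\Hts s\otimes\bb S$ has $\div\sigma=0$, apply $\Tc$ row-wise to get a matrix field $\eta=\Tc\sigma\in\Hts{s+1}\otimes\bb M$ with $\curl\eta=\sigma$ (by~\eqref{eq:CM-curl-T2}, applied to each row). Since $\sigma$ is symmetric, $\sigma=\sym\sigma=\sym\curl\eta$, so I want to replace $\eta$ by something whose curl is already symmetric. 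The skew part is the obstruction: $\skw\curl\eta=\skw\sigma=0$ automatically here, so actually $\sym\curl\eta=\curl\eta=\sigma$ already --- wait, that is not automatic since $\curl\eta=\sigma$ is symmetric by construction. So in fact $\eta$ already satisfies $\curl\eta=\sigma$ with $\sigma$ symmetric; the issue is only that $\eta$ itself need not be symmetric.

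The second step corrects $\eta$ to a symmetric matrix field without changing its curl's relevant part, using the identity toolbox of~\eqref{eq:earlier-identities}. The standard trick (as in the derivation of the elasticity complex from de~Rham, cf.~\cite{ArnolHu21}) is: write $\eta=\sym\eta+\skw\eta=\sym\eta+\mskw w$ where $w=\vskw\eta\in\Hts{s+1}\otimes\bb V$. Using $\mskw\curl v=2\skw\grad v$ from~\eqref{eq:mskw-curl} (or the companion identities), one subtracts off a gradient: set $g_0=\sym\eta-\sym\grad(\text{something built from }w)$ so that $\curl g_0=\curl\eta=\sigma$ still holds while $g_0$ is symmetric. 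Specifically, one checks that $\curl\,\skw\eta$ contributes a symmetric-curl piece that can be absorbed: the correct choice is $g=\sym\eta+\sym\grad z$ for a suitable vector field $z$ obtained by applying $\Tg$ to a curl-free combination, chosen so that $\inc g=0$ is \emph{not} needed here --- we only need $\curl g\in\Hts{s+1}$ and $g\in\Hts{s+1}$; but to land in $\Hts{s+2}$ we actually need one more integration. That is the real point: $\Dcc$ must produce an $\Hts{s+2}$ field, i.e.\ a genuine ``Hessian-type'' potential for $\sigma$, not merely a first-order potential.

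So the construction is genuinely two-stage. After obtaining a symmetric $g\in\Hts{s+1}\otimes\bb S$ with $\curl g=\sigma$ and hence $\inc g=\curl\T\curl g=\curl\T\sigma$, I observe $\curl\T\sigma=\div\T\curl\,(\cdots)$-type expression; more usefully, since I ultimately want $\inc(\Dcc\sigma)=\sigma$, I need to integrate $g$ once more. Because $g$ is symmetric with $\curl g=\sigma$, one has $\div\curl g=\div\sigma=0$, but more to the point $g$ need not be a gradient. Instead, apply the same backward-stepping: $\curl g=\sigma$ means $\curl(\sym\curl)$-type structure; one more application of a Bogovski\u\i-type inverse to $g$ itself (viewing $g$ along the $\hess$ arrow) gives a scalar-or-vector potential $\phi$ with $\sym\grad\grad\phi$ or $\dfo\grad$ of something reproducing $g$ up to a curl-free remainder, and the remainder is killed by the de~Rham right inverse. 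The cleanest route is: set $h=\T\eta$ or use $S$, apply $\Tg$ to the (now curl-free, by~\eqref{eq:S-grad} and the symmetry) correction to get $z\in\Hts{s+2}\otimes\bb V$, and finally $\Dcc\sigma:=\dfo z + (\text{lower-order regular terms})$, verifying $\inc\Dcc\sigma=\curl\T\curl\dfo z=\tfrac12\curl\T\grad\curl z=\sigma$ via~\eqref{eq:5} and~\eqref{eq:3}, while each constituent operator maps $\Hts{\cdot}\to\Hts{\cdot+1}$ continuously by~\eqref{eq:di-cty} and~\eqref{eq:std-rt-inverses}.

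The main obstacle is the bookkeeping in the second integration step: ensuring that after correcting $\eta$ to be symmetric one still has exactly the right curl-free remainder to feed into $\Tg$ (or $\Tc$), and that the axial-vector corrections $\vskw$, $\mskw$ interact correctly with $\curl$ so that no spurious $\inc$-obstruction or trace term survives. This is precisely the ``$\skw$-to-$\sym$'' reduction that makes the BGG derivation of the elasticity complex subtle; here it must be done at the level of the bounded right inverses rather than abstractly, so I expect to invoke~\eqref{eq:mskw-curl}, \eqref{eq:skw-curl}, and~\eqref{eq:S-grad} repeatedly and to absorb several smoothing/lower-order terms (which are harmless since they only improve regularity and the continuity estimates are all of the stated form). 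Once the algebra closes, the norm bound $\|\Dcc\sigma\|_{\Hts{s+2}}\lesssim\|\sigma\|_{\Hts s}$ follows by chaining the continuity constants of $\Tg,\Tc$ and the continuity~\eqref{eq:di-cty} of $\partial_i:\Hts{t+1}\to\Hts t$.
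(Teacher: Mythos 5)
Your first step agrees with the paper: since $\div\sigma=0$, applying $\Tc$ row-wise gives $\eta\in\Hts{s+1}\otimes\bb M$ with $\curl\eta=\sigma$ by~\eqref{eq:CM-curl-T2}. After that, however, the proposal has a genuine gap, and its concluding verification rests on a false identity. You propose $\Dcc\sigma:=\dfo z+(\text{lower-order regular terms})$ and check it via ``$\inc\Dcc\sigma=\curl\T\curl\dfo z=\sigma$''. But $\inc\circ\dfo\equiv 0$: by~\eqref{eq:5}, $\curl\dfo z=\tfrac12\T\grad\curl z$, so $\T\curl\dfo z=\tfrac12\grad\curl z$ and its row-wise curl vanishes identically (this is exactly the complex property of the elasticity complex~\eqref{eq:elasticity-complex}, and it is also how the paper shows $\inc\,\dfo u=0$ in Theorem~\ref{thm:diagram-commute-cty}). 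So the $\dfo z$ term contributes nothing to $\inc$, and the entire burden falls on the unspecified ``lower-order regular terms,'' which are never constructed. Likewise, the intermediate goal of producing a \emph{symmetric} $g$ with $\curl g=\sigma$ is not the right target: the object you must integrate a second time is $\T\curl(\Dcc\sigma)$, which for symmetric $\Dcc\sigma$ is trace-free but not symmetric, so symmetrizing $\eta$ does not set up the second lift.

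The missing idea is the paper's use of~\eqref{eq:skw-curl}: since $\curl\eta=\sigma$ is symmetric, $0=2\skw\curl\eta=\mskw\div S\eta$, hence $S\eta$ is divergence-free in all of $\bb R^3$. A second row-wise application of $\Tc$ then yields $\gamma\in\Hts{s+2}\otimes\bb M$ with $S\eta=\curl\gamma$, and one sets $\Dcc\sigma:=\sym\gamma$; the skew part of $\gamma$ drops out because $\curl S^{-1}\curl\mskw v=-\curl\grad v=0$ by~\eqref{eq:S-grad}, and $S^{-1}\curl(\sym\gamma)=\T\curl(\sym\gamma)$ because $\tr\curl$ of a symmetric field vanishes by~\eqref{eq:tr-curl}, giving $\sigma=\curl\eta=\curl S^{-1}\curl\gamma=\inc(\sym\gamma)$. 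Your sketch cites these identities as a toolbox but never isolates the key fact $\div S\eta=0$ that makes the second application of $\Tc$ (and hence the gain of two orders of regularity, $\Hts{s}\to\Hts{s+2}$) possible; without it, the construction does not close.
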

\begin{proof}
  Let $\sigma \in \Hts s \otimes \bb S$ and $\div \sigma=0$. Applying ${\Tc}$ to row
  vectors of $\sigma$, whose components are all distributions in
  $\bb R^3$ supported on $\bar \om$ per~\eqref{eq:Hts}, we find
  from~\eqref{eq:CM-curl-T2}  that there is a
  $\eta \in \Hts {s+1}\otimes \bb M$ such that the identity 
  \[
    \sigma = \curl \eta
  \]
  holds in $\bb R^3$. Also, since $\sigma$ is symmetric,
  \[
    \skw \sigma = 0 = \skw \curl \eta = \frac 1 2   \mskw \div S \eta
  \]
  by \eqref{eq:skw-curl}.  Hence
  $S \eta \in \Hts{s+1} \otimes \bb M$ has vanishing divergence in
  all $\bb R^3$ (and obviously $S \eta$ is supported on $\bar
  \om$). Applying ${\Tc}$ row-wise to $S \eta$, we conclude
  from~\eqref{eq:CM-curl-T2}  that there is a
  $\gamma \in \Hts{s+2}  \otimes \bb M$ such that
  \[
    S \eta = \curl \gamma.
  \]
  Set $g = \sym \gamma$ in
  $\Hts{s+2} \otimes \bb S$.  Then
  \begin{align*}
    \sigma
    & = \curl \eta = \curl S^{-1} \curl \gamma \\
    & = \curl S^{-1} \curl (\skw \gamma + g) = \inc g 
  \end{align*}
  where the last equality is due to \eqref{eq:S-grad} and \eqref{eq:tr-curl}.
  By the continuity of
  ${\Tc}$, the linear map $\sigma \mapsto g$ we just constructed is
  continuous and is the needed map~${\Dcc}$.
\end{proof}

\begin{lemma}
  \label{lem:Rcct}
  There is a linear map ${\Rggt}: \ker(\inc: \hHcc) \to \Ho^1 \otimes \bb V$ such
  that for any  $g \in \ker(\inc: \hHcc)$,
  \[
    \curl \dfo {\Rggt} g = \curl g, \qquad  \| {\Rggt} g \|_{H^1} \lesssim
    \| \curl g \|_{\Ht}.
  \]
\end{lemma}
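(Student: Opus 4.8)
Given $g \in \ker(\inc:\hHcc)$, I want to produce a vector field ${\Rggt} g \in \Ho^1\otimes\bb V$ whose deformation has the same curl as $g$. The natural approach is to exploit the fact that $\curl g$ has vanishing incompatibility: since $\inc g = \curl\T\curl g = 0$, the matrix field $\tau := \curl g$ satisfies $\curl\T\tau = 0$, and moreover $\tau = \curl g$ automatically has $\tr\tau = 0$ by \eqref{eq:tr-curl} applied suitably (in fact $\tr\curl g = -2\div\vskw g$, so one should be careful here and instead track $\skw$ and $\tr$ parts directly). The cleaner route is: we know $\curl\T\curl g = 0$ in all of $\bb R^3$ (everything is supported on $\bar\om$ via \eqref{eq:Hts}), so $\T\curl g$ has a potential. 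Apply $\Tg$ componentwise—more precisely, since $\curl(\T\curl g) = 0$ we get from \eqref{eq:CM-grad-T1} a matrix field, but we need a \emph{vector} potential. Instead, observe $\T\curl g$ has zero curl row-wise, so $\Tg$ row-wise yields $w\in \Hts{s+1}\otimes\bb V$ with $\grad w = \T\curl g$, hence $\curl g = \T\grad w$.

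**Relating to a deformation.** Now I have $\curl g = \T\grad w$ with $w\in\Hts{s+1}\otimes\bb V$ (with $s=-1$, so $w\in\Ho^1\otimes\bb V$). I must convert $\T\grad w$ into $\curl\dfo v$ for some $v$. By identity \eqref{eq:5}, $\curl\dfo v = \frac12\T\grad\curl v$, so if I can solve $\curl v = w$ then ${\Rggt} g := 2v$ would give $\curl\dfo(2v) = \T\grad\curl v = \T\grad w = \curl g$, as desired. To solve $\curl v = w$ I need $\div w = 0$. This should follow: $\div w$ relates to the trace of $\grad w = \T\curl g$, and $\div\T\curl g$—wait, $\div w = \tr\grad w = \tr\T\curl g = \tr\curl g = -2\div\vskw g$, which need not vanish. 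So one must first correct $w$ by subtracting a gradient: replace $w$ by $w - \grad\phi$ where $\Td$ (or rather solving a Poisson-type problem via the available operators) kills the divergence; since $\T\grad(\grad\phi) = \grad\grad\phi$ is symmetric while we only care about $\curl g = \T\grad w$ modulo... no—$\T\grad\grad\phi = \grad\grad\phi \neq 0$ in general, so subtracting $\grad\phi$ from $w$ \emph{does} change $\T\grad w$. The fix: subtract instead a field of the form making $\T\grad$ unchanged, i.e. we need $\grad(w - w') = \grad w$, impossible unless $w' $ constant. So the honest route is: use that $\div w = \tr\curl g$, apply $\Td$ to get $\psi := \Td(\div w)$ with $\div\psi = \div w$ (after checking the mean-value condition, which holds since $\div w$ is a divergence of a compactly supported field, integrating to zero), set $\hat w = w - \psi$ so $\div\hat w = 0$; but then $\T\grad\hat w = \T\grad w - \T\grad\psi = \curl g - \T\grad\Td(\tr\curl g)$. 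This correction term must be reabsorbed—it is a gradient-type term and should be shown to equal $\curl$ of a symmetric field or simply tracked into the final identity for $\curl g$, not $\curl\dfo{\Rggt} g$.

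**The cleaner formulation.** Actually the intended argument is surely simpler: since $\inc g = 0$ and $\om$ has trivial topology, the \emph{elasticity complex} (its less-regular incarnation \eqref{eq:elasticity-complex}) is exact at $\Hocc$, meaning $\ker(\inc:\hHcc)$ should coincide with the range of $\dfo$. But we cannot assume exactness—that is downstream. So instead I would: (1) from $\curl\T\curl g = 0$ and the regularized Bogovski\u\i-based operator $\Tg$ of \eqref{eq:17}, obtain $w\in\Hts{s+1}\otimes\bb V$ with $\T\grad w = \curl g$; (2) decompose $w$ using the standard $\Ho(\curl)$-type regular decomposition or directly solve $\curl v = \sym'(w)$-part after splitting $w$ into its ``curl-able'' part and a gradient part via $\Tc$ and $\Tg$: write $w = \Tc(\curl w) + \grad(\text{something})$; the gradient part contributes $\T\grad\grad(\cdot) = \hess(\cdot)$ which is symmetric, hence has zero $\curl\T$... but it does contribute to $\curl g$. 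Hmm—here is the genuine subtlety and \textbf{the main obstacle}: disentangling which part of the potential $w$ is realized by $\curl$ of a vector (giving a genuine deformation gradient) versus an extraneous symmetric-Hessian part, and showing the latter is absent because $\curl g$ itself, being a curl, has $\tr\curl g$ controlled and the Hessian part would violate this or can be subtracted. I would resolve it by noting $\curl g = \T\grad w$ forces $\grad w = (\curl g)^\T = \T\curl g$ to be a curl, so $\div(\grad w) = \Delta w$ equals $\div\T\curl g = \curl\div\T g$ by \eqref{eq:2}—wait that's not obviously zero either. The robust fix: apply $\Tc$ to the \emph{rows} of $\T\curl g$ (legal since $\curl\T\curl g=0$ means each row of $\T\curl g$ has zero... no, curl of rows of $\T\curl g$ gives $\curl\T\curl g = \inc g=0$, yes!). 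So $\T\curl g = \curl\Phi$ for $\Phi\in\Hts{s+1}\otimes\bb M$ via row-wise $\Tc$, hence $\curl g = \T\curl\Phi$, and comparing with \eqref{eq:3} which says $\curl\T\grad = \T\grad\curl$, one identifies $\Phi = \grad v + (\text{divergence-free remainder})$; then ${\Rggt} g$ is built from $v$, with the remainder shown to have vanishing contribution to $\curl g$ because it is symmetric modulo gradients. The continuity bound $\|{\Rggt} g\|_{H^1}\lesssim\|\curl g\|_{\Ht}$ then follows immediately since every operator applied ($\Tc$ once, possibly $\Tg$ once) is bounded by \eqref{eq:std-rt-inverses} and only $\curl g$ (not $g$ itself or $\inc g$) enters the construction. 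I expect the delicate bookkeeping of symmetric-versus-skew parts through \eqref{eq:skw-curl}, \eqref{eq:S-grad}, \eqref{eq:tr-curl} to be where all the real work lies, exactly as in the proof of Lemma~\ref{lem:Dcc} just above, which this lemma is clearly designed to parallel.
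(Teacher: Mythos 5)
There is a genuine gap, and it sits exactly at the step you abandoned. Your first route is in fact the paper's proof: apply $\Tg$ row-wise to $\T\curl g$ (legitimate since $\curl(\T\curl g)=\inc g=0$) to get $q$ with $\grad q=\T\curl g$, then solve $\curl u = 2q$ via $\Tc$ and conclude with \eqref{eq:5}. You stalled because you claimed $\div q=\tr(\T\curl g)=\tr\curl g=-2\div\vskw g$ ``need not vanish.'' But $g\in\hHcc\subset\Ht\otimes\bb S$ is a \emph{symmetric} matrix field, so $\skw g=0$, hence $\vskw g=0$ and \eqref{eq:tr-curl} gives $\tr\curl g=0$; therefore $\div q=0$ and no correction by $\Td$ or any gradient subtraction is needed. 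With that observation, $u=\tfrac12\Tc q\in\Ho^1\otimes\bb V$ satisfies $q=\tfrac12\curl u$, and $\curl g=\T\grad q=\tfrac12\T\grad\curl u=\curl\dfo u$ by \eqref{eq:5}; the bound $\|u\|_{H^1}\lesssim\|\curl g\|_{\Ht}$ follows because only $\curl g$ (never $g$ or $\inc g$) entered, and $\Tg$, $\Tc$ are continuous.

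Your fallback (``the cleaner formulation'') does not repair the gap: there you propose to apply $\Tc$ row-wise to $\T\curl g$, justified by $\curl\T\curl g=0$, but the hypothesis of \eqref{eq:CM-curl-T2} is vanishing \emph{divergence}, not vanishing curl — zero curl is the hypothesis for $\Tg$, which is what you already used. In fact $\div\T\curl g=\curl\div\T g=\curl\div g$ by \eqref{eq:2}, which is not assumed to vanish on $\ker(\inc:\hHcc)$, so the claimed potential $\Phi$ with $\T\curl g=\curl\Phi$ is not available by this route. The subsequent sketch (splitting $\Phi$ into a gradient plus a remainder whose contribution ``should'' vanish) is left unresolved, so as written the proposal does not constitute a proof; reinstating the symmetry observation $\tr\curl g=0$ closes it in three lines, exactly as in the paper.
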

\begin{proof}
  Given any $g \in \ker(\inc: \hHcc)$, since
  $\inc g = \curl (\T \curl g) = 0$, applying ${\Tg}$ to each row vector
  of $\T \curl g$ in $\Ht \otimes \bb V$ and
  using~\eqref{eq:CM-grad-T1}, we obtain a
  $q \in L_2(\bb R^3) \otimes \bb V$ satisfying
  \[
    \T \curl g = \grad q
  \]
  on all $\bb R^3$. Moreover, by~\eqref{eq:tr-curl}, $\curl g$ has
  zero trace, so
  \[
    0 = \tr( \T \curl g) = \tr(\grad q) = \div q.
  \]
  Hence $u = \frac 1 2 {\Tc} q$ is in $\Ho^1 \otimes \bb V$ and satisfies
  $q = \frac 1 2 \curl u$. Therefore,  using~\eqref{eq:5}, 
  \begin{align*}
    \curl g
    & = \T \grad q = \frac 1 2 \T \grad \curl u  =  \curl \dfo u. 
  \end{align*}
  Denoting the linear map $g \mapsto u$ by ${\Rggt}$, the proof is 
  now completed using the continuity of ${\Tg}$ and ${\Tc}$.
\end{proof}

\begin{lemma}
  \label{lem:Dgg}
  There is a linear map
  $\Dgg: \ker(\curl : \Hts s \otimes \bb S)\to \Hts{s+2}$ such that 
  \[
    g = \hess \Dgg g, \qquad \| \Dgg g \|_{\Hts{s+2}} \lesssim \| g \|_{\Hts s}.
  \]
  for any $s \in \bb R$ and $g \in \Hs{s}$ with vanishing curl.
  % In particular, when $s=-1$, $\Dgg$ provides a
  % continuous right inverse for the operator
  % $\hess: \Ho(\grad) \to \Hocc$ in~\eqref{eq:1-with-2nd-order}.
\end{lemma}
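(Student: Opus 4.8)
The plan is to obtain $\Dgg$ as a twofold application of the regularized potential operator $\Tg$ from \eqref{eq:17}, paralleling the strategy of Lemma~\ref{lem:Dcc} but now moving ``left twice'' along the top row of \eqref{eq:reverse-arrows} (i.e., $\Hts{s}\otimes\bb S \leftarrow \Hts{s+1}\otimes\bb V \leftarrow \Hts{s+2}$) rather than iterating $\Tc$. Concretely, I would set $\Dgg = \Tg\circ\Tg$, with the first $\Tg$ understood row-wise. Fix $g\in\Hts{s}\otimes\bb S$ with $\curl g=0$. Each row of $g$ is a distribution on $\bb R^3$ supported on $\bar\om$ (by \eqref{eq:Hts}) with vanishing curl, so applying $\Tg$ row-wise and invoking \eqref{eq:CM-grad-T1} yields $v\in\Hts{s+1}\otimes\bb V$ with $\grad v=g$ (componentwise, $\d_j v_i = g_{ij}$); continuity of $\Tg$ gives $\|v\|_{\Hts{s+1}}\lesssim\|g\|_{\Hts s}$.

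The one genuinely non-automatic step is to see that $v$ itself has vanishing curl, which is forced by the symmetry of $g$. Since $\grad v=g=\T g=\T\grad v$, we have $\skw\grad v=0$, and then \eqref{eq:mskw-curl} gives $\mskw\curl v = 2\skw\grad v = 0$; as $\mskw$ is an isomorphism onto $\bb K$, this yields $\curl v=0$. This is the exact analogue, one step up, of the role played by \eqref{eq:skw-curl} in the proof of Lemma~\ref{lem:Dcc}. Because $v\in\Hts{s+1}\otimes\bb V$ has vanishing curl, \eqref{eq:CM-grad-T1} applies again (with $s$ replaced by $s+1$): setting $w=\Tg v\in\Hts{s+2}$, we get $\grad w=v$ and $\|w\|_{\Hts{s+2}}\lesssim\|v\|_{\Hts{s+1}}\lesssim\|g\|_{\Hts s}$. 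Finally $\hess w=\sym\grad\grad w=\sym\grad v=\sym g=g$, so $\Dgg g:=w$ has the stated property; it is linear and continuous because $\Tg$ is.

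I do not expect a serious obstacle here. The only points requiring care are essentially bookkeeping: making precise the row-wise action of $\Tg$ on a matrix field and the meaning of the identity $\grad v=g$, and checking that the whole chain stays within the $\Hts{\cdot}$ scale of \eqref{eq:Hts} (i.e., that all intermediate fields remain distributions supported on $\bar\om$), which is automatic since the $\Tg$ of \eqref{eq:17} preserves this support property. The symmetry-to-vanishing-curl implication in the second paragraph is the ``idea'' of the proof, but it is immediate from \eqref{eq:mskw-curl}.
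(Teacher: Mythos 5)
Your proof is correct and follows essentially the same route as the paper: apply $\Tg$ row-wise to the curl-free rows of $g$ to get $v\in\Hts{s+1}\otimes\bb V$ with $\grad v=g$, verify $\curl v=0$, then apply $\Tg$ once more to obtain $w\in\Hts{s+2}$ with $\hess w=g$ and the stated bounds. The only (harmless) difference is the intermediate step: you get $\curl v=0$ directly from the symmetry of $\grad v=g$ via \eqref{eq:mskw-curl}, whereas the paper deduces $\frac12\T\grad\curl v=0$ from \eqref{eq:5} and then uses the support of $v$ in $\bar\om$ to kill the resulting constant vector --- your shortcut is valid and in fact a bit more direct.
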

\begin{proof}
  Let $g \in \Hts s $ have zero curl. Then applying ${\Tg}$ to the row
  vectors of $g$ and using \eqref{eq:CM-grad-T1}, we obtain a
  $u \in \Hts{s+1} \otimes \bb V$, supported on $\bar \om$, such that
  the identity
  \[
    g = \grad u
  \]
  holds on all $\bb R^3$. Applying $\sym$ to both sides,
  $
    g = \dfo u.
  $
  A further application of $\curl$ on both sides
  yields
  \[
    0 = \curl g = \curl \dfo u = \frac 1 2 \T \grad \curl u,
  \]
  by \eqref{eq:5}, i.e., all first order derivatives of $\curl u$ vanish.
  Hence there must
  exist a constant vector $b \in \bb V$ such that $ \curl u = b $
  holds on all $\bb R^3$. But $u$ is supported on $\bar \om$, so $b$
  must be the zero vector. Now that we have shown $\curl u =0$,
  putting $ w = {\Tg} u$ and using \eqref{eq:CM-grad-T1}, we
  find that $w \in \Hts{s+2}$ satisfies $\grad w = u$ and 
  \begin{align*}
    \hess(w) & = \dfo ( \grad w)
     = \dfo(u) = g.
  \end{align*}
  The linear map $g \mapsto w$ we just constructed is the needed operator $\Dgg$.
\end{proof}

\begin{theorem}
  \label{thm:reg-dec-Hcc}
  There exist three continuous linear operators
  \[
    \Socc 0 : \hHcc \to \Ho^1 \otimes \bb S, \qquad 
    \Socc 1 : \hHcc \to \Ho^1 \otimes \bb V, \qquad
    \Socc 2 : \hHcc \to \Ho^1,
  \]
  such that any $g \in \hHcc$ can be decomposed into
  \begin{equation}
    \label{eq:reg-dec-Hcc}
    g = \Socc 0 g + \dfo \Socc 1 g + \hess \Socc 2 g.
  \end{equation}
  Consequently, $\hHcc = \Hocc$.
\end{theorem}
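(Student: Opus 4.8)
The plan is to build the three potentials one at a time, applying Lemmas~\ref{lem:Dcc}, \ref{lem:Rcct}, and~\ref{lem:Dgg} in that order; in terms of diagram~\eqref{eq:reverse-arrows}, this amounts to moving diagonally up, then left, then diagonally up again from $\Hocc$. Fix $g \in \hHcc$. Since $\inc g = \curl\T\curl g$ and row-wise $\div\circ\curl=0$, the field $\inc g$ is divergence-free, so Lemma~\ref{lem:Dcc} (with $s=-1$, where $\Hts{1} = \Ho^1$) gives $\Socc0 g := \Dcc(\inc g) \in \Ho^1\otimes\bb S$ with $\inc\Socc0 g = \inc g$ and $\|\Socc0 g\|_{H^1}\lesssim\|\inc g\|_{\Ht}$. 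Then $g - \Socc0 g \in \hHcc$ (using the continuous embedding $\Ho^1\otimes\bb S\hookrightarrow\hHcc$, which follows from~\eqref{eq:di-cty}) and $\inc(g-\Socc0 g)=0$, so Lemma~\ref{lem:Rcct} applies and yields $\Socc1 g := \Rggt(g-\Socc0 g)\in\Ho^1\otimes\bb V$ with $\curl\dfo\Socc1 g = \curl(g-\Socc0 g)$ and $\|\Socc1 g\|_{H^1}\lesssim\|\curl(g-\Socc0 g)\|_{\Ht}$. Finally $g - \Socc0 g - \dfo\Socc1 g$ is symmetric, lies in $\Ht\otimes\bb S$ (as $\dfo\Socc1 g\in L_2\otimes\bb S$), and has vanishing curl, so Lemma~\ref{lem:Dgg} (again $s=-1$) produces $\Socc2 g := \Dgg(g-\Socc0 g-\dfo\Socc1 g)\in\Ho^1$ with $\hess\Socc2 g = g-\Socc0 g-\dfo\Socc1 g$; rearranging is exactly~\eqref{eq:reg-dec-Hcc}. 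Continuity of each $\Socc i$ is then immediate: it is a composition of the continuous maps $\inc$, $\Dcc$, $\Rggt$, $\dfo$, $\Dgg$, and a telescoping estimate bounds every intermediate remainder by $\|g\|_{\Hocc}$.

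For $\hHcc=\Hocc$: the inclusion $\Hocc\subseteq\hHcc$ is~\eqref{eq:Hcc-dd-cd-inclusions}, and for the reverse it suffices to show each of the three terms in~\eqref{eq:reg-dec-Hcc} lies in $\Hocc$. Since $\|\phi\|_{\Hocc}\lesssim\|\phi\|_{H^1}$ for $\phi\in\cl D(\om)\otimes\bb S$ (by~\eqref{eq:di-cty}), approximating $\Socc0 g$ by smooth compactly supported symmetric fields in $H^1$ gives a $\|\cdot\|_{\Hocc}$-Cauchy sequence whose $\Hocc$-limit coincides with $\Socc0 g$; hence $\Socc0 g\in\Hocc$. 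The same reasoning handles $\dfo\Socc1 g$ via $\|\dfo\psi\|_{\Hocc}\lesssim\|\psi\|_{H^1}$ for $\psi\in\cl D(\om)\otimes\bb V$ (here $\inc\dfo\psi=0$ by~\eqref{eq:5} together with $\curl\circ\grad=0$) and $\hess\Socc2 g$ via $\|\hess\chi\|_{\Hocc}\lesssim\|\chi\|_{H^1}$ for $\chi\in\cl D(\om)$ (here $\curl\hess\chi=\inc\hess\chi=0$). Therefore $g\in\Hocc$, so $\hHcc=\Hocc$; in particular $\cl D(\om)\otimes\bb S$ is dense in $\hHcc$.

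The conceptual obstacles are minor; the main effort is bookkeeping. One must verify at each stage that the current remainder lies in the hypothesis set of the next lemma---that $\inc g$ is divergence-free, that $g-\Socc0 g$ stays in $\hHcc$ and is $\inc$-free, and that $g-\Socc0 g-\dfo\Socc1 g$ lives in $\Ht\otimes\bb S$ with vanishing curl---and that the chain of estimates collapses to a single bound in $\|g\|_{\Hocc}$. The density step is the one place where the three differential orders interact (the Hessian term loses two derivatives), so the three embedding estimates $\|\phi\|_{\Hocc}\lesssim\|\phi\|_{H^1}$, $\|\dfo\psi\|_{\Hocc}\lesssim\|\psi\|_{H^1}$, and $\|\hess\chi\|_{\Hocc}\lesssim\|\chi\|_{H^1}$ each need a separate (but elementary) verification using~\eqref{eq:di-cty} and the vanishing of $\curl$ and $\inc$ on gradients.
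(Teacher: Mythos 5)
Your proposal follows exactly the paper's route: $\Socc 0 g = \Dcc(\inc g)$, then $\Socc 1 g = \Rggt(g-\Socc 0 g)$, then $\Socc 2 g = \Dgg(g-\Socc 0 g-\dfo\Socc 1 g)$ with $s=-1$, and the density/equality step via smooth $H^1$-approximation of the three potentials, with your verifications of the intermediate hypotheses (e.g.\ $\div\inc g=0$, $\inc\dfo=0$) being exactly what the paper leaves implicit. The only cosmetic difference is that you check each of the three terms lies in $\Hocc$ separately rather than approximating their sum at once, which is an equivalent bookkeeping choice.
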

\begin{proof}
  Put $\Socc 0 g := {\Dcc} \inc g$. By Lemma~\ref{lem:Dcc},
  \[
    \inc ( g - \Socc 0 g) =0.
  \]
  Consequently, by Lemma~\ref{lem:Rcct},
  $\Socc 1 g := {\Rggt} ( g - \Socc 0 g)$ is in $\Ho^1 \otimes \bb V$
  satisfies
  \begin{equation}
    \label{eq:19a}
    \curl \big(  g - \Socc 0 g - \dfo \Socc 1 g \big) = 0.
  \end{equation}
  Applying  Lemma~\ref{lem:Dgg} with $s=-1$, we find that 
  $\Socc 2 g : = \Dgg(g - \Socc 0 g - \dfo \Socc 1 g)$ satisfies  
  \[
    g - \Socc 0 g - \dfo \Socc 1 g = \hess \Socc 2 g,
  \]
  and has the required continuity property, thus completing the proof
  of~\ref{eq:reg-dec-Hcc}.

  To prove that $\hHcc = \Hocc$, in view of
  \eqref{eq:Hcc-dd-cd-inclusions}, it suffices to prove that
  any $g \in \hHcc$,
  decomposed as above into $g = \Socc 0 g + \dfo \Socc 1 g + \hess \Socc 2 g$,
  is in $\Hocc$.  By the density of $\cl D (\om)$ in $\Ho^1(\om)$,
  there are
  $g_m \in \cl D(\om) \otimes \bb S$,
  $u_m \in \cl D(\om) \otimes \bb V$,
  and
  $w_m \in \cl D(\om)$ such that
  \[
    \| g_m - \Socc 0 g \|_{H^1} \to 0, \qquad
    \| u_m - \Socc 1 g \|_{H^1} \to 0, \qquad
    \| w_m - \Socc 2 g \|_{H^1} \to 0,
  \]
  as $m \to \infty$. Hence, by \eqref{eq:di-cty},
  $g_m + \dfo u_m + \hess w_m \in \cl D(\om) \otimes \bb S$ converges
  to $g$ in $\| \cdot \|_{\Hocc}$-norm, proving that $g \in \Hocc$.
\end{proof}

\subsection{Regular decomposition of $\Hodd$}

\begin{lemma}
  \label{lem:Ddd}
  There is a linear map
  ${\Ddd}: \Hts{s}_{\pol_1} \to \Hts{s+2} \otimes \bb S $ such
  that for any $s \in \bb R$ and  $w \in \Hts{s}_{\pol_1}$,
  \begin{equation}
    \label{eq:Dm}
    \div \div {\Ddd} w = w, \qquad
    \| {\Ddd} w \|_{\Hts{s+2}} \lesssim \| w \|_{\Hts{s}}.
  \end{equation}
\end{lemma}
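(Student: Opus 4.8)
The plan is to build $\Ddd$ by composing the regularized right inverses of \eqref{eq:std-rt-inverses} along the $\div\div$ complex, reading the bottom--right corner of \eqref{eq:reverse-arrows} backwards from its codomain: first produce a vector potential $p$ with $\div p = w$ carrying the moment properties needed downstream, and then lift $p$ to a \emph{symmetric} matrix potential $\sigma$ with $\div\sigma = p$, so that $\div\div\sigma = \div p = w$. All fields constructed are supported on $\bar\om$ by \eqref{eq:Hts}, so $\Td$, $\curl$, $\div$ (via \eqref{eq:di-cty}) and the pointwise algebraic maps $S,S^{-1},\mskw,\vskw$ all act boundedly between the relevant $\Hts{\bullet}$ spaces.

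\emph{Step 1 (vector potential with the right moments).} Since $w\perp\pol_1$ we have $w(1)=0$, so \eqref{eq:CM-div-T3} gives $p_0:=\Td w\in\Hts{s+1}\otimes\bb V$ with $\div p_0 = w$ and $\|p_0\|_{\Hts{s+1}}\lesssim\|w\|_{\Hts s}$. Integrating by parts against coordinate functions and using $w(x_k)=0$ shows $(p_0,e_k)=0$, i.e.\ $p_0$ is already orthogonal to constant vector fields. To make it orthogonal also to the fields $b\times x$ (hence to all of $\ND$), I correct by a curl: fix $0\le\beta\in\cl D(\om)$ with $\int\beta>0$, set $c_k:=(p_0,e_k\times x)$, and put $p:=p_0-\curl\psi$ with $\psi=\tfrac{1}{2\int\beta}\sum_k c_k\,\beta e_k$. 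Using $(\curl(\beta e_k),e_j\times x)=2\delta_{kj}\int\beta$ and the fact that curls of compactly supported fields are orthogonal to constants, this leaves $\div p = w$, preserves $p\perp$ constants, achieves $p\perp\ND$, and gives $\|p\|_{\Hts{s+1}}\lesssim\|w\|_{\Hts s}$.

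\emph{Step 2 (symmetric matrix potential).} Applying $\Td$ componentwise to $p$, which is legitimate since $(p,e_i)=0$, yields $\eta\in\Hts{s+2}\otimes\bb M$ with $\div\eta=p$. This $\eta$ need not be symmetric, so I add a curl correction that does not change its divergence: I seek $\xi$ with $\skw(\eta+\curl\xi)=0$. By \eqref{eq:skw-curl}, $2\skw\curl\xi=\mskw\,\div S\xi$, so it suffices to solve $\div S\xi=-2\,\vskw\,\eta$. Set $h:=-2\,\vskw\,\eta\in\Hts{s+2}\otimes\bb V$; a componentwise integration by parts through the rows of $\eta$ shows that each $\int h_k$ equals $(p,e_k\times x)$, which vanishes by Step 1, so \eqref{eq:CM-div-T3} applied componentwise produces $\zeta\in\Hts{s+3}\otimes\bb M$ with $\div\zeta=h$. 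Put $\xi:=S^{-1}\zeta$ and $\sigma:=\eta+\curl\xi\in\Hts{s+2}\otimes\bb M$. Then $\div S\xi=\div\zeta=h$, hence $2\skw\curl\xi=\mskw(h)=-2\,\mskw\vskw\,\eta=-2\,\skw\eta$, so $\skw\sigma=0$, i.e.\ $\sigma\in\Hts{s+2}\otimes\bb S$; and $\div\sigma=\div\eta+\div\curl\xi=p$, so $\div\div\sigma=\div p=w$. Setting $\Ddd w:=\sigma$ defines a linear map, and the estimate in \eqref{eq:Dm} follows by concatenating the boundedness of the individual steps.

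The routine parts are the two integration-by-parts moment computations and the bookkeeping with $\mskw,\vskw,S,S^{-1}$. The one genuinely delicate point --- and the reason Step 1 cannot be skipped --- is that each invocation of $\Td$ requires its argument to annihilate constants, and the argument of the second (componentwise) $\Td$ is $\vskw\eta$, whose componentwise averages turn out to be exactly the $\{b\times x\}$-moments of $p$; hence $p$ must be chosen orthogonal to all of $\ND$, not merely to constants, which is what the extra curl correction in Step 1 arranges. (Alternatively one could package Step 2 as a standalone regularized right inverse of $\div\colon\Hts{s+2}\otimes\bb S\to\{q\in\Hts{s+1}\otimes\bb V:\ q\perp\ND\}$ and then compose with $\Td$, but the moment accounting is identical.)
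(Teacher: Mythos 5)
Your construction is correct, but it is a genuinely different (and longer) route than the paper's. The paper also applies $\Td$ twice --- first to $w$ to get $q$ with $\div q = w$, then, after checking via integration by parts that the components of $q$ have zero mean (a consequence of $w \perp \pol_1$ only), componentwise to $q$ to get $\tau$ with $\div\tau = q$ --- but then it simply sets $\Ddd w = \sym\tau$: by \eqref{eq:div-mskw}, $\div(\skw\tau) = \div\mskw(\vskw\tau) = -\curl(\vskw\tau)$, so $\div\div(\skw\tau)=0$ and the skew part contributes nothing to $\div\div$. This makes your entire symmetrization machinery unnecessary for the stated lemma: you insist on an exactly symmetric potential satisfying $\div\sigma = p$ (not merely $\div\div\sigma = w$), which is why you need the $\curl\xi$ correction via \eqref{eq:skw-curl} and, upstream of it, the extra curl correction in Step~1 forcing $p\perp\ND$ so that the second solvability condition $\int h_k = (p, e_k\times x)=0$ holds --- your moment bookkeeping there is accurate, and the regularity and norm bounds do concatenate as claimed. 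What your version buys is a stronger intermediate output (a symmetric $\sigma$ with a prescribed vector potential $p$ as its row-wise divergence, closer in spirit to the constructions in Lemmas~\ref{lem:Rcd} and~\ref{lem:Dcd}); what the paper's version buys is brevity, and it also handles the distributional pairings slightly more carefully by first proving the identity and estimate for $w\in\cl D_{\pol_1}$ and then invoking the density result of Lemma~\ref{lem:dense-mean0}, whereas you argue directly with distributions supported on $\bar\om$ (which is fine, but worth saying explicitly).
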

\begin{proof}
  Let $w \in \cl D_{\pol_1}$. Since $(w, 1)=0$,
  by~\eqref{eq:CM-div-T3}, $q = {\Td}w$ satisfies
  \[
    \div q = w, \qquad \| q \|_{\Hts{s+1}} \lesssim \| w \|_{\Hts s}.
  \]
  Since $q$ is supported on $\bar \om$, 
  we may integrate by parts to see that
  $0 = (w, p_1) = (\div q, p_1) = -(q, \grad p_1)$ for any $p_1 \in \pol_1$. Thus all
  components of $q$ have zero mean on
  $\om$. Applying~\eqref{eq:CM-div-T3} again, we then obtain a
  $\tau \in \Hts{s+2}\otimes \bb M$ such that $\div\tau = q$. Let
  $u = \vskw \tau$. Then $\div \skw\tau = \div \mskw u = -\curl u$
  by~\eqref{eq:div-mskw}.  Collecting these observations, and putting
  $\sigma = \sym \tau$, 
  \begin{align*}
    w & = \div q = \div \div \tau = \div\div ( \sym \tau + \skw \tau)
    \\
      & = \div \div \sigma  - \div \curl u  = \div \div \sigma.
  \end{align*}
  Denote the linear map $w \mapsto \sigma$ we just constructed by
  ${\Ddd} w$. By the continuity of ${\Td}$, we see that ${\Ddd}$ satisfies the
  norm estimate in~\eqref{eq:Dm} for all $w \in \cl D_{\pol_1}$. Hence
  by the density result of Lemma~\ref{lem:dense-mean0}, ${\Ddd}$ has a
  unique continuous extension, which is the required map.
\end{proof}

\begin{lemma}
  \label{lem:Rddt}
  There is a linear map
  ${\Rcct} : \ker(\div\div: \hHdd) \to \Ho^1\otimes \bb T$
  such  that for all
  $\sigma \in \ker(\div\div: \hHdd),$
  \[
    \div \sym \curl {\Rcct} \sigma = \div \sigma, \qquad
    \| {\Rcct} \sigma \|_{H^1} \lesssim \| \div \sigma \|_{\Ht}.
  \]
\end{lemma}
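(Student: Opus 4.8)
The plan is to mimic the construction of $\Rggt$ in Lemma~\ref{lem:Rcct}, building an $H^1$-regular matrix potential for $\sigma$ by applying the Costa--McIntosh operators $\Tc$ and then $\Td$, and reading the required identity off from~\eqref{eq:4}. Concretely: since $\sigma\in\ker(\div\div:\hHdd)$, the vector field $2\div\sigma\in\Ht\otimes\bb V$ is divergence-free (and supported in $\bar\om$), so by~\eqref{eq:CM-curl-T2} the field $\zeta:=\Tc(2\div\sigma)\in\Hts{0}\otimes\bb V=L_2\otimes\bb V$ satisfies $\curl\zeta=2\div\sigma$ and $\|\zeta\|_{L_2}\lesssim\|\div\sigma\|_{\Ht}$. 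Granting (see below) that each component $\zeta_i$ has vanishing mean, apply $\Td$ to $\zeta_1,\zeta_2,\zeta_3$ and stack the resulting vectors as the rows of a matrix field $\tau\in\Hts{1}\otimes\bb M=\Ho^1\otimes\bb M$; then $\div\tau=\zeta$ by~\eqref{eq:CM-div-T3}, with $\|\tau\|_{H^1}\lesssim\|\zeta\|_{L_2}$. Finally set
\[
  \Rcct\sigma:=\dev(\tau^{\T})\in\Ho^1\otimes\bb T .
\]

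To verify the claimed identity, note first that $\tau^{\T}-\dev(\tau^{\T})=\tfrac13\tr(\tau)\id$, and $\curl(\tr(\tau)\id)=-\mskw\grad\tr(\tau)$ by~\eqref{eq:mskw-grad} is skew-symmetric, hence annihilated by $\sym$; therefore $\sym\curl\dev(\tau^{\T})=\sym\curl\tau^{\T}$. Now applying~\eqref{eq:4} and then the construction of $\tau$ and $\zeta$,
\[
  \div\sym\curl\,\Rcct\sigma=\div\sym\curl\tau^{\T}=\tfrac12\curl\div\tau=\tfrac12\curl\zeta=\div\sigma .
\]
The norm bound $\|\Rcct\sigma\|_{H^1}\lesssim\|\div\sigma\|_{\Ht}$ follows by chaining the boundedness of $\Tc:\Hts{-1}\otimes\bb V\to\Hts{0}\otimes\bb V$, of $\Td:\Hts{0}\to\Hts{1}$ (with $\Hts{1}=\Ho^1$), and of the pointwise operations $\dev$ and $(\cdot)^{\T}$.

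The step I expect to be the main obstacle is precisely the verification that the components of $\zeta=\Tc(2\div\sigma)$ have zero mean, which is the hypothesis $\Td$ needs in order to invert $\div$ via~\eqref{eq:CM-div-T3}. One cannot repair $\zeta$ by the only admissible gauge, a gradient $\grad\varphi$ with $\grad\varphi$ supported in $\bar\om$, since every such correction already has componentwise mean zero; the vanishing of the means has to be inherited from $\sigma$. I would extract it as follows. Testing $\curl\zeta=2\div\sigma$ against the smooth field $x_m e^k$ — legitimate because $\zeta$ and $\div\sigma$ are supported in the compact set $\bar\om$ — and using $\curl(x_m e^k)=\sum_i\veps^{imk}e_i$ gives $\sum_i\veps^{imk}(\zeta_i,1)=2\,(\div\sigma)_k(x_m)$ for all $m,k$, so the means $(\zeta_i,1)$ are determined by the right-hand side. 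Integrating by parts once yields $(\div\sigma)_k(x_m)=-\sigma_{km}(1)$, and the identity $(\div\div\sigma)(p)=\sigma(\hess p)$ — valid for $p\in\cl D(\om)$ and, because $\sigma$ and $\div\div\sigma$ are supported in $\bar\om$, extending to all polynomials $p$ — applied with $p=x_k x_m$ and $\div\div\sigma=0$ gives $2\sigma_{km}(1)=0$. Hence $(\zeta_i,1)=0$ for each $i$, completing the argument; in particular $\Td$ is applicable and produces the stated $\Ho^1$ potential.
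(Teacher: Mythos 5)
Your proposal is correct and follows essentially the same route as the paper: apply $\Tc$ to (a multiple of) $\div\sigma$, verify that the resulting $L_2$ field has mean-zero components, lift it row-wise with $\Td$ to an $\Ho^1$ matrix potential $\tau$, set $\Rcct\sigma = \T\dev\tau$, and conclude via the identity~\eqref{eq:4}. The only deviation is in the mean-zero step, where the paper pairs $\div\sigma$ with $b\times x$ and invokes the symmetry of $\sigma$, while you pair with the quadratics $x_kx_m$ and invoke $\div\div\sigma=0$; both are valid for $\sigma\in\ker(\div\div:\hHdd)$, and your tracking of the factor $2$ is consistent throughout.
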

\begin{proof}
  Consider a  $\sigma \in \hHdd$ with $\div\div \sigma=0$. Then, since
  $\div \sigma \in \Ht \otimes \bb V$ has vanishing divergence,
  $u = {\Tc} \div\sigma$ is in $L_2\otimes \bb V$
  and satisfies $\curl u = \div \sigma$ by
  \eqref{eq:CM-curl-T2}.
  Next, we claim that $(u, b) =0$ for any $b \in \bb V$. To see this,
  first note that the distribution $\div \sigma$ satisfies 
  \[
    (\div \sigma)(b \times x) = \sigma (\grad (b \times x)) =
    \sigma (\mskw b) = 0 
  \]
  due to the symmetry of $\sigma$.
  Relating  to $u$, 
  \begin{align*}
    0
    & = (\div \sigma)( b \times x)
      = (\curl u ) ( b \times x)
      = (u, \curl (b \times x) ) = 2 (u, b).
  \end{align*}
  Hence we may apply ${\Td}$ to each component of $\frac 1 2 u$ and
  use~\eqref{eq:CM-div-T3} to get a $\tau \in \Ho^1 \otimes \bb M$
  such that $\div \tau = \frac 1 2 u$, which implies 
  \begin{align*}
    \div \sigma
    & = \curl u = \frac 1 2  \curl \div \tau = \frac 1 2  \curl \div \dev \tau
      =  \div \sym \curl \T \dev \tau.
  \end{align*}
  Here we have used~\eqref{eq:4} and the fact that $\curl \div$
  vanishes on matrix fields that are scalar multiples of the identity.
  Denoting the map $\sigma \mapsto \T \dev \tau$ by ${\Rcct}$, the
  continuity of ${\Tc}$ and ${\Td}$ finishes the proof.
\end{proof}

\begin{theorem}
  \label{thm:reg-dec-Hdd}
  There exist three continuous linear operators
  \[
    \Sodd 0 : \hHdd \to \Ho^1 \otimes \bb S, \qquad 
    \Sodd 1 : \hHdd \to \Ho^1 \otimes \bb T, \qquad 
    \Sodd 2 : \hHdd \to \Ho^1 \otimes \bb S,
  \]
  such that any $\sigma \in \hHdd$
  can be decomposed into
  \begin{equation}
    \label{eq:reg-dec-Hdd}
    \sigma = \Sodd 0 \sigma + \sym\curl \Sodd 1 \sigma + \inc \Sodd 2 \sigma.
  \end{equation}
  Consequently, $\hHdd = \Hodd$.
\end{theorem}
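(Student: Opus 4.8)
The plan is to mirror the peeling argument of Theorem~\ref{thm:reg-dec-Hcc}, now using the three right inverses constructed above---$\Ddd$ from Lemma~\ref{lem:Ddd}, $\Rcct$ from Lemma~\ref{lem:Rddt}, and $\Dcc$ from Lemma~\ref{lem:Dcc}---applied in that order. The reduction is: first remove the $\div\div$-part with $\Ddd$, which lands the residual in $\ker(\div\div: \hHdd)$; then apply $\Rcct$ to cancel the divergence of the residual, leaving a symmetric, divergence-free matrix field; finally represent that field as $\inc$ of an $\Ho^1$ potential via $\Dcc$. Throughout we use $\Hts 1=\Ho^1$, so the codomains $\Hts{s+2}$ of the lemmas (with $s=-1$) are the desired $\Ho^1$-regular spaces.

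In detail, given $\sigma\in\hHdd$, one first notes $(\div\div\sigma)(p)=(\sigma,\hess p)=0$ for every $p\in\pol_1$, so $\div\div\sigma\in\HtPl$ and Lemma~\ref{lem:Ddd} with $s=-1$ shows that $\Sodd 0\sigma:=\Ddd(\div\div\sigma)$ defines a continuous operator $\hHdd\to\Ho^1\otimes\bb S$ with $\div\div(\sigma-\Sodd 0\sigma)=0$. Hence $\sigma-\Sodd 0\sigma\in\ker(\div\div: \hHdd)$, and Lemma~\ref{lem:Rddt} produces a continuous $\Sodd 1\sigma:=\Rcct(\sigma-\Sodd 0\sigma)\in\Ho^1\otimes\bb T$ with $\div\bigl(\sigma-\Sodd 0\sigma-\sym\curl\Sodd 1\sigma\bigr)=0$. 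The residual $\sigma'':=\sigma-\Sodd 0\sigma-\sym\curl\Sodd 1\sigma$ is symmetric (since $\Sodd 0\sigma$ is symmetric and $\sym\curl$ produces symmetric fields), lies in $\Ht\otimes\bb S$, and is divergence-free, so Lemma~\ref{lem:Dcc} with $s=-1$ applies: $\Sodd 2\sigma:=\Dcc\sigma''\in\Ho^1\otimes\bb S$ satisfies $\inc\Sodd 2\sigma=\sigma''$. Chaining the three norm estimates gives continuity of $\Sodd 0,\Sodd 1,\Sodd 2$, and rearranging gives~\eqref{eq:reg-dec-Hdd}.

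For the consequence $\hHdd=\Hodd$, by the inclusion $\Hodd\subseteq\hHdd$ from~\eqref{eq:Hcc-dd-cd-inclusions} it suffices to show that each decomposed $\sigma$ lies in $\Hodd$, and for this I would reuse the density scheme from the proof of Theorem~\ref{thm:reg-dec-Hcc}: choose $\sigma_m^{(0)}\in\cl D(\om)\otimes\bb S$, $\tau_m\in\cl D(\om)\otimes\bb T$, $\eta_m\in\cl D(\om)\otimes\bb S$ converging in $H^1$ to $\Sodd 0\sigma,\Sodd 1\sigma,\Sodd 2\sigma$ respectively (using density of $\cl D(\om)$ in $\Ho^1$), set $\sigma_m:=\sigma_m^{(0)}+\sym\curl\tau_m+\inc\eta_m$, which is smooth, compactly supported, and symmetric (using that $\inc$ also preserves symmetry), hence in $\cl D(\om)\otimes\bb S$, and then show $\sigma_m\to\sigma$ in $\|\cdot\|_{\Hodd}$ via~\eqref{eq:di-cty}. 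The point that needs attention---what I would flag as the main obstacle---is the derivative budget: the $\Hodd$-norm contains the second-order operator $\div\div$, so bounding $\div\div\inc\eta_m$ or $\div\div\sym\curl\tau_m$ naively would demand more than $H^1$-regularity; but these compositions vanish identically ($\div\circ\inc=0$ and $\div\div\circ\sym\curl=0$ on smooth fields, cf.\ the elasticity and $\div\div$ complexes of Corollary~\ref{cor:3complexes}), so in fact only first- and second-order operators act on the approximants and~\eqref{eq:di-cty}, iterated at most twice, delivers convergence in all three seminorms. Apart from this, the proof is the assembly of the three lemmas, the remaining care being to verify at each peel that the residual lands in exactly the kernel/regularity class required next---above all the symmetry and divergence-freeness of $\sigma''$.
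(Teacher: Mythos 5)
Your proposal is correct and follows essentially the same route as the paper's proof: peel off the $\div\div$-part with $\Ddd$ (after checking $\div\div\sigma\in\HtPl$ via $(\div\div\sigma)(p)=\sigma(\hess p)=0$), cancel the divergence with $\Rcct$, represent the remaining symmetric divergence-free residual as $\inc$ of an $\Ho^1$ potential via Lemma~\ref{lem:Dcc} with $s=-1$, and then prove $\hHdd=\Hodd$ by approximating the three $\Ho^1$ components by $\cl D(\om)$ fields and invoking~\eqref{eq:di-cty}. Your extra remarks on the ``derivative budget'' (that $\div\circ\inc=0$ and $\div\div\circ\sym\curl=0$ keep the approximation error controlled by at most second-order derivatives) and on the symmetry of the residual are correct details that the paper leaves implicit.
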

\begin{proof}
  Let  $\sigma \in \hHdd$ and $\Sodd 0 \sigma := {\Ddd} \div \div \sigma.$
  Note that $\div\div \sigma$ is in $\HtPl$, the domain of
  ${\Ddd}$, because
  the hessian of $p$ is zero for any $p \in \pol_1$ and 
  \[
    (\div \div \sigma) (p) = \sigma(\hess p) = 0.
  \]
  By 
  Lemma~\ref{lem:Ddd},
  \[
    \div\div ( \sigma - \Sodd 0 \sigma) = 0.
  \]
  Next, set $\Sodd 1 \sigma := {\Rcct} (\sigma - \Sodd 0 \sigma)$ in
  $\Ho^1\otimes \bb T$.  By Lemma~\ref{lem:Rddt},
  \[
    \div \big( \sigma - \Sodd 0 \sigma -  \sym\curl \Sodd 1 \sigma \big) =0.
  \]
  By Lemma~\ref{lem:Dcc}, setting
  $\Sodd 2 \sigma := {\Dcc} ( \sigma - \Sodd 0 \sigma - \sym\curl \Sodd 1
  \sigma )$, we find that
  \[
    \sigma - \Sodd 0 \sigma - \sym\curl \Sodd 1  \sigma = \inc \Sodd 2 \sigma,
  \]
  thus completing the proof of \eqref{eq:reg-dec-Hdd}.

  To conclude, it suffices to prove that $\hHdd \subseteq \Hodd$, due
  to \eqref{eq:Hcc-dd-cd-inclusions}.  Decompose any
  $\sigma \in \hHdd$ into
  $\sigma = \Sodd 0 \sigma + \sym\curl \Sodd 1 \sigma + \inc \Sodd 2
  \sigma$.
  By the density of $\cl D (\om)$ in $\Ho^1$,
  there are
  $\sigma_m \in \cl D(\om) \otimes \bb S$,
  $\tau_m \in \cl D(\om) \otimes \bb T$,
  and
  $g_m \in \cl D(\om) \otimes \bb S$ such that
  \[
    \| \sigma_m - \Sodd 0 \sigma \|_{H^1} \to 0, \qquad
    \| \tau_m - \Sodd 1 \sigma \|_{H^1} \to 0, \qquad
    \| g_m - \Sodd 2 \sigma  \|_{H^1} \to 0,
  \]
  as $m \to \infty$. Hence, by \eqref{eq:di-cty},
  $\sigma_m + \sym\curl \tau_m + \inc g_m \in \cl D(\om) \otimes \bb S$ converges
  to $\sigma$ in $\| \cdot \|_{\Hodd}$~norm, thus proving that $\sigma \in \Hodd$.
\end{proof}

\subsection{Regular decomposition of $\Hocd$}

Next, we turn to constructing a regular decomposition of $\Hocd$. (The
case of $\HocdT$ obviously follows from that of $\Hocd$.) Unlike the
three-term decompositions of $\Hocc$ and $\Hodd$ cases, now we are only
able to construct a decomposition with four terms.  We begin with a
preparatory lemma.

\begin{lemma}
  \label{lem:Rcdh+Dcdh+Ucdh}
  Let $K = \ker( \curl\div: \hHcd)$. 
  There are  linear maps
  ${\Rgch}: K \to \Ho^1 \otimes \bb S % \subset \Hocc
  $,
  $\Dgch: K \to \Ho^1 \otimes \bb V % \subset \Ho(\curl)
  $,
  and
  $\Ugch: K \to \Ho^1 \otimes \bb V % \subset \Ho(\div)
  $,
  such that any $\tau \in \hHcd$ with $\curl\div \tau =0$ can be  
  decomposed into
  \begin{equation}
    \label{eq:RDU-cd-decomp}
    \tau = \curl {\Rgch} \tau + \curl\dfo \Ugch \tau + \T \dev \grad \Dgch \tau
  \end{equation}
  and the following continuity bound holds:
  \begin{equation}
    \label{eq:RDU-cd-decomp-est}
    \| {\Rgch} \tau \|_{H^1} + \| \Dgch \tau \|_{H^1} + \| \Ugch \tau
    \|_{H^1} \lesssim \| \tau \|_{\Hocd}.
  \end{equation}
  If in addition, $\tau$ is in $L_2 \otimes \bb T$, then $\Ugch$ can
  be taken to be zero provided~\eqref{eq:RDU-cd-decomp-est} is replaced by
  \[
    \| {\Rgch} \tau \|_{H^1} \lesssim \| \tau \|_{\Hocd},
    \qquad
    \| \Dgch \tau \|_{H^1} \lesssim  \|\tau\|_{L_2} +  \| \tau \|_{\Hocd}.
  \]  
\end{lemma}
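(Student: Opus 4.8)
The plan is to remove the three potentials successively, moving left and diagonally up in the diagram, using the regularized Bogovski{\u{i}} right inverses $\Tg,\Tc,\Td$ of~\eqref{eq:std-rt-inverses}, the operators $\Dcc$ of Lemma~\ref{lem:Dcc} and $\Rggt$ of Lemma~\ref{lem:Rcct}, and the algebraic identities of~\eqref{eq:earlier-identities} and Lemma~\ref{lem:commute-identities}. Two observations recur: every inversion of $\grad$, $\curl$ or $\div$ produces a constant/rigid-motion/mean-value ambiguity which is annihilated either because the output is supported in $\bar\om$ or by a moment condition one checks; and for a vector field $u$ one has $\inc\dfo u=0$ (from~\eqref{eq:5} and $\curl\grad=0$) and $\sym\curl\dev\grad u=0$ (from~\eqref{eq:mskw-grad}), so the $\curl\dfo$-term and the $\T\dev\grad$-term of~\eqref{eq:RDU-cd-decomp} are invisible to $\sigma\mapsto\sym\curl\T\curl\sigma$ and to $\sigma\mapsto\sym\curl\T\sigma$, respectively. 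First I would define $\Rgch$: since $\tau$ is trace-free, $(\div\tau)(r)=-(\tau,\grad r)=0$ for every $r\in\RT$, and~\eqref{eq:4} together with $\tau\in K$ gives $\div(\sym\curl\T\tau)=\tfrac12\curl\div\tau=0$, so, as $\sym\curl\T\tau\in\Ht\otimes\bb S$, Lemma~\ref{lem:Dcc} with $s=-1$ supplies $\Rgch\tau:=\Dcc(\sym\curl\T\tau)\in\Ho^1\otimes\bb S$ with $\inc\Rgch\tau=\sym\curl\T\tau$ and $\|\Rgch\tau\|_{H^1}\lesssim\|\tau\|_{\Hocd}$. One verifies $\curl\Rgch\tau\in\hHcd$, and since $\sym\curl\T\curl\Rgch\tau=\inc\Rgch\tau=\sym\curl\T\tau$, the field $\tau-\curl\Rgch\tau\in\hHcd$ has $\sym\curl\T(\tau-\curl\Rgch\tau)=0$.

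Next I would define $\Dgch$. Because $\curl\div\tau=0$, the scalar $\phi:=\Tg(\div\tau)\in L_2$ satisfies $\grad\phi=\div\tau$; testing against $x\in\RT$ gives $(\phi,1)=0$, hence $\Dgch\tau:=\tfrac32\Td\phi\in\Ho^1\otimes\bb V$ has $\div\Dgch\tau=\tfrac32\phi$ and, by~\eqref{eq:6}, $\div(\T\dev\grad\Dgch\tau)=\tfrac23\grad\div\Dgch\tau=\div\tau$, with $\|\Dgch\tau\|_{H^1}\lesssim\|\div\tau\|_{\Ht}\le\|\tau\|_{\Hocd}$. Then $\tau_2:=\tau-\curl\Rgch\tau-\T\dev\grad\Dgch\tau$ lies in $\hHcd$, is trace-free in $\Ht\otimes\bb T$, and satisfies $\div\tau_2=0$ and (by the previous step) $\sym\curl\T\tau_2=0$. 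To obtain $\Ugch$, apply $\Tc$ rowwise to $\tau_2$ to get $\eta\in L_2\otimes\bb M$ with $\curl\eta=\tau_2$; trace-freeness and~\eqref{eq:tr-curl} give $\div\vskw\eta=0$, so $w:=\Tc(\vskw\eta)\in\Ho^1\otimes\bb V$ satisfies $\curl w=\vskw\eta$, whence $\skw\eta=\mskw\curl w=2\skw\grad w$ by~\eqref{eq:mskw-curl} and $\tau_2=\curl(\sym\eta)-2\curl\dfo w$ after using $\curl\grad=0$. Putting $h:=\sym\eta\in L_2\otimes\bb S$, the vanishing of $\sym\curl\T\tau_2$ and of $\inc\dfo w$ forces $\inc h=0$, so Lemma~\ref{lem:Rcct} gives $\Rggt h\in\Ho^1\otimes\bb V$ with $\curl\dfo\Rggt h=\curl h$; therefore $\tau_2=\curl\dfo(\Rggt h-2w)$ and I set $\Ugch\tau:=\Rggt h-2w$. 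Chasing the norms (each $\Tc$ gaining one Sobolev order) yields $\|w\|_{H^1}+\|\Rggt h\|_{H^1}\lesssim\|\tau_2\|_{\Ht}\lesssim\|\tau\|_{\Hocd}$, which gives~\eqref{eq:RDU-cd-decomp} and~\eqref{eq:RDU-cd-decomp-est}.

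For the case $\tau\in L_2\otimes\bb T$ I keep the same $\Rgch\tau=\Dcc(\sym\curl\T\tau)$ and set $\tau':=\tau-\curl\Rgch\tau\in L_2\otimes\bb T$, which has $\sym\curl\T\tau'=0$. It then suffices to produce $v\in\Ho^1\otimes\bb V$ with $\T\dev\grad v=\tau'$ and $\|v\|_{H^1}\lesssim\|\tau'\|_{L_2}$, for then $\Dgch\tau:=v$, $\Ugch\tau:=0$ complete the decomposition, and $\|\tau'\|_{L_2}\lesssim\|\tau\|_{L_2}+\|\Rgch\tau\|_{H^1}\lesssim\|\tau\|_{L_2}+\|\tau\|_{\Hocd}$ gives the stated bounds. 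To build $v$, put $w:=\T\tau'$ (trace-free, $\sym\curl w=0$); by~\eqref{eq:skw-curl}, $\curl w=\skw\curl w=\tfrac12\mskw(\div\T w)$, so $\div\curl w=0$ and~\eqref{eq:div-mskw} give $\curl(\div\T w)=0$, hence $\psi:=\Tg(\div\T w)\in L_2$ with $\grad\psi=\div\T w$; then~\eqref{eq:mskw-grad} gives $\curl w=\tfrac12\mskw\grad\psi=-\tfrac12\curl(\psi\id)$, so $w+\tfrac12\psi\id$ is curl-free and $v:=\Tg(w+\tfrac12\psi\id)$ (rowwise) lies in $\Ho^1\otimes\bb V$ with $\grad v=w+\tfrac12\psi\id$; as $\tr(w+\tfrac12\psi\id)=\tfrac32\psi$, we get $\div v=\tfrac32\psi$ and thus $\dev\grad v=\grad v-\tfrac13(\div v)\id=w$, i.e.\ $\T\dev\grad v=\tau'$, with $\|v\|_{H^1}\lesssim\|w\|_{L_2}+\|\psi\|_{L_2}\lesssim\|\tau'\|_{L_2}$.

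The main obstacle is the bounded potential for $\dev\grad$ built in the last paragraph; it is a quantitative form of the exactness of the first step of the $\div\div$ complex~\eqref{eq:div-div-complex} and is the one genuinely new construction. Everything else is careful but routine bookkeeping, the two delicate points being: verifying at each stage that the running remainder stays in $\hHcd$ with the relevant second-order quantities in $\Ht$, and arranging the $\curl$-potential extraction in the general case so it lands in $\Ho^1$ rather than $L_2$, which is why $\Tc$ must be applied twice rather than invoking the $\dev\grad$-potential directly on $\tau_2\in\Ht\otimes\bb T$.
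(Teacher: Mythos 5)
Your proof is correct, and its first step coincides with the paper's: both define $\Rgch\tau$ by solving $\inc g=\sym\curl\T\tau$ with the operator $\Dcc$ of Lemma~\ref{lem:Dcc} at $s=-1$ (legitimate because \eqref{eq:4} and $\curl\div\tau=0$ make $\sym\curl\T\tau$ divergence-free), and your treatment of the $L_2\otimes\bb T$ case reproduces, in slightly different packaging, the paper's two-term decomposition \eqref{eq:21} with an $\Ho^1$ potential obtained from the same $\mskw$/trace bookkeeping. Where you genuinely diverge is the general case. The paper combines the symmetric part ($\inc g$) and the skew part ($-\tfrac12\curl(w\id)$ with $w=\Tg\div\tau$, via \eqref{eq:skw-curl}) of $\curl\T\tau$ into one row-wise curl-free matrix field $\T\tau-\T\curl g+\tfrac12 w\id$, inverts $\grad$ row-wise to get a single potential $q$, notes $\div q=\tfrac32 w$ so that $q$ is only in $\Ho(\div)$, and then upgrades to $\Ho^1$ by splitting $q=r+\tfrac12\curl u$ with $r=\Td\div q$ and $u=\tfrac12\Tc(q-r)$, the $\curl\dfo$ term appearing through \eqref{eq:5}. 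You instead first subtract an $\Ho^1$ potential matching the divergence, $\Dgch\tau=\tfrac32\Td\Tg\div\tau$ with $\div\T\dev\grad\Dgch\tau=\div\tau$ by \eqref{eq:6} (your check of the moment condition $(\phi,1)=0$ from trace-freeness of $\tau$ is exactly what makes $\Td$ applicable), and then write the remaining trace-free, divergence-free, $\sym\curl\T$-free field as $\curl\dfo$ of an $\Ho^1$ field by a double application of $\Tc$ combined with \eqref{eq:tr-curl}, \eqref{eq:mskw-curl} and Lemma~\ref{lem:Rcct}; this last construction is in effect an $\Ht$-level analogue of what the paper later does in Lemmas~\ref{lem:Rcdt+Dcdt} and~\ref{lem:Dgc}, and since you only invoke tools that precede the present lemma ($\Tg,\Tc,\Td$, Lemma~\ref{lem:Dcc}, Lemma~\ref{lem:Rcct}), there is no circularity. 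Both routes yield the same three-term structure, the same $\Ho^1$ regularity, and \eqref{eq:RDU-cd-decomp-est}; the paper's is a bit shorter because one row-wise $\grad$-inversion produces the last two potentials at once, while yours buys a cleaner separation of roles (the $\T\dev\grad$ term carries all of $\div\tau$, the $\curl\dfo$ term is exactly the divergence-free, $\sym\curl\T$-free remainder) and makes the two-term special case an immediate byproduct rather than a separate observation about the regularity of $q$.
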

\begin{proof}
  Given any  $\tau \in \hHcd$ with $\curl\div \tau =0$, put $w = {\Tg} \div\tau$. Then
  by~\eqref{eq:CM-grad-T1},
  \begin{equation}
    \label{eq:w-estimate}
    \grad w = \div \tau, \qquad \| w \|_{L_2} \lesssim \| \div\tau \|_{\Ht}.
  \end{equation}
  Since $\tr (\T \tau)=0$, we  know that $S \T \tau = \tau$, so
  \begin{equation}
    \label{eq:16}
    \skw\curl \T\tau =
    \frac 1 2 \mskw \div S \T \tau =
    \frac 1 2 \mskw \div \tau =
    \frac 1 2 \mskw \grad w =
    -\frac 1 2 \curl (w \id),
  \end{equation}
  where we have used~\eqref{eq:skw-curl} and~\eqref{eq:div-mskw}.

  Let $\sigma = \sym\curl \T \tau$.  By the identity~\eqref{eq:4} of
  Lemma~\ref{lem:commute-identities},
  $\div \sigma = \curl \div \tau =0$, so applying
  Lemma~\ref{lem:Rcct}, $g = {\Rggt} \sigma$ is in $\Ho^1 \otimes \bb S$
  and satisfies
  \begin{equation}
    \label{eq:g-est}
    \sigma = \inc g, \qquad \| g \|_{H^1}
    \lesssim \|\sym \curl \T \tau \|_{\Ht}        
  \end{equation}
  Combined with~\eqref{eq:16}, we have the twin identities 
  \begin{align*}
    \sym \curl \T \tau &  = \curl \T \curl g, 
    \\
    \skw \curl \T \tau & = -\frac 1 2 \curl( w \id).
  \end{align*}
  Adding these equations, we find that
  $\curl ( \T \tau - \T \curl g + \frac 1 2 w \id) =0$. Hence,
  applying ${\Tg}$ to each of the row vectors of
  $ \T \tau - \T \curl g + \frac 1 2 w \id$ and
  using~\eqref{eq:CM-grad-T1}, we obtain a $q \in L_2 \otimes \bb V$ such that 
  \begin{subequations}
    \label{eq:20-all}
    \begin{gather}
      \label{eq:20}
      \grad q = \T \tau - \T \curl g + \frac 1 2 w \id,
      \\
      \label{eq:20-est}
      \| q \|_{L_2} \lesssim \| \tau - \curl g + w \|_{\Ht}.
    \end{gather}
  \end{subequations}
  In fact, $q|_\om$ is in $\Ho(\div)$. To see this, take traces on
  both sides of~\eqref{eq:20}. Recall that $\tr \tau=0$. Also,
  $\tr(\curl g) =0$ by~\eqref{eq:tr-curl}. Hence we conclude that
  $\frac 3 2 w =\div q$, an identity that holds in all $\bb R^3$ with
  $q$ and $w$ supported only on $\bar \om$. Since $w \in L_2$, this in
  particular shows that $q|_\om \in \Ho(\div)$, and the estimate
  \begin{align}
    \label{eq:20-est-1}
    \| q \|_{H(\div)}
    & \lesssim \| \tau \|_{\Ht}
      + \| g \|_{H^1} + \| w \|_{L_2} \lesssim \| \tau \|_{\Hocd}
  \end{align}
  follows from the estimates of~\eqref{eq:20-est},~\eqref{eq:g-est},
  and \eqref{eq:w-estimate}.

  Taking the deviatoric part of both sides of~\eqref{eq:20} and noting
  that $\tau = \dev \tau$, we obtain a preliminary two-term
  decomposition of $\tau,$
   \begin{equation}
     \label{eq:21}
     \tau = \curl g + \T \dev \grad q.
   \end{equation}
   However, here $q$ is not in $\Ho^1 \otimes \bb V$, in general.   
   To improve this to the needed result, we use $r = {\Td} \div q$,
   which has the same divergence as $q$, but is in 
   $\Ho^1 \otimes \bb V$:
   \[
     \div r = \div q, \qquad \| r \|_{H^1} \lesssim \| \div q \|_{L_2}
     \lesssim \| \tau \|_{\Hocd},
   \]
   by~\eqref{eq:20-est-1}.  Since $\div(q - r)=0$, putting
   $u = \frac 1 2 {\Tc}(q -r)$ in $\Ho^1\otimes \bb V$,
   by~\eqref{eq:CM-curl-T2},
   \[
     \frac 1 2 \curl u = q - r,
     \qquad \| u \|_{H^1} \lesssim \| q - r \|_{L_2} \lesssim \|\tau\|_{\Hocd}.
   \]
   Hence
   \begin{align*}
     \dev \grad q
     = \dev \grad r + \frac 1 2 \dev \grad \curl u
     = \dev \grad r +  \curl \dfo u.
   \end{align*}
   Substituting this into~\eqref{eq:21} and setting
   ${\Rgch} \tau = g, \Dgch \tau = r,$ and $\Ugch \tau = u$, we see
   that~\eqref{eq:RDU-cd-decomp} and~\eqref{eq:RDU-cd-decomp-est}
   follow.

   To prove the remaining statement, suppose
   $\tau \in L_2 \otimes \bb T \cap \hHcd$. Then, due to the higher
   regularity of $\tau$, observe that $q$ in \eqref{eq:20} is in
   $\Ho^1 \otimes \bb V$ and in place of 
   \eqref{eq:20-est}, we have
   \[
     \| q \|_{H^1} \lesssim \| \tau - \curl g + w \|_{L_2}
     \lesssim \| \tau \|_{L_2} + \| g \|_{H^1} + \| w \|_{L_2}.
   \]
   which can be used in place of~\eqref{eq:20-est-1}.
   There is no
   longer a need to produce the $r$ above, and we may set
   $\Dgch \tau = q \in \Ho^1 \otimes \bb V$.  The
   decomposition~\eqref{eq:21} then concludes the proof.
\end{proof}

\begin{lemma}
  \label{lem:Dcd}
  There is a linear map
  ${\Dcd}: \ker( \div: \Hs{s}_{\ND}(\div) ) \to \Hs{s+2}  \otimes \bb T$
  such that 
  \[
    \curl\div {\Dcd} v = v, \qquad \| {\Dcd} v \|_{\Hs{s+2}} \lesssim \| v \|_{\Hs{s}},
  \]
  for any $s \in \bb R$ and $v$ in $\Hs{s}_{\ND}(\div) $ with zero divergence.
\end{lemma}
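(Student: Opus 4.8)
The plan is a three–step ``go around the diagram'' construction: first build a \emph{full}--matrix potential for $v$ using the Costa--McIntosh right inverses of~\eqref{eq:17}, then exploit orthogonality of $v$ to $\ND$ so that the required moment conditions line up, and finally pass to the deviatoric part without disturbing $\curl\div$. Concretely, fix $v\in\ker(\div:\Hs{s}_{\ND}(\div))$. Since $\div v=0$ and the components of $v$ are tempered distributions on $\bb R^3$ supported on $\bar\om$, \eqref{eq:CM-curl-T2} applies: $w:={\Tc}v$ lies in $\Hts{s+1}\otimes\bb V$, satisfies $\curl w=v$ on all of $\bb R^3$, and obeys $\|w\|_{\Hts{s+1}}\lesssim\|v\|_{\Hts{s}}$.

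The next step is where the full hypothesis $v\in\Hs{s}_{\ND}(\div)$ (beyond $v\in\Hs{s}(\div)$) is used: I would check that $w$ has vanishing action on every constant vector. For $d\in\bb V$ we have $d\times x\in\ND$ and $\curl(d\times x)=2d$, so, integrating by parts (legitimate because $v$ and $w$ are supported on the compact set $\bar\om$),
\[
  0=v(d\times x)=(\curl w)(d\times x)=w\bigl(\curl(d\times x)\bigr)=2\,w(d).
\]
Hence each component $w_i$ of $w$ satisfies $w_i(1)=0$, so~\eqref{eq:CM-div-T3} can be invoked componentwise: applying ${\Td}$ to each component of $w$ produces $\eta\in\Hts{s+2}\otimes\bb M$, supported on $\bar\om$, with $\div\eta=w$ and $\|\eta\|_{\Hts{s+2}}\lesssim\|w\|_{\Hts{s+1}}$.

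Finally, I would set ${\Dcd}v:=\dev\eta\in\Hts{s+2}\otimes\bb T$. Using $\div(f\,\id)=\grad f$ for a scalar field $f$, one gets $\div(\dev\eta)=\div\eta-\frac{1}{3}\grad(\tr\eta)=w-\frac{1}{3}\grad(\tr\eta)$, and since $\curl\grad=0$ this gives $\curl\div({\Dcd}v)=\curl w=v$, as required; the estimate $\|{\Dcd}v\|_{\Hts{s+2}}\lesssim\|v\|_{\Hts{s}}$ follows by chaining the two bounds above with boundedness of the algebraic map $\dev$, and linearity is immediate from the construction. I do not anticipate a genuine obstacle in this lemma; the only point requiring care is the bookkeeping of support and moment conditions in the middle step, which is precisely the mechanism by which $v\perp\ND$ translates into the hypothesis needed to apply ${\Td}$ componentwise.
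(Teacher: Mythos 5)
Your proposal is correct and follows essentially the same route as the paper's proof: apply ${\Tc}$ to the divergence-free $v$ to get a vector potential, use the annihilation of $\ND$ (via $\curl(d\times x)=2d$ and the compact support of the potential) to verify the zero-mean condition needed for ${\Td}$, apply ${\Td}$ componentwise to get a matrix potential, and take its deviatoric part, noting that $\curl\div$ kills the trace part since $\curl\grad=0$. The only differences are cosmetic (notation and the explicit chaining of norm bounds), so there is nothing to add.
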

\begin{proof}
  Since $\div v =0$, by~\eqref{eq:CM-curl-T2}, $u = {\Tc} v$ is in
  $\Hs{s} \otimes \bb V$ and satisfies $\curl u =v$ in all $\bb R^3$. For any
  constant vector $b \in \bb V$, the action of the distribution $u$ on $b$ satisfies  
  \begin{align*}
    2 \,u(b) = u(\curl (b \times x)) = (\curl u)( b \times x) = v(b \times x) = 0
  \end{align*}
  since $v(r)=0$ for any $r \in \ND$. Hence, applying ${\Td}$ to each
  component of $u$ and using~\eqref{eq:CM-div-T3}, we obtain a
  $\tau \in \Hs{s+2} \otimes \bb M$ such that $\div \tau = u$, i.e.,
  \begin{align*}
    v = \curl \div \tau = \curl \div \dev \tau,
  \end{align*}
  since $\curl \div (\frac 1 3 (\tr\tau ) \id) = 0.$ Denoting the map
  $v \mapsto \dev \tau$ by ${\Dcd}$, the proof is finished by the
  continuity of ${\Tc}$ and ${\Td}$.
\end{proof}

\begin{theorem}
  \label{thm:reg-dec-Hcd}
  There exist four continuous linear operators
  \[
    \Socd 0 : \hHcd \to \Ho^1 \otimes \bb T,\quad
    \Socd 1 : \hHcd \to \Ho^1 \otimes \bb S,\quad
    \Socd 2 : \hHcd \to \Ho^1 \otimes \bb V, \quad
    \Socd 3 : \hHcd \to \Ho^1 \otimes \bb V,\quad    
  \]
  such that any $\tau \in \hHcd$ can be decomposed into
  \begin{equation}
    \label{eq:tau-decomp-Hcd}
        \tau = \Socd 0 \tau + \curl \Socd 1 \tau +  \T \dev \grad \Socd 2 \tau + 
        \curl\dfo \Socd 3 \tau.
  \end{equation}
  It then follows that  $\hHcd = \Hocd$.
\end{theorem}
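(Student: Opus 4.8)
The plan is to build the four-term decomposition~\eqref{eq:tau-decomp-Hcd} by peeling off one potential at a time, in the spirit of the proofs of Theorems~\ref{thm:reg-dec-Hcc} and~\ref{thm:reg-dec-Hdd}, combining the right inverse ${\Dcd}$ of Lemma~\ref{lem:Dcd} with the triple ${\Rgch},{\Dgch},{\Ugch}$ of Lemma~\ref{lem:Rcdh+Dcdh+Ucdh}. Given $\tau\in\hHcd$, I would first set $\Socd 0\tau:={\Dcd}(\curl\div\tau)$, removing the ``most distributional'' datum. For this to be legitimate one checks that $\curl\div\tau$ lies in $\ker(\div:\Hs{-1}_{\ND}(\div))$, the domain of ${\Dcd}$ at $s=-1$: the condition $\div\curl\div\tau=0$ is automatic, and for $r=a+b\times x\in\ND$ one has $(\curl\div\tau)(r)=(\div\tau)(\curl r)=2(\div\tau)(b)=-2\,\tau(\grad b)=0$ since $b$ is constant. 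Lemma~\ref{lem:Dcd} then gives $\Socd 0\tau\in\Hs{1}\otimes\bb T=\Ho^1\otimes\bb T$ with $\curl\div\Socd 0\tau=\curl\div\tau$ and $\|\Socd 0\tau\|_{H^1}\lesssim\|\curl\div\tau\|_{\Ht}\le\|\tau\|_{\Hocd}$.

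Since $\Ho^1\otimes\bb T$ embeds continuously into $\hHcd$ (directly from~\eqref{eq:di-cty}), the residual $\tau-\Socd 0\tau$ is again in $\hHcd$ and now satisfies $\curl\div(\tau-\Socd 0\tau)=0$, which is precisely the hypothesis of Lemma~\ref{lem:Rcdh+Dcdh+Ucdh}. Applying it, set
\[
  \Socd 1\tau:={\Rgch}(\tau-\Socd 0\tau), \qquad
  \Socd 2\tau:={\Dgch}(\tau-\Socd 0\tau), \qquad
  \Socd 3\tau:={\Ugch}(\tau-\Socd 0\tau),
\]
which are in $\Ho^1$ (valued in $\bb S,\bb V,\bb V$ respectively), bounded by $\|\tau-\Socd 0\tau\|_{\Hocd}\lesssim\|\tau\|_{\Hocd}$, and satisfy $\tau-\Socd 0\tau=\curl\Socd 1\tau+\T\dev\grad\Socd 2\tau+\curl\dfo\Socd 3\tau$. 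Together with the first step this proves~\eqref{eq:tau-decomp-Hcd}, and continuity of each $\Socd j$ follows since each is a composition of continuous maps.

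For the concluding equality $\hHcd=\Hocd$, in view of~\eqref{eq:Hcc-dd-cd-inclusions} only $\hHcd\subseteq\Hocd$ needs proof, and I would repeat the density argument of the previous two theorems. Decompose $\tau\in\hHcd$ as in~\eqref{eq:tau-decomp-Hcd}; by density of $\cl D(\om)$ in $\Ho^1$, pick $\tau_m^0\in\cl D(\om)\otimes\bb T$, $\gamma_m\in\cl D(\om)\otimes\bb S$, and $r_m,u_m\in\cl D(\om)\otimes\bb V$ converging in $H^1$ to $\Socd 0\tau,\Socd 1\tau,\Socd 2\tau,\Socd 3\tau$. Then $\tau_m:=\tau_m^0+\curl\gamma_m+\T\dev\grad r_m+\curl\dfo u_m$ is trace-free --- indeed $\tr\curl\gamma_m=-2\,\div\vskw\gamma_m=0$ by~\eqref{eq:tr-curl} since $\gamma_m$ is symmetric, and the other two summands are manifestly deviatoric --- so $\tau_m\in\cl D(\om)\otimes\bb T$ and $\tau_m\to\tau$ in $\|\cdot\|_{\Hocd}$ by~\eqref{eq:di-cty}, whence $\tau\in\Hocd$.

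As for where the genuine difficulty lies: all of the above is routine once Lemma~\ref{lem:Rcdh+Dcdh+Ucdh} is in hand, so the real obstacle is that lemma, which is where the \emph{fourth} potential gets forced. Unlike the three-term splittings of $\Hocc$ and $\Hodd$, after inverting $\div$ on a trace-free field and correcting the trace one is left with a full-matrix, non-regular potential $q$ that must itself be split once more --- into a regular part via ${\Td}$ and a ${\Tc}$-image of the divergence-free remainder --- producing the extra $\curl\dfo$ term. Beyond invoking that lemma, the only points needing care in the present proof are the domain check for ${\Dcd}$ above and the verification that the reconstructed approximants $\tau_m$ stay trace-free, i.e.\ inside $\cl D(\om)\otimes\bb T$.
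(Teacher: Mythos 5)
Your proposal is correct and follows essentially the same route as the paper: apply ${\Dcd}$ (at $s=-1$) to $\curl\div\tau$ after verifying the $\ND$-moment conditions, peel off the remaining three potentials with Lemma~\ref{lem:Rcdh+Dcdh+Ucdh}, and conclude $\hHcd\subseteq\Hocd$ by the same smooth-approximation argument. Your explicit check that the reconstructed approximants are trace-free (hence in $\cl D(\om)\otimes\bb T$) is a small detail the paper leaves implicit, but it does not change the argument.
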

\begin{proof}
  Let $\tau \in \hHcd$ and put $q = \curl\div\tau$. Obviously
  $\div q =0$ and $q \in \Ht(\div)$. Moreover, for any
  $a, b \in \bb V$ and $r = a + b \times x \in \ND$, since
  $\curl r = 2 b$ is constant, its gradient vanishes, and
  \begin{align*}
    q(r) = (\curl \div \tau )(r) = \tau( \grad \curl r) = 0.
  \end{align*}
  Thus $q$ is in $\HtND(\div)$ and we apply $\Dcd$ to it. Put
  $\Socd 0 \tau := {\Dcd} \curl\div\tau$. By
  Lemma~\ref{lem:Dcd} with $s=-1$, we find that
  $\Socd 0 \tau \in \Ho^1 \otimes \bb T$ and 
  \[
    \curl\div(\tau - \Socd 0 \tau) =0.
  \]
  Hence we may apply Lemma~\ref{lem:Rcdh+Dcdh+Ucdh} to get 
  \[
    \tau - \Socd 0 \tau =
    (\curl {\Rgch}  + \curl\dfo \Ugch  + \T \dev \grad \Dgch )(\tau - \Socd 0 \tau).
  \]
  The decomposition~\eqref{eq:tau-decomp-Hcd}   now follows after setting 
  $\Socd 1 \tau = {\Rgch} (\tau -  \Socd 0 \tau)$,
  $\Socd 2 \tau = \Dgch (\tau -  \Socd 0 \tau)$ and 
  $\Socd 3 \tau = \Ugch (\tau - \Socd 0 \tau)$.

  To prove that $\hHcd = \Hocd$, let $\tau \in \hHcd$ be decomposed as in~\eqref{eq:tau-decomp-Hcd}.
  By the density of $\cl D (\om)$ in $\Ho^1$,
  there are
  $\tau_m \in \cl D(\om) \otimes \bb T$,
  $g_m \in \cl D(\om) \otimes \bb S$,
  $q_m \in \cl D(\om) \otimes \bb V$, and 
  $u_m \in \cl D(\om) \otimes \bb V$,
  such that
  \[
    \| \tau_m - \Socd 0 \tau \|_{H^1} \to 0, \quad 
    \| g_m - \Socd 1 \tau \|_{H^1}\to 0, \quad 
    \| q_m - \Socd 2 \tau \|_{H^1} \to 0, \quad 
    \| u_m - \Sodd 2 \tau  \|_{H^1}\to 0,
  \]
  as $m\to\infty.$ 
  By  \eqref{eq:5}, 
  \begin{align*}
    \| \tau_m
    & + \curl g_m + \T \dev \grad q_m + \curl\dfo u_m  \; - \;
    \tau \|_{\Hocd}
    \\
    & = \| \tau_m - \tau \|_{\Hocd} +
    \|\curl (g_m - \Socd 1 \tau) \|_{L_2}
      +\|  \T \dev \grad (q_m - \Socd 2 \tau) \|_{L_2}
    \\
    & +\| \curl\dfo (u_m  - \Socd 3 \tau) \|_{L_2} \to 0,
  \end{align*}
  which converges to zero as $m \to \infty$ in view of
  \eqref{eq:di-cty}.  Thus $\hHcd \subseteq \Hocd$ and the proof is
  complete due to \eqref{eq:Hcc-dd-cd-inclusions}.
\end{proof}

In view of these results, we shall no longer distinguish between
$\hHcc$ and $\Hocc$, $\hHdd$ and $\Hodd$, nor $\hHcd$ and $\Hocd$.

\subsection{Continuous right inverses}

Let us now complete the discussion of~\eqref{eq:reverse-arrows}.
Several right inverse operators in~\eqref{eq:reverse-arrows} were
already given in previous lemmas.  The right inverses in the top row
of~\eqref{eq:reverse-arrows} are the same operators as
in~\eqref{eq:CM-grad-T1}--\eqref{eq:CM-div-T3}. For example, ${\Tc}$
is a right inverse of $\curl: \Ho(\curl) \to \Ho(\div)$ in the sense
that ${\Tc} : \ker (\div: \Ho(\div)) \to \Ho(\curl)$ is continuous and
$\curl \circ {\Tc}$ equals the identity on $\ker (\div: \Ho(\div))$,
which is just a restatement of~\eqref{eq:CM-curl-T2} with $s=0$.
After construction of the remaining needed right inverses,
Theorem~\ref{thm:rt-inv-cts} below gathers everything together.

\begin{lemma}
  \label{lem:Rcdt+Dcdt}
  There are linear maps
  ${\Rgct} : \ker(\div: \Hocd) \to L_2 \otimes \bb S \subset \Hocc$ and
  $\Dgct: \ker(\div: \Hocd) \to \Ho^1 \otimes \bb V \subset \Ho(\curl)$ such that for all
  $\tau \in \ker(\div: \Hocd)$,
  \[
    \tau = \curl ( {\Rgct} \tau + \dfo \Dgct \tau), \qquad
    \| {\Rgct} \tau \|_{L_2} + \| \Dgct \tau \|_{H^1} \lesssim \| \tau \|_{\Ht}.
  \]
\end{lemma}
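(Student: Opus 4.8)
The plan is to build a potential $g$ with $\curl g=\tau$ by reversing one step of the diagram~\eqref{eq:1} (moving from $\Hocd$ leftward into $\Hocc$) with the right inverse $\Tc$ of $\curl$, while peeling off the symmetric part and the deformation part.

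First I would apply $\Tc$ row-wise to $\tau$. Since $\tau\in\Hocd$ with $\div\tau=0$ has rows that are vector-valued distributions on $\bb R^{3}$ supported on $\bar\om$ with vanishing divergence, \eqref{eq:CM-curl-T2} with $s=-1$ yields $\eta\in L_{2}\otimes\bb M$ with $\curl\eta=\tau$ on $\bb R^{3}$ and $\|\eta\|_{L_{2}}\lesssim\|\tau\|_{\Ht}$. Split $\eta=\sym\eta+\mskw v$ with $v:=\vskw\eta\in L_{2}\otimes\bb V$. Because $\tau=\curl\eta$ is trace-free, \eqref{eq:tr-curl} gives $0=\tr\tau=-2\div\vskw\eta$, so $\div v=0$; then \eqref{eq:S-grad} gives $\curl(\mskw v)=-S\grad v=-(\T\grad v-(\div v)\id)=-\T\grad v$, hence $\tau=\curl\sym\eta-\T\grad v$.

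Next I would apply $\Tc$ a second time, now to $v$. Since $\div v=0$, \eqref{eq:CM-curl-T2} with $s=0$ gives $u:=\Tc v$ with $\curl u=v$ and, crucially, $u\in\Hts{1}\otimes\bb V=\Ho^{1}\otimes\bb V\subset\Ho(\curl)$ and $\|u\|_{H^{1}}\lesssim\|v\|_{L_{2}}$. By \eqref{eq:5}, $\T\grad v=\T\grad\curl u=2\curl\dfo u$, so $\tau=\curl(\sym\eta-2\dfo u)$. Setting $\Rgct\tau:=\sym\eta$ and $\Dgct\tau:=-2u$, both linear in $\tau$, gives $\tau=\curl(\Rgct\tau+\dfo\Dgct\tau)$, and the claimed bound $\|\Rgct\tau\|_{L_{2}}+\|\Dgct\tau\|_{H^{1}}\lesssim\|\tau\|_{\Ht}$ follows by chaining $\|\sym\eta\|_{L_{2}}\le\|\eta\|_{L_{2}}$, $\|u\|_{H^{1}}\lesssim\|v\|_{L_{2}}\le\|\eta\|_{L_{2}}$, and $\|\eta\|_{L_{2}}\lesssim\|\tau\|_{\Ht}$.

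Lastly I would check the codomains. $\Dgct\tau=-2u\in\Ho^{1}\otimes\bb V\hookrightarrow\Ho(\curl)$, so $\dfo\Dgct\tau\in\Hocc$ by the continuity of $\dfo\colon\Ho(\curl)\to\Hocc$ (Theorem~\ref{thm:diagram-commute-cty}). For $\Rgct\tau=\sym\eta\in L_{2}\otimes\bb S$ one has $\curl\sym\eta=\tau+\T\grad v\in\Ht$ and $\inc\sym\eta=\curl\T(\tau+\T\grad v)=\curl\T\tau$; since $\tau$ is trace-free, \eqref{eq:skw-curl} gives $\curl\T\tau=\sym\curl\T\tau+\frac{1}{2}\mskw\div\tau$, which lies in $\Ht$ with norm $\lesssim\|\tau\|_{\Hocd}$, so $\sym\eta\in\Hocc$; hence $\Rgct\tau+\dfo\Dgct\tau\in\Hocc$ and $\tau=\curl(\Rgct\tau+\dfo\Dgct\tau)$ holds as an identity in $\Hocd$. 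I expect no real obstacle: the argument is bookkeeping with \eqref{eq:earlier-identities} and the mapping properties of $\Tc$. The only noteworthy point is that a single application of $\Tc$ to the divergence-free $v\in L_{2}$ already yields an $\Ho^{1}$ potential, so — unlike in Lemma~\ref{lem:Rcdh+Dcdh+Ucdh} — no correction of the $r=\Td\div(\cdot)$ type is needed; and the $\Hocc$-membership of $\Rgct\tau$ genuinely uses the full $\Hocd$-norm of $\tau$, while the quantitative estimate asked is only on $\|\Rgct\tau\|_{L_{2}}$.
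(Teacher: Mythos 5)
Your proof is correct and follows essentially the same route as the paper: apply $\Tc$ row-wise to the divergence-free $\tau$, split the potential into its symmetric part and $\mskw$ of its axial vector, deduce $\div v=0$ from trace-freeness, apply $\Tc$ once more to get an $\Ho^1$ field $u$, and convert $\T\grad\curl u$ into $\curl\dfo u$ via~\eqref{eq:5}. The only deviations are cosmetic (a different scaling of $u$, deducing $\div v=0$ directly from~\eqref{eq:tr-curl} rather than by tracing the decomposition, and your extra verification that $\Rgct\tau\in\Hocc$), so there is nothing to correct.
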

\begin{proof}
  Applying ${\Tc}$ to the divergence-free row vectors of $\tau$, we find a
  $\gamma \in L_2 \otimes \bb M$ satisfying $\curl \gamma = \tau$ per
  \eqref{eq:CM-curl-T2}. Put $g = \sym\gamma$ and $v = \vskw \gamma$. Then,
  by~\eqref{eq:S-grad}, 
  \begin{align*}
    \tau
    & = \curl ( \sym \gamma + \skw \gamma) = \curl g - \curl \mskw v
    \\
    & = \curl g - S \grad v 
    \\
    & = \curl g - \T \grad v + (\div v) \id. 
  \end{align*}
  Since $\Hocd$ consists of trace-free matrix fields and since trace of
  $\curl g$ vanishes by~\eqref{eq:tr-curl}, taking the trace of the
  above expression, we find that
  \begin{align*}
    0 = \tr \tau = 2 \div v.
  \end{align*}
  Therefore, by~\eqref{eq:CM-curl-T2},
  $u = -\frac 1 2 {\Tc} v \in \Ho^1 \otimes \bb V,$ satisfies
  $v = -\frac 1 2 \curl u$, so
  \begin{align*}
    \tau
    & = \curl g + \frac 1 2 \T \grad \curl u
    \\
    & = \curl (g + \dfo u)
  \end{align*}
  by~\eqref{eq:5} of Lemma~\ref{lem:commute-identities}. The linear
  maps $\tau \mapsto g$ and $\tau \mapsto u$ are the needed ${\Rgct}$ and
  $\Dgct$.
\end{proof}

\begin{lemma}
  \label{lem:Rgc}
  There is a linear map
  ${\Rgc} : \ker(\div: \Hocd) \to L_2 \otimes \bb S \subset \Hocc$ such that for all
  $\tau \in \ker(\div: \Hocd)$,
  \[
    \tau = \curl  {\Rgc} \tau, \qquad
    \| {\Rgc} \tau \|_{L_2}  \lesssim \| \tau \|_{\Ht}.
  \]
\end{lemma}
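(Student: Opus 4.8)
The plan is to read off $\Rgc$ directly from Lemma~\ref{lem:Rcdt+Dcdt} by collapsing its two potentials into one. Given $\tau \in \ker(\div: \Hocd)$, that lemma supplies $\Rgct \tau \in L_2 \otimes \bb S$ and $\Dgct \tau \in \Ho^1 \otimes \bb V$ with $\tau = \curl(\Rgct \tau + \dfo \Dgct \tau)$. The observation that makes everything work is that, because $\Dgct \tau$ has $H^1$-regularity, its deformation $\dfo \Dgct \tau = \sym \grad \Dgct \tau$ is itself an $L_2$ symmetric matrix field. So I would simply define
\[
  \Rgc \tau := \Rgct \tau + \dfo \Dgct \tau,
\]
which is linear in $\tau$, lies in $L_2 \otimes \bb S$, and satisfies $\curl \Rgc \tau = \tau$ by the identity just quoted.

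Next I would verify that $\Rgc \tau$ indeed belongs to $\Hocc$, so that the stated mapping property holds. By Lemma~\ref{lem:Rcdt+Dcdt}, $\Rgct \tau \in \Hocc$. For the second summand, note $\Dgct \tau \in \Ho^1 \otimes \bb V \subset \Ho(\curl)$ (the inclusion following from $\|\curl v\|_{L_2} \le \|\grad v\|_{L_2}$ on $\cl D(\om)\otimes\bb V$ and passing to closures), and the deformation operator is continuous from $\Ho(\curl)$ into $\Hocc$ by Theorem~\ref{thm:diagram-commute-cty}; hence $\dfo \Dgct \tau \in \Hocc$. Since $\Hocc$ is a linear space, $\Rgc \tau \in \Hocc$. (Equivalently, one may invoke $\Hocc = \hHcc$ from Theorem~\ref{thm:reg-dec-Hcc}: $\Rgc\tau \in L_2\otimes\bb S$, $\curl \Rgc\tau = \tau \in \Ht$, and $\inc \Rgc\tau = \sym(\curl\T\tau)\in\Ht$ because $\inc$ preserves symmetry and $\tau \in \Hocd$.)

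For the norm bound I would use the triangle inequality, boundedness of $\sym$, and the estimate of Lemma~\ref{lem:Rcdt+Dcdt}:
\[
  \| \Rgc \tau \|_{L_2}
  \le \| \Rgct \tau \|_{L_2} + \| \grad \Dgct \tau \|_{L_2}
  \le \| \Rgct \tau \|_{L_2} + \| \Dgct \tau \|_{H^1}
  \lesssim \| \tau \|_{\Ht}.
\]
There is no real obstacle here: all the work was already done in constructing $\Rgct$ and $\Dgct$ in Lemma~\ref{lem:Rcdt+Dcdt}, and the only new content is the elementary remark that an $H^1$ vector potential contributes, via $\dfo$, an $L_2$ symmetric correction that can be absorbed into a single potential without disturbing its curl.
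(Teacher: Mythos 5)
Your proposal is correct and follows exactly the paper's own proof: the paper also defines $\Rgc\tau = \Rgct\tau + \dfo\Dgct\tau$ and concludes immediately from Lemma~\ref{lem:Rcdt+Dcdt} together with the continuity of first derivatives in~\eqref{eq:di-cty}. Your extra verifications (membership in $\Hocc$ and the explicit norm chain) are fine but are precisely the details the paper leaves implicit.
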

\begin{proof}
  Using the operators of Lemma~\ref{lem:Rcdt+Dcdt}, define
  ${\Rgc} \tau = {\Rgct} \tau + \dfo \Dgct \tau$. Then the result follows
  immediately from Lemmas~\ref{lem:Rcdt+Dcdt} and~\eqref{eq:di-cty}.
\end{proof}

\begin{lemma}
  \label{lem:Dgc}
  There is a linear map
  $\Dgc: \ker( \div: \Hocd) \cap \ker(\sym\curl \T: \Hocd) \to \Ho^1
  \otimes \bb V \subset \Ho(\curl)$ such that for all $\tau \in \Hocd$
  with $\div \tau=0$ and $\sym \curl \T \tau =0$,
  \[
    \tau  = \curl\dfo \Dgc \tau, \qquad
    \| \Dgc \tau \|_{H^1} \lesssim \| \tau \|_{\Ht}
  \]    
\end{lemma}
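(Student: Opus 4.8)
The plan is to realize $\Dgc$ as a composition of two right inverses already constructed: first $\Rgc$ from Lemma~\ref{lem:Rgc}, then $\Rggt$ from Lemma~\ref{lem:Rcct}. Given $\tau \in \Hocd$ with $\div\tau = 0$, Lemma~\ref{lem:Rgc} produces $g := \Rgc\tau \in L_2 \otimes \bb S$ with $\curl g = \tau$ and $\|g\|_{L_2} \lesssim \|\tau\|_{\Ht}$; note that $g$ is a symmetric matrix field by construction. The crucial observation is that the second hypothesis $\sym\curl\T\tau = 0$ is precisely the condition $\inc g = 0$: indeed $\inc g = \curl\T\curl g = \curl\T\tau$, and since $g$ is symmetric, $\inc g$ is also symmetric, whence $\inc g = \sym(\curl\T\tau) = \sym\curl\T\tau = 0$. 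Because $g \in L_2\otimes\bb S\subset \Ht\otimes\bb S$, $\curl g = \tau$ has $\Ht$-regular components, and $\inc g = 0$, we have $g \in \hHcc$, so $g$ lies in $\ker(\inc:\hHcc)$, the domain of $\Rggt$.

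Next I would set $\Dgc\tau := \Rggt(\Rgc\tau) = \Rggt g \in \Ho^1 \otimes \bb V$. By Lemma~\ref{lem:Rcct},
\[
  \curl\dfo\Dgc\tau = \curl g = \tau, \qquad
  \|\Dgc\tau\|_{H^1} \lesssim \|\curl g\|_{\Ht} = \|\tau\|_{\Ht}.
\]
Since $\Rgc$ and $\Rggt$ are linear and continuous, so is $\Dgc$, and it maps $\ker(\div:\Hocd)\cap\ker(\sym\curl\T:\Hocd)$ into $\Ho^1\otimes\bb V \subset \Ho(\curl)$, which is exactly the asserted statement.

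I do not anticipate a genuine obstacle here; the lemma is essentially a corollary of the machinery already assembled. The only point needing care is checking that $g = \Rgc\tau$ lands in the domain of $\Rggt$, which rests on two routine facts: that $\Rgc$ returns a symmetric matrix field (built into Lemma~\ref{lem:Rgc}, since $\Rgc\tau = \Rgct\tau + \dfo\Dgct\tau$ with $\Rgct\tau\in L_2\otimes\bb S$ and $\dfo\Dgct\tau$ symmetric), and that the incompatibility operator maps symmetric matrix fields to symmetric matrix fields (a short computation in index notation using $\veps$). Both are standard, so modulo them the result follows immediately.
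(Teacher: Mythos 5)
Your proposal is correct and takes essentially the same route as the paper: both produce a symmetric $L_2$ potential whose curl is $\tau$ (the paper directly via Lemma~\ref{lem:Rcdt+Dcdt}, you via its corollary Lemma~\ref{lem:Rgc}), use the hypothesis $\sym\curl\T\tau=0$ together with the fact that $\inc$ of a symmetric field is symmetric to conclude the potential has vanishing incompatibility, and then invoke ${\Rggt}$ from Lemma~\ref{lem:Rcct}. The only difference is bookkeeping: you set $\Dgc=\Rggt\Rgc$, feeding the whole potential through $\Rggt$, whereas the paper sets $\Dgc=\Rggt\Rgct+\Dgct$, applying $\Rggt$ only to the symmetric part $\Rgct\tau$; both give the stated identity and the bound $\|\Dgc\tau\|_{H^1}\lesssim\|\tau\|_{\Ht}$.
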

\begin{proof}
  Given any $\tau \in \Hocd$ with $\div \tau=0$, by
  Lemma~\ref{lem:Rcdt+Dcdt},
  $\tau = \curl ({\Rgct} \tau + \dfo \Dgct\tau)$.  When
  $\sym\curl \T \tau$ also vanishes, this implies that
  \[
    0 = \sym\curl \T \tau = 
    \sym\curl \T \curl ({\Rgct} \tau + \dfo \Dgct\tau) 
    = \inc ({\Rgct} \tau).
  \]
  Applying  Lemma~\ref{lem:Rcct} with $g = {\Rgct} \tau$, 
  $
    \curl \dfo {\Rggt} g = \curl g,
  $
  which in turn implies that
  \[
    \tau =  \curl ({\Rgct} \tau + \dfo \Dgct\tau)
    = \curl \dfo( {\Rggt} {\Rgct} \tau + \Dgct \tau).
  \]
  Hence the result follows by setting $\Dgc = {\Rggt} {\Rgct}  + \Dgct $.
\end{proof}

\begin{lemma}
  \label{lem:Dgd}
  There is a linear map
  ${\Dgd}: \ker( \curl: \HtRT(\curl)) \to \Ho^1 \otimes \bb V$ such
  that for all $v \in \ker( \curl: \HtRT(\curl))$
  \begin{equation}
    \label{eq:Dc}
    v = \frac 1 3 \grad \div {\Dgd} v, \qquad
    \| {\Dgd} v \|_{H^1} \lesssim \| v \|_{\Ht}.
  \end{equation}
\end{lemma}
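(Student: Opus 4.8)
The plan is to build the potential by composing two of the regularized right inverses from~\eqref{eq:std-rt-inverses}, moving left and then further left in the diagram~\eqref{eq:reverse-arrows}: since $v$ is curl-free, $\Tg$ produces a scalar potential $w$ with $\grad w = v$, and then $\Td$ produces a vector potential $q$ with $\div q = w$; a suitably scaled $q$ will be the desired $\Dgd v$.

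In detail, given $v \in \ker(\curl: \HtRT(\curl))$, so that $v \in \Ht\otimes\bb V$ with $\curl v = 0$, I would set $w := \Tg v$. By~\eqref{eq:CM-grad-T1} this lies in $\Hts{0} = L_2$, is supported on $\bar\om$, satisfies $\grad w = v$ on all of $\bb R^3$, and obeys $\|w\|_{L_2}\lesssim\|v\|_{\Ht}$ by continuity of $\Tg$. The key observation is then that $w$ has zero mean: because the coordinate field $x$ belongs to $\RT$ and $v \in \HtRT(\curl)$, we have $v(x) = 0$; since $v = \grad w$ with $w$ supported on $\bar\om$, integrating by parts gives $0 = v(x) = -(w, \div x) = -3\,(w,1)$, so $(w,1) = 0$. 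Hence $q := \Td w$ is defined, lies in $\Hts{1}\otimes\bb V = \Ho^1\otimes\bb V$, satisfies $\div q = w$ by~\eqref{eq:CM-div-T3}, and $\|q\|_{H^1}\lesssim\|w\|_{L_2}$ by continuity of $\Td$.

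Setting $\Dgd v := 3\,q = 3\,\Td\,\Tg v$, which is linear as a composition of linear maps, we get $\div \Dgd v = 3w$, hence $\tfrac13\grad\div\Dgd v = \grad w = v$, which is the first identity in~\eqref{eq:Dc}; chaining the two continuity bounds above yields $\|\Dgd v\|_{H^1}\lesssim\|w\|_{L_2}\lesssim\|v\|_{\Ht}$, which is the second.

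I do not expect a genuine obstacle here; the only step requiring a little care is the integration by parts establishing $(w,1)=0$, where one must use that $w\in L_2$ is supported on $\bar\om$ and that $v(x)$ is a legitimate duality action (since $v$ is supported on $\bar\om$ and $\RT\subset H(\grad)\otimes\bb V$), so that no boundary term survives --- exactly as in the mean-zero verifications already carried out in the proofs of Lemmas~\ref{lem:Ddd} and~\ref{lem:Rcdt+Dcdt}.
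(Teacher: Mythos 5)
Your proposal is correct and follows essentially the same route as the paper: apply $\Tg$ to the curl-free field $v$ to get a scalar potential $w\in L_2$ with $\grad w = v$, use the orthogonality of $v$ to the $\RT$ field $bx$ to conclude $(w,1)=0$, and then apply $\Td$ to produce $q\in \Ho^1\otimes\bb V$ with $\div q = w$, so that $\Dgd v = 3q$ works with the continuity bounds chained from those of $\Tg$ and $\Td$. The mean-zero verification via duality and the compact support of $w$ is exactly the argument the paper uses.
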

\begin{proof}
  Let $v \in \HtRT(\curl)$ have zero curl. Then $w = {\Tg} v$ is in
  $L_2(\bb R^3) \otimes \bb V$, supported on $\bar \om$, and satisfies
  $v = \grad w$ in all $\bb R^3$ due to~\eqref{eq:CM-grad-T1}. Since
  $v(r)=0$ for all $r \in \RT$, choosing $r = b x$ for any constant $b$, 
  \[
   0 = (\grad w)(r) = (w,  \div r )= 3 (w, b).
 \]
 Hence we may apply ${\Td}$ to each component of $w$ and
 use~\eqref{eq:CM-div-T3} to get a $q \in \Ho^1 \otimes \bb V$
 satisfying $w = \frac 1 3 \grad \div q$. The linear map $v \mapsto w$
 is the required ${\Dgd}$.
\end{proof}

\begin{lemma}
  \label{lem:Rgg}
  There is a linear map ${\Rgg}: \ker(\inc: \Hocc) \to \Ho(\curl)$ such
  that for any  $g \in \ker(\inc: \Hocc)$,
  \[
    \dfo {\Rgg} g = g, \qquad  \| {\Rgg} g \|_{H(\curl)} \lesssim \| g \|_{\Hocc}.
  \]
\end{lemma}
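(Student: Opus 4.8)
The plan is to build $\Rgg$ by composing the already-constructed operator $\Rggt$ of Lemma~\ref{lem:Rcct} with a scalar potential supplied by Lemma~\ref{lem:Dgg}. Given $g \in \ker(\inc:\Hocc)$, first set $u := \Rggt g \in \Ho^1\otimes\bb V$; by Lemma~\ref{lem:Rcct} this satisfies $\curl\dfo u = \curl g$ and $\|u\|_{H^1}\lesssim \|\curl g\|_{\Ht}\le \|g\|_{\Hocc}$. Consequently $g-\dfo u$ lies in $\Ht\otimes\bb S$ and has vanishing curl, so it belongs to the domain $\ker(\curl:\Hts{-1}\otimes\bb S)$ of $\Dgg$ with $s=-1$. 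Setting $w := \Dgg(g-\dfo u)$, Lemma~\ref{lem:Dgg} gives $w \in \Hts{1}=\Ho^1$ with $\hess w = g-\dfo u$ and $\|w\|_{H^1}\lesssim \|g-\dfo u\|_{\Ht}\lesssim \|g\|_{\Hocc}$, where the last step uses that the $\Ht$-norm is dominated by the $L_2$-norm to bound $\|\dfo u\|_{\Ht}\lesssim \|u\|_{H^1}$. The operator is then
\[
  \Rgg g := u + \grad w = \Rggt g + \grad\,\Dgg\big(g-\dfo\,\Rggt g\big).
\]

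Next I would verify the two claimed properties. For the identity, since $\hess=\dfo\circ\grad$ one gets $\dfo\,\Rgg g = \dfo u + \dfo\grad w = \dfo u + \hess w = \dfo u + (g-\dfo u) = g$. For membership in $\Ho(\curl)$ and continuity, note first $u \in \Ho^1\otimes\bb V\subset\Ho(\curl)$ with $\|u\|_{H(\curl)}\lesssim \|u\|_{H^1}$; and since $w\in\Ho^1$, there are $w_m\in\cl D(\om)$ with $w_m\to w$ in $H^1$, whence $\grad w_m\in\cl D(\om)\otimes\bb V$ converges to $\grad w$ in $L_2$ while $\curl\grad w_m=0$, so $\grad w\in\Ho(\curl)$ with $\|\grad w\|_{H(\curl)}=\|\grad w\|_{L_2}\lesssim\|w\|_{H^1}$. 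Combining, $\Rgg g\in\Ho(\curl)$ and $\|\Rgg g\|_{H(\curl)}\lesssim \|u\|_{H^1}+\|w\|_{H^1}\lesssim \|g\|_{\Hocc}$. Linearity is immediate from the defining formula.

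There is no serious obstacle here: the analytic content was already isolated in Lemmas~\ref{lem:Rcct} and~\ref{lem:Dgg}, and what remains is bookkeeping. The two points requiring a little care are (i) checking that $g-\dfo\,\Rggt g$ genuinely has vanishing curl so that it lands in the domain of $\Dgg$ — this is exactly the conclusion $\curl\dfo\,\Rggt g=\curl g$ of Lemma~\ref{lem:Rcct} — and (ii) confirming that $\grad w$ belongs to the zero-trace space $\Ho(\curl)$ rather than merely $H(\curl)$, which follows from $w\in\Hts{1}=\Ho^1$ (the identification recalled in Subsection~\ref{ssec:preliminaries-spaces}) via the density argument above.
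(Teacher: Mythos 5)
Your proof is correct. The first step coincides with the paper's: apply $\Rggt$ from Lemma~\ref{lem:Rcct} to get $u$ with $\curl(g-\dfo u)=0$ and the right bound. The second step differs. The paper applies ${\Tg}$ once more, row-wise, to produce a vector potential $v\in L_2\otimes\bb V$ with $g-\dfo u=\grad v$, and then uses the symmetry of $g-\dfo u$ together with the identity $\skw\grad v=\tfrac12\mskw\curl v$ (i.e.\ \eqref{eq:mskw-curl}) to conclude $\curl v=0$, hence $\grad v=\dfo v$ and $v|_\om\in\Ho(\curl)$; the output is $\Rgg g=u+v$. You instead invoke Lemma~\ref{lem:Dgg} with $s=-1$ to get a scalar potential $w\in\Hts{1}=\Ho^1$ with $\hess w=g-\dfo u$, and output $\Rgg g=u+\grad w$. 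Since Lemma~\ref{lem:Dgg} internally applies ${\Tg}$ twice and handles the symmetry by a support argument showing $\curl$ of the intermediate potential is a constant that must vanish, your route goes one potential deeper than necessary but reuses existing machinery and avoids redoing the algebraic skew-curl manipulation; the paper's route is marginally more economical (a single extra application of ${\Tg}$) and yields a correction $v$ that need not be a gradient, whereas yours produces the more structured representative $\Rggt g+\grad w$. Both give linearity, the identity $\dfo\Rgg g=g$, and the bound $\|\Rgg g\|_{H(\curl)}\lesssim\|g\|_{\Hocc}$; your checks that $g-\dfo\Rggt g$ lies in the domain of $\Dgg$ (curl vanishing in the sense of distributions supported on $\bar\om$, which holds because $\Rggt g\in\Ho^1\otimes\bb V$) and that $\grad w\in\Ho(\curl)$ are exactly the points that needed care, and you handled them correctly.
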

\begin{proof}
  By Lemma~\ref{lem:Rcct}, $u = {\Rggt} g$ satisfies $ \curl (g - \dfo u) = 0$.
  Hence applying ${\Tg}$ to each row
  vector of $g - \dfo u$, we obtain a $v \in L_2 \otimes \bb V$ such
  that $ g - \dfo u = \grad v$. The symmetry of the left hand side implies that 
  \[
    0 = \skw ( g -  \dfo u) = \skw \grad v = \frac 1 2 \mskw \curl v
  \]
  by~\eqref{eq:mskw-curl}.  Hence $\curl v = 0 $ on all $\bb R^3$, so
  the vector field $v|_\om$ is in $\Ho(\curl)$. We have thus shown
  that $g = \dfo( u + v)$.  Letting the map $g \mapsto u + v$ be
  denoted by ${\Rgg}$, the norm bound follows from the continuity of
  ${\Tg}$ and ${\Rggt}$.
\end{proof}

\begin{lemma}
  \label{lem:Rgd}
  There is a linear map
  ${\Rgd}: \HtRT(\curl) \to L_2 \otimes \bb T \subset \Hocd$
  such that for
  all $v \in \HtRT(\curl)$,
  \begin{equation}
    \label{eq:Rgd}
    v = \div {\Rgd} v, \qquad
    \| {\Rgd} v \|_{L_2}    \lesssim \| v \|_{\Ht}.
  \end{equation}
\end{lemma}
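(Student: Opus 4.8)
The plan is to realize $\Rgd v$ as a full-matrix right inverse of the row-wise divergence, corrected so as to be trace-free. Let $v\in\HtRT(\curl)\subseteq\Ht\otimes\bb V$. Each component $v_j$ lies in $\Ht$, and since the constant field $e_j$ belongs to $\RT$ we have $v_j(1)=v(e_j)=0$; hence \eqref{eq:CM-div-T3} applies (with $s=-1$), and the matrix field $\eta\in L_2\otimes\bb M$ whose $j$th row is $\Td v_j$ satisfies $\div\eta=v$ in $\bb R^3$, with $\|\eta\|_{L_2}\lesssim\|v\|_\Ht$ by the continuity of $\Td$. This $\eta$ need not be trace-free, and the rest of the argument repairs that.

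Set $h=\tr\eta\in L_2$. The crucial point is that $h$ has zero mean. Indeed, by \eqref{eq:CM-div-T3} and an integration by parts (legitimate because each $\Td v_j$ is supported on $\bar\om$) one gets $v_j(x_j)=(\div\Td v_j)(x_j)=-(\Td v_j)(e_j)=-\int_\om(\Td v_j)_j$; summing over $j$ gives $\int_\om h=-v(x)$, which vanishes because the coordinate field $x$ lies in $\RT$ and $v$ annihilates $\RT$. Consequently $q:=\Td h$ is well defined by \eqref{eq:CM-div-T3} (now with $s=0$), lies in $\Ho^1\otimes\bb V$, satisfies $\div q=h$, and obeys $\|q\|_{H^1}\lesssim\|h\|_{L_2}\lesssim\|v\|_\Ht$. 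Now define
\[
  \Rgd v \;:=\; \dev\eta \;+\; \tfrac12\,\T\dev\grad q.
\]
Both summands lie in $L_2\otimes\bb T$, so $\Rgd v\in L_2\otimes\bb T$; the map $v\mapsto\Rgd v$ is plainly linear; and $\|\Rgd v\|_{L_2}\lesssim\|\eta\|_{L_2}+\|q\|_{H^1}\lesssim\|v\|_\Ht$. Finally $\div\dev\eta=\div\eta-\tfrac13\grad h=v-\tfrac13\grad h$, whereas \eqref{eq:6} gives $\div(\tfrac12\T\dev\grad q)=\tfrac13\grad\div q=\tfrac13\grad h$; adding these, $\div\Rgd v=v$, as required.

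It remains to check that $\Rgd v$ actually belongs to $\Hocd$. It is a field in $L_2\otimes\bb T$ whose row-wise divergence equals $v\in\Ht\otimes\bb V$, whence $\curl\div(\Rgd v)=\curl v\in\Ht\otimes\bb V$ because $v\in\HtRT(\curl)$; the remaining defining condition of $\hHcd$ holds automatically, since every first-order derivative of an $L_2$ field lies in $\Ht$ by \eqref{eq:di-cty}. Hence $\Rgd v\in\hHcd=\Hocd$ by Theorem~\ref{thm:reg-dec-Hcd}.

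The only step needing genuine care is the zero-mean property of $h=\tr\eta$, and this is exactly where the full $\RT$-orthogonality of $v$ is spent: the constant vectors in $\RT$ license the first, row-wise, inversion of $\div$ via $\Td$, while the coordinate field $x\in\RT$ is precisely what forces $\tr\eta$ to integrate to zero and thereby permits the second application of $\Td$ that absorbs the spherical defect of $\eta$. Everything else is routine manipulation with \eqref{eq:6} and the mapping properties recorded in \eqref{eq:std-rt-inverses}.
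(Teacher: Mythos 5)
Your proof is correct and follows essentially the same construction as the paper: apply $\Td$ row-wise to obtain a matrix potential, use the orthogonality of $v$ to the coordinate field $x\in\RT$ to show the trace has zero mean, and absorb that trace with a second application of $\Td$ via identity~\eqref{eq:6}, arriving at the same formula $\dev\eta+\tfrac12\,\T\dev\grad q$. The only (harmless) difference is that you work directly on all of $\HtRT(\curl)$ via distributional pairings and explicitly verify membership in $\Hocd$, whereas the paper first argues on the dense subspace $\cl D_{\RT}$ with $L_2$ integrals and then extends by continuity using the density result of Lemma~\ref{lem:dense-mean0}.
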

\begin{proof}
  Let $v \in \cl D_{\RT}$. Since every component $v_i$ of $v$ has
  zero mean on $\om$, we may apply ${\Td}$ to each and use~\eqref{eq:CM-div-T3}
  to get a $\tau \in L_2(\bb R^3) \otimes \bb M$, supported on
  $\bar \om$, satisfying $v = \div \tau$ on all $\bb R^3$, which
  in particular, implies
  that $\tau$ is in $\Ho(\div)$. Hence we may integrate by parts to conclude that
  \begin{align*}
    (\tau, \grad r) = (\div \tau , r) = 
    (v, r) =0
  \end{align*}
  for all $r \in \RT$. Choosing $r = b x$ for any
  $b \in \bb R$ and noting that $\grad(b x) = b \id$, we find that
  $(\tau, \id) =0$. Hence $t = \tr(\tau) \in L_2$ satisfies
  \begin{align*}
    0 &  = (\tau, \id) 
        = (\dev \tau, \id)  + \frac 1 3  (t \id, \id) = (t, 1).
  \end{align*}
  Now, by~\eqref{eq:CM-div-T3}, $q = {\Td} t$ satisfies $\div q = t$, so
  \begin{align*}
    v & = \div\tau = \div  \dev \tau + \frac 1 3 \div (\tr\tau )\, \id
        = \div  \dev \tau + \frac 1 3 \grad(t)
    \\    
      & = \div (\dev \tau) + \frac 1 3 \grad (\div q)
    \\
      & = \div (\dev \tau) + \frac 1 2 \div(\T \dev \grad  q)
  \end{align*}
  by~\eqref{eq:6} of Lemma~\ref{lem:commute-identities}. Denoting the
  linear maps $v \mapsto \dev \tau + \frac 1 2 \T \dev \grad q$ by ${\Rgd}$,
  the norm estimate in~\eqref{eq:Rgd}
  follows for $v \in \cl D_{\RT}$. The proof is now finished by
  the density result of Lemma~\ref{lem:dense-mean0}.
\end{proof}

\begin{lemma}
  \label{lem:RgcT}
  There is a linear map ${\RgcT} : \ker(\sym\curl: \HocdT) \to \Ho(\div)$
  such that for all $\tau \in \ker(\sym\curl: \HocdT)$,
  \[
    \frac 1 2 \dev \grad {\RgcT} \tau = \tau, \qquad
    \| {\RgcT} \tau \|_{H(\div)} \lesssim \| \tau \|_{\Hocd}.
  \]
\end{lemma}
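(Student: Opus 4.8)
The plan is to construct $\RgcT\tau$ directly from the regularized right inverse $\Tg$ of \eqref{eq:std-rt-inverses} and the algebraic identities in \eqref{eq:earlier-identities}; since $\Tg$ already operates on the $\Ht$-based spaces there is no need for a density reduction, and there is no topological obstruction to address because all potentials are produced by the global operators of \cite{CostaMcInt10}. This makes the lemma a short variant of the earlier right-inverse constructions, the essential new ingredient being that the kernel hypothesis $\sym\curl\tau=0$ forces $\curl\tau$ to be skew-symmetric.

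Fix $\tau\in\ker(\sym\curl:\HocdT)$; recall that $\tau$ is trace-free and $\T\tau\in\Hocd$. First I extract a scalar potential. Since $\sym\curl\tau=0$, the field $\curl\tau=\skw\curl\tau$ is skew, and \eqref{eq:skw-curl} combined with $S\tau=\T\tau$ (valid because $\tr\tau=0$) gives $\curl\tau=\tfrac12\,\mskw(\div\T\tau)$. Set $w:=\div\T\tau$, which lies in $\Ht\otimes\bb V$, is supported on $\bar\om$ by \eqref{eq:Hts}, and satisfies $\|w\|_{\Ht}=\|\div\T\tau\|_{\Ht}\le\|\T\tau\|_{\Hocd}$. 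Applying $\div$ to $\curl\tau=\tfrac12\mskw w$ and using \eqref{eq:div-mskw} together with $\div\circ\curl=0$ shows $\curl w=0$. Hence $\phi:=3\,\Tg w$ lies in $L_2(\bb R^3)$, is supported on $\bar\om$, and satisfies $\grad\phi=3w$ on all of $\bb R^3$ by \eqref{eq:CM-grad-T1}, with $\|\phi\|_{L_2}\lesssim\|w\|_{\Ht}$.

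Next I produce $v$. Consider the matrix field $M:=2\tau+\tfrac13\phi\,\id\in\Ht\otimes\bb M$. By \eqref{eq:mskw-grad}, $\curl M=2\curl\tau-\tfrac13\mskw\grad\phi=\mskw w-\mskw w=0$, so applying $\Tg$ to each row of $M$ and invoking \eqref{eq:CM-grad-T1} yields $v:=\Tg M\in L_2(\bb R^3)\otimes\bb V$, supported on $\bar\om$, with $\grad v=M$ on all of $\bb R^3$. Taking the trace of this identity and using $\tr\tau=0$ gives $\div v=\phi\in L_2$, so the restriction $v|_\om$ belongs to $\Ho(\div)$ — an $L_2$ vector field on $\bb R^3$ with square-integrable divergence and support in $\bar\om$ restricts to an element of $\Ho(\div)$, exactly as in the proofs of Lemmas~\ref{lem:Rgd} and~\ref{lem:Dgd}. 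Taking instead the deviatoric part gives $\dev\grad v=2\dev\tau=2\tau$, i.e.\ $\tfrac12\dev\grad v=\tau$. Setting $\RgcT\tau:=v$ defines a linear map with the asserted identity, and the bound follows by chaining the continuity of $\Tg$: $\|v\|_{L_2}\lesssim\|M\|_{\Ht}\lesssim\|\tau\|_{\Ht}+\|\phi\|_{L_2}\lesssim\|\tau\|_{\Ht}+\|w\|_{\Ht}$, together with $\|\div v\|_{L_2}=\|\phi\|_{L_2}\lesssim\|w\|_{\Ht}$, and both $\|\tau\|_{\Ht}$ and $\|w\|_{\Ht}$ are dominated by $\|\T\tau\|_{\Hocd}$.

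I do not expect a genuine obstacle here. The points needing care are purely bookkeeping: getting the constants and sign conventions in \eqref{eq:earlier-identities} to line up (the combination of $S$, $\mskw$, $\dev$ and the factor $\tfrac12$ must be tracked carefully), using the kernel hypothesis $\sym\curl\tau=0$ at precisely the point where $\curl\tau$ must be skew so that $\curl\tau=\tfrac12\mskw w$, and the routine verification that $v$, a priori only in $H(\div)$ over $\bb R^3$, restricts to a member of $\Ho(\div)$ over $\om$ thanks to its support in $\bar\om$.
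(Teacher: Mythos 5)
Your construction is correct and is essentially the paper's own proof in different notation: the paper likewise shows $\div\T\tau$ is curl-free (via the commutation identity \eqref{eq:4} rather than your skew-symmetry route), takes a scalar potential $w=\Tg\div\T\tau$, observes $\curl(\tau+\tfrac12 w\,\id)=0$, applies $\Tg$ row-wise to obtain $q$ with $\grad q = 2\tau + w\,\id$ (your $M$, so your $v$ coincides with the paper's $q$), and then uses the trace to get $\div q = 3w \in L_2$, hence $q\in\Ho(\div)$, and the deviatoric part to recover $\tau=\tfrac12\dev\grad q$. The only differences are the order in which the algebraic identities \eqref{eq:skw-curl}, \eqref{eq:mskw-grad}, \eqref{eq:div-mskw} are invoked and a harmless rescaling of the scalar potential, so there is no gap.
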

\begin{proof}
  Let $\tau \in \ker(\sym\curl: \HocdT)$. Then
  $\curl \div \T \tau = \div \sym\curl \tau =0$, so
  by~\eqref{eq:CM-grad-T1}, $w = {\Tg} \div (\T \tau)$ in
  $L_2(\bb R^3)$, supported on $\bar\om$, satisfies
  $\grad w = \div \T \tau$.

  Next, recalling that $\tr \tau = 0$, note that
  \begin{align*}
    2 \skw\curl \tau
    & = \mskw \div S \tau
    && \text{ by~\eqref{eq:skw-curl}}
    \\
    & = \mskw \div \T \tau       = \mskw \grad w
    \\
    & = -\curl (w \id),
    && \text{ by~\eqref{eq:mskw-grad}}.
  \end{align*}
  Hence
  $\curl \tau = \sym\curl \tau + \skw \curl \tau = -\frac 1 2 \curl (w
  \id).$ Applying ${\Tg}$ to each row vector of
  $\tau + \frac 1 2 w \id$, \eqref{eq:CM-grad-T1} we obtain a
  $q \in L_2 \otimes \bb V$ satisfying
  \begin{equation}
    \label{eq:9}
    \tau + \frac 1 2 w \id = \frac 1 2 \grad q.
  \end{equation}
  In particular, applying the $\tr$-operator to both sides
  of~\eqref{eq:9}, we see that the identity $3 w = \div q$ holds on
  all $\bb R^3$, so $q \in \Ho(\div)$.  Furthermore, applying
  $\dev$-operator to both sides of~\eqref{eq:9}, we conclude that
  $\tau = \dev \tau = \dev \grad q$. The map $\tau \mapsto q$ is the
  required operator ${\Rgc}$ and its stated norm bound follows from the
  continuity of~${\Tg}$.
\end{proof}

\begin{lemma}
  \label{lem:Rcc}
  There is a linear map
  ${\Rcc} : \ker(\div\div: \Hodd) \to L_2\otimes \bb T $
  such  that for all
  $\sigma \in \ker(\div\div: \Hodd)$,
  \[
    \sym \curl {\Rcc} \sigma = \sigma, \qquad
    \| {\Rcc} \sigma \|_{L_2} \lesssim \| \sigma \|_{\Ht}.
  \]
\end{lemma}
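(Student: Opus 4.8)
The plan is to build $\tau := \Rcc\sigma$ by exhibiting it as the deviatoric part of a $\curl$-potential, applied not to $\sigma$ itself (which is not divergence-free) but to a divergence-corrected modification of $\sigma$, and with the potential operator invoked on carefully chosen Sobolev scales; only two applications of $\Tc$ from~\eqref{eq:std-rt-inverses} will be needed. The obstacle to keep in mind throughout is the target estimate: it bounds $\|\Rcc\sigma\|_{L_2}$ by $\|\sigma\|_{\Ht}$ alone, which rules out the naive approach of composing Lemma~\ref{lem:Rddt} with Lemma~\ref{lem:Dcc} (in the manner of the proof of Lemma~\ref{lem:Rgc}), since that would only produce a bound in the larger norm $\|\sigma\|_{\Hodd}$.

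First I would exploit $\div\div\sigma=0$, which says precisely that $\div\sigma$ is divergence-free. I regard $\div\sigma$ as an element of $\Hts{-2}\otimes\bb V$ — legitimate because $\partial_j:\Hts{-1}\to\Hts{-2}$ is continuous by~\eqref{eq:di-cty}, and, importantly, yielding $\|\div\sigma\|_{\Hts{-2}}\lesssim\|\sigma\|_{\Ht}$. Applying $\Tc$ with $s=-2$ and using~\eqref{eq:CM-curl-T2}, I obtain $a:=\Tc(\div\sigma)\in\Hts{-1}\otimes\bb V=\Ht\otimes\bb V$ with $\curl a=\div\sigma$ and $\|a\|_{\Ht}\lesssim\|\sigma\|_{\Ht}$. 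Then $\mu:=\sigma+\mskw a$ lies in $\Ht\otimes\bb M$, satisfies $\|\mu\|_{\Ht}\lesssim\|\sigma\|_{\Ht}$, and by~\eqref{eq:div-mskw} has $\div\mu=\div\sigma-\curl a=0$.

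Applying $\Tc$ a second time, now row-wise to $\mu$ with $s=-1$, gives $\gamma\in L_2\otimes\bb M$ with $\curl\gamma=\mu$ and $\|\gamma\|_{L_2}\lesssim\|\mu\|_{\Ht}\lesssim\|\sigma\|_{\Ht}$. I then set $\tau:=\dev\gamma\in L_2\otimes\bb T$. Rewriting $\curl\big((\tr\gamma)\id\big)=-\mskw\grad(\tr\gamma)$ via~\eqref{eq:mskw-grad}, one computes
\[
\curl\tau=\curl\gamma-\frac13\curl\big((\tr\gamma)\id\big)=\mu+\frac13\mskw\grad(\tr\gamma)=\sigma+\mskw\Big(a+\frac13\grad\tr\gamma\Big),
\]
and since the $\mskw$-term is skew while $\sigma$ is symmetric, $\sym\curl\tau=\sigma$. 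All the operations used ($\partial_j$, $\Tc$, $\mskw$, $\dev$) are linear, so $\sigma\mapsto\tau$ is linear; combining the estimates gives $\|\tau\|_{L_2}\lesssim\|\sigma\|_{\Ht}$; and $\Rcc\sigma:=\dev\gamma$ is the asserted map.

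The sole delicate point — and the step I expect to be the only genuine obstacle — is the choice of Sobolev exponents in the first step: $\div\sigma$ must be handled at the $\Hts{-2}$ level so that its norm is dominated by $\|\sigma\|_{\Ht}$, after which the one decisive application of $\Tc$ raising regularity to $L_2$ is the one applied to $\mu\in\Ht\otimes\bb M$. The remainder is routine bookkeeping with the algebraic identities collected in~\eqref{eq:earlier-identities}.
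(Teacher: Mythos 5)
Your proposal is correct, but it takes a genuinely different route from the paper. The paper's proof recycles Lemma~\ref{lem:Rddt}: it sets $g={\Rcct}\sigma\in\Ho^1\otimes\bb T$, so that $\sigma-\sym\curl g$ is divergence-free, applies ${\Tc}$ once at $s=-1$ to obtain $\rho\in L_2\otimes\bb M$ with $\curl\rho=\sigma-\sym\curl g$, and then, exactly as in your final step, uses~\eqref{eq:mskw-grad} and symmetrization to discard the trace part of $\rho$, ending with ${\Rcc}\sigma={\Rcct}\sigma+\dev\rho$. You instead remove the divergence by a purely algebraic skew correction: $a={\Tc}(\div\sigma)$ taken at the lower index $s=-2$, so that $\mu=\sigma+\mskw a\in\Ht\otimes\bb M$ is divergence-free by~\eqref{eq:div-mskw}, followed by one application of ${\Tc}$ at $s=-1$ and the same $\dev$/$\sym$ bookkeeping. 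Both arguments are sound: your use of ${\Tc}$ at $s=-2$ is legitimate because~\eqref{eq:std-rt-inverses} holds for every real $s$ and because $\div\div\sigma=0$ holds as a distribution on $\bb R^3$ for $\sigma\in\ker(\div\div:\Hodd)$ (the same reading the paper uses in Lemma~\ref{lem:Rddt}). The paper's route is shorter since it leans on ${\Rcct}$ and keeps one potential component in $\Ho^1\otimes\bb T$; your route buys a cleaner constant: it delivers $\|{\Rcc}\sigma\|_{L_2}\lesssim\|\sigma\|_{\Ht}$ literally as stated, whereas the paper's chain passes through $\|{\Rcct}\sigma\|_{H^1}\lesssim\|\div\sigma\|_{\Ht}$ and so, as written, yields a bound by $\|\sigma\|_{\Ht}+\|\div\sigma\|_{\Ht}$ (continuity in the $\Hodd$-type norm, which is all that is needed where the lemma is applied). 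One small correction: your parenthetical objection to the ``naive approach'' refers to composing Lemma~\ref{lem:Rddt} with Lemma~\ref{lem:Dcc}, but the paper actually composes Lemma~\ref{lem:Rddt} with a direct application of ${\Tc}$; this does not affect your argument.
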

\begin{proof}
  By Lemma~\ref{lem:Rddt}, $\sigma - \sym\curl {\Rcct} \sigma$ has
  vanishing divergence. Applying ${\Tc}$ to its rows and
  using~\eqref{eq:CM-curl-T2}, we obtain a
  $\rho \in L_2 \otimes \bb M$ such that
  $\sigma - \sym\curl {\Rcct} \sigma = \curl \rho$. Hence
  \begin{align*}
    \sigma
    & = \sym\curl {\Rcct} \sigma +  \curl (\dev \rho + \frac 1 3 \tr(\rho) \id)
    \\
    & = \sym\curl {\Rcct} \sigma +  \curl (\dev \rho) -\frac 1 3
      \mskw\grad  \tr(\rho)
  \end{align*}
  by~\eqref{eq:mskw-grad}. Applying $\sym$-operator to both sides,
  $\sigma = \sym\curl ({\Rcct} \sigma + \dev \rho)$. The linear map
  $\sigma \mapsto {\Rcct} \sigma + \dev \rho$ is the required map
  ${\Rcc}$.
\end{proof}

\begin{lemma}
  \label{lem:Rcd}
    There is a linear map
    ${\Rcd}: \HtND(\div) \to L_2 \otimes \bb S$
    such that for all
    $q$ in $\HtND(\div)$,
    \begin{equation}
      \label{eq:Rd}
      q = \div {\Rcd} q, \qquad 
      \| {\Rcd} q \|_{L_2} \lesssim \| q \|_{\HtND(\div)}.
  \end{equation}
\end{lemma}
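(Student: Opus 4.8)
The plan is to peel off the ``double-divergence part'' of $q$ using the operator $\Ddd$ of Lemma~\ref{lem:Ddd}, reducing to the case of a divergence-free field, and then to build a \emph{symmetric} potential for the remainder. The one genuine obstacle is the symmetry constraint: constructing some matrix potential $\gamma$ with $\div\gamma=w$ for a divergence-free $w$ is routine (apply $\Td$ row-wise, or $\Tc$ together with $\mskw$), but $\gamma$ will not be symmetric, and symmetrizing it alters its divergence. I sidestep this by producing the remainder potential in the special form $\sym\curl\T\beta$, whose symmetry is automatic, while identity~\eqref{eq:4} still delivers the prescribed divergence; the price is one more application of $\Td$ to gain the required regularity.

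Concretely, I first note that $\div q\in\HtPl$: we have $\div q\in\Ht$ because $q\in\Ht(\div)$, and $(\div q)(p)=-q(\grad p)=0$ for every $p\in\pol_1$, since $\grad p$ is a constant vector and hence lies in $\ND$, to which $q$ is orthogonal. Thus, by Lemma~\ref{lem:Ddd} with $s=-1$, $\sigma_0:=\Ddd(\div q)\in\Ho^1\otimes\bb S$ is well defined and supported on $\bar\om$, with $\div\div\sigma_0=\div q$ and $\|\sigma_0\|_{H^1}\lesssim\|\div q\|_\Ht$. Set $w:=q-\div\sigma_0\in\Ht\otimes\bb V$. Then $\div w=0$, and $w\in\HtND(\div)$: for $r=a+b\times x\in\ND$ we get $q(r)=0$ and $(\div\sigma_0)(r)=-\sigma_0(\grad r)=0$ because $\grad r$ is a constant skew matrix while $\sigma_0$ is symmetric. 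Also $\|w\|_\Ht\lesssim\|q\|_\Ht+\|\div\sigma_0\|_{L_2}\lesssim\|q\|_{\HtND(\div)}$.

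Next, since $w$ is divergence-free, $\eta:=\Tc w\in L_2\otimes\bb V$ satisfies $\curl\eta=w$ and $\|\eta\|_{L_2}\lesssim\|w\|_\Ht$ by~\eqref{eq:CM-curl-T2}. The key observation is that $\eta$ has componentwise zero mean: for any constant $b\in\bb V$, using $w\perp\ND$ and $\curl(b\times x)=2b$,
\[
  0=w(b\times x)=(\curl\eta)(b\times x)=\eta\big(\curl(b\times x)\big)=2\,\eta(b).
\]
Hence $\Td$ applies to each component of $2\eta$, and collecting the resulting vector fields as the rows of a matrix field produces $\beta\in\Ho^1\otimes\bb M$, supported on $\bar\om$, with $\div\beta=2\eta$ and $\|\beta\|_{H^1}\lesssim\|\eta\|_{L_2}$.

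Finally I set $\sigma_1:=\sym\curl\T\beta\in L_2\otimes\bb S$. It is symmetric by construction, and by identity~\eqref{eq:4}, $\div\sigma_1=\div\sym\curl\T\beta=\frac12\curl\div\beta=\curl\eta=w$, with $\|\sigma_1\|_{L_2}\lesssim\|\beta\|_{H^1}\lesssim\|w\|_\Ht$. Defining $\Rcd q:=\sigma_0+\sigma_1$ then gives a linear map into $L_2\otimes\bb S$ with $\div\Rcd q=\div\sigma_0+w=q$, and chaining the estimates above yields $\|\Rcd q\|_{L_2}\lesssim\|q\|_{\HtND(\div)}$. The only steps requiring care are the two moment-condition verifications (that $\div q\in\HtPl$ and that $\eta$ has zero mean), which are precisely where orthogonality to $\ND$ enters; the rest is a routine composition of the continuity bounds for $\Ddd$, $\Tc$, and $\Td$.
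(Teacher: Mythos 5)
Your proof is correct, but it follows a genuinely different route from the paper's. The paper works first with smooth $q\in\cl D_{\ND}$: it applies ${\Td}$ row-wise to $q$ itself to get a (nonsymmetric) potential $\gamma\in L_2\otimes\bb M$ with $\div\gamma=q$, then repairs the symmetry by writing $\div\skw\gamma=-\curl u$ with $u=\vskw\gamma$, showing $(u,b)=0$ from the $\ND$-orthogonality of $q$, applying ${\Td}$ again to $u$, and converting via~\eqref{eq:4} to obtain $\Rcd q=\sym\gamma+\sym\curl\T\dev\tau$; the extension to all of $\HtND(\div)$ is then done by the density result of Lemma~\ref{lem:dense-mean0}. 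You instead decompose $q$ itself: you peel off $\sigma_0=\Ddd(\div q)$ (reusing Lemma~\ref{lem:Ddd}, which the paper does not invoke here), reduce to a divergence-free remainder $w$, and build its potential in the automatically symmetric form $\sym\curl\T\beta$ via ${\Tc}$ and ${\Td}$ — essentially the same mechanism as the paper's Lemmas~\ref{lem:Rddt} and~\ref{lem:Rcc}. Your moment-condition checks ($\div q\in\HtPl$ because $\grad\pol_1\subset\ND$; $(\div\sigma_0)(r)=-\sigma_0(\grad r)=0$ since $\grad r$ is constant skew; $2\eta(b)=w(b\times x)=0$) are all sound, and the distributional pairings are legitimate since everything is supported on $\bar\om$. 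What each approach buys: yours works directly with distributional $q$ and dispenses with the density argument, at the cost of an extra Bogovski{\u\i}-type application and a bound that genuinely uses $\|\div q\|_{\Ht}$ (i.e., the full $\HtND(\div)$ norm, which is exactly what the lemma states); the paper's construction, besides being self-contained at the level of ${\Td}$ alone, actually yields the slightly stronger estimate $\|\Rcd q\|_{L_2}\lesssim\|q\|_{\Ht}$.
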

\begin{proof}
  Let $q \in \cl D_{\ND}$. Since every component of $q$ has zero
  mean, by~\eqref{eq:CM-div-T3}, there exists a
  $\gamma \in L_2(\bb R^3) \otimes \bb M$, supported on $\bar \om$,
  such that $\div \gamma = q$. In particular, this implies that each
  row vector of $\gamma|_\om$ is in $\Ho(\div)$. Hence, integration by
  parts shows that for any $b \in \bb V$,
  \begin{align*}
    (q, b\times x) = (\div \gamma, b \times x) = -(\gamma, \grad (b \times x) ) =
    -(\gamma, \mskw b).   
  \end{align*}
  Since $(q, b \times x )=0$ for all $q \in \cl D_{\ND}$, all terms
  above vanish, so $0= (\gamma, \mskw b) = (\mskw u, \mskw b)$ with
  $u = \vskw \gamma,$ or equivalently, $(u, b)=0$. Therefore,
  applying~\eqref{eq:CM-div-T3} to each component of
  $u \in L_2 \otimes \bb V$, we obtain a
  $\tau \in \Ho^1 \otimes \bb M$ such that $\div \tau = - 2 u$.

  Collecting these observations, and putting $\sigma = \sym \gamma$,
  \begin{align*}
    q
    = \div \gamma
    & = \div( \sym \gamma +  \mskw u)
    \\
    & = \div \sigma - \curl u
    && \text{by~\eqref{eq:div-mskw}},
    \\
    & = \div \sigma + \frac 1 2 \curl \div \tau
    \\
    & = \div \sigma + \frac 1 2 \curl \div \dev \tau
    \\
    & = \div( \sigma  + \sym  \curl \T \dev \tau)
    &&\text{by~\eqref{eq:4}}.
  \end{align*}
  Set ${\Rcd} q = \sigma + \sym \curl \T \dev \tau$. Then by the
  continuity of ${\Td}$ in~\eqref{eq:CM-div-T3}, the norm estimate in
  \eqref{eq:Rd} follows for any $q \in \cl D_{\ND}$. The proof is
  finished using the density result of Lemma~\ref{lem:dense-mean0}.
\end{proof}

\begin{lemma}
  \label{lem:bottom-row}
  There are continuous linear maps
  ${\Rg} : \ker( \curl: \HtRT(\curl)) \to \LR$, 
  ${\Rc}: \ker(\div: \HtND(\div)) \to (L_2 \otimes \bb V) \cap  \HtRT(\curl)$,
  and
  ${\Rd} : \HtPl \to (L_2 \otimes \bb V)\cap \HtND(\div)$,
  such that for any  $v \in \HtRT(\curl)$ with $\curl v=0$,
  $q \in \HtND(\div)$ with $\div q =0$, and $ w \in \HtPl$, we have 
  \begin{alignat*}{3}
    & \frac 1 3 \grad {\Rg} v = v, 
    &\qquad
    &
    \frac 1 2 \curl {\Rc} q = q, 
    &\qquad
    &
      \div {\Rd} w = w,
    \\
    &
      \| {\Rg} v \|_{L_2} \lesssim \| v \|_{\Ht}
    &
    &
      \| {\Rc} q \|_{L_2} \lesssim \| q \|_{\Ht},
    &
    &
      \| {\Rd} w \|_{L_2} \lesssim  \| w \|_{\Ht}.
  \end{alignat*}
\end{lemma}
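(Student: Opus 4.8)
The plan is to realize each of $\Rg$, $\Rc$, $\Rd$ as the row-wise divergence of an already-constructed second-order potential operator (up to a scalar factor), and then to check the codomain conditions by integration by parts. Concretely, I would set
\[
  \Rg := \div\circ\Dgd, \qquad \Rc := 2\,(\div\circ\Dcd), \qquad \Rd := \div\circ\Ddd,
\]
invoking Lemma~\ref{lem:Dgd}, Lemma~\ref{lem:Dcd} with $s=-1$, and Lemma~\ref{lem:Ddd} with $s=-1$. The domains match exactly: $\Dgd$ is defined on $\ker(\curl:\HtRT(\curl))$, $\Dcd$ at $s=-1$ on $\ker(\div:\HtND(\div))$, and $\Ddd$ on $\HtPl$. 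The right-inverse identities then follow immediately from the cited lemmas: $\frac 1 3\grad\Rg v=\frac 1 3\grad\div\Dgd v=v$, $\frac 1 2\curl\Rc q=\curl\div\Dcd q=q$, and $\div\Rd w=\div\div\Ddd w=w$. Since $\Dgd$, $\Dcd$, $\Ddd$ take values in $\Ho^1$-based spaces and $\div$ is bounded $\Ho^1\to L_2$, the outputs are $\bb V$-valued $L_2$ fields, and the bounds $\|\Rg v\|_{L_2}\lesssim\|\Dgd v\|_{H^1}\lesssim\|v\|_{\Ht}$, $\|\Rc q\|_{L_2}\lesssim\|\Dcd q\|_{H^1}\lesssim\|q\|_{\Ht}$, $\|\Rd w\|_{L_2}\lesssim\|\Ddd w\|_{H^1}\lesssim\|w\|_{\Ht}$ are inherited.

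Next I would check that the outputs satisfy the moment conditions defining the target subspaces. For $\Rg v=\div\Dgd v$, since $\Dgd v$ has vanishing trace on $\partial\om$, $(\Rg v,1)=-(\Dgd v,\grad 1)=0$, so $\Rg v\in\LR$. For $\Rc q=2\div\Dcd q$ and any $r=a+bx\in\RT$, integrating by parts, $(\Rc q,r)=-2(\Dcd q,\grad r)=-2b(\Dcd q,\id)=0$ because $\Dcd q$ is $\bb T$-valued, hence trace-free. For $\Rd w=\div\Ddd w$ and any $r=a+d\times x\in\ND$, $(\Rd w,r)=-(\Ddd w,\grad r)=-(\Ddd w,\mskw d)=0$ because $\Ddd w$ is $\bb S$-valued, hence orthogonal to the skew matrix $\mskw d$.

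To conclude, one still needs $\Rc q\in\Hs{-1}(\curl)$ and $\Rd w\in\Hs{-1}(\div)$, i.e.\ membership in the closures of $\cl D(\om)\otimes\bb V$ in the respective graph norms, not just $L_2$-regularity with a first derivative in $\Ht$. This is where the product structure helps: choosing $\phi_m\in\cl D(\om)\otimes\bb T$ with $\phi_m\to\Dcd q$ in $H^1$, the test fields $2\div\phi_m\in\cl D(\om)\otimes\bb V$ converge to $\Rc q$ in $L_2$, while $\curl(2\div\phi_m)=2\curl\div\phi_m\to 2\curl\div\Dcd q=2q$ in $\Ht$ by applying the one-order continuity~\eqref{eq:di-cty} twice; hence $\Rc q\in\Hs{-1}(\curl)$, and together with the moment identity above, $\Rc q\in\HtRT(\curl)$. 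The identical device, with $\Ddd w$ and $\div\div$ replacing $\Dcd q$ and $\curl\div$, yields $\Rd w\in\HtND(\div)$. For $\Rg$ no such step is needed, since $\LR$ is just a closed subspace of $L_2$.

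The whole argument is essentially mechanical once Lemmas~\ref{lem:Dgd}, \ref{lem:Dcd}, \ref{lem:Ddd} are in hand. The only points demanding attention are the bookkeeping of the scalar factors — in particular the $2$ in $\Rc$, forced by the $\frac 1 2\curl$ labelling of the corresponding arrow in~\eqref{eq:reverse-arrows} — and the final verification that the outputs lie in the closed graph-norm subspaces $\Hs{-1}(\curl)$, $\Hs{-1}(\div)$ rather than merely in the larger spaces of $L_2$ fields with distributional derivatives in $\Ht$; I expect this last, density-by-test-functions step to be the main (if mild) obstacle.
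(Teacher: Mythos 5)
Your construction is correct, but it takes a genuinely different route from the paper. You obtain all three operators by composing $\div$ with the already-built second-order potentials: $\Rg=\div\circ\Dgd$, $\Rc=2\,\div\circ\Dcd$, $\Rd=\div\circ\Ddd$ (Lemmas~\ref{lem:Dgd}, \ref{lem:Dcd}, \ref{lem:Ddd}, the latter two at $s=-1$), then verify the moment conditions by integration by parts using the zero trace of the $\Ho^1$ potentials and the algebraic structure ($\bb T$-valued, hence $(\cdot,\id)=0$; $\bb S$-valued, hence orthogonal to $\mskw d$), and finally confirm membership in the closure spaces $\Hs{-1}(\curl)$, $\Hs{-1}(\div)$ by pushing smooth approximants of the potentials through $\div$ and using~\eqref{eq:di-cty}. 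The paper instead builds the operators directly from the first-order Bogovski{\u\i}-type inverses: $\Rg=3\Tg$, while $\Rc q=2(\Tc q-\grad w)$ and $\Rd w=\Td w-\curl u$, where $w$ and $u$ solve auxiliary variational problems (a Neumann projection and a constrained curl--curl problem) whose role is precisely to enforce the $\RT$- and $\ND$-orthogonality. Your version is shorter and avoids the auxiliary boundary value problems, at the cost of resting on the heavier diagonal lemmas (no circularity arises, since those lemmas precede this one and use only $\Tg,\Tc,\Td$); the paper's version produces outputs with extra structure ($\Rd w$ curl-free, $\Rc q$ divergence-free) that your outputs need not have, though the lemma does not require it. A point in your favor is that you explicitly verify that $\Rc q$ and $\Rd w$ lie in the closures defining $\Hs{-1}(\curl)$ and $\Hs{-1}(\div)$, a step the paper's proof leaves implicit. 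Only minor bookkeeping remains as you note (the factor $2$ in $\Rc$, and for $\Rg$ the trivial observation $(\div\Dgd v,1)=0$), so I see no gap.
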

\begin{proof}
  Let us construct the last operator first.  The functional action of
  any $w \in \HtPl$ on constant functions vanish, so we
  use~\eqref{eq:CM-div-T3} to conclude that
  ${\Td} w \in L_2 \otimes \bb V$ satisfies $\div {\Td} w = w$.

  We proceed to correct ${\Td}$ to obtain orthogonality to $\ND$. 
  Let  $u \in \Ho(\curl)$ satisfy
  \begin{align*}
    (\curl u, \curl v)   = ({\Td} w, \curl v), 
    \qquad 
    \div u  = 0,
  \end{align*}
  for all $v \in \Ho(\curl)$, a constrained formulation that is well
  known to be uniquely solvable~\cite{Monk03b}. The first equation
  above implies that
  \begin{gather}
    \label{eq:18}
    \| \curl u \|_{L_2} \le \| {\Td} w \|_{L_2} \lesssim \| w \|_{\Ht}, 
    \\ \label{eq:19}
    \curl \curl u = \curl {\Td} w.
  \end{gather}
  Put ${\Rd} w := {\Td}w - \curl u \in L_2 \otimes \bb V$. It  satisfies
  \[
    \curl {\Rd} w = 0, \qquad \div {\Rd} w = w, \qquad
    ({\Rd} w, r )=0
  \]
  for all $r \in \ND$. Indeed, the first equation follows
  from~\eqref{eq:19} and the second from $\div {\Td} w = w$. To see the
  third, first note that since the functional action of $w$ on any
  $p_1 \in \pol_1$ vanish,
  \[
    0 = w(p_1) = (\div {\Td} w )(p_1) = ({\Td} w, \grad p_1),
  \]
  so 
  $({\Td} w, a) = 0$ for any $a \in \bb V.$ Combined with 
  $(\curl u, a)=(u, \curl a) =0$, we have $({\Rd} w, a)=0$.
  Moreover,  since $\curl ( b x \cdot x) = -2 b \times x$ for any
  $b \in \bb V$, we have 
  \begin{align*}
    0 = (\curl {\Rd} w)(b x \cdot x) = ({\Rd} w, \curl ( b x \cdot x)) =
    ({\Rd} w, -2 b \times x),
  \end{align*}
  so ${\Rd} w $ is $L_2$-orthogonal to $a + b \times x$ for any $a, b \in \bb V$.
  By the continuity of ${\Td}$ and~\eqref{eq:18}, we also obtain
  the norm bound $\| {\Rd} w \|_{L_2} \lesssim \| w \|_{\Ht}$.

  The construction of ${\Rc}$ is similar: set
  ${\Rc} q := 2 ({\Tc} q - \grad w)$ where $w \in \Ho^1$ solves
  $(\grad w, \grad z) = ({\Tc}q, \grad z)$ for all $z \in
  \Ho^1$. Clearly, $\frac 1 2 \curl {\Rc} q =q$. Also, for any
  $a \in \bb V$, since the action of $q$ on any $\ND$-function
  vanishes,
  $0 = q( a \times x ) = (\curl {\Tc}q)( a \times x) = ({\Tc} q, 2 a)$, so
  $({\Rc} q, a)=0$. Moreover, for any $b \in \bb R$,
  \[
    ({\Rc} q, a + b x) = ({\Rc}q, bx) = \frac 1 2 ({\Rc}q, \grad(b x \cdot x)) =
    \frac 1 2 ( \div {\Rc} q)( b |x|^2)
  \]
  which must vanish since $\div {\Rc} q=0$ by construction, so ${\Rc} q$
  is $L_2$-orthogonal to $\RT$.

  Finally, simply setting ${\Rg} = 3 {\Tg}$, it is easy to verify the
  stated properties of ${\Rg}$.
\end{proof}

\begin{theorem}
  \label{thm:rt-inv-cts}
  Each $R$ and $D$ operator in~\eqref{eq:reverse-arrows} is a
  continuous right inverse of the differential operator marked above
  it.
\end{theorem}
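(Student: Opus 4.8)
The plan is to prove Theorem~\ref{thm:rt-inv-cts} by assembly: every $R$- and $D$-operator drawn in~\eqref{eq:reverse-arrows} has already been constructed in one of Lemmas~\ref{lem:Dcc}--\ref{lem:bottom-row}, or is a transpose or harmless rescaling of one, and in each case the cited lemma already states both the right-inverse identity and the continuity bound. So the proof simply walks through the diagram arrow by arrow, recording the correct citation together with the needed parameter specialization. I would first clear the top and bottom rows: the reverse arrows $\Tg,\Tc,\Td$ in the top row are, by definition, the operators of~\eqref{eq:std-rt-inverses} at $s=0$, and~\eqref{eq:CM-grad-T1}--\eqref{eq:CM-div-T3} restricted to the kernels drawn in~\eqref{eq:reverse-arrows} are exactly the asserted relations (using $\Hts1=\Ho^1$, noted after~\eqref{eq:Hts}, so that, e.g., $\Tg$ maps $\ker(\curl:\Ho(\curl))$ continuously into $\Ho(\grad)$); and the bottom-row operators $\Rg,\Rc,\Rd$ are precisely those of Lemma~\ref{lem:bottom-row}.

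Next I would run through the matrix-field operators, pairing each diagonal ($D$) arrow with its horizontal ($R$) neighbour row by row. For the second row: $\Rgg$ is Lemma~\ref{lem:Rgg} (right inverse of $\dfo$), $\Dgg$ is Lemma~\ref{lem:Dgg} with $s=-1$ (right inverse of $\hess$; one uses $\Hocc\subseteq\Hts{-1}\otimes\bb S$ so that $\ker(\curl:\Hocc)$ lies inside the kernel used there, and $\Hts1=\Ho^1$ for the codomain), $\Rgc$ is Lemma~\ref{lem:Rgc}, $\Dgc$ is Lemma~\ref{lem:Dgc}, $\Rgd$ is Lemma~\ref{lem:Rgd}, and $\Dgd$ is Lemma~\ref{lem:Dgd} (which also covers the mirror copy of $\Dgd$ in the fourth row, the diagram being symmetric about its diagonal). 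For the third row: $\RgcT$ is Lemma~\ref{lem:RgcT}, $\Rcc$ is Lemma~\ref{lem:Rcc}, $\Rcd$ is Lemma~\ref{lem:Rcd}, $\Dcc$ is Lemma~\ref{lem:Dcc} with $s=-1$, and $\Dcd$ is Lemma~\ref{lem:Dcd} with $s=-1$. The two transposed labels $\Dgc\T$ and $\T\Dcd$ are obtained by conjugating Lemmas~\ref{lem:Dgc} and~\ref{lem:Dcd} by the pointwise transpose $\tau\mapsto\tau^{\T}$; since this map is an isometry for every norm occurring and preserves trace-freeness, setting e.g.\ $\Dgc\T\tau:=\Dgc(\tau^{\T})$ gives $\T\curl\dfo\,\Dgc\T\tau=\T\!\big(\curl\dfo\,\Dgc(\tau^{\T})\big)=\T(\tau^{\T})=\tau$ on $\ker(\div\T:\HocdT)\cap\ker(\sym\curl:\HocdT)$, with the matching estimate. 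Finally $\Ddd$ is Lemma~\ref{lem:Ddd} at $s=-1$, giving the right inverse of $\div\div:\Hodd\to\HtPl$.

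The only work beyond citation --- and the closest thing to an obstacle here --- is bookkeeping of three kinds. First, one checks that the domain on which each lemma is stated coincides with the subspace drawn in~\eqref{eq:reverse-arrows} (always the kernel of one of the several operators living on that space); this is immediate because the operator cutting out that kernel left-factors through the one used in the lemma, e.g.\ $\inc=\curl\circ\T\curl$ gives $\ker(\curl:\Hocc)\subseteq\ker(\inc:\Hocc)$ and, dually, $\mathrm{range}(\dfo)\subseteq\ker(\inc:\Hocc)$. Second, one absorbs the scalar factors $\tfrac12$ and $\tfrac13$ decorating some diagonal arrows into the corresponding $D$-operator by an evident rescaling (so the diagram's $\Dcd$, say, is twice the operator of Lemma~\ref{lem:Dcd}). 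Third, one uses the identifications $\Hts1=\Ho^1$ (and the same with an $\bb S$- or $\bb V$-valued factor) so that the ``regular'' codomains really are the $\Ho^1$-type spaces with vanishing trace as drawn. None of this is substantive: the mathematical content of Theorem~\ref{thm:rt-inv-cts} was discharged in the preceding lemmas, and the theorem merely consolidates them.
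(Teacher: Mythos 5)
Your proposal matches the paper's own proof: Theorem~\ref{thm:rt-inv-cts} is proved there exactly by the same arrow-by-arrow consolidation of Lemmas~\ref{lem:Dcc}--\ref{lem:bottom-row} (with $s=-1$ where needed, $\Hts{1}=\Ho^1$, and symmetric/transposed arrows dismissed as mirror copies). Your extra bookkeeping remarks on kernel inclusions and absorbing the scalar factors $\tfrac12$, $\tfrac13$ are correct and, if anything, slightly more careful than the paper's statement of the same points.
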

\begin{proof} The result is proved by prior lemmas, as seen by the
  following pointers, which consider the operators, row by row, from
  left to right, but omitting the obvious ones in the first row and
  any obvious symmetrically opposite ones.
  
  \begin{itemize}

  \item    
    $\Dgg: \ker(\curl : \Hocc )\to \Ho^1 $ is a continuous right
    inverse of $\hess: \Ho(\grad) \to \Hocc$ by Lemma~\ref{lem:Dgg}
    applied with  $s=-1$.

  \item
    $\Dgc: \ker( \div: \Hocd) \cap \ker(\sym\curl \T: \Hocd) \to \Ho^1
    \otimes \bb V \subset \Ho(\curl)$ is a continuous right inverse of
    $\curl\dfo : \Ho(\curl) \to \Hocd$ by Lemma~\ref{lem:Dgc}.

  \item
    ${\Dgd}: \ker( \curl: \HtRT(\curl)) \to \Ho^1 \otimes \bb V
    \subset \Ho(\div)$ is a continuous right inverse of
    $\frac 1 3 \grad \div : \Ho(\div) \to \HtRT(\curl)$ by
    Lemma~\ref{lem:Dgd}.

  \item ${\Rgg}: \ker(\inc: \Hocc) \to \Ho(\curl)$ is a continuous
    right inverse of $\dfo: \Ho(\curl) \to \Hocc$ by
    Lemma~\ref{lem:Rgg}.

  \item ${\Rgc} : \ker(\div: \Hocd) \to L_2 \otimes \bb S \subset \Hocc$
    is a continuous right inverse of $\curl :\Hocc \to \Hocd $ by
    Lemma~\ref{lem:Rgc}.

  \item   ${\Rgd}: \HtRT(\curl) \to L_2 \otimes \bb T \subset \Hocd$
    is a continuous right inverse of $\div: \Hocd \to \HtRT(\curl)$ by
    Lemma~\ref{lem:Rgd}.

  \item ${\RgcT} : \ker(\sym\curl: \HocdT) \to \Ho(\div)$ is a
    continuous right inverse of
    $\frac 1 2 \dev \grad : \Ho(\div) \to \HocdT$ by
    Lemma~\ref{lem:RgcT}.
    
  \item $\Dcc: \ker(\div: \Hodd) \to \Ho^1 \otimes \bb S$ is a
    continuous right inverse of $\inc: \Hocc \to \Hodd$ by
    Lemma~\ref{lem:Dcc} applied with $s=-1$.

  \item
    ${\Dcd}: \ker( \div: \HtND(\div) ) \to \Ho^1 \otimes \bb T
    \subset \Hodd$ is a continuous right inverse of
    $\frac 1 2 \curl\div : \Hocd \to \HtND(\div)$ by
    Lemma~\ref{lem:Dcd} applied with $s=-1$.

  \item 
    ${\Rcc} : \ker(\div\div: \Hodd) \to L_2\otimes \bb T \subset \HocdT$
    is a continuous right inverse of $\sym\curl: \HocdT \to \Hodd$
    by Lemma~\ref{lem:Rcc}.

  \item     ${\Rcd}: \HtND(\div) \to L_2 \otimes \bb S \subset \Hodd $
    is a continuous right inverse of $\div: \Hocd \to \HtND(\div)$
    by Lemma~\ref{lem:Rcd}.

  \item ${\Ddd}: \HtPl \to \Ho^1\otimes \bb S \subset \Hodd $ is
    a continuous right inverse of $\div\div: \Hodd \to \HtPl$ by
    Lemma~\ref{lem:Ddd} applied with $s=-1$.

  \item ${\Rg} : \ker( \curl: \HtRT(\curl)) \to \LR$,
    ${\Rc}: \ker(\div: \HtND(\div)) \to (L_2 \otimes \bb V) \cap
    \HtRT(\curl)$, and
    ${\Rd} : \HtPl \to (L_2 \otimes \bb V)\cap
    \HtND(\div)$, are continuous right inverses
    of $\frac 1 3 \grad: \LR \to \HtRT(\curl)$,
    $\frac 1 2 \curl: \HtRT(\curl) \to \HtND(\div)$,
    and
    $\div: \HtND(\div) \to \HtPl$, respectively, by
    Lemma~\ref{lem:bottom-row}.
  \end{itemize}
\end{proof}

\begin{corollary}
  \label{cor:closed-reange}
  The range of every differential operator
  in~\eqref{eq:reverse-arrows} is closed.
\end{corollary}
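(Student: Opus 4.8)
The plan is to show that for every differential operator $A$ occurring in~\eqref{eq:reverse-arrows}, its range coincides with a \emph{closed} subspace $Z_A$ of its codomain; since a bounded operator whose range equals a closed subspace has closed range, the corollary follows. The inclusion $Z_A \subseteq \ran(A)$ is immediate from Theorem~\ref{thm:rt-inv-cts}: it supplies a bounded linear right inverse $R_A$ defined on a subspace $Z_A$ of the codomain of $A$ with $A\circ R_A = \operatorname{id}$ on $Z_A$, so every element of $Z_A$ is of the form $A(R_A z)$. Reading off $Z_A$ from the lemmas quoted in that proof, one sees that in each case $Z_A$ is either the entire codomain of $A$, or the null space $\ker(B\colon Y)$ of one of the continuous operators $B$ acting on the codomain $Y$, or (as for $\Dgc$) the intersection of two such null spaces; in all cases $Z_A$ is closed, being a preimage of $\{0\}$ under a continuous map.

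What is left is the reverse inclusion $\ran(A)\subseteq Z_A$ for each $A$, and this has in effect already been proved. When $Z_A$ is the full codomain of $A$, the inclusion is precisely the mapping property ``$A$ maps into the indicated space,'' established in Theorem~\ref{thm:diagram-commute-cty} (for instance that $\div$ sends $\Hocd$ into $\HtRT(\curl)$ and $\Hodd$ into $\HtND(\div)$, that $\grad$ sends $\LR$ into $\HtRT(\curl)$, and similarly for the bottom row). When $Z_A=\ker(B\colon Y)$ with $B$ a first-order operator, $\ran(A)\subseteq Z_A$ is the complex property $B\circ A=0$; when $B$ is one of the second-order operators (or $Z_A$ is the intersection of a first-order and a second-order kernel, as for $\curl\dfo$, whose right-inverse domain is $\ker(\div\colon\Hocd)\cap\ker(\sym\curl\T\colon\Hocd)$), it is the $2$-complex property of Theorem~\ref{thm:2-complex}, or equivalently the vanishing compositions already written out in the continuity arguments of Theorem~\ref{thm:diagram-commute-cty} (e.g.\ $\inc\circ\dfo=0$ and $\div\circ\inc=0$). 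For the downward arrows of~\eqref{eq:reverse-arrows}, which are not decorated with right inverses, one uses the transpose symmetry of the diagram noted after~\eqref{eq:1}: each downward operator is obtained from a rightward one by pre- and post-composition with the pointwise transpose, so a continuous right inverse is had by transposing the corresponding rightward right inverse, and the argument above applies verbatim.

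Putting the two inclusions together yields $\ran(A)=Z_A$ for each $A$, which is closed. I do not anticipate any genuine difficulty here: the content is entirely bookkeeping — pairing each of the operators in~\eqref{eq:reverse-arrows} with the lemma that provides its right inverse and with the already-established fact that its image lands in the relevant kernel or codomain. The only point requiring slight attention is the small number of operators (such as $\curl\dfo$) whose right-inverse domain is an intersection of two kernels, where both defining compositions must be seen to vanish.
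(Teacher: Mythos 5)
Your proposal is correct and follows essentially the same route as the paper: the paper's proof also identifies $\ran(A)$ with the closed domain of the corresponding right inverse (the codomain itself, a kernel, or an intersection of kernels, closed by the continuity established in Theorem~\ref{thm:diagram-commute-cty}), with one inclusion from Theorem~\ref{thm:rt-inv-cts} and the other from the complex/2-complex identities, working out exactly your "delicate" case $\curl\dfo$ with $\ran(\curl\dfo)=\ker(\div\colon\Hocd)\cap\ker(\sym\curl\T\colon\Hocd)$ and declaring the remaining operators similar and simpler. Your explicit handling of the unmarked downward arrows by transpose symmetry matches the paper's implicit remark following~\eqref{eq:reverse-arrows}.
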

\begin{proof}
  This is an immediate consequence of the existence of continuous
  right inverses for each differential operator 
  in~\eqref{eq:reverse-arrows}, as we have shown.

  For example, consider the operator $\curl \dfo.$ Clearly its range
  is contained in both $\ker(\div: \Hocd)$ and
  $\ker(\sym\curl\T : \Hocd)$.  But Lemma~\ref{lem:Dgc} shows that
  the intersection of these kernels is also contained in the range of
  $\curl \dfo$, so
  \[
    \ran( \curl \dfo) = \ker(\div: \Hocd) \cap \ker(\sym\curl\T : \Hocd).
  \]
  By the continuity of the differential operators proved in
  Theorem~\ref{thm:diagram-commute-cty}, both the above kernels are
  closed, and so is their intersection. Hence the range of
  $\curl \dfo$ is closed.

  The proofs for the remaining operators  are similar and simpler. 
\end{proof}

\begin{corollary}
  The hessian complex \eqref{eq:hessian-complex}, the elasticity
  complex \eqref{eq:elasticity-complex}, and the div-div complex
  \eqref{eq:div-div-complex} are exact complexes.
\end{corollary}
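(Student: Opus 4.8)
The plan is to promote each of the three sequences \eqref{eq:hessian-complex}, \eqref{eq:elasticity-complex} and \eqref{eq:div-div-complex} from being merely a complex (which is already known from Corollary~\ref{cor:3complexes}, so that the range of every map is contained in the kernel of the next) to an exact sequence by supplying the reverse inclusions $\ker\subseteq\ran$ at the two interior nodes of each, and then disposing of the two ends. The mechanism is uniform and requires no new analysis: by Theorem~\ref{thm:rt-inv-cts}, the right-inverse operator attached to each first- or second-order derivative map is defined precisely on the kernel of the operator that follows it in the complex, so applying it to an arbitrary element of that kernel produces a preimage, which is exactly the missing inclusion.

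Concretely, for the Hessian complex \eqref{eq:hessian-complex} I would use that $\Dgg$ right-inverts $\hess:\Ho(\grad)\to\Hocc$ on $\ker(\curl:\Hocc)$ and that $\Rgc$ right-inverts $\curl:\Hocc\to\Hocd$ on $\ker(\div:\Hocd)$, whence $\ker(\curl:\Hocc)=\ran(\hess)$ and $\ker(\div:\Hocd)=\ran(\curl:\Hocc)$. For the elasticity complex \eqref{eq:elasticity-complex}, $\Rgg$ right-inverts $\dfo:\Ho(\curl)\to\Hocc$ on $\ker(\inc:\Hocc)$ and $\Dcc$ right-inverts $\inc:\Hocc\to\Hodd$ on $\ker(\div:\Hodd)$. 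For the $\div\div$ complex \eqref{eq:div-div-complex}, $\RgcT$ right-inverts $\tfrac12\dev\grad:\Ho(\div)\to\HocdT$ on $\ker(\sym\curl:\HocdT)$ and $\Rcc$ right-inverts $\sym\curl:\HocdT\to\Hodd$ on $\ker(\div\div:\Hodd)$. In each case the statement in Theorem~\ref{thm:rt-inv-cts} already records that the image of the right-inverse operator lies inside the node it must ($\Dcc$ maps into $\Ho^1\otimes\bb S\subseteq\Hocc$, $\Rgc$ into $L_2\otimes\bb S\subseteq\Hocc$, and so on), so no separate embedding check is needed; the $\Hts{s}$-indexed lemmas behind $\Dgg$, $\Dcc$ and $\Ddd$ are to be invoked with $s=-1$, which is legitimate since $\Hocc,\Hocd,\Hodd$ sit inside $\Ht\otimes\bb S$ or $\Ht\otimes\bb T$ as appropriate.

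For the endpoints, surjectivity of the last map of each sequence is again immediate from Theorem~\ref{thm:rt-inv-cts}: $\Rgd$ right-inverts $\div:\Hocd\to\HtRT(\curl)$, $\Rcd$ right-inverts $\div:\Hodd\to\HtND(\div)$, and $\Ddd$ (with $s=-1$) right-inverts $\div\div:\Hodd\to\HtPl$. Injectivity of the first map of each sequence follows from identifying its kernel: $\ker\hess$ consists of affine scalar fields, $\ker\dfo$ of infinitesimal rigid motions, and $\ker(\dev\grad)$ of vector fields of the form $a+bx$ with $a\in\bb V$, $b\in\bb R$, and in each case the only such field lying in the corresponding space with homogeneous boundary conditions ($\Ho(\grad)$, $\Ho(\curl)$, $\Ho(\div)$, respectively) is zero. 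This last assertion is elementary: one pairs the defining integration-by-parts identity of the boundary-condition space against a conveniently chosen polynomial test field (for instance a candidate constant field against itself) to force it to vanish, entirely in the spirit of Lemma~\ref{lem:bottom-row}.

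I do not expect any genuinely difficult step: the analytic content is entirely absorbed into Theorem~\ref{thm:rt-inv-cts} and, behind it, the regularized Bogovski{\u{i}} operators of \cite{CostaMcInt10}. The only points that call for attention---and hence the closest thing to an obstacle---are bookkeeping ones: correctly matching the kernels named in Theorem~\ref{thm:rt-inv-cts} to the interior nodes of \eqref{eq:hessian-complex}--\eqref{eq:div-div-complex}, instantiating the $\Hts{s}$-parametrized lemmas at $s=-1$ and verifying that their images at $s=-1$ land in $\Ho^1\otimes\bb S$ (hence in the weakly regular matrix spaces), and carrying out the short elementary computations that trivialize the first kernels.
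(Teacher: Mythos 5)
Your proposal is correct and takes essentially the same route as the paper: exactness is reduced, via Corollary~\ref{cor:3complexes}, to the reverse inclusions $\ker\subseteq\ran$ at the interior nodes plus surjectivity of the last map, and these are supplied by exactly the same continuous right inverses ($\Dgg,\Rgc,\Rgd$ for the hessian complex, $\Rgg,\Dcc,\Rcd$ for elasticity, and $\RgcT,\Rcc,\Ddd$ for $\div\div$). The only difference is your additional injectivity check for the first map of each sequence, which the paper does not require (its notion of exactness here involves only the interior nodes and the final surjectivity) and which, while true, is sketched rather loosely in your last step; it is harmless but superfluous.
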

\begin{proof}
  To prove that the hessian complex \eqref{eq:hessian-complex} is
  exact it suffices to prove that
  $
  \ran(\hess) \supseteq \ker(\curl: \Hocc),$
  $
    \ran(\curl) \supseteq \ker(\div: \Hocd), 
    $
    and
    $\ran(\div)  \supseteq \HtRT(\curl)$
    (since the reverse inclusions are clear from
  Corollary~\ref{cor:3complexes}).  But these are now obvious by the
  existence of continuous right inverses $\Dgg, {\Rgc},$ and ${\Rgd}$
  given by Lemmas~\ref{lem:Dgg}, \ref{lem:Rgc}, and~\ref{lem:Dgd},
  respectively.

  Similarly, the continuous right inverse operators $\Rgg$, $\Dcc$,
  and ${\Rcd}$, given by Lemmas~\ref{lem:Rgg},~\ref{lem:Dcc},
  and~\ref{lem:Rcd}, prove the exactness of the elasticity complex.

  The exactness of the div div complex similarly follows from the
  continuous right inverse operators of $\RgcT, \Rcc$ and $\Ddd$ of 
  Lemmas~\ref{lem:RgcT}, \ref{lem:Rcc}, and~\ref{lem:Ddd}.
\end{proof}

\section{Slightly more regular spaces of matrix fields}
\label{sec:slightly-more-regular}

Consider the following  slightly more regular spaces of matrix fields, 
contained in the previously introduced spaces $\Hocc$, $\Hocd$ and
$\Hodd$: 
\begin{align}
  \toHcc
  & = \{ g \in L_2 \otimes \bb S:    \inc g \in \Ht \otimes \bb S\},
  \\
  \toHcd
  & = \{ \tau \in L_2 \otimes \bb T:  \curl\div \tau \in \Ht \otimes \bb V\},
  \\
  \toHdd
  & = \{ \sigma \in L_2 \otimes \bb S: \div \div \sigma \in \Ht\},
\end{align}
whose natural norms are defined  respectively by
\begin{align*}
  \| g \|^2_{\toHcc}
  & = \| g \|_{L_2}^2 + \| \inc g \|_{\Ht}^2,
  \\
  \| \tau \|_{\toHcd}^2
  & = \| \tau \|_{L_2}^2 + \| \curl \div \tau \|_{\Ht}^2,
  \\
  \| \sigma \|_{\toHdd}^2
  & = \| \sigma \|_{L_2}^2 + \| \div\div \sigma \|_{\Ht}^2.
\end{align*}
Such spaces of matrix fields and their even smoother versions, have
emerged in recent works~\cite{ArnolHu21, GopalLederSchob20,
  PechsSchob18}.

Note that one possible way to increase the regularity of the prior
spaces $\Hocc$, $\Hocd$ and $\Hodd$ is to uniformly replace $\Ht$ by
$\Hs{s}$ with some $s > -1$ in \eqref{eq:Hcc-cd-dd-norms} (and we
expect the prior analysis to go through with minimal changes for such
modification).  The new spaces of this section, $\toHcc, \toHcd,$ and
$\toHdd$, are not obtained this way. Instead, they are obtained by increasing
the regularity of the matrix-valued function to $L_2$ while
maintaining the same $\Ht$-regularity for its second-order derivative.
We proceed to prove regular decompositions and density of smooth
functions for such spaces.

\begin{theorem}[Regular decomposition of $\toHcc$]
  \label{thm:reg-dec-Hinc}
  There exist continuous linear operators
  $\Socc 0 : \toHcc \to \Ho^1 \otimes \bb S$,
  $\Socct 1 : \toHcc \to \Ho^1 \otimes \bb V$
  such that any $g \in \toHcc$
  can be decomposed into
  \begin{equation}
    \label{eq:Hinc-reg-dec}
    g = \Socc 0 g + \dfo \Socct 1 g.
  \end{equation}
  Consequently,
  \begin{equation}
    \label{eq:Hinc-density}
    \toHcc
    = \clos{ \cl D (\om) \otimes \bb S}{ \| \cdot \|_{\toHcc}}.    
  \end{equation}
\end{theorem}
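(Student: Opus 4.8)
The plan is to follow the template of Theorem~\ref{thm:reg-dec-Hcc}, but exploit that $g$ is already in $L_2 \otimes \bb S$: this lets us drop from three potentials to two and push the second potential all the way up to $\Ho^1$-regularity. First, since $\div \inc = \div \curl \T \curl = 0$ row-wise (even distributionally), the symmetric field $\inc g \in \Ht \otimes \bb S$ lies in $\ker(\div : \Hts{-1} \otimes \bb S)$, so Lemma~\ref{lem:Dcc} with $s=-1$ applies: set $\Socc{0} g := \Dcc \inc g \in \Hts{1} \otimes \bb S = \Ho^1 \otimes \bb S$, which satisfies $\inc \Socc{0} g = \inc g$ and $\| \Socc{0} g \|_{H^1} \lesssim \| \inc g \|_{\Ht} \le \| g \|_{\toHcc}$. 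Then $g - \Socc{0} g \in L_2 \otimes \bb S$ has vanishing incompatibility; extending it by zero to $\bb R^3$, it sits in $\Hts{0} \otimes \bb S \subset \Ht \otimes \bb S$ with distributional $\curl$ in $H^{-1}(\bb R^3)\otimes\bb V$ supported in $\bar\om$, hence in $\Ht \otimes \bb V$, so $g - \Socc{0} g \in \ker(\inc : \hHcc)$.

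Next we remove the curl. Apply Lemma~\ref{lem:Rcct} to $g - \Socc{0} g$ to obtain $u_1 := \Rggt(g - \Socc{0} g) \in \Ho^1 \otimes \bb V$ with $\curl \dfo u_1 = \curl(g - \Socc{0} g)$ and $\| u_1 \|_{H^1} \lesssim \| \curl(g - \Socc{0} g) \|_{\Ht} \lesssim \| g - \Socc{0} g \|_{L_2} \lesssim \| g \|_{\toHcc}$, using \eqref{eq:di-cty} and the $H^1$-bound on $\Socc{0} g$. Then $h := g - \Socc{0} g - \dfo u_1 \in L_2 \otimes \bb S$ is curl-free. The key step is now to apply the Costabel--McIntosh operator $\Tg$ to each row of $h \in \Hts{0} \otimes \bb S$: by \eqref{eq:CM-grad-T1} we get $v \in \Hts{1} \otimes \bb V = \Ho^1 \otimes \bb V$ with $\grad v = h$, and since $h$ is symmetric, $h = \sym \grad v = \dfo v$, with $\| v \|_{H^1} \lesssim \| h \|_{L_2} \lesssim \| g \|_{\toHcc}$. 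Therefore $g - \Socc{0} g = \dfo u_1 + h = \dfo(u_1 + v)$, so setting $\Socct{1} g := u_1 + v \in \Ho^1 \otimes \bb V$ yields the decomposition \eqref{eq:Hinc-reg-dec}, and the continuity of $\Socc{0}$ and $\Socct{1}$ is inherited from that of $\Dcc$, $\Rggt$, and $\Tg$.

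For \eqref{eq:Hinc-density}, the inclusion $\supseteq$ is clear. For $\subseteq$, decompose $g = \Socc{0} g + \dfo \Socct{1} g$ as above and use density of $\cl D(\om)$ in $\Ho^1$ to pick $g_m \in \cl D(\om) \otimes \bb S$ with $g_m \to \Socc{0} g$ in $H^1$ and $u_m \in \cl D(\om) \otimes \bb V$ with $u_m \to \Socct{1} g$ in $H^1$. Then $g_m + \dfo u_m \in \cl D(\om) \otimes \bb S$ converges to $g$ in $\| \cdot \|_{\toHcc}$: the $L_2$-part follows from the triangle inequality and boundedness of $\dfo$ on $H^1$, while $\inc \dfo = 0$ (from $\curl \dfo u = \tfrac12 \T \grad \curl u$, cf.\ \eqref{eq:5} and \eqref{eq:3}) gives $\inc(g_m + \dfo u_m) = \inc g_m$, and $\| \inc g_m - \inc \Socc{0} g \|_{\Ht} \lesssim \| g_m - \Socc{0} g \|_{\Hts{1}} \to 0$ by \eqref{eq:di-cty} applied twice.

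The main obstacle is obtaining the second potential in $\Ho^1$ rather than merely in $\Ho(\curl)$, which is all that a direct appeal to Lemma~\ref{lem:Rgg} would give. The resolution is the two-stage construction above: first strip off the curl using $\Rggt$, which leaves a curl-free \emph{and} symmetric field of $L_2$-regularity, and only then invoke $\Tg$ at regularity $s=0$; the symmetry of the remainder $h$ is precisely what upgrades the resulting gradient potential to a deformation potential.
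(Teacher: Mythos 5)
Your proposal is correct and takes essentially the same route as the paper: the same first operator $\Socc 0 = \Dcc \inc$, the same curl-removal via $\Rggt$, and your final step---applying $\Tg$ row-wise to the curl-free symmetric $L_2$ remainder and using its symmetry to read off a deformation potential in $\Ho^1$---is precisely the first half of the proof of Lemma~\ref{lem:Dgg}, so your $\Socct 1 g = u_1 + v$ coincides with the paper's $\Socc 1 g + \grad \Socc 2 g$ obtained by invoking Lemma~\ref{lem:Dgg} with $s=0$ and absorbing $\hess \Socc 2 g = \dfo\grad \Socc 2 g$ into the deformation term. The continuity bounds and the density argument (using $\inc\circ\dfo=0$ and \eqref{eq:di-cty}) also match the paper's.
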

\begin{proof}
  Since any $g \in \toHcc$ in also in $\Hocc$, we apply the operators
  $\Socc 0$ and $\Socc 1$ of Theorem~\ref{thm:reg-dec-Hcc} and follow
  along the lines of its proof to obtain~\eqref{eq:19a},
  $ \curl \big( g - \Socc 0 g - \dfo \Socc 1 g \big) = 0.$ Setting
  $\Socc 2 g = \Dgg(g - \Socc 0 g - \dfo \Socc 1 g)$ we apply
  Lemma~\ref{lem:Dgg}, but this time with $s=0$ since $g$ is now in
  $L_2 \otimes \bb S$, to get that
  \[
    \Socc 2 g \in \Hts{2} \otimes \bb S,
    \qquad 
    g - \Socc 0 g - \dfo \Socc 1 g = \hess \Socc 2 g .   
  \]
  Now~\eqref{eq:Hinc-reg-dec} follows setting
  $\Socct 1 g = \Socc 1 g + \grad \Socc 2 g$.

  To prove~\eqref{eq:Hinc-density}, decompose $g \in \toHcc$ as
  above. By the density of $\cl D(\om)$ in $\Ho^1$, there are
  $g_m \in \cl D(\om) \otimes \bb S$ and
  $u_m \in \cl D(\om) \otimes \bb V$ such that
  \[
    \| g_m - \Socc 0 g \|_{H^1} \to 0, \qquad \| u_m - \Socc 1 g
    \|_{H^1} \to 0, \qquad
  \]
  as $m \to \infty$. Hence, $g_m + \dfo u_m \in \cl D(\om) \otimes \bb S$ and 
  \begin{align*}
    \| g - (g_m + \dfo u_m)\|_{\toHcc}^2
    \le \| g - g_m \|_{L_2}^2 + \| \inc (g - g_m) \|_{\Ht}^2
    \lesssim \| g - g_m \|_{H^1}^2
  \end{align*}  
  by \eqref{eq:di-cty}. Since the last term converges to zero, we
  have proved that $\toHcc$ is contained in the closure of
  $\cl D (\om) \otimes \bb S$. The reverse inclusion is obvious.
\end{proof}

\begin{theorem}[Regular decomposition of $\toHdd$]
  \label{thm:reg-dec-Hdivdiv}
  There exist  continuous linear operators
  $\Sodd 0 : \Hodd \to \Ho^1 \otimes \bb S$ and 
  $\Soddt 1 : \Hodd \to \Ho^1 \otimes \bb T$
  such that any $\sigma \in \toHdd$
  can be decomposed into
  \begin{equation}
    \label{eq:Hdivdiv-reg-dec}
    \sigma = \Sodd 0 \sigma + \sym\curl \Soddt 1 \sigma.
  \end{equation}
  Consequently,
  \begin{equation}
    \label{eq:Hdivdiv-density}
    \toHdd
    = \clos{ \cl D (\om) \otimes \bb S}{ \| \cdot \|_{\toHdd}}.
  \end{equation}
\end{theorem}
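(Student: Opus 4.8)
The plan is to follow the strategy of the proof of Theorem~\ref{thm:reg-dec-Hinc}: start from the three--term regular decomposition of $\Hodd$ furnished by Theorem~\ref{thm:reg-dec-Hdd}, and then collapse its last two potentials into a single $\sym\curl$ term, using the extra $L_2$ regularity of $\sigma$ to invoke Lemma~\ref{lem:Dcc} with exponent $s=0$ rather than $s=-1$.

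First I would record that $\toHdd\subseteq\Hodd$: extending $\sigma\in L_2\otimes\bb S$ by zero gives an $L_2(\bb R^3)$ field whose divergence lies in $H^{-1}(\bb R^3)$ and is supported on $\bar\om$, hence $\div\sigma\in\Ht\otimes\bb V$, and together with $\div\div\sigma\in\Ht$ this places $\sigma$ in $\hHdd=\Hodd$. Reusing the construction from the proof of Theorem~\ref{thm:reg-dec-Hdd}, set $\Sodd 0\sigma={\Ddd}\div\div\sigma\in\Ho^1\otimes\bb S$ and $\Sodd 1\sigma={\Rcct}(\sigma-\Sodd 0\sigma)\in\Ho^1\otimes\bb T$, so that $h:=\sigma-\Sodd 0\sigma-\sym\curl\Sodd 1\sigma$ has vanishing divergence by Lemmas~\ref{lem:Ddd} and~\ref{lem:Rddt}. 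The key new point is that here $h\in L_2\otimes\bb S$ with $\|h\|_{L_2}\lesssim\|\sigma\|_{\toHdd}$: indeed $\|\Sodd 0\sigma\|_{H^1}\lesssim\|\div\div\sigma\|_{\Ht}$ by Lemma~\ref{lem:Ddd}, and $\|\Sodd 1\sigma\|_{H^1}\lesssim\|\div(\sigma-\Sodd 0\sigma)\|_{\Ht}\lesssim\|\sigma\|_{L_2}+\|\Sodd 0\sigma\|_{H^1}$ by Lemma~\ref{lem:Rddt} together with~\eqref{eq:di-cty}. Consequently Lemma~\ref{lem:Dcc} applied with $s=0$ to $h$ yields $\Sodd 2\sigma:={\Dcc}h\in\Hts 2\otimes\bb S$ with $h=\inc\Sodd 2\sigma$ and $\|\Sodd 2\sigma\|_{\Hts 2}\lesssim\|\sigma\|_{\toHdd}$.

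The step I expect to be the crux is converting $\inc\Sodd 2\sigma$ into a $\sym\curl$ applied to an $\Ho^1\otimes\bb T$ field. This rests on two algebraic facts about the symmetric field $\Sodd 2\sigma$: first, $\curl\Sodd 2\sigma$ is trace-free by~\eqref{eq:tr-curl}, so $\T\curl\Sodd 2\sigma\in\Hts 1\otimes\bb T=\Ho^1\otimes\bb T$; second, $\inc\Sodd 2\sigma=\curl\T\curl\Sodd 2\sigma$ is symmetric, so $\sym\curl(\T\curl\Sodd 2\sigma)=\frac12\bigl(\inc\Sodd 2\sigma+\T\inc\Sodd 2\sigma\bigr)=\inc\Sodd 2\sigma$. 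Defining $\Soddt 1\sigma:=\Sodd 1\sigma+\T\curl\Sodd 2\sigma\in\Ho^1\otimes\bb T$ then produces the decomposition $\sigma=\Sodd 0\sigma+\sym\curl\Soddt 1\sigma$ of~\eqref{eq:Hdivdiv-reg-dec}, and the continuity of $\Sodd 0$ and $\Soddt 1$ is immediate from the norm estimates already collected.

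Finally, for the density statement~\eqref{eq:Hdivdiv-density}, I would decompose an arbitrary $\sigma\in\toHdd$ as in~\eqref{eq:Hdivdiv-reg-dec}, choose $\sigma_m\in\cl D(\om)\otimes\bb S$ and $\tau_m\in\cl D(\om)\otimes\bb T$ with $\|\sigma_m-\Sodd 0\sigma\|_{H^1}\to0$ and $\|\tau_m-\Soddt 1\sigma\|_{H^1}\to0$ (density of $\cl D(\om)$ in $\Ho^1$), and note that $\sigma-(\sigma_m+\sym\curl\tau_m)=(\Sodd 0\sigma-\sigma_m)+\sym\curl(\Soddt 1\sigma-\tau_m)$. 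Its $L_2$ norm tends to $0$ by~\eqref{eq:di-cty}, while $\div\div$ of it equals $\div\div(\Sodd 0\sigma-\sigma_m)$ since $\div\div\circ\sym\curl$ vanishes (cf.~\eqref{eq:div-div-complex}, extended to $\Ho^1\otimes\bb T$ by density), and this tends to $0$ in $\Ht$ by two applications of~\eqref{eq:di-cty}. Hence $\sigma_m+\sym\curl\tau_m\in\cl D(\om)\otimes\bb S$ converges to $\sigma$ in $\|\cdot\|_{\toHdd}$; the reverse inclusion is trivial, completing the proof.
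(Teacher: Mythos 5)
Your proposal is correct and follows essentially the same route as the paper: it reuses $\Sodd 0\sigma={\Ddd}\div\div\sigma$ and $\Sodd 1\sigma={\Rcct}(\sigma-\Sodd 0\sigma)$ from Theorem~\ref{thm:reg-dec-Hdd}, applies Lemma~\ref{lem:Dcc} with $s=0$ to the divergence-free $L_2$ remainder, and sets $\Soddt 1\sigma=\Sodd 1\sigma+\T\curl\Sodd 2\sigma$, exactly as in the paper's proof. The extra details you supply (the inclusion $\toHdd\subseteq\Hodd$, the $L_2$ bound on the remainder, the trace-free and symmetry checks justifying $\sym\curl\T\curl\Sodd 2\sigma=\inc\Sodd 2\sigma$, and the $\div\div\circ\sym\curl=0$ step in the density argument) are all consistent with, and merely make explicit, what the paper leaves terse.
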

\begin{proof}
  We proceed along the lines of the proof of
  Theorem~\ref{thm:reg-dec-Hdd}, but now with the more regular
  $\sigma$ in $\toHdd$, to obtain that
  \[
    \div \big( \sigma - \Sodd 0 \sigma -  \sym\curl \Sodd 1 \sigma \big) =0,
  \]
  with $\Sodd 0$ and $\Sodd 1$ as defined there. At this point, we apply 
  Lemma~\ref{lem:Dcc}, now with $s=0$,
  setting
  $\Sodd 2 \sigma := {\Dcc} ( \sigma - \Sodd 0 \sigma - \sym\curl \Sodd 1
  \sigma )$, to  find that
  \[
    \Socc 2 \sigma \in \Hts{2}\otimes \bb S,
    \qquad \sigma - \Sodd 0 \sigma - \sym\curl \Sodd 1  \sigma = \inc \Sodd 2 \sigma.
  \]
  The result now follows setting
  $\Soddt 1 \sigma = \Sodd 1 \sigma + \T \curl \Sodd 2 \sigma$.

  To conclude, let $\sigma \in \toHdd$ and use the just proved
  decomposition~\eqref{eq:Hdivdiv-reg-dec} to split it into 
  $\sigma = \Sodd 0 \sigma + \sym\curl \Soddt 1 \sigma.$ By the density
  of $\cl D (\om)$ in $\Ho^1$, there are
  $\sigma_m \in \cl D(\om) \otimes \bb S$ and
  $\tau_m \in \cl D(\om) \otimes \bb T$ such that
  \[
    \| \sigma_m - \Sodd 0 \sigma \|_{H^1} \to 0, \qquad
    \| \tau_m - \Soddt 1 \sigma \|_{H^1} \to 0, \qquad
  \]
  as $m \to \infty$. Hence, by \eqref{eq:di-cty} and~\eqref{eq:4}
  $\sigma_m + \sym\curl \tau_m\in \cl D(\om) \otimes \bb S$ converges
  to $\sigma$ in $\| \cdot \|_{\toHdd}$~norm, thus proving that
  $\sigma$ is contained in the closure of $\cl D (\om) \otimes \bb
  S$. Since the reverse inclusion is obvious,
  \eqref{eq:Hdivdiv-density} is proved.
\end{proof}

\begin{theorem}[Regular decomposition of $\toHcd$]
  \label{thm:reg-dec-Hcurldiv}
  There exist three continuous linear operators
  \[
    \Socd 0 : \toHcd \to \Ho^1 \otimes \bb T,\quad
    \Socd 1 : \toHcd \to \Ho^1 \otimes \bb S,\quad
    \Socdt 2 : \toHcd \to \Ho^1 \otimes \bb V,
  \]
  such that any $\tau \in \toHcd$ can be decomposed into
  \begin{equation}
    \label{eq:Hcurldiv-dec}
    \tau = \Socd 0 \tau + \curl \Socd 1 \tau +  \T \dev \grad \Socdt 2 \tau.
  \end{equation}
  Consequently, 
  \begin{equation}
    \label{eq:Hcurldiv-density}
    \toHcd
    = \clos{ \cl D (\om) \otimes \bb T}{ \| \cdot \|_{\toHcd}}.
  \end{equation}
\end{theorem}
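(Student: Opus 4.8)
The plan is to follow the proof of Theorem~\ref{thm:reg-dec-Hcd} almost verbatim, exploiting the extra $L_2$-regularity available here to eliminate one of the four terms by invoking the ``in addition'' clause of Lemma~\ref{lem:Rcdh+Dcdh+Ucdh}. First I would record that $\toHcd \subseteq \Hocd$: for $\tau \in L_2 \otimes \bb T$ the first-order derivatives $\div\tau$ and $\curl\T\tau$ lie in $\Ht$ by~\eqref{eq:di-cty}, and $\curl\div\tau \in \Ht$ by assumption, so that $\| \tau \|_{\Hocd}^2 = \| \tau \|_{\Ht}^2 + \| \div\tau \|_{\Ht}^2 + \| \sym\curl\T\tau\|_{\Ht}^2 + \|\curl\div\tau\|_{\Ht}^2 \lesssim \| \tau\|_{L_2}^2 + \|\curl\div\tau\|_{\Ht}^2 = \| \tau\|_{\toHcd}^2$.

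Given $\tau \in \toHcd$, I would set $\Socd 0 \tau := \Dcd\, \curl\div\tau$ exactly as in Theorem~\ref{thm:reg-dec-Hcd}; the verification that $\curl\div\tau$ lies in $\HtND(\div)$ (it is divergence-free, and it annihilates any $r = a + b\times x \in \ND$ because $\curl r$ is constant so $\grad\curl r = 0$) carries over unchanged. Lemma~\ref{lem:Dcd} with $s=-1$ then gives $\Socd 0\tau \in \Ho^1 \otimes \bb T$ with $\|\Socd 0\tau\|_{H^1} \lesssim \|\curl\div\tau\|_{\Ht} \le \| \tau\|_{\toHcd}$ and $\curl\div(\tau - \Socd 0\tau)=0$. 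Since $\Socd 0\tau \in \Ho^1 \otimes \bb T \subset L_2 \otimes \bb T$, the corrected field $\tau - \Socd 0\tau$ is again in $L_2 \otimes \bb T$, and applying the inclusion just established to $\tau - \Socd 0\tau$ gives $\|\tau - \Socd 0\tau\|_{\Hocd} \lesssim \|\tau - \Socd 0\tau\|_{L_2} \lesssim \| \tau\|_{\toHcd}$. This is precisely the situation in which the final clause of Lemma~\ref{lem:Rcdh+Dcdh+Ucdh} applies with $\Ugch = 0$, yielding
\[
  \tau - \Socd 0\tau = \curl\, \Rgch(\tau - \Socd 0\tau) + \T\dev\grad\, \Dgch(\tau - \Socd 0\tau)
\]
with $\Rgch(\tau - \Socd 0\tau) \in \Ho^1 \otimes \bb S$, $\Dgch(\tau - \Socd 0\tau) \in \Ho^1 \otimes \bb V$, and $H^1$-norms bounded by $\|\tau - \Socd 0\tau\|_{L_2} + \|\tau - \Socd 0\tau\|_{\Hocd} \lesssim \| \tau\|_{\toHcd}$. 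Setting $\Socd 1\tau := \Rgch(\tau - \Socd 0\tau)$ and $\Socdt 2\tau := \Dgch(\tau - \Socd 0\tau)$ establishes~\eqref{eq:Hcurldiv-dec} together with the continuity of all three operators.

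For~\eqref{eq:Hcurldiv-density} I would decompose $\tau \in \toHcd$ as in~\eqref{eq:Hcurldiv-dec} and, using density of $\cl D(\om)$ in $\Ho^1$, choose $\tau_m \in \cl D(\om)\otimes\bb T$, $g_m \in \cl D(\om)\otimes\bb S$, $q_m \in \cl D(\om)\otimes\bb V$ converging in $H^1$ to $\Socd 0\tau$, $\Socd 1\tau$, $\Socdt 2\tau$, respectively. Then $\tau_m + \curl g_m + \T\dev\grad q_m$ lies in $\cl D(\om)\otimes\bb T$ (here $\curl g_m$ is trace-free by~\eqref{eq:tr-curl} since $g_m$ is symmetric, and $\T\dev\grad q_m$ is trace-free by construction), it converges to $\tau$ in $L_2$, and, since $\curl\div(\curl g_m) = 0$ and $\curl\div(\T\dev\grad q_m) = \tfrac{2}{3}\curl\grad\div q_m = 0$ by~\eqref{eq:6}, one has $\curl\div(\tau_m + \curl g_m + \T\dev\grad q_m) = \curl\div\tau_m$, which converges in $\Ht$ to $\curl\div\Socd 0\tau = \curl\div\tau$ by continuity of $\curl\div : \Hs{1} \to \Hs{-1}$. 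Hence the approximants converge to $\tau$ in $\|\cdot\|_{\toHcd}$; the reverse inclusion being obvious, \eqref{eq:Hcurldiv-density} follows.

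The main obstacle is the norm bookkeeping in the second paragraph: one must verify that subtracting $\Socd 0\tau$ preserves membership in $L_2 \otimes \bb T$ and that every quantity produced along the way is controlled by $\| \tau\|_{\toHcd}$ rather than merely by $\| \tau\|_{\Hocd}$, so that the sharper, $\Ugch$-free form of Lemma~\ref{lem:Rcdh+Dcdh+Ucdh} is legitimately available. Once this is in place, the remainder is a direct transcription of the arguments already used for $\Hocd$, $\toHcc$, and $\toHdd$.
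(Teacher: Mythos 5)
Your proposal is correct and follows essentially the same route as the paper: define $\Socd 0 = \Dcd\,\curl\div$ as in Theorem~\ref{thm:reg-dec-Hcd}, then invoke the $\Ugch$-free clause of Lemma~\ref{lem:Rcdh+Dcdh+Ucdh} on $\tau - \Socd 0\tau \in L_2\otimes\bb T$, and prove density by $H^1$-approximation of the three components. The only differences are cosmetic: you spell out the embedding $\toHcd\subseteq\Hocd$ and the bound $\|\tau-\Socd 0\tau\|_{\Hocd}\lesssim\|\tau\|_{\toHcd}$, which the paper leaves implicit, and you use~\eqref{eq:6} rather than~\eqref{eq:5} to kill the $\curl\div$ of the $\T\dev\grad$ term in the density argument.
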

\begin{proof}
  We follow along the lines of the proof of
  Theorem~\ref{thm:reg-dec-Hcd} to find that given any
  $\tau \in \toHcd$, using the same $\Socd 0$ operator defined there, we
  have $\curl \div (\tau - \Socd 0 \tau) =0$. But now, since the higher
  regularity of $\tau$ implies that $\tau - \Socd 0 \tau$ is in
  $L_2 \otimes \bb T$, we may apply the two-term decomposition of
  Lemma~\ref{lem:Rcdh+Dcdh+Ucdh} (taking $\Ugch$ there to be zero) instead
  of the three-term decomposition to obtain
  \[
    \tau - \Socd 0 \tau =
    (\curl {\Rgch}  +  \T \dev \grad \Dgch )(\tau - \Socd 0 \tau).
  \]
  The decomposition~\eqref{eq:Hcurldiv-dec} now follows after
  setting $\Socd 1 \tau = {\Rgch} (\tau - \Socd 0 \tau)$, and
  $\Socdt 2 \tau = \Dgch (\tau - \Socd 0 \tau)$.

  To prove \eqref{eq:Hcurldiv-density}, let $\tau \in \toHcd$ be
  decomposed as in \eqref{eq:Hcurldiv-dec}. By the density of
  $\cl D(\om)$ in $\Ho^1$, there are
  $\tau_m \in \cl D(\om) \otimes \bb T$,
  $g_m \in \cl D(\om) \otimes \bb S$ and
  $q_m \in \cl D(\om) \otimes \bb V$ which converge to the
  decomposition components $\Socd 0 \tau$, $ \Socd 1 \tau$ and
  $\T \dev \grad \Socdt 2 \tau$, respectively, in the $H^1$-norm, as
  $m \to \infty$. Hence
  \begin{align*}
    \| \tau_m
    & + \curl g_m + \T \dev \grad q_m -\tau \|_{\toHcd}
    \\
    & \le \| \tau_m - \Socd 0 \tau \|_{\toHcd}
      +
      \| \curl (g_m - \Socd 1 \tau) \|_{\toHcd}
      +
      \| \T \dev \grad (q_m - \Socdt 2 \tau)\|_{\toHcd}
    \\
    &\lesssim
      \| \tau_m - \Socd 0 \tau \|_{H^1}
      +
      \| \curl (g_m - \Socd 1 \tau) \|_{L_2}
      +
      \| \T \dev \grad (q_m - \Socdt 2 \tau)\|_{L_2}
    \\
    & \lesssim
      \| \tau_m - \Socd 0 \tau \|_{H^1}
      +
      \| g_m - \Socd 1 \tau \|_{H^1}
      +
      \| q_m - \Socdt 2 \tau\|_{H^1}
  \end{align*}
  where we have used \eqref{eq:5} (which implies that
  $\curl \div \circ \T \dev \grad =0$) and~\eqref{eq:di-cty}. Since
  the last bound converges to zero as $m \to \infty$, we have just 
  exhibited a sequence in $\cl \D(\om) \otimes \bb T$ that
  approximates any given $\tau \in \toHcd$ arbitrarily closely in
  $\toHcd$~norm.
\end{proof}

\section{Duality}
\label{sec:duality}

In this section, we state extensions to 2-complexes built
using Sobolev spaces that are generally not closures of compactly
supported smooth functions, such as those
in~\eqref{eq:std-Sobolev-spaces}, as well as 
spaces built using $\Hm$ instead of $\Ht$, such as 
\begin{align*}
  \Hm(\curl)
  & = \{ u \in \Hm \otimes \bb V: \curl u \in \Hm \otimes \bb V \},
  \\
  \Hm(\div)
  & = \{ q \in \Hm \otimes \bb V: \div q \in \Hm \},
\end{align*}
spaces of matrix-valued functions,
\begin{subequations}
  \label{eq:Hcc-Hdd-Hcd}
\begin{align}
  \Hcc
  & = \{ g \in \Hm \otimes \bb S: \curl g \in \Hm \otimes \bb V,
    \inc g \in \Hm \otimes \bb S \},
  \\  \nonumber 
  \Hcd & = \{ \tau \in \Hm  \otimes \bb T: \div \tau \in \Hm  \otimes \bb V,
         \sym\curl \tau \in \Hm  \otimes \bb S,
  \\ \label{eq:Hcd-defn}
  & \hspace{6.5cm} \curl\div\tau \in \Hm  \otimes \bb V \},
  \\ \label{eq:Hdd-defn}
  \Hdd & = \{ \sigma \in \Hm  \otimes \bb S: \div \sigma \in \Hm  \otimes \bb V,
         \div \div\sigma \in \Hm  \},
\end{align}
\end{subequations}
and their slightly more regular versions,
\begin{subequations}
  \label{eq:Hcc-Hdd-Hcd-smoother}
\begin{align}  
  \tHcc & = \{ g \in L_2 \otimes \bb S:    \inc g \in \Hm \otimes \bb S\},
  \\ \label{eq:Hdd-smoother-defn}
  \tHdd & = \{ \sigma \in L_2 \otimes \bb S: \div \div \sigma \in \Hm\},
  \\ \label{eq:Hcd-smoother-defn}
  \tHcd & = \{ \tau \in L_2 \otimes \bb T:  \curl\div \tau \in \Hm \otimes \bb V\}.  
\end{align}
\end{subequations}
Analogues of previous results can be proved for 
\begin{equation}
  \label{eq:no-bc-2-complex}
  \begin{tikzcd}
    [
    row sep=hyper2,
    column sep=large,
    ampersand replacement=\&
    ]
    H^1
    \arrow{r}{\grad}
    \arrow[d, "\grad"{sloped}]
    \arrow[dr, "\hess"{sloped}]
    \&[1em]  
    H(\curl)
    \arrow[r, "\curl"]
    \arrow[d, "\deff"{sloped}]
    \arrow[dr, "\curl\dfo"{sloped}]
    \&
    H(\div) 
    \arrow{r}{\div}
    \arrow[d, "\;\;\frac 1 2\!\! \T\dev\grad\!"{sloped}]
    \arrow[dr, "\frac 1 3 \grad \div"{sloped}]
    \&
    L_2
    \arrow[d, "\frac 1 3 \grad"]
    \\  
    H(\curl)
    \arrow{r}{\deff}
    \arrow[d, "\curl"{sloped}]
    \arrow[dr, "\T \curl \dfo"{sloped}]
    \&
    {\Hcc}
    \arrow{r}{\curl}
    \arrow[d, "\T\curl"{sloped}]
    \arrow[dr, "\inc"{sloped}]
    \&
    {\Hcd}
    \arrow{r}{\div}
    \arrow[d, "\sym\curl\T"{sloped}]
    \arrow[dr, "\quad\frac 1 2 \curl \div"{sloped}]
    \&
    {H^{-1}(\curl)}
    \arrow[d, "\frac 1 2 \curl"]      
    \\
    H(\div)
    \arrow[r, "\frac 1 2 \dev\grad"]
    \arrow[d, "\div"{sloped}]
    \arrow[dr, "\frac 1 3 \grad \div"{sloped}]
    \& 
    {\HcdT}
    \arrow{r}{\sym\curl}
    \arrow[d, "\div\T"{sloped}]
    \arrow[dr, "\frac 1 2 \curl \div \T"{sloped}]
    \&
    {\Hdd}
    \arrow{r}{\div}
    \arrow[d, "\div"{sloped}]
    \arrow[dr, "\div \div"{sloped}]
    \&
    {H^{-1}(\div)}
    \arrow[d, "\div"]
    \\
    L_2
    \arrow{r}{\frac 1 3 \grad}
    \&
    {H^{-1}(\curl)}
    \arrow{r}{\frac 1 2 \curl}
    \&
    {H^{-1}(\div)}
    \arrow{r}{\div}
    \&
    {H^{-1}}
  \end{tikzcd}
\end{equation}
using exactly the same techniques. They are summarized in the next
theorem. We use $\cl D(\bar \om)$ to denote the space of restrictions of functions 
in $\cl D(\bb R^3)$ to the closure $\bar \om$.

\begin{theorem}
  \label{thm:no-bc-spaces}
  \hfill
  \begin{enumerate}
  \item  The diagram~\eqref{eq:no-bc-2-complex} commutes.
  \item Every path in it is a 2-complex.
  \item Every differential operator in it is continuous,  has closed
    range, and has a continuous right inverse. 
  \item There are regular decompositions for $\Hcc, \Hdd,$ and $\Hcd$. Namely, 
    there exist continuous linear operators 
    $ \Scc 0 : \Hcc \to {H}^1 \otimes \bb S, \;
      \Scc 1 : \Hcc \to {H}^1 \otimes \bb V, \;
      \Scc 2 : \Hcc \to {H}^1,
      \Sdd 0 : \Hdd \to {H}^1 \otimes \bb S, \;
      \Sdd 1 : \Hdd \to {H}^1 \otimes \bb T, \;
      \Sdd 2 : \Hdd \to {H}^1 \otimes \bb S,
      \Scd 0 : \Hcd \to {H}^1 \otimes \bb T,\;
      \Scd 1 : \Hcd \to {H}^1 \otimes \bb S,\;
      \Scd 2 : \Hcd \to {H}^1 \otimes \bb V, \;
      \Scd 3 : \Hcd \to {H}^1 \otimes \bb V,\;    
      $
      such that
      \begin{align}
        \label{eq:reg-dec-Hcc-nobc}
        g
        & = \Scc 0 g + \dfo \Scc 1 g + \hess \Scc 2 g,
        && g \in \Hcc,
        \\
        \label{eq:reg-dec-Hdd-nobc}
        \sigma
        & = \Sdd 0 \sigma + \sym\curl \Sdd 1 \sigma + \inc \Sdd 2 \sigma,
          && \sigma \in \Hdd,
        \\
        \label{eq:reg-dec-Hcd-nobc}
        \tau
        & = \Scd 0 \tau + \curl \Scd 1 \tau +  \T \dev \grad \Scd 2 \tau + 
          \curl\dfo \Scd 3 \tau,
        && \tau \in \Hcd.
      \end{align}

    \item The spaces
      $\cl D(\bar\om) \otimes \bb S, \cl D(\bar\om) \otimes \bb T,$
      and $\cl D(\bar\om) \otimes \bb S$ are dense in $\Hcc, \Hcd,$
      and $\Hdd$, respectively.
    \item There are regular decompositions for $\tHcc, \tHdd,$ and
      $\tHcd$. Namely, there are continuous linear operators
      $\Scct 1 : \tHcc \to \Ho^1 \otimes \bb V$,
      $\Scdt 2 : \tHcd \to \Ho^1 \otimes \bb S$,
      $\Sddt 1 : \tHdd \to \Ho^1 \otimes \bb T$,
      such that
      \begin{align}
        g
        & = \Scc 0 g + \dfo \Scct 1 g,
        && g \in \tHcc, 
        \\
        \tau
        & = \Scd 0 \tau + \curl \Scd 1 \tau +  \T \dev \grad \Scdt 2 \tau,
        && \tau \in \tHcd, 
        \\
        \sigma
        & = \Sdd 0 \sigma + \sym\curl \Sddt 1 \sigma,
        && \sigma \in \tHdd.
      \end{align}

    \item The spaces
      $\cl D(\bar\om) \otimes \bb S, \cl D(\bar\om) \otimes \bb T,$
      and $\cl D(\bar\om) \otimes \bb S$ are dense in $\tHcc, \tHcd,$
      and $\tHdd$, respectively.
  \end{enumerate}  
\end{theorem}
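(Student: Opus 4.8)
The plan is to transcribe, essentially line by line, the developments of Sections~\ref{sec:cont-commut}--\ref{sec:slightly-more-regular} to the diagram~\eqref{eq:no-bc-2-complex}, making the substitutions: replace $\Ht$ by $\Hm$, replace the zero-trace spaces $\Ho(\grad),\Ho(\curl),\Ho(\div)$ by $H^1,H(\curl),H(\div)$, replace the compactly supported test functions $\cl D(\om)$ by the restrictions $\cl D(\bar\om)$, and replace the moment-constrained subspaces (those decorated with $\RT$, $\ND$, $\pol_1$ that occupied the right column and bottom row of~\eqref{eq:1}) by the plain spaces $H^{-1}(\curl)$, $H^{-1}(\div)$, $H^{-1}$, $L_2$. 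Parts~(1) and~(2) are then immediate: the algebraic identities of~\eqref{eq:earlier-identities} and Lemma~\ref{lem:commute-identities} hold for arbitrary vector- and matrix-valued distributions, so the proofs of Theorems~\ref{thm:diagram-commute-cty} and~\ref{thm:2-complex} apply unchanged, the boundary conditions having played no role there.

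For part~(3) the two analytic ingredients needed are, first, the continuity of $\d_i\colon H^s(\om)\to H^{s-1}(\om)$ for every real $s$, which replaces~\eqref{eq:di-cty}; and second, the ``without boundary condition'' versions of the regularized Bogovski{\u{\i}}--Poincar\'e operators of~\cite{CostaMcInt10}, namely continuous linear maps $\Tg\colon H^s(\om)\otimes\bb V\to H^{s+1}(\om)$, $\Tc\colon H^s(\om)\otimes\bb V\to H^{s+1}(\om)\otimes\bb V$, and $\Td\colon H^s(\om)\to H^{s+1}(\om)\otimes\bb V$ satisfying $\grad\Tg v=v$ for curl-free $v$, $\curl\Tc q=q$ for divergence-free $q$, and $\div\Td u=u$ for \emph{all} $u$ --- the last being the top-degree homotopy operator of the de~Rham complex, which carries no side condition. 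With these in hand, every continuity estimate in the proof of Theorem~\ref{thm:diagram-commute-cty} and every right-inverse construction in Lemmas~\ref{lem:Dcc}--\ref{lem:bottom-row} transcribes with the substitutions above; in fact the moment-constrained domains of the right-column and bottom-row operators, together with the need for the moment-density results of Lemma~\ref{lem:dense-mean0}, simply disappear, because the unconstrained $\Td$ lets the corresponding right inverses be built directly from the $T$-operators. Closed range of every operator then follows from the existence of a continuous right inverse, exactly as in Corollary~\ref{cor:closed-reange}.

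Parts~(4) and~(6) are obtained by chaining these right inverses precisely as in the proofs of Theorems~\ref{thm:reg-dec-Hcc}, \ref{thm:reg-dec-Hdd}, and~\ref{thm:reg-dec-Hcd} (for~(4)) and of Theorems~\ref{thm:reg-dec-Hinc}, \ref{thm:reg-dec-Hdivdiv}, and~\ref{thm:reg-dec-Hcurldiv} (for~(6)), the only change being that all potentials now land in $H^1(\om)$-valued spaces rather than their zero-trace subspaces. Parts~(5) and~(7) then follow from these decompositions as in the closing paragraphs of those theorems: one approximates each $H^1(\om)$-valued component by a sequence in $\cl D(\bar\om)\otimes\W$ (with $\W\in\{\bb R,\bb V,\bb S,\bb T\}$ as appropriate, using the standard density of $\cl D(\bar\om)$ in $H^1(\om)$ on the Lipschitz domain~$\om$), applies the relevant differential operator --- which maps $H^1(\om)$-valued spaces continuously into the $\Hm$-based norm because the second-order derivatives of its $\dfo$-, $\curl$-, $\sym\curl$-, $\curl\dfo$-, and $\hess$-images vanish by Lemma~\ref{lem:commute-identities} --- and concludes that the resulting linear combination $\cl D(\bar\om)$-approximates the given field in the relevant norm; the reverse inclusions are obvious.

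The part requiring genuine attention, as opposed to mechanical transcription, is the handful of places in Lemmas~\ref{lem:Dgg}, \ref{lem:Rddt}, \ref{lem:Ddd}, and~\ref{lem:Dcd} where the original proofs invoked ``$\supp u\subset\bar\om$'' to force some curl or divergence of a potential, a priori known only to be a constant, to vanish, or to verify a mean-zero condition needed to reapply $\Td$. Without boundary conditions these steps must be re-derived: in each case the leftover constant is annihilated by the symmetry or trace-freeness built into the relevant tensor space (a skew constant matrix is killed by the symmetrization producing a field of $\bb S$, an $\id$-multiple is killed by the deviatoric projection), while every spurious mean-value condition is avoided altogether because the no-boundary-condition $\Td$ needs none. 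Checking that this collapse of the moment bookkeeping occurs correctly in each of the roughly a dozen lemmas is routine but tedious, which is why the statement is recorded here in consolidated form; beyond it, the argument is identical to Sections~\ref{sec:cont-commut}--\ref{sec:slightly-more-regular}.
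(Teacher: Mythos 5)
Your proposal is correct and follows essentially the same route as the paper, whose own proof of this theorem is simply the instruction to repeat the arguments of Theorems~\ref{thm:diagram-commute-cty}, \ref{thm:2-complex}, and~\ref{thm:rt-inv-cts} (and the regular-decomposition theorems) with the boundary-condition-free spaces. Your two refinements --- using the regularized Poincar\'e (no-support-condition) operators of~\cite{CostaMcInt10}, which the paper itself invokes in Step~4 of Lemma~\ref{lem:duality-vector-cases}, and noting that the moment constraints and support-based constant-killing steps either disappear (unconstrained $\Td$) or are absorbed by the symmetry/trace-free structure --- are exactly the adjustments the paper's terse proof presupposes.
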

\begin{proof}
  Proceed as in the proofs of Theorems~\ref{thm:diagram-commute-cty},
  \ref{thm:2-complex}, and \ref{thm:rt-inv-cts}.
\end{proof}

Some spaces at the edges  of the diagram~\eqref{eq:no-bc-2-complex} can
be restricted to certain subspaces of interest without affecting the
matrix-valued function spaces to get the following commuting diagram:
\begin{equation}
  \label{eq:no-bc-2-complex-quotient}
  \begin{tikzcd}
    [
    row sep=hyper2,
    % column sep=large,
    ampersand replacement=\&
    ]
    H^1/\pol_1
    \arrow{r}{\grad}
    \arrow[d, "\grad"{sloped}]
    \arrow[dr, "\hess"{sloped}]
    \&[1em]  
    H(\curl)/\ND
    \arrow[r, "\curl"]
    \arrow[d, "\deff"{sloped}]
    \arrow[dr, "\curl\dfo"{sloped}]
    \&
    H(\div)/\RT
    \arrow{r}{\div}
    \arrow[d, "\;\;\frac 1 2\!\! \T\dev\grad\!"{sloped}]
    \arrow[dr, "\frac 1 3 \grad \div"{sloped}]
    \&
    L_2/\bb R
    \arrow[d, "\frac 1 3 \grad"]
    \\  
    H(\curl)/\ND
    \arrow{r}{\deff}
    \arrow[d, "\curl"{sloped}]
    \arrow[dr, "\T \curl \dfo"{sloped}]
    \&
    {\Hcc}
    \arrow{r}{\curl}
    \arrow[d, "\T\curl"{sloped}]
    \arrow[dr, "\inc"{sloped}]
    \&
    {\Hcd}
    \arrow{r}{\div}
    \arrow[d, "\sym\curl\T"{sloped}]
    \arrow[dr, "\quad\frac 1 2 \curl \div"{sloped}]
    \&
    {H^{-1}(\curl)}
    \arrow[d, "\frac 1 2 \curl"]      
    \\
    H(\div)/\RT
    \arrow[r, "\frac 1 2 \dev\grad"]
    \arrow[d, "\div"{sloped}]
    \arrow[dr, "\frac 1 3 \grad \div"{sloped}]
    \& 
    {\HcdT}
    \arrow{r}{\sym\curl}
    \arrow[d, "\div\T"{sloped}]
    \arrow[dr, "\frac 1 2 \curl \div \T"{sloped}]
    \&
    {\Hdd}
    \arrow{r}{\div}
    \arrow[d, "\div"{sloped}]
    \arrow[dr, "\div \div"{sloped}]
    \&
    {H^{-1}(\div)}
    \arrow[d, "\div"]
    \\
    L_2/\bb R
    \arrow{r}{\frac 1 3 \grad}
    \&
    {H^{-1}(\curl)}
    \arrow{r}{\frac 1 2 \curl}
    \&
    {H^{-1}(\div)}
    \arrow{r}{\div}
    \&
    {H^{-1}}
  \end{tikzcd}
\end{equation}
This is because of the following facts:
(a)~$\grad \pol_1 = \bb V$ is in the zero coset of $H(\curl)/\ND$,
(b)~$\curl \ND = \bb V$ is in the zero coset of $H(\div) / \RT$,
(c)~$\div \RT = \bb R$ is in the zero coset of $L_2/\bb R$,
(d)~$\dfo \ND = 0$, and 
(e)~$\dev \grad \RT =0$.

Next we turn to establishing certain duality relationships between the spaces  in the diagram~\eqref{eq:1} and those in the just introduced diagram~\eqref{eq:no-bc-2-complex-quotient}.
First
we need a lemma that enlarges the  domain of
certain functionals.  Consider a $q \in H^{-1}(\div)$. Then, since $q$
is in $H^{-1} \otimes \bb V$, its action on a function in
$\Ho^1 \otimes \bb V$ is well defined, so the action of $q$ on
gradient fields, $q(\grad w)$, is well defined provided 
$\grad w \in \Ho^1 \otimes \bb V$. But in fact, it is also well defined if
$\grad w$ is just in $L_2 \otimes \bb V$, as stated next, where similar
other extensions are also collected.

\begin{lemma}
  \label{lem:extensions-to-range}
  Let $q \in H^{-1}(\div)$, $\sigma \in \Hdd$, $g \in \Hcc$, and
  $\tau \in \Hcd$.  Then $q \circ \grad$, $\sigma \circ \dfo$, extends
  to continuous linear maps such that 
  \begin{subequations}    
    \begin{alignat}{4}
      \label{eq:q-grad}
      q \circ \grad
      & : \Ho^1 \to \bb R,
      &\quad (q \circ \grad)(w )
      &  = - (\div q)(w),
      &&\quad  w \in \Ho^1,
      \\ \label{eq:sigma-eps}
      \sigma \circ \dfo
      & : \Ho^1 \otimes \bb V \to \bb R,
      & \quad (\sigma \circ \dfo) (u)
      & = -(\div \sigma )(u),
      &&\quad  u \in \Ho^1 \otimes \bb V,
      \\ \label{eq:sigma-hess}
      \sigma \circ \hess
      &: \Ho^1 \otimes \bb S \to \bb R,
      & \quad (\sigma \circ \hess)(w)
      & = (\div\div \sigma)(w),
      &&\quad  w \in \Ho^1,
      \\ \label{eq:g-sc}
      g\circ \sym\curl
      & : \Ho^1 \otimes \bb T \to \bb R,
      & \quad (g\circ \sym\curl)(\eta)
      & = (\curl g)(\eta),
      && \quad \eta \in \Ho^1 \otimes \bb T,
      \\ \label{eq:g-inc}
      g \circ \inc
      &: \Ho^1 \otimes \bb S \to \bb R,
      & \quad (g \circ \inc)(\gamma)
      & = (\inc g)(\gamma),
      &&\quad \gamma \in \Ho^1 \otimes \bb S,
      \\
      \label{eq:tau-curl}
      \tau\circ \curl
      & : \Ho^1 \otimes \bb T \to \bb R,
      & \quad (\tau\circ \curl)(\gamma)
      & = (\sym\curl \tau)(\gamma),
      &&\quad \gamma \in \Ho^1 \otimes \bb S,
      \\
      \label{eq:tau-dev-grad}
      \tau \circ \dev\grad
      &: \Ho^1 \otimes \bb V \to \bb R,
      & \quad (\tau \circ \dev\grad)(u)
      & = -(\div \tau)( u),
      &&\quad u \in \Ho^1 \otimes \bb V,
      \\ \label{eq:tau-curl-def}
      \tau \circ \curl \dfo 
      &: \Ho^1 \otimes \bb V \to \bb R, 
      & \quad (\tau \circ \curl \dfo )(u)
      & = \frac 1 2 (\curl \div \tau )(u),
      &&\quad u \in \Ho^1 \otimes \bb V,
    \end{alignat}
  \end{subequations}
  where, component wise, the right hand sides are all duality pairings
  between $\Hm$ and $\Ho^1$.
\end{lemma}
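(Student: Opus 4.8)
The plan is to handle all eight extensions by a single argument. For each line fix the value space $\bb W$ appearing as the argument space (one of $\bb R,\bb V,\bb S,\bb T$), and recall that $\cl D(\om)\otimes\bb W$ is dense in $\Ho^1\otimes\bb W$. For a test field $\phi\in\cl D(\om)\otimes\bb W$ every composition in the statement is a genuine distributional pairing of a matrix- or vector-valued distribution supported in $\bar\om$ against a compactly supported smooth field, so repeated integration by parts is legitimate. First I would establish the displayed identity for $\phi$ in the dense subspace $\cl D(\om)\otimes\bb W$; then observe that the right-hand side is bounded by a constant times $\|\phi\|_{H^1}$, because $\div q$, $\div\sigma$, $\div\div\sigma$, $\curl g$, $\inc g$, $\sym\curl\tau$, $\div\tau$ and $\curl\div\tau$ all lie in $\Hm$ tensored with the appropriate value space, by the very \emph{definitions} of $\Hm(\div)$, $\Hdd$, $\Hcc$ and $\Hcd$; and finally conclude by density together with uniqueness of continuous extension. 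In each first-order case the extension is then simply the identification of the functional with (minus) a divergence or curl of the given field, regarded as an element of the relevant negative-order space.

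The five first-order compositions are routine. Identity \eqref{eq:q-grad} is just the distributional definition of $\div q$. For \eqref{eq:sigma-eps} and \eqref{eq:tau-dev-grad} I would first use the pointwise facts $\sigma:\sym\grad u=\sigma:\grad u$ (valid since $\sigma^{\T}=\sigma$) and $\tau:\dev\grad u=\tau:\grad u$ (valid since $\tr\tau=0$), and then integrate by parts row-wise. For \eqref{eq:g-sc} and \eqref{eq:tau-curl} I would use that the row-wise $\curl$ is formally self-adjoint for the Frobenius pairing, $\langle\eta,\curl\gamma\rangle=\langle\curl\eta,\gamma\rangle$ for compactly supported $\gamma$, together with the symmetry of $g$ (respectively of the symmetric test field $\gamma$) to replace $\sym\curl$ by $\curl$ inside the pairing before integrating by parts once.

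The three second-order compositions \eqref{eq:sigma-hess}, \eqref{eq:g-inc} and \eqref{eq:tau-curl-def} require two integrations by parts, and this is where I expect the main obstacle to be. The delicate point is that after a \emph{single} integration by parts one is left with an expression — for instance $\curl g$, or $\div\sigma$, paired against a field that is merely in $L_2\otimes\bb W$ — which is \emph{not} a continuous functional on $\Ho^1\otimes\bb W$; it is only meaningful as a distributional action on $\cl D(\om)\otimes\bb W$. So the whole chain of identities must be carried out inside the distributional pairing on the dense subspace, and the continuous extension performed only once the \emph{final} form has been reached. Concretely: for \eqref{eq:sigma-hess} one writes $\hess w=\grad\grad w$ (the Hessian being pointwise symmetric) and integrates by parts twice to reach $(\div\div\sigma)(w)$; for \eqref{eq:g-inc} one uses that $\inc=\curl\T\curl$ is formally self-adjoint (two applications of the self-adjointness of the row-wise $\curl$ together with $A:\T B=\T A:B$), giving $(\inc g)(\gamma)$; and for \eqref{eq:tau-curl-def} one combines self-adjointness of $\curl$, the pointwise symmetrization trick, and the commutativity identity \eqref{eq:4} of Lemma~\ref{lem:commute-identities} to push both derivatives onto $\tau$. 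It is precisely the commutativity identities \eqref{eq:2}--\eqref{eq:6} that certify that, after both integrations by parts, one has indeed landed on one of the operators $\div\div$, $\inc$, $\curl\div$ occurring in the definitions of $\Hdd$, $\Hcc$, $\Hcd$, so that the resulting functional is $H^1$-bounded. Everything else is bookkeeping with the alternating symbol and the pointwise algebra of $\sym$, $\dev$ and transposition.
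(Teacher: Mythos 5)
Your proposal is correct and follows essentially the same route as the paper: prove each identity for test fields in $\cl D(\om)\otimes\bb W$ via distributional pairings and the commutativity identities of Lemma~\ref{lem:commute-identities} (the paper uses \eqref{eq:2} and \eqref{eq:5} for the $\curl\dfo$ case, which is equivalent to your use of \eqref{eq:4}), bound the result by the $\Hm$-norms that are built into the definitions of $\Hm(\div)$, $\Hdd$, $\Hcc$, $\Hcd$, and then extend by density of $\cl D(\om)\otimes\bb W$ in $\Ho^1\otimes\bb W$. Your observation that the intermediate expressions after a single integration by parts are only meaningful as distributional actions, so the extension must be performed only on the final form, is exactly the (implicit) logic of the paper's argument.
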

\begin{proof}
  For any $\vphi \in \cl D(\om)$, by the definition of the distributional
  divergence,
  $(\div q )(\vphi) = -q( \grad \vphi) \equiv - (q \circ \grad)
  (\vphi).$ Since $\div q $ is in $\Hm(\div)$, 
  \[
    |(q \circ \grad)(\vphi) | \le \| \div q \|_{\Hm} \| \vphi\|_{\Ho^1}
  \]
  showing that $q \circ \grad$ can be continuously extended to the
  closure of $\cl D(\om)$ in the $\Ho^1$-norm, i.e., to $\Ho^1$. This
  proves the first statement. The remaining statements are proved
  similarly, noting that at times we must use the identities of
  Lemma~\ref{lem:commute-identities}: e.g., for any given 
  $\vphi \in \cl D(\om) \otimes \bb V$, using \eqref{eq:2} and \eqref{eq:5},
  we have
  $(\curl\div \tau)(\vphi) = (\div\T\curl \T\tau)(\vphi) =
  \T \tau  (\curl \T \grad\vphi)
  =
  \T \tau   (\T\dev \grad \curl \vphi)
   = 2\tau (\curl\dfo \vphi).
   $
\end{proof}

It was proved in \cite{GopalLederSchob20a} that the dual of
$\Ho(\div)$ equals $\Hm(\curl)$, both algebraically and topologically.
The next lemma states this together with closely related identities.

\begin{lemma}
  \label{lem:duality-vector-cases}
  The following equalities of spaces hold algebraically and topologically.
  \begin{alignat*}{4}
    &\Ho(\div)^* &=& H^{-1}(\curl), &\qquad& \Ho(\curl)^*&=&H^{-1}(\div),
    \\
    &H(\div)^* &= &\Ht(\curl), &\qquad& H(\curl)^*&=&\Ht(\div).
  \end{alignat*}
\end{lemma}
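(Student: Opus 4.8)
The first identity is the result of~\cite{GopalLederSchob20a} recalled above; the plan for the remaining three is to adapt its proof. The common framework is this: each space being dualised is either a graph space of the form $\{u \in L_2 \otimes \bb V : \div u \in L_2\}$ or $\{u \in L_2 \otimes \bb V : \curl u \in L_2 \otimes \bb V\}$, or the closure of $\cl D(\om) \otimes \bb V$ in it. For a densely defined closed operator $D \in \{\div, \curl\}$, the isometry $u \mapsto (u, Du)$ identifies the graph space with a closed subspace of a product of two $L_2$-type spaces, so its dual is the corresponding quotient: every functional has the form $u \mapsto (f, u) + (g, Du)$, and two pairs $(f, g)$ give the same functional exactly when their difference is of the form $(-D^{*}g, g)$ with $g$ in the domain of the formal adjoint $D^{*}$. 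Hence the dual is identified, via $(f, g) \mapsto f + D^{*}g$ --- a distribution that reads $f - \grad g$ in the two divergence cases and $f + \curl g$ in the two curl cases --- with a space of distributions. For $\Ho(\div)$ and $\Ho(\curl)$ the adjoint ($-\grad$, respectively $\curl$) carries no boundary condition and one stays with distributions on $\om$, so the dual is built from $\Hm = \Ho(\grad)^{*}$; for $H(\div)$ and $H(\curl)$ the adjoint carries a zero boundary condition, and after extending $f$ and $g$ by zero to $\bb R^{3}$ the representative is a tempered distribution supported on $\bar\om$, so the dual is built from $\Ht = H(\grad)^{*}$, using the identification~\eqref{eq:Hts}.

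To show the dual embeds into the claimed graph space, I would check that the representative $w$ has the advertised weak regularity, which is elementary: $f$ and $g$ are square integrable and $\grad, \curl$ map $L_2$ continuously into $H^{-1}$ (cf.~\eqref{eq:di-cty}), which places $w$ in $\Hm \otimes \bb V$ (respectively $\Ht \otimes \bb V$); and applying the complementary first-order operator --- $\curl$ to $w$ in the divergence cases, $\div$ to $w$ in the curl cases --- annihilates the $\grad g$ (respectively $\curl g$) summand since $\curl\grad = 0$ (respectively $\div\curl = 0$), leaving $\curl f$ (respectively $\div f$), which lies in the same negative space. For the boundary-condition cases this can also be seen directly: restricting a functional $\ell$ to $\cl D(\om) \otimes \bb V$ gives a $w$ with $|w(\varphi)| \le \|\ell\|\,\|\varphi\|_{H^{1}}$ (because $\|\varphi\|_{H(\div)}, \|\varphi\|_{H(\curl)} \lesssim \|\varphi\|_{H^{1}}$), and testing $w$ against $\curl$ of vector test fields, respectively against $\grad$ of scalar test fields --- all of which lie in $\Ho(\div)$, respectively $\Ho(\curl)$, with $L_2$-norm bounded by the $H^{1}$-norm since $\div\curl = 0$, respectively $\curl\grad = 0$ --- yields $\curl w \in \Hm \otimes \bb V$, respectively $\div w \in \Hm$.

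The reverse inclusion is the substantive half, and it also subsumes, in the closure definition~\eqref{eq:13}, the identification of the graph space with the closure of $\cl D(\om) \otimes \bb V$; here the right inverses do the work. In the two $\Ht$-cases, given $w$ in the graph space, $\curl w$ is divergence-free and in $\Ht \otimes \bb V$ (respectively $\div w \in \Ht$ automatically satisfies the mean condition in~\eqref{eq:CM-div-T3}), so applying $\Tc$ and then $\Tg$ (respectively $\Td$ and then $\Tc$) from~\eqref{eq:std-rt-inverses} produces a decomposition $w = \grad\alpha + \beta$ (respectively $w = \curl\gamma + \delta$) with $\alpha, \beta$ (respectively $\gamma, \delta$) in $L_2(\bb R^{3})$ supported on $\bar\om$; this both realizes $w$ as the bounded functional $u \mapsto (\beta, u) - (\alpha, \div u)$ on $H(\div)$ (respectively $u \mapsto (\delta, u) + (\gamma, \curl u)$ on $H(\curl)$) and, after approximating $\alpha, \beta$ (respectively $\gamma, \delta$) in $L_2 = \Hts{0}$ by $\cl D(\om)$-functions, exhibits $w$ as a limit of $\cl D(\om) \otimes \bb V$-fields in the $\Ht(\curl)$-norm (respectively the $\Ht(\div)$-norm), which is precisely~\eqref{eq:13}. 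In the two $\Hm$-cases I would use instead the regular decomposition~\eqref{eq:Hocurl-reg-dec} of $\Ho(\curl)$ together with Lemma~\ref{lem:extensions-to-range}: given $q \in H^{-1}(\div)$, the functional $u \mapsto q(\Soc{0} u) - (\div q)(\Soc{1} u)$, whose second term is the extension of $q \circ \grad$ supplied by~\eqref{eq:q-grad}, is bounded on $\Ho(\curl)$ and agrees with $q$ on $\cl D(\om) \otimes \bb V$, so $H^{-1}(\div) \subseteq \Ho(\curl)^{*}$; the companion inclusion $H^{-1}(\curl) \subseteq \Ho(\div)^{*}$ is symmetric and already contained in~\cite{GopalLederSchob20a}. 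In all four cases, equivalence of norms --- the topological part of the statement --- follows because every map involved (the $T$-operators, the operators $\Soc{0}, \Soc{1}$, and $\grad, \curl, \div$) is bounded between the relevant spaces.

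The main obstacle will be this reverse inclusion: extracting the $L_2$-regular potentials through the right inverses of~\eqref{eq:std-rt-inverses} (or of~\cite{CostaMcInt10, GopalLederSchob20a}) and then reconciling the quotient description of the dual with the closure-of-test-functions definition~\eqref{eq:13}. A secondary point requiring care is keeping the two negative spaces apart --- $\Hm = \Ho(\grad)^{*}$, a space of distributions on $\om$, for the boundary-condition spaces, versus $\Ht = H(\grad)^{*}$, realized by~\eqref{eq:Hts} as the $H^{-1}(\bb R^{3})$-distributions supported on $\bar\om$, for the full spaces --- and remembering that, in the latter, the distributional derivative of the zero-extension of $g$ may carry a layer supported on $\partial\om$. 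Everything else amounts to bookkeeping with $\curl\grad = 0$, $\div\curl = 0$, and the continuity bounds of Section~\ref{sec:cont-commut}.
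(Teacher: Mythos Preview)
Your proposal is correct and broadly parallel to the paper's proof, but differs in two places worth noting. For the forward inclusion $X^*\hookrightarrow Y$, the paper uniformly uses the Riesz representative of $f$ in $X$ to write $f$ explicitly as a distribution (e.g., $f=u_f+\curl\curl u_f$ for $X=\Ho(\curl)$, and, in the non-BC case, $\tilde f=\curl\tilde c_f+\tilde u_f$ after zero-extension), whereas you use the graph-space/quotient description of $X^*$ to obtain a representative $f+D^*g$; both yield the same regularity estimates, but the Riesz route avoids having to argue that the quotient map $(f,g)\mapsto f+D^*g$ is injective into the target space. For the reverse inclusion in the non-BC cases, the paper decomposes the \emph{test function} $u\in H(\curl)$ via regularized Poincar\'e operators (the analogue of~\eqref{eq:Hocurl-reg-dec} without boundary conditions, introduced there for this purpose), and then defines $f_q(u)=q(\Sc{0}u)-(\div q)(\Sc{1}u)$; you instead decompose the \emph{functional} $w\in\Ht(\div)$ via the Bogovski{\u\i}-type operators $\Td,\Tc$ already at hand, writing $w=\curl\gamma+\delta$ with $\gamma,\delta\in L_2$ and reading off the bounded functional $u\mapsto(\delta,u)+(\gamma,\curl u)$. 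Your variant is slightly more economical---it reuses~\eqref{eq:std-rt-inverses} rather than introducing the Poincar\'e operators, and it yields the density statement~\eqref{eq:13} as a byproduct---while the paper's version keeps the four steps in a single template (Riesz for one direction, regular decomposition of the test space for the other). For the $\Hm$ cases your reverse-inclusion argument coincides with the paper's Step~2.
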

\begin{proof}
  The first identity was proved in
  \cite[Theorem~2.2]{GopalLederSchob20a}.  Here we prove the second
  and the last (since the third is similar).  The proofs are
  presented step by step below, but a unified strategy for proving all identities
  will be evident: we use the Riesz representative of a functional in
  $X^*$ to show that $X^* \hookrightarrow Y$, and then use a regular
  decomposition of $X$ to prove that $Y \hookrightarrow X^*$.

  {\em Step~1}. \underline{$\Ho(\curl)^* \hookrightarrow \Hm(\div)$:} \quad To prove
  that $\Ho(\curl)^*$ is continuously embedded in $ \Hm(\div)$,
  consider an  $f \in \Ho(\curl)^*$.
  By the Riesz representation theorem, there is a $u_f \in \Ho(\curl)$
  such that
  \[
    f(v) = (u_f, v) + (\curl u_f, \curl v), \qquad v \in \Ho(\curl).
  \]
  Choosing $v \in \cl{D}(\om) \otimes \bb V$ we find that the equality 
  \[
    f = u_f + \curl \curl u_f
  \]
  holds as distributions on $\om$. It immediately follows that
  \begin{align*}
    \| f \|_{\Hm(\div)}^2
    & =   \| u_f  + \curl \curl u_f \|_{\Hm}^2 + \| \div u_f\|_{\Hm}^2
    \\
    & \lesssim \| u_f \|_{\Ho(\curl)} = \| f \|_{\Ho(\curl)^*}.
  \end{align*}
  This proves that the restriction of the distribution $f$ to
  $\bar \om$ is an element of $\Hm(\div)$ and that the embedding of
  $\Ho(\curl)^*$ into $\Hm(\div)$ is continuous.

  {\em Step~2}. \underline{$ \Hm(\div) \hookrightarrow \Ho(\curl)^*$:} \quad Let
  $q \in \Hm(\div)$.  Applying the regular
  decomposition~\eqref{eq:Hocurl-reg-dec} to split any $u$ in
  $\Ho(\curl)$ as $u= \grad \Soc 1 u + \Soc 0 u$, we define a functional
  $f_q$ acting on $u$ by
  \begin{equation}
    \label{eq:fq-defn}
    f_q(u) = -(\div q )(\Soc 1 u) + q ( \Soc 0 u).
  \end{equation}
  By the continuity of $\Soc i$, 
  \begin{align*}
    |f_q(u) |
    & \le \| \div q \|_{\Hm} \| \Soc 1 u \|_{\Ho^1} + \| q \|_{\Hm} \| \Soc 0 u \|_{\Ho^1}
    \\
    & \lesssim
      \| q \|_{\Hm(\div)} \| u \|_{\Ho(\curl)},
  \end{align*}
  so $f_q \in \Ho(\curl)^*$ and
  \begin{equation}
    \label{eq:fq-q-bd}
    \| f_q \|_{\Ho(\curl)^*} \lesssim \| q \|_{\Hm(\div)}.
  \end{equation}
  Note that $f_q$ is a distribution (which can be seen for instance by
  the previously proved imbedding showing that $f_q$ is in
  $\Hm(\div)$). We now show 
  that the distribution $f_q$ is identical to
  $q$. Indeed, by~\eqref{eq:q-grad} of
  Lemma~\ref{lem:extensions-to-range}, for any
  $\vphi \in \cl D(\om) \times \bb V$, the
  definition~\eqref{eq:fq-defn} implies
  $f_q(\vphi ) = q(\grad \Soc 1 \vphi + \Soc 0 \vphi ) = q (\vphi),$
  i.e.,  $f_q = q$. Thus~\eqref{eq:fq-q-bd} implies that
  $\| f_q \|_{\Ho(\curl)^*} \equiv \| q \|_{\Ho(\curl)^*} \lesssim \|
  q \|_{\Hm(\div)}$ showing the stated continuous embedding.

  {\em Step~3}. \underline{$H(\curl)^* \hookrightarrow \Ht(\div)$:} \quad Writing
  $H(\curl)$ as $H(\curl,\om)$ to explicitly indicate the domain, we
  identify any given $f \in H(\curl, \om)^*$ with the following
  distribution in~$\bb R^3$,
  \[
    \tilde f \in \cl D(\bb R^3)': \quad
    \tilde f (\vphi) = f( \vphi|_\om), \quad \vphi \in \cl D(\bb R^3).
  \]
  By the Riesz representation theorem, there is a unique
  $u_f \in H(\curl,\om)$ such that
  $f(v) = (\curl u_f, \curl v) + (u_f, v)$ for all $v$ in
  $H(\curl,\om)$. Let $\tilde c_f$ and $\tilde u_f$ denote the
  extensions by zero of $L_2(\om)$-functions $\curl u_f$ and $u_f$ by
  zero to all $\bb R^3$. Then, for any $\vphi \in \cl D(\bb R^3)$, 
  \[
    \tilde f (\varphi) = f(\varphi|_{\om})
    = (\tilde c_f , \curl \vphi)_{L_2(\bb R^3)} + (\tilde u_f, \vphi)_{L_2(\bb R^3)}.
  \]
  This shows that the following identities hold in $\cl D(\bb R^3)$: 
  \begin{gather*}
    \tilde f  = \curl \tilde c_f + \tilde u_f,
    \qquad 
    \div \tilde f  = \div \tilde u_f.
  \end{gather*}
  These two identities give these corresponding bounds
  \begin{align*}
    \| f \|_{\Ht}
    & = \| \tilde f \|_{H^{-1}(\bb R^3)}
      \le
      \| \curl \tilde c_f\|_{H^{-1}(\bb R^3)} +\| \tilde u_f\|_{H^{-1}(\bb R^3)}
    \\
    & \lesssim
      \| \tilde c_f \|_{L_2(\bb R^3)} + \| \tilde u_f \|_{L_2(\bb R^3)}
      \lesssim \|u_f \|_{H(\curl)},
    \\
    \| \div f \|_{\Ht}
    & = \| \div \tilde f \|_{H^{-1}(\bb R^3)} = \| \div \tilde u_f \|_{H^{-1}(\bb R^3)}
      \lesssim  \| \tilde u_f \|_{L^2(\bb R^3)} = \| u_f \|_{L_2},
  \end{align*}
  which prove that $\| f \|_{\Ht(\div)} \lesssim \| u_f \|_{H(\curl)} = \| f \|_{H(\curl)^*}$ using the Riesz isometry.

  {\em Step~4}. \underline{$\Ht(\div) \hookrightarrow H(\curl)^* $:} \quad In
  exactly the same way the regularized Bogovski{\u{i}} operators of
  \eqref{eq:std-rt-inverses} give the regular decomposition
  \eqref{eq:Hocurl-reg-dec}, the regularized Poincar{\'{e}} operators
  of \cite{CostaMcInt10} show that there are continuous operators
  $\Sc 0 : H(\curl) \to H^1 \otimes \bb V$ and
  $\Sc 1 : H(\curl) \to H^1$ such that any $u \in H(\curl)$ can be
  decomposed into
  \begin{equation}
    \label{eq:Hcurl-reg-dec-nobc}
    u = \Sc 0 u + \grad \Sc 1 u.
  \end{equation}
  Let $q \in \Ht(\div)$. Using the regular decomposition, we define a
  functional \[ f_q (u) = -(\div q)(\Sc 1 u) + q( \Sc u )\]
  where, on
  the right hand side, the functional actions are duality pairings
  between $\Ht$ and $H^1$, well defined in view of \eqref{eq:7}. Hence
    \begin{align*}
      |f_q (u) |
      & \le \| \div q \|_{\Ht}\| \Sc 1 u \|_{H^1} + \| q \|_{\Ht } \| \Sc 0 u \|_{H^1}
        \lesssim \| q \|_{\Ht(\div)} \| u \|_{H(\curl)}
    \end{align*}
    Therefore $f_q$ is in $H(\curl)^*$ and
    \begin{equation}
      \label{eq:fq-u-Hcurl-bd}
      \| f_q \|_{H(\curl)^*} \lesssim \| q \|_{\Ht(\div)}. 
    \end{equation}
    Since we have already shown
    that $H(\curl)^* $ is embedded into the subspace of
    $\bb R^3$-distributions in $ \Ht(\div)$, we conclude that
    $f_q \in \cl D(\bb R^3)'$ satisfies, for any $\vphi \in \cl D(\bb R^3)$,
    \[
      f_q (\vphi) = -(\div q)(\Sc 1 \vphi) + q ( \Sc 0 \vphi) =
      q( \grad \Sc 1 \vphi + \Sc 0 \vphi) = q(\vphi), 
    \]
    i.e., $f_q$ and $q$ coincide as distributions. Combined
    with~\eqref{eq:fq-u-Hcurl-bd}, we have thus shown that 
    $\| q\|_{H(\curl)^*} = \|f_q \|_{H(\curl)^*} \lesssim \| q \|_{\Ht(\div)}.$

    Steps~1 and~2 prove the second identity of the lemma. Steps~3
    and~4 prove the last identity of the lemma. The proofs of the
    remaining identities are similar.
\end{proof}

In the next theorem we show that more dualities such as those in
Lemma~\ref{lem:duality-vector-cases} can be read off the diagrams with
and without boundary conditions.  Let
$\LL: H^1 \to \Ht \equiv (H^1)^*$ denote the Riesz map of $H^1$
defined by
\begin{equation}
\label{eq:Riesz-defn}
  (\LL w)(v) = (w, v)_{H^1} \equiv (\grad w, \grad v) + (w, v),
  \quad w, v \in \Ho^1,
\end{equation}
where $(\cdot, \cdot)_X$ denotes the inner product of $X$, and when
the subscript is absent, we interpret it as $L_2$~products, as described
previously in~\eqref{eq:L2-inner}. By the Riesz representation
theorem, recalling that our spaces are over the real field, $\LL$ is a
linear invertible isometry, so for any $q, r \in \Ht$,
\begin{align*}
  (q, r)_{\Ht}
  & = (\LL^{-1} q, \LL^{-1}r)_{H^1} = r( \LL^{-1} q),
\end{align*}
%\kh{$H^{-1}$ was defined by duality of $H^{1}_{0}$. However, here we need an inner product. This requires an equivalent definition?}
%\jg{By Riesz isometry and~\eqref{eq:Riesz-defn}, the norm in 
%$H^{-1} = (\Ho^1)^*$ is $\| w \|_{\Ho^1} = \| \LL w \|_{\Ht}$, or equivalently,  $\| q \|_{\Ht} = \|\LL^{-1} q \|_{\Ho^1}$ and the innerproduct follows by polarization.}
where we have used~\eqref{eq:Riesz-defn} in the last step.
In particular, if a $v \in L_2 \subset \Hm$ is used in place of $r$ above, the functional action becomes an $L_2$~product and we obtain 
\begin{equation}
  \label{eq:H-minus-product}
  (q, v)_{\Ht} = (\LL^{-1} q, v), \qquad v \in L_2.
\end{equation}
Note that $\LL^{-1}q $ is the result of solving a Neumann problem with
$q$ as the source. When $q$ is a vector or matrix field, by
$\LL^{-1}q $ we mean the respective vector or matrix field obtained by
component-wise application of~$\LL^{-1}$.

\begin{theorem}
  \label{thm:duality}
  The diagrams~\eqref{eq:1} and~\eqref{eq:no-bc-2-complex-quotient} are in
  duality in the sense displayed below, where the second diagram has been 
  rearranged to easily display the duality ($*$) correspondences.
\begin{equation}
  \label{eq:duality-diagram}
  \begin{tikzcd} 
    [
    column sep=-0.5cm,
    ampersand replacement=\&
    ]
    \Ho^1
    \arrow[rr]
    \arrow[dr, "*"]
    \arrow[dd]
    % \arrow[rr, "\grad"]
    % \arrow[dd, "\grad"{sloped}]    
    % \arrow[dr, "\hess"{sloped}]
    \&
    \&
    \Ho(\curl)
    \arrow[dr, "*"]
    \arrow[dd]
    \arrow[rr]
    % \arrow[rr, "\curl"]
    % \arrow[dd, "\deff"{sloped, near end}]
    \&
    \&
    \Ho(\div)
    \arrow[dr, "*"]
    \arrow[rr]
    \arrow[dd]
    % \arrow[rr, "\div"]
    % \arrow[dd, "\frac 1 2 \T\dev\grad"{sloped, near end}]
    \&
    \&
    \LR
    \arrow[dr, "*"]
    \arrow[dd]
    % \arrow[dd, "\frac 1 3 \grad"{sloped, near end}]
    \\
    \&
    \Hm
    \&
    \&
    \Hm(\div)
    \arrow[ll, crossing over]
    % \arrow[ll, crossing over, "\div"{near end, swap}]
    \&
    \&
    \Hm(\curl)
    \arrow[ll, crossing over]
    % \arrow[ll, crossing over, "\frac 1 2 \curl"{near end, swap}]
    \&
    \&
    L_2/\bb R
    \arrow[ll, crossing over]    
    % \arrow[ll, crossing over, "\frac 1 3 \grad"{near end, swap}]
    \\
    \Ho(\curl)
    % \arrow[rr, "\deff"{near end}]
    % \arrow[dd, "\curl"{sloped, near end}]
    \arrow[rr]
    \arrow[dd]
    \arrow[dr, "*"]
    \&
    \&
    \Hocc
    % \arrow[rr, "\curl"{near end}]
    % \arrow[dd, "\T\curl"{sloped, near end}]
    \arrow[rr]
    \arrow[dd]
    \arrow[dr, "*"]
    \&
    \&
    \Hocd
    \arrow[rr]
    \arrow[dd]
    \arrow[dr, "*"]
    % \arrow[rr, "\div"{near end}]
    % \arrow[dd, "\sym\curl\T"{sloped, near end}]
    \&
    \&
    \HtRT(\curl)
    % \Ht(\curl)
    \arrow[dd]
    \arrow[dr, "*"]
    % \arrow[dd, "\frac 1 2 \curl"{sloped, near end}]    
    \\
    \&
    \Hm(\div)
    \arrow[uu, crossing over]
    % \arrow[uu, crossing over, "\div"{sloped, near end}]
    \&
    \&
    \Hdd
    \arrow[ll, crossing over]
    \arrow[uu, crossing over]
    % \arrow[ll, crossing over, "\div"{swap, near end}]
    % \arrow[uu, crossing over, "\div"{sloped, near end}]
    \&
    \&
    \HcdT
    \arrow[ll, crossing over]
    \arrow[uu, crossing over]
    % \arrow[ll, crossing over, "\sym\curl"{swap, near end}]
    % \arrow[uu, crossing over, "\div\T"{sloped, swap, near end}]
    \&
    \&
    H(\div)/\RT
    \arrow[ll, crossing over] % , "\dev\grad"{swap, near end}]
    % \arrow[uu, "\div"{sloped}]
    \arrow[uu]
    \\
    \Ho(\div)
    \arrow[dr, "*"]
    \arrow[rr]
    \arrow[dd]
    % \arrow[rr, "\frac 1 2 \dev\grad"]
    % \arrow[d, "\div"{sloped}]
    \&
    \&
    \HocdT
    \arrow[dr, "*"]
    \arrow[rr] % , "\sym\curl"]
    \arrow[dd]
    %\arrow[d, "\div\T"{sloped}]
    \&
    \&
    \Hodd
    \arrow[dr, "*"]
    \arrow[rr] % , "\div"]
    \arrow[dd]
    %\arrow[d, "\div"{sloped}]
    \&
    \&
    \HtND(\div)
    % \Ht(\div)
    \arrow[dr, "*"]
    \arrow[dd]
    \\
    \&
    \Hm(\curl)
    \arrow[uu, crossing over]    
    \&
    \&
    \Hcd
    \arrow[uu, crossing over]
    \arrow[ll, crossing over]    
    \&
    \&
    \Hcc
    \arrow[uu, crossing over]
    \arrow[ll, crossing over]    
    \&
    \&
    H(\curl)/\ND
    \arrow[uu, crossing over]
    \arrow[ll, crossing over]    
    \\
    \LR
    \arrow[rr]
    \arrow[dr, "*"]
    \&
    \&
    \HtRT(\curl)
    \arrow[rr]
    \arrow[dr, "*"]
    \&
    \&
    \HtND(\div)
    \arrow[rr]
    \arrow[dr, "*"]
    \&
    \&
    \HtPl
    \arrow[dr, "*"]
    \\
    \&
    L_2/\bb R
    \arrow[uu, crossing over]
    \&
    \&
    H(\div)/\RT
    \arrow[uu, crossing over]
    \arrow[ll, crossing over]    
    \&
    \&
    H(\curl)/\ND
    \arrow[uu, crossing over]
    \arrow[ll, crossing over]    
    \&
    \&
    H^1/\pol_1
    \arrow[uu, crossing over]
    \arrow[ll, crossing over]    
  \end{tikzcd}  
\end{equation}
\end{theorem}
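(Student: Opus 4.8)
The plan is to establish, for each $*$-labeled pair in~\eqref{eq:duality-diagram}, an isomorphism — both algebraic and topological — between one space and the topological dual of the other, and then to check that under these identifications every differential operator of~\eqref{eq:1} is, up to the constants already displayed, the Banach-space adjoint of the matching operator of~\eqref{eq:no-bc-2-complex-quotient}. Since all the spaces in sight are Hilbert spaces, hence reflexive, it suffices to produce just one of the two embeddings for each pair; throughout, $\om$ is assumed to satisfy the standing hypotheses of Section~\ref{sec:regul-decomp}, so that the regular decompositions are available. I would follow verbatim the two-step scheme of the proof of Lemma~\ref{lem:duality-vector-cases}: to show $X^*\hookrightarrow Y$, take the Riesz representative $x_f\in X$ of a functional $f\in X^*$, test the Riesz identity against $\cl D(\om)$-functions, convert every $\Ht$-inner product into an $L_2$-pairing with $\LL^{-1}(\cdot)$ via~\eqref{eq:H-minus-product}, integrate by parts, and read off that $f$ equals — as a distribution — a fixed sum of $\LL^{-1}$-smoothed first- and second-order derivatives of the components of $x_f$, a sum that manifestly lies in $Y$ with norm $\lesssim\|x_f\|_X=\|f\|_{X^*}$; to show $Y\hookrightarrow X^*$, use the regular decomposition of $X$ together with Lemma~\ref{lem:extensions-to-range} to define a bounded functional $f_y$ on $X$ from $y\in Y$, and then test $f_y$ against $\cl D(\om)$ to see that $f_y=y$. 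Lemma~\ref{lem:extensions-to-range} is exactly what makes $f_y$ well defined here, since the differential operators applied to the $\Ho^1$-components of a regular decomposition generally land only in $L_2$, not in $\Ho^1$.

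For the scalar and vector entries nothing new is needed: $(\Ho^1)^*=\Hm$ is the definition of $\Hm$, while $\Ho(\curl)^*=\Hm(\div)$ and $\Ho(\div)^*=\Hm(\curl)$ are supplied by Lemma~\ref{lem:duality-vector-cases}. For the three matrix spaces I would run the two-step scheme. For the pair $(\Hocc,\Hdd)$ the Riesz direction uses the formal self-adjointness of $\inc$ and the adjoint relations $\langle\dfo u,\sigma\rangle=-\langle u,\div\sigma\rangle$, $\langle\hess w,\sigma\rangle=\langle w,\div\div\sigma\rangle$, so that the Riesz element of $f\in\Hocc^*$ is seen to represent a distribution built from $\LL^{-1}g_f$, $\sym\curl\LL^{-1}\curl g_f$ and $\inc\LL^{-1}\inc g_f$, which lies in $\Hdd$; the converse direction uses the three-term decomposition of Theorem~\ref{thm:reg-dec-Hcc} and the extensions~\eqref{eq:g-sc}--\eqref{eq:g-inc} (for $g\in\Hcc$). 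Symmetrically, Theorem~\ref{thm:reg-dec-Hdd} and~\eqref{eq:sigma-eps}--\eqref{eq:sigma-hess} give $\Hodd^*=\Hcc$. For $(\Hocd,\HcdT)$ one uses the four-term decomposition of Theorem~\ref{thm:reg-dec-Hcd} and the identities~\eqref{eq:tau-curl}--\eqref{eq:tau-curl-def}, together with the commutativity identities of Lemma~\ref{lem:commute-identities}; the identity $\HocdT^*=\Hcd$ then follows by composing with the isometric transpose map $\tau\mapsto\tau^{\T}$.

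For the spaces along the four edges of the diagrams the identifications are purely functional-analytic. Each of $\LR$, $\HtRT(\curl)$, $\HtND(\div)$ and $\HtPl$ is, by its definition, the (pre-)annihilator of a finite-dimensional subspace of smooth fields — namely $\spn\{1\}$, $\RT$, $\ND$, $\pol_1$, respectively — inside a space whose dual is already known: $L_2$ for the first, $H(\div)$ and $H(\curl)$ (via Lemma~\ref{lem:duality-vector-cases}) for the middle two, and $H^1$ (via~\eqref{eq:7}) for the last. The key point is that the evaluation functional $v\mapsto v(r)$ is, under each of these known dualities, represented by the element $r$ itself. The standard fact that the dual of the annihilator of a finite-dimensional (hence closed) subspace $M$ of a reflexive space $W$ is the quotient $W/M$ then yields $\LR^*=L_2/\bb R$, $\HtRT(\curl)^*=H(\div)/\RT$, $\HtND(\div)^*=H(\curl)/\ND$, and $\HtPl^*=H^1/\pol_1$, which are exactly the edge correspondences of~\eqref{eq:duality-diagram}. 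Finally, that the arrows of~\eqref{eq:1} become, up to the stated constants, the adjoints of the arrows of~\eqref{eq:no-bc-2-complex-quotient} under all these identifications is read off directly from the integration-by-parts identities collected in Lemma~\ref{lem:extensions-to-range} (and from~\eqref{eq:earlier-identities} and Lemma~\ref{lem:commute-identities} for the diagonal second-order operators).

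The hard part will be the $\Hocd$ case. Because the regular decomposition of $\Hocd$ has four terms and its defining derivatives include the unsymmetric operator $\curl\div$ — whose natural dual partner is $\curl\dfo$ rather than a formally self-adjoint operator — one must keep careful track of transposes and of the half-integer constants of~\eqref{eq:2}--\eqref{eq:5} at two delicate places: when identifying the distribution represented by the Riesz element of $\Hocd^*$ and verifying that all four conditions defining $\HcdT$ (membership in $\Hm\otimes\bb T$, and $\div$, $\sym\curl$, $\curl\div$ of it lying in $\Hm$) hold, and when checking that the functional assembled from the four-term decomposition coincides with the prescribed element of $\HcdT$. Once these two verifications are in place, the rest — sign and constant bookkeeping in the remaining entries, the role of $\LL^{-1}$, and the norm-equivalence estimates — is routine given Theorems~\ref{thm:reg-dec-Hcc}--\ref{thm:reg-dec-Hcd}, Lemma~\ref{lem:duality-vector-cases}, and Lemma~\ref{lem:extensions-to-range}.
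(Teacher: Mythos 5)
Your proposal is correct and follows essentially the same route as the paper's proof: the two-step scheme of Lemma~\ref{lem:duality-vector-cases} (Riesz representative plus $\LL^{-1}$ for $X^*\hookrightarrow Y$, regular decompositions from Theorems~\ref{thm:reg-dec-Hcc}--\ref{thm:reg-dec-Hcd} together with Lemma~\ref{lem:extensions-to-range} for $Y\hookrightarrow X^*$) applied to the pairs $(\Hocc,\Hdd)$, $(\Hocd,\HcdT)$, $(\Hodd,\Hcc)$, with the edge correspondences obtained, exactly as in the paper, from the annihilator/quotient duality and reflexivity on top of Lemma~\ref{lem:duality-vector-cases} and \eqref{eq:7}. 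The only differences are cosmetic (e.g., writing the middle term of the Riesz representation as $\sym\curl\LL^{-1}\curl g_f$ and the optional adjointness check for the arrows), which do not change the argument.
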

\begin{proof}
  The strategy of this proof is the same as what was outlined in the
  beginning of the proof of Lemma~\ref{lem:duality-vector-cases}. We
  now focus on the spaces of matrix-valued functions.
  
  {\em Step 1.} \underline{$\Hocc^* \hookrightarrow \Hdd$:} \quad Let
  $f \in \Hocc^*$. Then, by Riesz representation, there is a
  $g_f \in \Hocc$ such that $f(g) = (g_f, g)_{\Hocc}$ for all
  $g \in \Hocc$.  Expanding out the $\Hocc$~inner product and using
  \eqref{eq:H-minus-product}, for any $\vphi \in \cl D(\om) \otimes \bb S$, 
  \begin{align*}
    f(\vphi)
    &
      = (g_f, \vphi)_{\Hm} + (\curl g_f, \curl \vphi)_{\Hm}
      + (\inc g_f, \inc \vphi)_{\Hm}
    \\
    & =
      (\LL^{-1} g_f, \vphi)
      + (\LL^{-1} \curl g_f, \curl \vphi)
      + (\LL^{-1} \inc g_f, \inc \vphi).
  \end{align*}
  Thus
  \begin{equation}
    \label{eq:f-representation}
    f = \LL^{-1} g_f + \curl \LL^{-1} \curl g_f + \inc \LL^{-1} \inc  g_f,
  \end{equation}
  a sum of three terms, which are in $H^1, L_2$, and $\Hm$,
  respectively. Since $H^1 \hookrightarrow L_2 \hookrightarrow \Hm$,
  \begin{align*}
    \| f \|_{\Hm}
    & \le
      \|  \LL^{-1} g_f\|_{\Hm} +
      \|\curl \LL^{-1} \curl g_f\|_{\Hm} +
      \| \inc \LL^{-1} \inc  g_f\|_{\Hm}
    \\
    & \lesssim
      \|  \LL^{-1} g_f\|_{H^1} +
      \|\curl \LL^{-1} \curl g_f\|_{L_2} +
      \| \inc \LL^{-1} \inc  g_f\|_{\Hm}
    \\
    & \lesssim
      \|  \LL^{-1} g_f\|_{H^1} +
      \|\LL^{-1} \curl g_f\|_{H^1} +
      \| \LL^{-1} \inc  g_f\|_{H^1}
    \\
    & =
      \|   g_f\|_{\Ht} +
      \|\curl g_f\|_{\Ht} +
      \| \inc  g_f\|_{\Ht}
    \\
    & \lesssim \| g_f \|_{\Hocc} = \| f\|_{\Hocc^*}
  \end{align*}
 % \kh{I did not see why we use the $\mathcal L$ operator here. seems that we can use regular decomposition?}\jg{Sorry, don't understand the question.}
  where we have used the Riesz isometry multiple times as well as the
  continuity of the derivative $\d_i : H^m \to H^{m-1}$ for some  
  integers~$m$ (see e.g.,~\cite[Theorem~1.4.4.6]{Grisv85}).
  From~\eqref{eq:f-representation} we also see that
  $\div f = \div \LL^{-1} g_f$ and
  $\div\div f = \div \div \LL^{-1} g_f$, so $f$ is indeed in
  $\Hdd$. Moreover,
  \begin{align*}
    \| f\|_{\Hdd}^2
    & = \| f \|_{\Hm}^2 + \| \div \LL^{-1} g_f\|_{\Hm}^2
      + \| \div \div \LL^{-1} g_f\|_{\Hm}^2
    \\
    & \lesssim
      \| f \|_{\Hm}^2 + \| \LL^{-1} g_f\|_{H^1}^2
      = \| f \|_{\Hm}^2 + \| g_f\|_{\Ht}^2
    \\
    & \lesssim \| f \|_{\Hocc^*},
  \end{align*}
  thus completing the proof of continuity of the embedding of $f$ into $\Hdd$.

  {\em Step 2.}  \underline{$\Hdd \hookrightarrow \Hocc^*$:} Let $\sigma \in
  \Hdd$. Decomposing a $g \in \Hocc$ using \eqref{eq:reg-dec-Hcc} as
  $g = \Socc 0 g + \dfo \Socc 1 g + \hess \Socc 2 g$, we define a
  linear functional $f_\sigma$ acting on $g$ as follows:
  \[
    f_\sigma (g)  = \sigma( \Socc 0 g) - (\div \sigma) ( \Socc 1 g)
    + (\div\div \sigma)(\Socc 2 g).
  \]
  By the continuity of $\Socc i$ (see Theorem~\ref{thm:reg-dec-Hcc}),
  $ |f_\sigma(g) | \lesssim \| \sigma \|_{\Hdd} \| g \|_{\Hocc} $ so
  $f_\sigma $ is in $\Hocc^*$ and
  \begin{equation}
    \label{eq:f-sig-bd}
    \| f_\sigma \|_{\Hocc^*} \lesssim \| \sigma \|_{\Hdd}.
  \end{equation}
  By the previous embedding, we know that $f_\sigma$ is in $\Hdd$ (in
  particular in $\Hm \otimes \bb S$) and is therefore a distribution
  on $\om$. We claim that $f_\sigma$ and $\sigma$ are identical
  distributions. Indeed, for any $\vphi \in \cl D(\om) \otimes \bb S$,
  \begin{align*}
    f_\sigma(\vphi)
    & =  \sigma( \Socc 0 \vphi) - (\div \sigma) ( \Socc 1 \vphi)
      + (\div\div \sigma)(\Socc 2 \vphi)
    \\
    & = \sigma (    \Socc 0 \vphi + \dfo \Socc 1 \vphi + \hess \Socc 2 \vphi)
      = \sigma(\vphi)
  \end{align*}
  where we have used \eqref{eq:sigma-eps} and \eqref{eq:sigma-hess} of
  Lemma~\ref{lem:extensions-to-range}. Combined
  with~\eqref{eq:f-sig-bd}, we have
  $ \| \sigma \|_{\Hocc^*} \equiv \| f_\sigma \|_{\Hocc^*} 
  \lesssim \| \sigma \|_{\Hdd},$ thus completing the proof of
  continuity of the embedding of $\sigma$ into $\Hodd^*$.

  {\em Step 3.} \underline{$\Hocd^* \hookrightarrow \HcdT$:} Let $f \in \Hocd^*$. By
  the Riesz representation theorem, there is a unique
  $\tau_f \in \Hocd$ satisfying $f(\tau) = (\tau_f, \tau)_{\Hocd}$ for
  all $\tau \in \Hocd$. Choosing
  $\tau =\vphi \in \cl D \otimes \bb T$, the definition of
  $\Hocd$-inner product and \eqref{eq:H-minus-product} imply
  \begin{align*}
    f(\vphi)
    & = ( \LL^{-1} \tau_f, \vphi)
    + ( \LL^{-1}\div \tau_f, \div \vphi)
    + (\LL^{-1} \sym\curl \T \tau_f, \sym\curl \vphi)
    \\
    & + (\LL^{-1} \curl\div \tau_f, \curl\div \vphi).   
  \end{align*}
  Hence
  \begin{equation}
    \label{eq:f-Hcd}
    \begin{aligned}      
      f
      &  = \LL^{-1} \tau_f - \grad \LL^{-1} \div \tau_f
      + \T \curl \sym \LL^{-1} \sym\curl \T \tau_f
      \\
      & - \grad \curl \LL^{-1} \curl\div \tau_f.
    \end{aligned}
  \end{equation}
  Applying $\div \T$, the identity \eqref{eq:5} shows that last term
  vanishes and
  \begin{align}
    \label{eq:div-T-f-Hcd}
    \div\T f
    = \div \T \LL^{-1} \tau_f - \div \T \grad \LL^{-1} \div \tau_f. 
  \end{align}
  Moreover, applying $\curl\div\T$ to both sides of~\eqref{eq:f-Hcd}
  and using the identity~\eqref{eq:2},
  \begin{align}
    \label{eq:cdT-f-Hcd}
    \curl\div\T f
    & = \curl\div \T \LL^{-1} \tau_f.
  \end{align}
  Also, 
  \begin{align}
    \label{eq:sc-f-Hcd}
    \sym\curl  f
    & = \sym\curl \LL^{-1} \tau_f
      + \inc \sym \LL^{-1} \sym\curl \T \tau_f.
  \end{align}
  Equations~\eqref{eq:f-Hcd},~\eqref{eq:div-T-f-Hcd},
  \eqref{eq:cdT-f-Hcd} and~\eqref{eq:sc-f-Hcd} imply
  that
  \[
    \| f \|_{\HcdT} \lesssim \| \tau_f \|_{\Hocd} = \| f \|_{\Hocd^*}
  \]
  where we have used the isometry of $\LL^{-1}$ and
  $f \mapsto \tau_f$.  This proves that
  $\Hocd^* \hookrightarrow \HcdT$.

  {\em Step 4.} \underline{$\HcdT \hookrightarrow \Hocd^*$:} Let $\tau \in
  \HcdT$. Define a linear functional $f_\tau$ acting on any $\eta$ in
  $\Hocd$ as follows:
  \[
    f_\tau(\eta) = \tau ( \Socd 0 \eta)
    + (\sym\curl \tau)(\Socd 1 \eta)
    - (\div \T \tau)(\Socd 3 \eta)
    - (\frac 1 2 \curl\div \T\tau)(\Socd 2 \eta).
  \]
  All the terms on the right are well defined duality pairings since
  $ \tau, \sym\curl \tau, \div \T \tau,$ and $ \curl\div \T\tau,$ have
  $\Ht$~components for any $\tau \in \HcdT$ and since each
  $\Socd i \eta$ has $\Ho^1$~components per
  Theorem~\ref{thm:reg-dec-Hcd}. The continuity of $\Socd i$ asserted
  by the same theorem gives
  \begin{equation}
    \label{eq:ftau-eta-bd}
    |f_\tau(\eta)| \lesssim \| \tau\|_{\HcdT} \| \eta\|_{\Hocd}.
  \end{equation}
  Next, observe that for any
  $\vphi \in \cl D(\om) \otimes \bb T$, decomposing $\vphi$ by
  Theorem~\ref{thm:reg-dec-Hcd} into
  $\vphi = \Socd 0 \vphi + \curl \Socd 1 \vphi + \T \dev \grad \Socd 2
  \vphi + \curl\dfo \Socd 3 \vphi,$ and using \eqref{eq:tau-curl},
  \eqref{eq:tau-dev-grad} and \eqref{eq:tau-curl-def} of
  Lemma~\ref{lem:extensions-to-range},
  \begin{align*}
    f_\tau(\vphi)
    & = \tau ( \Socd 0 \vphi) 
    + \tau(\curl \Socd 1 \vphi)
    + \tau(\T\dev\grad \Socd 3 \vphi)
    + \tau( \curl\dfo \Socd 2 \vphi)
    \\
    & = \tau (\vphi).
  \end{align*}
  Hence $f_\tau$ and $\tau$ are the same
  distribution. Using~\eqref{eq:ftau-eta-bd},
  $\| \tau \|_{\Hocd^*} \equiv \|f_\tau \|_{\Hocd^*} \lesssim \| \tau
  \|_{\HcdT}$, which proves  that $\HcdT \hookrightarrow \Hocd^*$ is a
  continuous embedding.

  {\em Step 5.} \underline{$\Hodd^* \hookrightarrow \Hcc$:} Denoting the Riesz
  representative of any $f \in \Hodd^*$ by $\sigma_f \in \Hodd$, its
  defining equation $f(\sigma) = (\sigma_f, \sigma)_{\Hodd}$ for all
  $\sigma \in \Hodd$ gives a formula for $f$ as in the previous cases:
  \[
    f = \LL^{-1} \sigma_f - \grad \LL^{-1} \div \sigma_f
    + \hess \LL^{-1} \div\div \sigma_f.
  \]
  Then $\curl f = \curl \LL^{-1} \sigma_f$ is in $L_2\otimes \bb V$
  and $\inc f = \inc \LL^{-1} \sigma_f $ is in $\Ht \otimes \bb
  S$. Hence $f$ is in $\Hcc$ and by Riesz isometries,
  $\| f \|_{\Hcc} \lesssim \| f\|_{\Hodd^*}$.

  {\em Step 6.} \underline{$\Hcc \hookrightarrow \Hodd^*$:} Let $g \in \Hcc$. We
  will show that $g$ can be identified with the functional $f_g$
  acting on $\sigma \in \Hodd$ by
  \[
    f_g(\sigma) = g(\Sodd 0 \sigma) + (\curl g) (\Sodd 1 \sigma) + (\inc g)(\Sodd 2).
  \]
  By the regular decomposition result of
  Theorem~\ref{thm:reg-dec-Hdd}, this implies that
  \begin{equation}
    \label{eq:fg-sigma}
    |f_g (\sigma) | \lesssim \| g \|_{\Hcc} \| \sigma \|_{\Hodd}
  \end{equation}
  and also using~\eqref{eq:g-sc} and \eqref{eq:g-inc} of
  Lemma~\ref{lem:extensions-to-range}, we find that
  \[
    f_g(\vphi) = g( \Sodd 0 \vphi + \sym\curl \Sodd 1 \vphi + \inc
    \Sodd 2 \vphi)= g( \vphi)
  \]
  for any $\vphi \in \cl D(\om) \otimes \bb S$, i.e., $f_g$ and $g$
  are the same distribution. Hence \eqref{eq:fg-sigma} shows that
  $\| g \|_{\Hodd^*} = \|f_g \|_{\Hodd^*} \lesssim \| g \|_{\Hcc}$
  thus establishing the continuous embedding of $\Hcc$ into
  $ \Hodd^*$.

  To conclude the proof, note that Steps~1 and~2 prove that
  $\Hocc^*=\Hdd$. Steps~3 and~4 prove that $\Hocd^* = \HcdT$ and
  $\HocdT^* = \Hcd$. Steps~5 and~6 prove that $\Hodd^* =\Hcc$. To
  establish the remaining nontrivial duality identities
  in~\eqref{eq:duality-diagram}, it suffices to use the identities of
  Lemma~\ref{lem:duality-vector-cases} and the fact that for any
  Hilbert space $X$ and its closed subspace $Z$, the dual of the
  quotient space $(X /Z)^*$ is isomorphic to the annihilator
  $Z^\perp = \{ x' \in X': x'(z) = 0$ for all $z \in Z\}$ contained in
  $X'$. For example,  with $X = H(\curl)$ and $Z = \ND$, we
  find that $(H(\curl)/\ND)^*$ is isomorphic to the annihilator of
  $\ND^\perp$ contained in $H(\curl)^* = \Ht(\div)$, which is exactly
  the same as $\HtND(\div),$ i.e., $\big(H(\curl)/\ND\big)^* =
  \HtND(\div)$. Taking duals on both sides we obtain, due to the reflexivity
  of Hilbert spaces, $\HtND(\div)^* = H(\curl)/\ND,$ which is one of
  the identities indicated in~\eqref{eq:duality-diagram}.
\end{proof}

\section*{Funding}

This work was supported in part by the National Science Foundation
(USA) under grant DMS-219958, the Austrian Science Fund (FWF) project 10.55776/F65,
a Royal Society University Research Fellowship (URF$\backslash$R1$\backslash$221398) and the European Union (ERC, GeoFEM, 101164551). Views and opinions expressed are however those of the authors only and do not necessarily reflect those of the European Union or the European Research Council. Neither the European Union nor the granting authority can be held responsible for them. For
open-access purposes, the authors have applied a CC BY public copyright
license to any author-accepted manuscript version arising from this
submission.

\bibliographystyle{siam}      
\bibliography{references}

\end{document}
%%% Local Variables:
%%% mode: latex
%%% TeX-master: t
%%% End: